\documentclass[12pt,reqno]{amsart} 
\usepackage{amssymb}
\usepackage{mathrsfs}
\usepackage{enumerate}
\usepackage[all]{xy}
\usepackage[usenames,dvipsnames]{color}
\usepackage[colorlinks=true, citecolor=blue, linkcolor=blue]{hyperref}

\definecolor{darkgreen}{rgb}{0,0.5,0}
\definecolor{bluegreen}{rgb}{0,0.2,0.8}
\definecolor{darkred}{rgb}{0.8,0,0}
\definecolor{newercolor}{rgb}{0.2,0,1}
\definecolor{darkyellow}{rgb}{0.7,0.7,0}
\definecolor{darkorange}{rgb}{0.8,0.4,0}




\setlength{\hoffset}{-20mm} 
\setlength{\textwidth}{165mm}

\newcommand{\xxx}[2][k]{#2_1\times\cdots\times #2_{#1}}
\newcommand{\Xxx}[2][k]{\prod_{i=1}^{#1}#2_i}

\textheight=246truemm
\parskip=4truept
\headheight=7pt
\voffset-15mm

\setcounter{tocdepth}{2}

\renewenvironment{enumerate}[1][]
{\begin{enumerat}[#1]\setlength{\itemsep}{6pt}}{\end{enumerat}}

\newenvironment{enuma}{\begin{enumerate}[{\rm(a) }]}{\end{enumerate}}
\newenvironment{enumi}{\begin{enumerate}[{\rm(i) }]}{\end{enumerate}}

\renewenvironment{itemize}
{\begin{itemiz}\setlength{\itemsep}{6pt} \setlength{\itemindent}{-20pt} }
{\end{itemiz}}






\newcommand{\8}[1]{c_{#1}}


\numberwithin{table}{section}

\newcommand{\boldd}[1]{{\mathversion{bold}\textbf{#1}}}

\newlength{\short}
\setlength{\short}{\textwidth}
\addtolength{\short}{-20mm}

\newcommand{\4}[1]{\widebar{#1}}
\newcommand{\5}[1]{\widehat{#1}}

\newcommand{\9}[1]{{}^{#1}\!}

\def\pair[#1,#2]{[\hskip-1.5pt[#1,#2]\hskip-1.5pt]}

\SelectTips{cm}{10} \UseTips   

\let\oldcirc=\circ
\renewcommand{\circ}{\mathchoice
    {\mathbin{\scriptstyle\oldcirc}}{\mathbin{\scriptstyle\oldcirc}}
    {\mathbin{\scriptscriptstyle\oldcirc}}
    {\mathbin{\scriptscriptstyle\oldcirc}}}

\def\beq#1\eeq{\begin{equation*}#1\end{equation*}}
\def\beqq#1\eeqq{\begin{equation}#1\end{equation}}

\numberwithin{equation}{section}

\newtheorem{Thm}{Theorem}[section]
\newtheorem{Prop}[Thm]{Proposition}

\newtheorem{Lem}[Thm]{Lemma}
\newtheorem{Defi}[Thm]{Definition}

\newtheorem{Hyp}[Thm]{Hypotheses}
\newtheorem{Not}[Thm]{Notation}

\newtheorem{Th}{Theorem}
\newtheorem{Cr}[Th]{Corollary}

\newcommand{\widebar}[1]
      {\overset{{\mskip3mu\leaders\hrule height0.4pt\hfill\mskip3mu}}{#1}
      \vphantom{#1}}


\newcounter{let} \setcounter{let}{0}
\loop\stepcounter{let}
\expandafter\edef\csname cal\alph{let}\endcsname%
{\noexpand\mathcal{\Alph{let}}}
\ifnum\thelet<26\repeat

\setcounter{let}{0}
\loop\stepcounter{let}
\expandafter\edef\csname scr\alph{let}\endcsname%
{\noexpand\mathscr{\Alph{let}}}
\ifnum\thelet<26\repeat

\newcommand{\tdef}[2][]{\expandafter\newcommand\csname#2\endcsname%
{#1\textup{#2}}}
\tdef{Iso}   \tdef{Aut}    \tdef{Out}    \tdef{Inn}    \tdef{Hom}
\tdef{End}   \tdef{Inj}    \tdef{map}    \tdef{Ker}    \tdef{Ob}
\tdef{Mor}   \tdef{Res}    \tdef{Id}     \tdef{Fr}     \tdef{Spin} 
\tdef{rk}    \tdef{conj}   \tdef{incl}   \tdef{proj}   \tdef{diag} 
\tdef{trf}   \tdef{Sol}    \tdef{He}     \tdef{Sz}     \tdef{cj}
\tdef{Rep}   \tdef{pr}    \tdef{Inndiag} \tdef{Outdiag}  \tdef{expt}
\tdef{supp}  \tdef{Isom}   \tdef{ord}    \tdef{Coker}   \tdef{Tr}
\tdef[_]{typ} \tdef[^]{op} \tdef[^]{ab}   \tdef{lcm}  \tdef{McL}
\tdef{restr}  \tdef{Comp}   \tdef{HS}

\newcommand{\fdef}[1]{\expandafter\newcommand\csname#1\endcsname%
{\mathfrak{#1}}}
\fdef{X}  \fdef{red}  \fdef{foc}  \fdef{hyp}  \fdef{Lie} \fdef{Y}

\newcommand{\bbdef}[1]{\expandafter\newcommand%
\csname#1\endcsname{\mathbb{#1}}}
\bbdef{C} \bbdef{F} \bbdef{R} \bbdef{Z} \bbdef{N} \bbdef{Q} \bbdef{K}

\newcommand{\itdef}[1]{\expandafter\newcommand\csname#1\endcsname%
{\textit{#1}}}
\itdef{PSL}  \itdef{PSU}  \itdef{SL}  \itdef{SU}  \itdef{GL} \itdef{GU}
\itdef{Sp}   \itdef{PSp} \itdef{PSO} \itdef{SO}   \itdef{SD} \itdef{PGU} 
\itdef{PGL}  \itdef{Co}  \itdef{Fi}  \itdef{GO}

\newcommand{\POmega}{\textit{P}\varOmega}

\newcommand{\sminus}{\smallsetminus}
\newcommand{\lie}[3]{\def\test{#2}\def\tst{G}\ifx\test\tst{{}^{#1}#2_{#3}}
\else{{}^{#1}\!#2_{#3}}\fi}
\renewcommand{\*}{\,\lower6pt\hbox{\Large{\textup{*}}}\,}
\newcommand{\syl}[2]{\textup{Syl}_{#1}(#2)}
\newcommand{\sylp}[1]{\syl{p}{#1}}

\renewcommand{\Im}{\textup{Im}}
\newcommand{\autf}{\Aut_{\calf}}

\newcommand{\outf}{\Out_{\calf}}

\newcommand{\homf}{\Hom_{\calf}}
\newcommand{\isof}{\Iso_{\calf}}
\newcommand{\defeq}{\overset{\textup{def}}{=}}

\newcommand{\mxfoura}[8]{\left(\begin{smallmatrix}#1&#2&#3&#4\\#5&#6&#7&#8}
\newcommand{\mxfourb}[8]{\\#1&#2&#3&#4\\#5&#6&#7&#8\end{smallmatrix}\right)}

\let\emptyset=\varnothing
\renewcommand{\:}{\colon}
\newcommand{\pcom}{{}^\wedge_p}

\newcommand{\nsg}{\trianglelefteq}

\let\nnsg=\ntrianglelefteq

\newcommand{\til}[1]{\widetilde{#1}}
\let\too=\longrightarrow
\let\xto=\xrightarrow

\newcommand{\gen}[1]{{\langle}#1{\rangle}}
\newcommand{\Gen}[1]{{\bigl\langle}#1{\bigr\rangle}}

\newcommand{\longleft}[1]{\;{\leftarrow%
\count255=0 \loop \mathrel{\mkern-6mu}%
    \relbar\advance\count255 by1\ifnum\count255<#1\repeat}\;}
\newcommand{\longright}[1]{\;{\count255=0 \loop \relbar\mathrel{\mkern-6mu}%
    \advance\count255 by1\ifnum\count255<#1\repeat\rightarrow}\;}
\newcommand{\Right}[2]{\overset{#2}{\longright#1}}
\newcommand{\RIGHT}[3]{\mathrel{\mathop{\kern0pt\longright#1}
        \limits^{#2}_{#3}}}

\newcommand{\LEFT}[3]{\mathrel{\mathop{\kern0pt\longleft#1}\limits^{#2}_{#3}}
}
\newcommand{\dRIGHT}[3]{\mathrel{%
   \mathop{\vcenter{\baselineskip=0pt\hbox{$\kern0pt\longright#1$}%
   \hbox{$\kern0pt\longright#1$}}}\limits^{#2}_{#3}}}
\newcommand{\LRIGHT}[3]{\mathrel{%
   \mathop{\vcenter{\baselineskip=0pt\hbox{$\kern0pt\longleft#1$}%
   \hbox{$\kern0pt\longright#1$}}}\limits^{#2}_{#3}}}
\newcommand{\RLEFT}[3]{\mathrel{%
   \mathop{\vcenter{\baselineskip=0pt\hbox{$\kern0pt\longright#1$}%
   \hbox{$\kern0pt\longleft#1$}}}\limits^{#2}_{#3}}}
\newcommand{\onto}[1]{\;{\count255=0 \loop \relbar\mathrel{\mkern-6mu}%
    \advance\count255 by1
    \ifnum\count255<#1 \repeat \twoheadrightarrow}\;}

\newcommand{\m}{morphism}

\begin{document}

\title{Realizability and tameness of fusion systems}

\author{Carles Broto}
\address{Departament de Matem\`atiques, Edifici Cc, Universitat Aut\`onoma de 
Barcelona, 08193 Cerdanyola del Vall\`es (Barcelona), Spain.
\newline\indent Centre de 
Recerca Matem\`atica, Edifici Cc, Campus de Bellaterra, 08193 Cerdanyola del 
Vall\`es (Barcelona), Spain.}
\email{carles.broto@uab.cat}

\thanks{C. Broto is partially supported by MICINN grant 
PID2020-116481GB-I00 and AGAUR grant 2021-SGR-01015}.

\author{Jesper M. M\o{}ller}
\address{Matematisk Institut, Universitetsparken 5, DK--2100 K\o{}benhavn, 
Denmark}
\email{moller@math.ku.dk}
\thanks{J. M\o{}ller was partially supported by the Danish National Research 
Foundation through the Copenhagen Centre for Geometry and Topology (DNRF151)}

\author{Bob Oliver}
\address{Universit\'e Sorbonne Paris Nord, LAGA, UMR 7539 du CNRS, 
99, Av. J.-B. Cl\'ement, 93430 Villetaneuse, France.}
\email{bobol@math.univ-paris13.fr}
\thanks{B. Oliver is partially supported by UMR 7539 of the CNRS}

\author{Albert Ruiz}
\address{Departament de Matem\`atiques, Edifici Cc, Universitat Aut\`onoma de 
Barcelona, 08193 Cerdanyola del Vall\`es (Barcelona), Spain.
\newline\indent Centre de 
Recerca Matem\`atica, Edifici Cc, Campus de Bellaterra, 08193 Cerdanyola del 
Vall\`es (Barcelona), Spain.}
\email{albert.ruiz@uab.cat}

\thanks{A. Ruiz is partially supported by MICINN grant 
PID2020-116481GB-I00 and AGAUR grant 2021-SGR-01015}.


\subjclass[2000]{Primary 20D20. Secondary 20D05, 20D25, 20D45}
\keywords{fusion systems, Sylow subgroups, automorphisms, wreath products, 
finite simple groups}

\begin{abstract}

A saturated fusion system over a finite $p$-group $S$ is a category whose 
objects are the subgroups of $S$ and whose morphisms are injective 
homomorphisms between the subgroups satisfying certain axioms. A fusion 
system over $S$ is realized by a finite group $G$ if $S$ is a Sylow 
$p$-subgroup of $G$ and morphisms in the category are those induced by 
conjugation in $G$. 
One recurrent question in this subject is to find criteria as to 
whether a given saturated fusion system is realizable or not.

One main result in this paper is that a saturated fusion 
system is realizable if all of its components (in the sense of Aschbacher) are 
realizable. Another result is that all realizable fusion systems are tame: 
a finer condition on realizable fusion systems that involves describing 
automorphisms of a fusion system in terms of those of some group that 
realizes it. Stated in this way, these results depend on the classification 
of finite simple groups, but we also give more precise formulations whose 
proof is independent of the classification.

\end{abstract}

\maketitle

\bigskip

\section*{Introduction}

Let $p$ be a prime. The \emph{fusion system} of a finite group $G$ over a 
Sylow $p$-subgroup $S$ of $G$ is the category $\calf_S(G)$ whose 
objects are the subgroups of $S$ and whose morphisms are the homomorphisms 
between subgroups induced by conjugation in $G$, thus encoding 
$G$-conjugacy relations among subgroups and elements of $S$. 
With this as starting point and also motivated by questions in 
representation theory, Puig defined the concept of abstract fusion 
systems (see \cite{Puig} and Definition \ref{d:sfs}) and showed that they 
behave in many ways like finite groups.

By analogy with finite groups, a \emph{component} $\calc$ of a fusion 
system $\calf$ is a subnormal fusion subsystem that is quasisimple (i.e., 
$O^p(\calc)=\calc$ and $\calc/Z(\calc)$ is simple). The basic properties of 
components were shown by Aschbacher \cite[Theorem 6]{A-gfit} (see also 
Lemma \ref{l:A2-Th6} below). 

A fusion system $\calf$ over a finite $p$-group $S$ is \emph{realized} by a 
finite group $G$ if $S\in\sylp{G}$ and $\calf\cong\calf_S(G)$, 
and is \emph{realizable} if it is realized by some finite group. 
One of our main theorems is the following:

\begin{Th} \label{ThA}
Let $p$ be a prime, let $\calf$ be a saturated fusion system over a finite 
$p$-group, and let $\cale\nsg\calf$ be a normal fusion subsystem that 
contains all components of $\calf$. If $\cale$ is realizable, then $\calf$ 
is also realizable.
\end{Th}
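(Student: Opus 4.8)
The plan is to reduce the realizability of $\calf$ to a problem about realizing a fusion system that "extends" a given realization of $\cale$ by an automorphism-like action, and then to invoke a version of the Gluing/Extension construction combined with the known realizability of fusion systems built as overgroups of a Sylow inclusion.

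First I would set up the data. Write $\cale$ over $T \nsg S$, and fix a finite group $H$ with $T \in \sylp{H}$ and $\cale \cong \calf_T(H)$. Since $\cale \nsg \calf$, conjugation in $S$ induces a homomorphism $S \to \Out(\cale)$ (more precisely, each $x \in S$ gives $c_x \in \autf(T)$ which normalizes $\cale$), and composing with the natural map $\Out(\cale) \to \Out(\calf_T(H)) \cong \Out_{\text{typ}}(H)$-style target one gets an action of $S/T$ on $H$ up to the ambiguity captured by $\Out(H)$ and the center. The key structural input is that all components of $\calf$ lie in $\cale$: by Aschbacher's Lemma (Lemma \ref{l:A2-Th6}) the components of $\calf$ are exactly the components of $\cale$ together with conjugates, so $\calf$ has no components "outside" $\cale$. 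This is what forces $\calf/\cale$, in the appropriate sense, to be built from $p$-local data (a $p$-group acting), with no new quasisimple pieces appearing — which is precisely the situation in which one can hope to realize $\calf$ by a group of the form (realization of $\cale$) extended by a $p$-group, possibly after enlarging $H$.

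The main step is the construction. I would aim to produce $G$ fitting in an extension $1 \to H_0 \to G \to \pi \to 1$ (or a subgroup of a wreath-product-type construction — the appearance of wreath products in the keywords suggests the authors pass through $H \wr (\text{something})$ to get enough room) where $H_0$ is built from $H$, $\pi$ is a $p$-group realizing the "$S/T$ part", $S \in \sylp{G}$, and $\calf_S(G) \cong \calf$. Concretely: realize the coset action of $S$ on $\cale$ as an action on $H$ up to inner automorphisms, lift it — after replacing $H$ by $H \times H \times \cdots$ permuted suitably, or by a suitable wreath product $H \wr P$ — to an honest action of a $p$-group $\pi$ covering $S/T$, form the semidirect product, and check saturation-plus-matching: that the fusion system of this group over its Sylow $p$-subgroup is exactly $\calf$. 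For the verification that $\calf_S(G) = \calf$ I would use that both are saturated fusion systems over $S$ with the same normal subsystem $\cale$ and the same induced outer action of $S/T$, and then invoke a rigidity statement (to be cited from earlier in the paper, or from \cite{A-gfit}) saying that a saturated fusion system is determined by a normal subsystem containing all components together with the $S/T$-action and the fusion of the "p-local quotient" $\calf/\cale$.

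The hard part will be the lifting step: passing from the outer action $S/T \to \Out(\cale)$, which only lives over $T$ and only up to inner automorphisms of $H$, to a genuine finite group in which $S$ embeds as a Sylow $p$-subgroup with the correct fusion — in particular controlling the obstructions in $H^2$ (the extension class) and $H^3$ (the existence of a lift of the outer action). This is exactly where I expect one must allow $H$ to be replaced by a larger group (a product or wreath product), killing the cohomological obstruction at the cost of size; the role of "all components inside $\cale$" is to guarantee that this replacement does not introduce components outside the enlarged normal subsystem, so that the matching argument still goes through. A secondary technical obstacle is bookkeeping with $\Out_{\text{typ}}$ versus $\Out$ of the realizing group, i.e. making sure the chosen realization $H$ of $\cale$ can be taken "tame" enough that $\outf(\cale)$ is genuinely induced by $\Out(H)$ — which is presumably why the tameness theorem (the paper's second main result) is proved in tandem with Theorem A and used here.
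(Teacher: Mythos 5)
Your proposal and the paper's proof diverge substantially. The paper does \emph{not} attempt a direct construction of a realizing group $G$ as an extension of (or a wreath product over) $H$. Instead it argues by minimal counterexample: one takes $(\calf,\cale,H)$ minimal (lexicographically on $(|H|,|\Mor(\calf)|)$) among triples where the conclusion fails, and reduces in stages --- first to $O_p(\calf)=1$ with $H$ a direct product of known simple groups, then (via Proposition \ref{p:NF(EJ)}) to the case where every component $\calc_i\nsg\calf$, and finally to the case $\cale=\calc^*$, the unique minimal normal realizable subsystem containing $E(\calf)$ (Proposition \ref{l:Comp(F)}). At that point one applies \cite[Theorem 5(b)]{O-red-corr} to obtain a chain $\calc^*=\calf_0\nsg\calf_1\nsg\cdots\nsg\calf_m=\calf$ of characteristic subsystems, each of $p$-power index or index prime to $p$ in the next, and then propagates \emph{tameness} (not mere realizability) up this chain using Proposition \ref{p:tame=>tame}. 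The starting point is that $\calc^*$, being a product of fusion systems of known simple groups, is tame by Propositions \ref{p:simple=>tame} and \ref{p:prod.tame}(c). So Theorem A is obtained as a corollary of the much stronger inductive statement that everything in sight is \emph{tame}; realizability alone is not the quantity that can be propagated.

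This is exactly the gap in your approach. You try to pass in one jump from a realization $H$ of $\cale$ to a realization $G$ of $\calf$, and for that you invoke two things that do not exist in the form you need. First, the ``rigidity statement'' that a saturated fusion system is determined by a normal subsystem containing all components together with the $S/T$-action and the fusion of $\calf/\cale$: no such statement is available, and the extension theory for fusion systems (cf. \cite{BCGLO2}) is subtler than this --- the paper gets identification-for-free only because it works at the level of \emph{linking systems}, where Chermak's uniqueness theorem (Theorem \ref{t:unique.l.s.}) and \cite[Proposition 2.16]{AOV1} do give both existence of the group extension and the matching. Second, the plan to kill the $H^2$/$H^3$ obstructions by replacing $H$ with a wreath product $H\wr P$: this changes both $T$ and $\cale$, so you would then have to re-match the resulting Sylow subgroup and fusion system back to $S$ and $\calf$, and nothing in the proposal addresses that; there is no reason the wreath-product Sylow will embed in $S$ in a way compatible with $\calf$. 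The construction the paper actually uses (\cite[Proposition 2.16]{AOV1}) does not enlarge $H$ at all; the input that makes it work is precisely that $\kappa_H\colon\Out(H)\to\Out(\call_T^c(H))$ is \emph{split surjective} --- tameness. You noticed this (``presumably why the tameness theorem ... is proved in tandem''), but you demote it to a ``secondary technical obstacle''; in the paper's proof it is the primary mechanism, and once it is in hand the cohomological obstructions you were worried about do not arise. Finally, the hypothesis $\Comp(\cale)=\Comp(\calf)$ does more work than you assign it: beyond guaranteeing ``no new quasisimple pieces,'' it is what gives (via Lemma \ref{l:same.comps}) that $\cale$ is centric in $\calf$ once $O_p(\calf)=1$, which is needed both to get normal pairs of \emph{linking} systems and to control centers at each reduction step.
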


The following is an immediate corollary to Theorem \ref{ThA}:

\begin{Cr} \label{CrB} 
Let $p$ be a prime, and let $\calf$ be a saturated fusion system over a 
finite $p$-group. If all components of $\calf$ are realizable, then $\calf$ 
is realizable.
\end{Cr}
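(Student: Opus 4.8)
The corollary is deduced from Theorem~\ref{ThA} by exhibiting a realizable normal fusion subsystem of $\calf$ that contains all components of $\calf$, so the plan is to produce such a subsystem and check that it is realizable. The natural candidate is the generalized Fitting subsystem $\cale=F^*(\calf)=O_p(\calf)\cdot E(\calf)$, where $E(\calf)$ is the layer of $\calf$, i.e.\ the central product of the components $\calc_1,\ldots,\calc_k$ of $\calf$. By Aschbacher's structure theory for components (Lemma~\ref{l:A2-Th6} and \cite[Theorem~6]{A-gfit}), $F^*(\calf)$ is a normal subsystem of $\calf$; it contains $E(\calf)$ and hence every component of $\calf$. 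So everything reduces to showing that $F^*(\calf)$ is realizable.

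To do that I would assemble $F^*(\calf)$ from realizations of its constituents. First, by a standard reduction using quasisimplicity, each component $\calc_i$ --- a quasisimple saturated fusion system over some $S_i\le S$, realizable by hypothesis --- may be taken to be realized by a finite \emph{quasisimple} group $L_i$ with $S_i\in\sylp{L_i}$ and with $Z(L_i)$ corresponding to $Z(\calc_i)$ under the isomorphism $\calc_i\cong\calf_{S_i}(L_i)$. Since distinct components of $\calf$ commute and $O_p(\calf)$ centralizes $E(\calf)$, one can then form an iterated central product of finite groups $G_0=O_p(\calf)*L_1*\cdots*L_k$, performing the central amalgamations exactly so as to reproduce the central-product structure that defines $F^*(\calf)$ (identifying the relevant central subgroups of the $L_i$ and of $O_p(\calf)$ with the corresponding subgroups of $S$). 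As central products of finite groups are compatible with the formation of Sylow $p$-subgroups and with conjugation fusion, this yields $\calf_{S_0}(G_0)\cong F^*(\calf)$ for the evident $S_0\in\sylp{G_0}$, so $F^*(\calf)$ is realizable. Feeding the normal realizable subsystem $\cale=F^*(\calf)\nsg\calf$ into Theorem~\ref{ThA} then shows that $\calf$ is realizable.

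The one step that is more than bookkeeping is the construction of $G_0$: the quasisimple realizations $L_i$ must be chosen compatibly, so that the central subgroups amalgamated in the group-theoretic central product match precisely those amalgamated in $F^*(\calf)$. Quasisimplicity of the components is what makes this possible --- it determines $Z(\calc_i)$ and makes the choice of $L_i$ essentially canonical modulo its center --- and once it is arranged the remaining content is just the application of Theorem~\ref{ThA}.
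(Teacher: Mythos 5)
Your proof takes essentially the same approach as the paper, which notes that Corollary~\ref{CrB} is the special case of Theorem~\ref{ThA} with $\cale = F^*(\calf)$, the generalized Fitting subsystem; you correctly fill in the implicit point that $F^*(\calf)$ is realizable as the fusion system of a central product of groups realizing the components and $O_p(\calf)$. One remark: the reduction to \emph{quasisimple} $L_i$ that you invoke is not actually needed and is itself a nontrivial CFSG-dependent claim --- what the central-product construction requires is only that $O_{p'}(L_i)=1$ and $Z(L_i)=Z(\calc_i)$, and this is obtained directly by replacing any realization $G_i$ first by $C_{G_i}(Z(\calc_i))$ (which still realizes $\calc_i$ by Lemma~\ref{l:NG(Q)}(a) since $C_{\calc_i}(Z(\calc_i))=\calc_i$) and then by its quotient by $O_{p'}$, after which $Z(L_i)\le O_p(\calc_i)=Z(\calc_i)\le Z(L_i)$ forces equality.
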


Corollary \ref{CrB} is just the special case of Theorem \ref{ThA} where 
$\cale$ is the generalized Fitting subsystem of $\calf$: the central 
product of the components of $\calf$ and $O_p(\calf)$. Note, however, that 
a fusion system can be realizable even when some of its components are not.

For each component $\calc$ of $\calf$, $\calc/Z(\calc)$ is simple, and is a 
composition factor of $\calf$ (see \cite[\S\,II.10]{AKO}). Hence one 
consequence of Corollary \ref{CrB} is that $\calf$ is realizable if all of 
its composition factors are realizable. However, the converse of this is 
not true either: $\calf$ can be realizable without all of its composition 
factors being realizable.

In order to prove Theorem \ref{ThA}, we need to work with linking 
systems and tameness. 
The concept of \emph{linking systems} associated to fusion systems was 
first proposed by Benson in \cite{Benson} and in unpublished notes, and was 
developed in detail by Broto, Levi, and Oliver \cite{BLO2}. See Definition 
\ref{d:Linking} for precise definitions. This was 
originally motivated by questions involving classifying spaces of fusion 
systems and of the finite groups that they realize, but also turns out to 
be important when studying many of the purely algebraic properties of 
fusion systems. 

A fusion system $\calf$ is \emph{tamely realized} by $G$ if it is 
realized by $G$, and in addition, the natural homomorphism from $\Out(G)$ 
to $\Out(\call_S^c(G))$ is split surjective (Definitions \ref{d:kappaG} and 
\ref{d:tame}). Here, $\call_S^c(G)$ is the linking system associated to $G$ 
and to $\calf$. We say that $\calf$ is \emph{tame} if it is tamely realized 
by some finite group.

Tameness was originally defined in \cite[\S2]{AOV1}, motivated by questions 
of realizability and extensions of fusion systems, and that 
is how it is used here in the proof of Theorem \ref{ThA}. In this way, it 
also plays a role in Aschbacher's program for classifying simple fusion 
systems over $2$-groups and reproving certain parts of the classification 
of finite simple groups. See \cite[\S2.4]{AO} for more detail. 

Tameness can also be interpreted topologically. For a finite group $G$, let 
$BG\pcom$ be the classifying space of $G$ completed at $p$ in the sense of 
Bousfield and Kan, and let $\Out(BG\pcom)$ be the set of homotopy classes 
of self homotopy equivalences of $BG\pcom$. Then for $S\in\sylp{G}$, the 
fusion system $\calf_S(G)$ is tamely realized by $G$ if and only if the 
natural map from $\Out(G)$ to $\Out(BG\pcom)$ is split surjective. We refer 
to \cite[Theorem B]{BLO1}, \cite[Lemma 8.2]{BLO2}, and \cite[Lemma 
1.14]{AOV1} for the proof that $\Out(\call_S^c(G))\cong\Out(BG\pcom)$.

We can now state our second main theorem. 

\begin{Th} \label{ThC}
For each prime $p$, every realizable fusion system over a finite $p$-group 
is tame. 
\end{Th}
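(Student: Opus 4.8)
The plan is to deduce Theorem~\ref{ThC} from Theorem~\ref{ThA} together with the existing literature on tameness of the simple groups, via an induction on the order of the realizing group. More precisely, I would fix a prime $p$ and a fusion system $\calf=\calf_S(G)$ for $G$ a finite group with $S\in\sylp G$, and argue by induction on $|G|$ (or on $|S|$, then $|G|$). The key structural input is the generalized Fitting subsystem $\calf^*(\calf)$, the central product of $O_p(\calf)$ with the components $\calc_1,\dots,\calc_k$ of $\calf$. One expects that when $\calf$ has no components, i.e. $\calf^*(\calf)=O_p(\calf)$, the fusion system is constrained, and tameness in the constrained case is already known (constrained fusion systems are realized by a canonical model group $L$ with $\Out(\calf)\cong\Out(L)$, so the splitting is immediate). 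So the real content is the case where $\calf$ has at least one component.

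When $\calf$ has components $\calc_1,\dots,\calc_k$, I would take $\cale$ to be a suitable normal subsystem containing $\calf^*(\calf)$ — for instance the normal closure of the components together with $O_p(\calf)$, or one of the terms in the layer filtration — chosen so that $\cale\nsg\calf$ and $\cale$ is realizable by a group $H$ of order smaller than or comparable to $G$, with (by induction, or by the CFSG-based tameness of simple groups) $\cale$ tame. The heart of the argument is then a "tameness is inherited from a normal subsystem containing the components" statement: if $\cale\nsg\calf$ contains $\calf^*(\calf)$ and $\cale$ is tame, then $\calf$ is tame. This is exactly the fusion-system analogue of the group-theoretic fact that automorphisms of $G$ are controlled by their action on $F^*(G)$ (since $C_G(F^*(G))\le F^*(G)$), and the corresponding statement for fusion systems is that $C_\calf(\calf^*(\calf))\le\calf^*(\calf)$, so $\Out(\calf)$ injects into the outer automorphism group of $\calf^*(\calf)$, compatibly with outer automorphisms of a realizing group. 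Concretely I would: (1) show $\kappa_G\colon\Out(G)\to\Out(\call^c_S(G))$ factors through and is detected by the analogous maps for the realizations of $\cale$ and of the individual components; (2) use the splitting for $\cale$ (from tameness of $\cale$) to build a splitting for $\calf$; (3) handle the "glue" — the extension data relating $\Out(\calf)$ to $\Out(\cale)$ and $\Out/\!\Out$ of the components — using the rigidity of linking systems (unique existence of linking systems, by Chermak / Oliver) so that the obstruction groups vanish or are controlled.

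In implementing step (2)–(3) I would rely on the machinery of \cite{AOV1}: there tameness is phrased so that $\calf$ tame is equivalent to the natural map $\Out(\call^c_S(G))\to\Out_{\mathrm{typ}}(\calf)$ (or the comparison with $\mathrm{Out}(BG\pcom)$) being split, and there are already results there reducing tameness of $\calf$ to tameness of a normal subsystem plus vanishing of certain higher limits $\lim^i$ of the relevant functors on the orbit category. So the plan is to check those higher-limit obstruction groups vanish in our situation — this should follow because $\calf^*(\calf)\le\cale$ forces the relevant centralizer functor to be concentrated on $\cale$-related subgroups where the obstruction theory is already understood — and then assemble the splitting. The base cases of the induction are: $\calf$ constrained (tame, as noted); and $\calf$ itself quasisimple, i.e. $\calf$ is the fusion system of a quasisimple group, where tameness is the main theorem of the series of papers on tameness of fusion systems of simple groups (this is where CFSG enters).

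The main obstacle I anticipate is precisely step (3): propagating the splitting of $\kappa$ across the extension $1\to\Out(\cale\text{-part})\to\Out(\calf)\to(\text{``component permutation + diagonal'' part})\to 1$ while keeping everything compatible with a single realizing group $G$ for $\calf$ (whose existence is guaranteed by Theorem~\ref{ThA}). The subtlety is that a splitting over $\cale$ lives in $\Out(H)$ for a group $H$ realizing $\cale$, and one must massage it into $\Out(G)$; this requires knowing that $G$ can be chosen so that $H$ (or a close relative of it) sits inside $G$ in a controlled way — again an appeal to the structure theory behind Theorem~\ref{ThA} — and that the non-split part (coming from components interchanged by $S$, handled via wreath products as the paper's title and keywords suggest) can be realized by honest automorphisms of $G$ permuting the factors. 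Controlling the wreath-product bookkeeping, and checking that the relevant $\lim^1$ obstruction to lifting the splitting vanishes, is where the real work lies; everything else is formal once the normal-subsystem reduction and the simple-group input are in place.
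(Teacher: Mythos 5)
Your overall strategy --- reduce to the layer (generalized Fitting subsystem), invoke tameness of the fusion systems of the simple components (the CFSG input), and propagate tameness back up through normal subsystems --- is the right one and matches the paper's. But the plan to ``deduce Theorem~\ref{ThC} from Theorem~\ref{ThA}'' has the logical dependence backwards: in the paper, Theorems~\ref{ThA} and~\ref{ThC} are both special cases of the single inductive Theorem~\ref{t:E.real}, and the proof of Theorem~\ref{ThA} \emph{itself} requires the tameness machinery. (The mechanism for extending a realization of $\cale$ to a realization of $\calf$ --- \cite[Proposition 2.16]{AOV1}, packaged here as Proposition~\ref{p:tame=>tame}(b) --- requires $\cale$ to be \emph{tamely} realized, not merely realized, and also requires a normal pair of linking systems with a centricity condition.) Moreover Theorem~\ref{ThA} says nothing about embedding a realizing group $H$ for $\cale$ into a realizing group $G$ for $\calf$, which is exactly what your step (2)--(3) needs. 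For Theorem~\ref{ThC} alone you already have a realizing $G$ by hypothesis, so you do not need Theorem~\ref{ThA} at all; what you need is the inductive propagation statement, proved with a lexicographic induction on $(|H|,|\Mor(\calf)|)$ where $H$ realizes the chosen normal subsystem $\cale$, not on $|G|$ or $|S|$.

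The reference to $\lim^i$ obstruction groups is more a gesture than an argument: the paper does no fresh obstruction-theory computations, and instead relies on the uniqueness of linking systems (Chermak/Oliver/Glauberman--Lynd) together with the AOV1 propagation results, after first establishing centricity of the relevant normal pair of linking systems (Lemma~\ref{centric<=>centric}). The reductions are also more delicate than your sketch: one first reduces to $O_p(\calf)=1$ via Proposition~\ref{p:tame=>tame}(a) applied along the upper central series of $O_p(\calf)$ (using Lemmas~\ref{l:E/Z<|F/Z} and~\ref{l:ZC/Z} to track components modulo a central subgroup); then one reduces via the subsystem $\caln$ of Proposition~\ref{p:NF(EJ)} to the case where each component is \emph{normal} in $\calf$ (so there is no ``component permutation'' to worry about at the $\calf$-level); and then one passes from $E(\calf)$ to the minimal normal realizable subsystem $\calc^*$ containing it (Proposition~\ref{l:Comp(F)}), which is a product of fusion systems of known simple groups. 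Your ``wreath-product bookkeeping'' is indeed the remaining serious technical piece and is carried out in Proposition~\ref{p:prod.tame} via Krull--Remak--Schmidt; you correctly identify it as a main obstacle. So: right picture, but the circular appeal to Theorem~\ref{ThA} must be replaced by a single stronger inductive statement, and the constrained base case you posit does not actually occur --- after reducing to $O_p(\calf)=1$, there is always at least one component, so the base of the induction is really the product-of-simples case.
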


One of the original motivations for defining tameness in 
\cite{AOV1} was the hope that it might provide a new way to construct 
exotic fusion systems; i.e., fusion systems not realized by any finite 
group. By \cite[Theorem B]{AOV1}, if $\calf$ is a reduced fusion system 
that is not tame, then there is an extension of $\calf$ whose reduction is 
isomorphic to $\calf$ and is exotic. However, Theorem \ref{ThC} tells us 
that this procedure does not give us any new exotic examples, since if 
$\calf$ is not tame, then it is itself exotic.

A saturated fusion system $\calf$ is \emph{reduced} if $O_p(\calf)=1$ and 
$O^p(\calf)=\calf=O^{p'}(\calf)$ (see Definitions \ref{d:subgroups} and 
\ref{d:reduced}). The \emph{reduction} $\red(\calf)$ of an arbitrary 
saturated fusion system $\calf$ is the fusion system obtained by taking 
$C_\calf(O_p(\calf))/Z(O_p(\calf))$, and then alternately taking $O^p(-)$ 
or $O^{p'}(-)$ until the sequence becomes constant. By \cite[Theorem 
A]{AOV1}, $\calf$ is tame if $\red(\calf)$ is tame. So one immediate 
consequence of Theorem \ref{ThC} is:

\begin{Cr} \label{CrD}
If $\calf$ is a saturated fusion system over a finite $p$-group $S$, and 
$\red(\calf)$ is realizable, then $\calf$ is also realizable.
\end{Cr}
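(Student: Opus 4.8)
The plan is to obtain Corollary \ref{CrD} as an immediate consequence of Theorem \ref{ThC} together with the cited result \cite[Theorem A]{AOV1}, exactly as suggested by the sentence preceding the statement. Suppose $\red(\calf)$ is realizable. Since the reduction of a saturated fusion system is again a saturated fusion system, Theorem \ref{ThC} applies to it and shows that $\red(\calf)$ is tame. By \cite[Theorem A]{AOV1}, a saturated fusion system is tame whenever its reduction is tame; applied to $\calf$, this gives that $\calf$ is itself tame. It then remains only to unwind the definition of tameness (Definition \ref{d:tame}): a tame fusion system is by definition tamely realized by some finite group $G$, and in particular it is realized by $G$ (one has $\calf\cong\calf_S(G)$ for $S\in\sylp{G}$). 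Hence $\calf$ is realizable, as claimed.

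Thus the proof is a short chaining of three facts: realizable $\Rightarrow$ tame (Theorem \ref{ThC}), tameness of $\red(\calf)$ $\Rightarrow$ tameness of $\calf$ (\cite[Theorem A]{AOV1}), and tame $\Rightarrow$ realizable (unwinding Definition \ref{d:tame}). All of the substance sits in the middle implication and in Theorem \ref{ThC}: \cite[Theorem A]{AOV1} works by reversing, one step at a time, the operation $\calf\mapsto C_\calf(O_p(\calf))/Z(O_p(\calf))$ and the alternating passages to $O^p(-)$ and $O^{p'}(-)$ that define $\red(\calf)$, using tameness of the system at each level to control the relevant extension problem and the associated outer automorphism groups. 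Consequently, I do not expect any obstacle in proving Corollary \ref{CrD} beyond what is already handled in Theorem \ref{ThC}; in particular, no direct construction of a finite group realizing $\calf$ out of one realizing $\red(\calf)$ is needed, since that is precisely what the tameness machinery of \cite{AOV1} packages.
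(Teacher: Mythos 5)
Your argument is exactly the paper's: chain Theorem \ref{ThC} (realizable $\Rightarrow$ tame, applied to $\red(\calf)$), then \cite[Theorem A]{AOV1} ($\red(\calf)$ tame $\Rightarrow$ $\calf$ tame), then the trivial implication tame $\Rightarrow$ realizable from Definition \ref{d:tame}. This matches the derivation the paper gives immediately before stating Corollary \ref{CrD}, so there is nothing to add.
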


The proofs of Theorems \ref{ThA} and \ref{ThC} as formulated above, as well 
as those of Corollaries \ref{CrB} and \ref{CrD}, require the classification 
of finite simple groups. But they will be reformulated in Section 
\ref{s:tame-real} in a a way so as to be independent of the classification. 
Our main theorem there, Theorem \ref{t:E.real}, is independent of the 
classification and includes Theorems \ref{ThA} and \ref{ThC} as special 
cases (the latter is reformulated as Theorem \ref{t:all.tame}).

The first two sections of the paper contain mostly background material: 
some basic definitions and properties of fusion and linking systems are in 
Section \ref{s:defs}, and those of automorphism groups and tameness in 
Section \ref{s:aut}. We then deal with products in Section \ref{s:prod} and 
components of fusion systems in Section \ref{s:comp}. Theorems \ref{ThA} 
and \ref{ThC}, as well as some other applications, are shown in Section 
\ref{s:tame-real}, as Theorems \ref{t:E.real} and \ref{t:all.tame}.

The authors would like to thank the Universitat Aut\`onoma de Barcelona and 
the University of Copenhagen for their hospitality while the four authors 
met during early stages of this work; and also the French CNRS and 
BigBlueButton for helping us to meet virtually at frequent intervals to 
discuss this work during the covid-19 pandemic. We would especially like to 
thank the referee, whose very careful reading of the paper and many 
suggestions helped us to greatly improve it.

\smallskip
\noindent{\textbf{Notation:}}
The notation used in this paper is mostly standard, with a few exceptions. 
Composition of functions and functors is always from right to left. Also, 
$C_n$ denotes a (multiplicative) cyclic group of order $n$. When $G$ is a 
(multiplicative) group, $1\in G$ always denotes its identity element.

When $f\:\calc\too\cald$ is a functor, then for objects $c,c'$ in 
$\calc$, we let $f_{c,c'}$ be the induced map from $\Mor_\calc(c,c')$ to 
$\Mor_\cald(f(c),f(c'))$, and also set $f_c=f_{c,c}$ for short. 

When $G$ is a group, we indicate conjugation by setting 
$\9gx=c_g(x)=gxg^{-1}$ and $\9gH=c_g(H)=gHg^{-1}$ for $g,x\in G$ and $H\le 
G$. Also, for $P,Q\le G$, we let $\Hom_G(P,Q)$ be the set of (injective) 
homomorphisms from $P$ to $Q$ induced by conjugation in $G$, and set 
$\Aut_G(P)=\Hom_G(P,P)$.

Throughout the paper, $p$ will always be a fixed prime.

\bigskip

\section{Fusion systems and linking systems}
\label{s:defs}

This is a background section intended to provide the reader with the 
necessary basic definitions and properties of fusion and linking systems 
that will be used throughout the paper. Fusion systems and saturation were 
originally introduced by Puig, first in unpublished notes,  and then in 
\cite{Puig}. Abstract linking systems were defined in \cite{BLO2}. 
As general references for the subject we refer to \cite{AKO} and 
\cite{Craven}. 

\smallskip

\subsection{Fusion systems}

For a prime $p$, a \emph{fusion system} over a finite $p$-group $S$ is a 
category whose objects are the subgroups of $S$, and whose morphisms are 
injective homomorphisms between subgroups such that for each $P,Q\le S$:
\begin{itemize} 
	\item $\homf(P,Q)\supseteq \Hom_S(P,Q)$; and 
	\item for each $\varphi\in\homf(P,Q)$, 
	$\varphi^{-1}\in\homf(\varphi(P),P)$.
\end{itemize}
Here, $\homf(P,Q)$ denotes the set of morphisms in $\calf$ from 
$P$ to $Q$. We also write $\isof(P,Q)$ for 
the set of isomorphisms, $\autf(P)=\isof(P,P)$, and 
$\outf(P)=\autf(P)/\Inn(P)$. For $P\le S$ and $g\in S$, we set 
	\[ P^\calf = \{\varphi(P)\,|\,\varphi\in\homf(P,S)\} 
	\qquad\textup{and}\qquad 
	g^\calf = \{\varphi(g)\,|\,\varphi\in\homf(\gen{g},S)\}  \]
(the sets of subgroups and elements \emph{$\calf$-conjugate} to $P$ and 
to $g$).

The following version of the definition of a saturated fusion system is the 
most convenient one to use here. (See Definitions I.2.2 and I.2.4 and 
Proposition I.2.5 in \cite{AKO}.)

\begin{Defi} \label{d:sfs}
	Let $\calf$ be a fusion system over a finite $p$-group $S$.
	\begin{enuma}
	\item A subgroup $P\le S$ is \emph{fully normalized} \emph{(fully 
	centralized)} in $\calf$ if $|N_S(P)|\ge|N_S(Q)|$ 
	($|C_S(P)|\ge|C_S(Q)|$) for each $Q\in P^\calf$. 

	\item A subgroup $P\le S$ is \emph{fully automized} in $\calf$ if 
	$\Aut_S(P)\in\sylp{\autf(P)}$.
	
	\item A subgroup $P\le S$ is \emph{receptive} in $\calf$ if each 
	isomorphism $\varphi\in\isof(Q,P)$ in $\calf$ extends to a morphism 
	$\4\varphi\in\homf(N_\varphi,S)$, where 
		\[ N_\varphi = \{ g\in N_S(Q) \,|\, \varphi c_g \varphi^{-1} \in 
			\Aut_S(P) \}. \]

	\item The fusion system $\calf$ is \emph{saturated} if it satisfies the 
	following two conditions:
	\begin{enumerate}[\rm(I) ]
	\item \emph{(Sylow axiom)} each subgroup $P\le S$ fully normalized 
	in $\calf$ is also fully automized and fully centralized; and 
	\item \emph{(extension axiom)} each subgroup $P\le S$ fully 
	centralized in $\calf$ is also receptive.
	\end{enumerate}

	\end{enuma}
\end{Defi}

The above definition is motivated by fusion systems of finite groups. 
When $G$ is a finite group and $S\in\sylp{G}$, the \emph{$p$-fusion system} 
of $G$ is the category $\calf_S(G)$ whose objects are the subgroups of $S$, 
and where $\Mor_{\calf_S(G)}(P,Q)=\Hom_G(P,Q)$ for each $P,Q\le S$. For a 
proof that $\calf_S(G)$ is saturated, see, e.g., \cite[Lemma I.1.2]{AKO}. 
In general, a saturated fusion system $\calf$ over a finite $p$-group $S$ 
will be called \emph{realizable} if $\calf=\calf_S(G)$ for some finite 
group $G$ with $S\in\sylp{G}$, and will be called \emph{exotic} otherwise. 

The following lemma lists relations between some of these 
conditions that hold for \emph{all} fusion systems, not just those that are 
saturated. 

\begin{Lem}[{\cite[Lemma I.2.6]{AKO}}] 
If $\calf$ is a fusion system over a finite $p$-group $S$, then each 
receptive subgroup of $S$ is fully centralized, and each subgroup that is 
fully automized and receptive is fully normalized. 
\end{Lem}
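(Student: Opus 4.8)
The plan is to exploit the defining property of receptivity directly: an $\calf$-isomorphism onto a receptive subgroup extends over the ``largest admissible'' domain $N_\varphi$, so the strategy in both parts is (i) to realize the relevant $\calf$-conjugacy by a well-chosen isomorphism $\varphi$; (ii) to check that $N_\varphi$ contains the centralizer (resp.\ normalizer) in $S$ of the source subgroup; and (iii) to verify that the extension of $\varphi$ carries that subgroup into $C_S(P)$ (resp.\ $N_S(P)$). Injectivity of the extension then yields the required inequality of orders, and comparison with a fully centralized (resp.\ fully normalized) member of the conjugacy class finishes the argument.

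For the first statement, let $P\le S$ be receptive and pick $Q\in P^\calf$ that is fully centralized in $\calf$ (possible since $P^\calf$ is finite). I would choose $\varphi\in\isof(Q,P)$ --- such a $\varphi$ exists because $Q\in P^\calf$ and the second fusion-system axiom lets one invert a morphism carrying $P$ to $Q$. The first step is the observation that $C_S(Q)\le N_\varphi$: for $g\in C_S(Q)$ the map $c_g$ restricts to the identity on $Q$, hence $\varphi c_g\varphi^{-1}=\Id_P\in\auts(P)$. Receptivity then extends $\varphi$ to $\4\varphi\in\homf(N_\varphi,S)$, and the second step is the computation that for $g\in C_S(Q)$ and $y\in Q$,
\[ \4\varphi(g)\cdot\varphi(y)\cdot\4\varphi(g)^{-1}=\4\varphi(gyg^{-1})=\varphi(y), \]
so $\4\varphi(g)$ centralizes $\varphi(Q)=P$; that is, $\4\varphi(C_S(Q))\le C_S(P)$. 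Since $\4\varphi$ is injective, $|C_S(P)|\ge|C_S(Q)|$, and as $Q$ was chosen fully centralized with $P\in Q^\calf$, it follows that $P$ is fully centralized as well.

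For the second statement, let $P\le S$ be fully automized and receptive, pick $Q\in P^\calf$ fully normalized in $\calf$, and take $\varphi\in\isof(Q,P)$ as before. Here one cannot expect $N_\varphi=N_S(Q)$ outright, so the extra step --- and what I expect to be the main obstacle --- is to improve $\varphi$ first. The subgroup $\varphi\auts(Q)\varphi^{-1}$ is a $p$-subgroup of $\autf(P)$, so by the hypothesis $\auts(P)\in\sylp{\autf(P)}$ and Sylow's theorem there is $\alpha\in\autf(P)$ with $\psi\auts(Q)\psi^{-1}\le\auts(P)$, where $\psi=\alpha\circ\varphi\in\isof(Q,P)$. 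With this choice, every $g\in N_S(Q)$ has $c_g|_Q\in\auts(Q)$, hence $\psi c_g\psi^{-1}\in\auts(P)$, so $N_\psi=N_S(Q)$. Now receptivity extends $\psi$ to $\4\psi\in\homf(N_S(Q),S)$, and the same computation as above --- for $g\in N_S(Q)$, $y\in Q$, one gets $\4\psi(g)\psi(y)\4\psi(g)^{-1}=\4\psi(gyg^{-1})\in P$ --- shows $\4\psi(N_S(Q))\le N_S(P)$. Injectivity gives $|N_S(P)|\ge|N_S(Q)|$, and since $Q$ was fully normalized with $P\in Q^\calf$, $P$ is fully normalized.

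So the only genuine subtlety is that normalizing adjustment in the second part: receptivity by itself extends $\varphi$ only over $N_\varphi$, which in general is a proper subgroup of $N_S(Q)$, and one really does need the fully-automized hypothesis together with a Sylow argument inside $\autf(P)$ to replace $\varphi$ by a $\psi$ with $N_\psi=N_S(Q)$. Once $\psi$ is in place, both halves reduce to the same short conjugation calculation with the extended homomorphism, and I anticipate no further difficulties.
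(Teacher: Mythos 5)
Your proof is correct, and since the paper simply cites \cite[Lemma I.2.6]{AKO} without reproducing the argument, there is no in-paper proof to diverge from: what you have written is the standard proof from that reference, including the key Sylow adjustment of $\varphi$ in the second part to force $N_\psi=N_S(Q)$ before invoking receptivity.
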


We next list some of the terminology used to describe certain 
subgroups in a fusion system.

\begin{Defi} \label{d:subgroups}
Let $\calf$ be a fusion system over a finite $p$-group $S$. For a subgroup 
$P\le S$, 
\begin{enuma} 

\item $P$ is \emph{$\calf$-centric} if $C_S(Q)\le Q$ for each $Q\in 
P^\calf$;

\item $P$ is \emph{$\calf$-radical} if $O_p(\outf(P))=1$; 

\item $P$ is \emph{$\calf$-quasicentric} if for each $Q\in P^\calf$ which 
is fully centralized in $\calf$, the centralizer fusion system $C_\calf(Q)$ 
(see Definition \ref{d:CentralizerNormalizer}(b)) is the fusion 
system of the group $C_S(Q)$; 

\item $P$ is \emph{weakly closed} in $\calf$ if $P^\calf=\{P\}$;

\item $P$ is \emph{strongly closed} in $\calf$ if for each $x\in P$, 
$x^\calf\subseteq P$;

\item \label{d:subgroups:g} $P$ is \emph{normal} in $\calf$ ($P\nsg\calf$) 
if each $\varphi\in\homf(Q,R)$ (for $Q,R\le S$) extends to a morphism 
$\4\varphi\in\homf(PQ,PR)$ such that $\4\varphi(P)=P$; and 

\item \label{d:subgroups:h} $P$ is \emph{central} in $\calf$ if each 
$\varphi\in\homf(Q,R)$ (for $Q,R\le S$) extends to a morphism 
$\4\varphi\in\homf(PQ,PR)$ such that $\4\varphi|_P=\Id_P$.

\end{enuma}
Let $\calf^{cr}\subseteq\calf^c\subseteq\calf^q$ denote the sets of 
$\calf$-centric $\calf$-radical, $\calf$-centric, and $\calf$-quasicentric 
subgroups of $S$, respectively, or (depending on the context) the full 
subcategories of $\calf$ with those objects.
Let $O_p(\calf)\ge Z(\calf)$ denote the (unique) largest normal and 
central subgroups, respectively, in $\calf$.
\end{Defi}

The following result is one of the versions of Alperin's fusion 
theorem for fusion systems. 

\begin{Thm} \label{t:AFT}
Let $\calf$ be a saturated fusion system over a finite $p$-group $S$. Then 
each morphism in $\calf$ is a composite of restrictions of automorphisms of 
subgroups that are $\calf$-centric, $\calf$-radical, and fully 
normalized in $\calf$. 
\end{Thm}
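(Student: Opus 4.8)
The plan is to run the standard Alperin--Goldschmidt argument, exploiting the fact that the family here consists of \emph{all} fully normalized $\calf$-centric $\calf$-radical subgroups rather than just the essential ones, which makes the key extension step cleaner. Call a morphism of $\calf$ \emph{good} if it is a composite of restrictions of automorphisms of subgroups that are $\calf$-centric, $\calf$-radical, and fully normalized. First I would record the preliminaries: $S$ itself has all three properties --- it is fully normalized and $\calf$-centric for trivial reasons, and it is $\calf$-radical because the Sylow axiom forces $\Aut_S(S)=\Inn(S)$ to be Sylow in $\autf(S)$, so that $\outf(S)$ is a $p'$-group. Hence every inclusion is good, and the class of good morphisms is closed under composition, restriction, and inverse. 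Since any $\varphi\in\homf(P,Q)$ factors as an isomorphism $P\to\varphi(P)$ followed by an inclusion, it then suffices to prove that every isomorphism of $\calf$ is good.

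I would prove this by downward induction on $m$, establishing the statement $(\star_m)$: \emph{every isomorphism of $\calf$ between subgroups of order at least $m$ is good}. The base case $m=|S|$ concerns only $\autf(S)$ and holds by the first paragraph. For the inductive step, assume $(\star_{m+1})$ and let $\varphi\in\isof(P,Q)$ with $|P|=|Q|=m<|S|$; choose a fully normalized $R\in P^\calf$. Because $\Aut_S(R)$ is Sylow in $\autf(R)$ (Sylow axiom) and $R$ is receptive (extension axiom), a standard argument --- conjugating the $p$-subgroup $\Aut_S(P)$ into $\Aut_S(R)$ --- produces $\chi\in\isof(P,R)$ extending to an isomorphism from $N_S(P)$ onto a subgroup of $N_S(R)$; since $|N_S(P)|>m$, that extension is good by $(\star_{m+1})$, hence so is $\chi$, and likewise there is a good isomorphism $\psi\colon Q\to R$. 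Writing $\varphi=\psi^{-1}(\psi\varphi\chi^{-1})\chi$ with $\psi\varphi\chi^{-1}\in\autf(R)$, I reduce to showing that every element of $\autf(R)$ is good whenever $R$ is fully normalized with $|R|=m<|S|$.

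This last reduction is the heart of the matter. If $R$ is also $\calf$-centric and $\calf$-radical there is nothing to prove. Otherwise, for $\alpha\in\autf(R)$ I would show that $N_\alpha=\{g\in N_S(R)\mid\alpha c_g\alpha^{-1}\in\Aut_S(R)\}$ strictly contains $R$: it always contains $R$ and $C_S(R)$, so if $R$ is not $\calf$-centric then --- because $R$ is fully centralized --- one has $C_S(R)\not\le R$ and $N_\alpha\ge RC_S(R)>R$; while if $R$ is $\calf$-centric but not $\calf$-radical, then the preimage $K$ in $\autf(R)$ of $O_p(\outf(R))\ne1$ is a nontrivial normal $p$-subgroup of $\autf(R)$, hence lies in the Sylow subgroup $\Aut_S(R)$ and is normalized by $\alpha$, so that its full preimage in $N_S(R)$ --- which strictly contains $R$ --- lies in $N_\alpha$. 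Since $R$ is receptive, $\alpha$ extends to $\4\alpha$ defined on $N_\alpha$, and as $N_\alpha$ and $\4\alpha(N_\alpha)$ both have order $>m$, $\4\alpha$ is good by $(\star_{m+1})$; therefore $\alpha=\4\alpha|_R$ is good. This completes the induction, so every isomorphism of $\calf$ --- and hence, by the first-paragraph reduction, every morphism --- is good.

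The subtlety to get right is the shape of the induction: it must be stated for \emph{all} isomorphisms between subgroups of a given order, not just for automorphisms, because the extensions produced above are defined on $N_S(P)$, $N_\alpha$ and their images, which need not be fully normalized. The only genuinely structural input is the dichotomy in the third paragraph, and that is exactly where saturation enters twice --- the Sylow axiom to place $O_p(\outf(R))$ inside $\Aut_S(R)$, and the extension axiom to extend $\alpha$ off of $R$. As an alternative, one could instead quote the version of Alperin's fusion theorem phrased in terms of essential subgroups together with $S$ and note that essential subgroups are $\calf$-centric and $\calf$-radical.
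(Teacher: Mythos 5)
Your proof is correct, and it is essentially the direct argument of \cite[Theorem A.10]{BLO2}, which is the second reference the paper cites for this statement (the paper itself gives no argument beyond a citation, the first option being via essential subgroups as in \cite[Theorem I.3.6 and Proposition I.3.3(a)]{AKO}, which is exactly the alternative you mention at the end). All the key steps check out: $S$ is $\calf$-radical because the Sylow axiom makes $\Inn(S)$ Sylow in $\autf(S)$; the downward induction on order is well-founded; the conjugating-isomorphism lemma (fully automized + receptive gives a map $N_S(P)\to N_S(R)$ over $P\to R$) reduces to automorphisms of fully normalized $R$; and the dichotomy showing $N_\alpha>R$ when $R$ fails to be centric (via full centralization) or radical (via $O_p(\outf(R))\ne 1$ being contained in the Sylow $\Aut_S(R)$ and normalized by $\alpha$) is exactly the structural input needed. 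The remark that the induction must be phrased for isomorphisms rather than only automorphisms, since the extensions land on overgroups that need not be fully normalized, is the standard and correct observation.
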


\begin{proof} This follows from \cite[Theorem I.3.6]{AKO} (the same 
statement but for $\calf$-essential subgroups), together with 
\cite[Proposition I.3.3(a)]{AKO} (all $\calf$-essential subgroups are 
$\calf$-centric and $\calf$-radical). Alternatively, the result as stated 
here is shown directly (without mention of essential subgroups) in 
\cite[Theorem A.10]{BLO2}.
\end{proof}

\begin{Defi}
\label{d:CentralizerNormalizer}
Let $\calf$ be a saturated fusion system over a finite $p$-group $S$, and 
let $Q\leq S$ be a subgroup. 
\begin{enuma}

\item For each $K\le\Aut(Q)$, set $N_S^K(Q)=\{x\in N_S(Q)\,|\,c_x\in K\}$, 
and let $N_\calf^K(Q)$ be the fusion system over $N_S^K(Q)$ in which for 
$P,R\le N_S^K(Q)$, 
	\begin{align*} 
	\qquad
	\Hom_{N_\calf^K(Q)}(P,R) = \bigl\{\varphi\in\homf(P,R) &\,\big|\, 
	\textup{there is}~ \4\varphi\in\homf(PQ,RQ) \\ &~\textup{such that}~ 
	\4\varphi|_P=\varphi,~ \4\varphi(Q)=Q,~ \4\varphi|_Q\in K \bigr\}. 
	\end{align*}

\item Set $N_\calf(Q)=N_\calf^{\Aut(Q)}(Q)$ and 
$C_\calf(Q)=C_\calf^{\{1\}}(Q)$.

\end{enuma}
\end{Defi}

If $Q$ is fully normalized (fully centralized) in $\calf$ then $N_\calf(Q)$ 
($C_\calf(Q)$) is a saturated fusion system (see 
\cite[Proposition~A.6]{BLO2} or \cite[Theorem~I.5.5]{AKO}). There is 
a similar condition (see \cite[Theorem~I.5.5]{AKO}) that implies that 
$N_\calf^K(Q)$ is saturated. Note that $Q\nsg\calf$ if and only if 
$N_\calf(Q)=\calf$, and $Q$ is central in $\calf$ (i.e., $Q\le Z(\calf)$) 
if and only if $C_\calf(Q)=\calf$.

\begin{Lem} \label{l:NG(Q)}
Let $G$ be a finite group with $S\in\sylp{G}$, and set $\calf=\calf_S(G)$.
\begin{enuma}
    \item For each $Q\le S$, $Q$ is fully normalized (fully centralized) if
and only if $N_S(Q)\in\sylp{N_G(Q)}$ ($C_S(Q)\in\sylp{C_G(Q)}$). If
this holds, then $N_\calf(Q)=\calf_{N_S(Q)}(N_G(Q))$
($C_\calf(Q)=\calf_{C_S(Q)}(C_G(Q))$).
    \item In all cases, $O_p(G)\le O_p(\calf)$ and $O_p(Z(G))\le Z(\calf)$.
\end{enuma}
\end{Lem}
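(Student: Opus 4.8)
The plan is to reduce everything to elementary group theory, using that $\calf=\calf_S(G)$ has morphism sets $\homf(P,R)=\Hom_G(P,R)$ and that $Q^\calf$ is precisely the set of $G$-conjugates of $Q$ that happen to lie in $S$.

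For the first assertion of (a), I would first observe that if $Q'\in Q^\calf$, say $Q'={}^gQ$, then $N_G(Q')={}^gN_G(Q)$, so $|N_G(Q')|=|N_G(Q)|$; and $N_S(Q')=N_G(Q')\cap S$ is always a $p$-subgroup of $N_G(Q')$, so $|N_S(Q')|\le|N_G(Q)|_p$, with equality exactly when $N_S(Q')\in\sylp{N_G(Q')}$. Hence it suffices to produce one $Q'\in Q^\calf$ with $|N_S(Q')|=|N_G(Q)|_p$: then $|N_S(-)|$ is maximized on $Q^\calf$ precisely at those $Q'$ with $N_S(Q')$ Sylow in $N_G(Q')$, which is the claimed equivalence. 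To produce such a $Q'$ I would take $Q'\in Q^\calf$ with $|N_S(Q')|$ as large as possible, and argue by contradiction: if $N_S(Q')$ were not Sylow in $N_G(Q')$, pick a Sylow $p$-subgroup $T$ of $N_G(Q')$ strictly containing it and set $T_1=N_T(N_S(Q'))$, which strictly contains $N_S(Q')$. Since $T_1$ normalizes $Q'$, the product $Q'T_1$ is a $p$-subgroup of $G$; conjugating it into $S$ by some $g\in G$, the subgroup $Q''={}^gQ'\in Q^\calf$ satisfies ${}^gT_1\le N_S(Q'')$, so $|N_S(Q'')|\ge|T_1|>|N_S(Q')|$, contradicting maximality. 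The centralizer statement is the same argument with $N$ replaced by $C$ throughout (now $T_1$ centralizes $Q'$, so $Q'T_1$ is again a $p$-group).

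For the equalities of fusion systems I would argue directly on morphism sets. Both $N_\calf(Q)$ and $\calf_{N_S(Q)}(N_G(Q))$ are fusion systems over $N_S(Q)$, so fix $P,R\le N_S(Q)$ and a morphism $\varphi\colon P\to R$; note $PQ,RQ\le S$ since $P,R,Q\le N_S(Q)$. If $\varphi$ is conjugation by some $h\in N_G(Q)$, then $c_h$ restricts to a morphism $PQ\to RQ$ in $\calf$ fixing $Q$, so $\varphi$ lies in $N_\calf(Q)$. Conversely, if $\varphi\in\Hom_{N_\calf(Q)}(P,R)$ then, by Definition~\ref{d:CentralizerNormalizer}, it extends to some $\bar\varphi\in\homf(PQ,RQ)$ with $\bar\varphi(Q)=Q$; since $\calf=\calf_S(G)$, $\bar\varphi=c_h|_{PQ}$ for some $h\in G$, and $c_h(Q)=Q$ forces $h\in N_G(Q)$, so $\varphi=c_h|_P\in\Hom_{N_G(Q)}(P,R)$. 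The same computation, with the extra condition $\bar\varphi|_Q=\Id$ and with $h\in C_G(Q)$, gives $C_\calf(Q)=\calf_{C_S(Q)}(C_G(Q))$ (recall $C_S(Q)=N_S^{\{1\}}(Q)$). The full normalization (resp. centralization) hypothesis is what guarantees the right-hand side is \emph{saturated}, via the first part of (a) together with the fact recalled above that $\calf_T(H)$ is saturated whenever $T\in\sylp{H}$; but the identity of morphism sets itself holds without that hypothesis.

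For (b): $O_p(G)$ is a normal $p$-subgroup of $G$, hence $O_p(G)\le S$ and in fact $O_p(G)\nsg S$, so $O_p(G)P\le S$ for every $P\le S$. Given $\varphi=c_g|_Q\in\homf(Q,R)$ with $g\in G$, the same $g$ conjugates $O_p(G)Q$ into $O_p(G)R$ (since $O_p(G)\nsg G$) while fixing $O_p(G)$; thus $c_g$ furnishes the extension required by the definition of normality in $\calf$, proving $O_p(G)\nsg\calf$ and hence $O_p(G)\le O_p(\calf)$. Identically, $O_p(Z(G))=Z(G)\cap S$ is normal in $S$ and central in $G$, so for $\varphi=c_g|_Q$ the morphism $c_g|_{O_p(Z(G))Q}$ extends $\varphi$ and restricts to the identity on $O_p(Z(G))$, giving $O_p(Z(G))\le Z(\calf)$. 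The only place requiring genuine thought is the Sylow-theoretic "pushing up" step in (a); the rest is a direct unwinding of the definitions.
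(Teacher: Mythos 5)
Your proof is correct. The paper itself disposes of part (a) by citing \cite[Proposition I.5.4]{AKO}, and then deduces part (b) as an immediate corollary of (a): taking $Q=O_p(G)$ gives $N_\calf(Q)=\calf_{N_S(Q)}(N_G(Q))=\calf_S(G)=\calf$, hence $O_p(G)\nsg\calf$, and similarly $Q=O_p(Z(G))$ gives $C_\calf(Q)=\calf$, hence $O_p(Z(G))\le Z(\calf)$. You instead reprove (a) from scratch — your Sylow ``pushing-up'' argument (pass to $T_1=N_T(N_S(Q'))$ or $N_T(C_S(Q'))$ and conjugate into $S$) is exactly the standard argument behind the cited AKO result, and the verification of the morphism-set identity is a correct direct unwinding of Definition~\ref{d:CentralizerNormalizer}. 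For (b) you likewise verify directly from Definition~\ref{d:subgroups}(g,h) that $O_p(G)\nsg\calf$ and $O_p(Z(G))\le Z(\calf)$, rather than specializing (a); this is equally valid and perhaps cleaner. Your observation that the morphism-set identity $N_\calf(Q)=\calf_{N_S(Q)}(N_G(Q))$ holds for arbitrary $Q$ (with full normalization only needed for saturation of the right-hand side) is also correct and a little sharper than what the lemma asserts.
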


\begin{proof} Point (a) is shown in \cite[Proposition I.5.4]{AKO}. In
	particular, when $Q=O_p(G)$, we have
	$N_\calf(Q)=\calf_S(G)=\calf$ and hence $Q\le O_p(\calf)$. Similarly, when $Q=O_p(Z(G))$, it says that $C_\calf(Q)=\calf_S(G)=\calf$ and
	hence that $Q\le Z(\calf)$.
\end{proof}

\smallskip

\subsection{Linking systems} 

Before recalling the definition of linking systems, we need to 
introduce more notation. If $P,Q\leq G$ are subgroups of a finite group 
$G$, the \emph{transporter set $T_G(P,Q)$} is defined by setting 
	\[ T_G(P,Q)=\{g \in G \mid \9gP\leq Q\}. \]
The \emph{transporter category of $G$} is the category $\calt(G)$ whose 
objects are the subgroups of $G$, and whose morphisms sets are the 
transporter sets:
	\[ \Mor_{\calt(G)}(P,Q)=T_G(P,Q). \]
Composition in $\calt(G)$ is given by multiplication in $G$. If $\calh$ is a set 
of subgroups of $G$, then $\calt_\calh(G)\subseteq\calt(G)$ denotes the 
full subcategory with object set $\calh$.

The following definition of linking system taken from 
\cite[Definition III.4.1]{AKO}.

\begin{Defi}\label{d:Linking}
	Let $\calf$ be a fusion system over a finite $p$-group $S$. A \emph{linking system} associated to $\calf$ is a triple $(\call,\delta,\pi)$ where $\call$ is a finite category, and $\delta$ and $\pi$ are a pair of functors
	\[
	\calt_{\Ob(\call)}(S) \Right3{\delta} \call \Right3{\pi} \calf 
	\]
	that satisfy the following conditions:
	\begin{enumerate}[\rm ({A}1)]
	\item $\Ob(\call)$ is a set of subgroups of $S$ closed under 
	$\calf$-conjugacy and overgroups, and contains $\calf^{cr}$. Each 
	object in $\call$ is isomorphic (in $\call$) to one which is fully 
	centralized in $\calf$. 
	\item $\delta$ is the identity on objects, and $\pi$ is the 
	inclusion on objects. For each $P,Q\in\Ob(\call)$ such that $P$ is 
	fully centralized in $\calf$, $C_S(P)$ acts freely on 
	$\Mor_\call(P,Q)$ via $\delta_P$ and right composition, and
		\[ \pi_{P,Q}\colon \Mor_\call(P,Q) \to \Hom_\calf(P,Q) \]
		is the orbit map for this action. 
	\end{enumerate}
	\begin{enumerate}[\rm (A)] \setcounter{enumi}{1}
	\item For each $P,Q\in\Ob(\call)$ and each $g\in T_S(P,Q)$, 
	$\pi_{P,Q}$ sends $\delta_{P,Q}(g)\in\Mor_\call(P,Q)$ to 
	$c_g\in\Hom_\calf(P,Q)$.
		
	\item For all $\psi\in\Mor_\call(P,Q)$ and all $g\in P$, the diagram
		\[
		\xymatrix{ P \ar[r]^\psi \ar[d]_{\delta_P(g)} & Q \ar[d]^{\delta_Q(\pi(\psi)(g))} \\
			P \ar[r]^\psi & Q
		}
		\]
		commutes in $\call$.

	\end{enumerate} 
When the functors $\delta$ and $\pi$ are understood, we refer 
directly to the category $\call$ as a linking system. 
\end{Defi} 
	
A \emph{centric linking system} associated to $\calf$ is a linking system 
$\call$ associated to $\calf$ such that $\Ob(\call)=\calf^c$.

Linking systems associated to a fusion system were originally 
motivated by centric linking systems of finite groups. For a finite group 
$G$, a $p$-subgroup $P\leq G$ is \emph{$p$-centric in $G$} 
if $Z(P)\in\sylp{C_G(P)}$; equivalently, if $C_G(P)=Z(P)\times 
O_{p'}(C_G(P))$. For $S\in\sylp{G}$, the 
\emph{centric linking system of $G$ over $S$} is consists of the category 
$\call^c_S(G)$ whose objects are the subgroups of $S$ which are $p$-centric 
in $G$ and whose morphism sets are given by
	\[ \Mor_{\call^c_S(G)}(P,Q)=T_G(P,Q)/O_{p'}(C_G(P)) 
	\qquad \textup{(all $P,Q\in\Ob(\call_S^c(G))$),} \]
together with functors 
$\calt_{\Ob(\call^c_S(G))}(S)\xto{~\delta~}\call_S^c(G)\xto{~\pi~} 
\calf_S(G)$ defined in the obvious way. 

When $G$ is a finite group and $S\in\sylp{G}$, then $P\leq S$ 
is $\calf$-centric (see Definition \ref{d:subgroups}) if and only if $P$ is 
$p$-centric in $G$ (see \cite[Lemma A.5]{BLO1}). Moreover, $\calf_S(G)$ is 
always saturated (see \cite[Theorem I.2.3]{AKO}), and 
$(\call^c_S(G),\delta,\pi)$ is a centric linking system 
associated to $\calf_S(G)$.

Some of the basic properties of linking systems are listed in the next 
proposition.

\begin{Prop} \label{L-prop}
Let $(\call,\delta,\pi)$ be a linking system associated to a saturated 
fusion system $\calf$ over a finite $p$-group $S$. For each pair of 
subgroups $P\le Q\le S$ with $P,Q\in\Ob(\call)$, set 
$\iota_{P,Q}=\delta_{P,Q}(1)\in\Mor_\call(P,Q)$ (the \emph{inclusion} in 
$\call$ of $P$ into $Q$). Then 
\begin{enuma}  

\item $\delta$ is injective on all morphism sets; and 

\item all morphisms in $\call$ are monomorphisms and epimorphisms in the 
categorical sense.

\end{enuma}
Conditions for the existence of restrictions and extensions of morphisms 
are as follows:
\begin{enuma} \setcounter{enumi}{2}

\item For every morphism $\psi\in\Mor_\call(P,Q)$, and every 
$P_0,Q_0\in\Ob(\call)$ such that $P_0\le{}P$, $Q_0\le{}Q$, and 
$\pi(\psi)(P_0)\le{}Q_0$, there is a unique morphism 
$\psi|_{P_0,Q_0}\in\Mor_\call(P_0,Q_0)$ (the ``restriction'' of $\psi$) 
such that $\psi\circ\iota_{P_0,P}=\iota_{Q_0,Q}\circ\psi|_{P_0,Q_0}$.  

\item Let $P,Q,\widebar{P},\widebar{Q}\in\Ob(\call)$ and 
$\psi\in\Mor_\call(P,Q)$ be such that $P\nsg\widebar{P}$, 
$Q\le\widebar{Q}$, and for each $g\in\widebar{P}$ there is 
$h\in\widebar{Q}$ such that 
$\iota_{Q,\4Q}\circ\psi\circ\delta_P(g)=\delta_{Q,\4Q}(h)\circ\psi$. 
Then there is a unique morphism 
$\widebar{\psi}\in\Mor_\call(\widebar{P},\widebar{Q})$ such that 
$\widebar{\psi}|_{P,Q}=\psi$. 

\end{enuma}
\end{Prop}

\begin{proof} See points (c), (f), (b), and (e), respectively, in 
\cite[Proposition 4]{O-linkext}.
\end{proof}

We note here the existence and uniqueness of linking systems shown by 
Chermak, Oliver, and Glauberman-Lynd. Two linking systems 
$(\call_1,\delta_1,\pi_1)$ and $(\call_2,\delta_2,\pi_2)$ associated to the 
same fusion system $\calf$ are \emph{isomorphic} 
if there is an isomorphism 
of categories $\rho\:\call_1\xto{~\cong~}\call_2$ such that 
$\rho\circ\delta_1=\delta_2$ and $\pi_2\circ\rho=\pi_1$.

\begin{Thm}[{\cite{Chermak,O-Ch,GLynd}}] \label{t:unique.l.s.}
	Let $\calf$ be a saturated fusion system over a finite $p$-group $S$, and 
	let $\calh$ be a set of subgroups of $S$ such that 
	$\calf^{cr}\subseteq\calh\subseteq\calf^q$, and such that $\calh$ is closed 
	under $\calf$-conjugacy and overgroups. Then up to isomorphism, there is a unique linking 
	system $\call^\calh$ associated to $\calf$ with object set $\calh$. 
\end{Thm}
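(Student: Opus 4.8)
The plan is to reduce the statement, for an arbitrary allowed object set $\calh$, to the case of \emph{centric} linking systems (where $\calh=\calf^c$), and then to appeal to the existence and uniqueness theorem for centric linking systems due to Chermak \cite{Chermak}, in the form given by Oliver \cite{O-Ch} (via obstruction theory, removing Chermak's use of the classification of finite simple groups) and corrected by Glauberman and Lynd \cite{GLynd}.

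First I would eliminate the dependence on $\calh$. Every object set allowed in the statement satisfies $\calf^{cr}\subseteq\calh\subseteq\calf^q$, and both $\calf^c$ and $\calf^q$ are themselves allowed object sets with $\calf^c\subseteq\calf^q$. Restricting a linking system to the full subcategory with a smaller allowed object set always yields a linking system: the axioms of Definition \ref{d:Linking} are directly inherited, using that the smaller set contains $\calf^{cr}$ and is closed under $\calf$-conjugacy (so that each of its objects is $\call$-isomorphic to a fully centralized one among them) and under overgroups. Conversely, the object set of a linking system can be enlarged, within the allowed range up to $\calf^q$, in an essentially unique way; this is the standard change-of-objects machinery for linking systems (see \cite[Part~III]{AKO} and \cite{O-linkext}). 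Passing through the quasicentric linking system with object set $\calf^q$, one thus obtains natural bijections between the sets of isomorphism classes of linking systems associated to $\calf$ with object sets $\calh$, $\calf^q$, and $\calf^c$. Hence it suffices to prove the theorem for $\calh=\calf^c$.

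For centric linking systems one invokes the obstruction theory of Broto--Levi--Oliver \cite{BLO2}: with $\calz_\calf$ the functor $P\mapsto Z(P)$ on the centric orbit category $\calo(\calf^c)$ of $\calf$, the obstruction to the existence of a centric linking system associated to $\calf$ lies in $\varprojlim^3_{\calo(\calf^c)}(\calz_\calf)$, and, when such a system exists, $\varprojlim^2_{\calo(\calf^c)}(\calz_\calf)$ acts freely and transitively on the set of isomorphism classes of centric linking systems associated to $\calf$. So the theorem (for $\calh=\calf^c$) is equivalent to the vanishing
\[ \varprojlim\nolimits^3_{\calo(\calf^c)}(\calz_\calf)=\varprojlim\nolimits^2_{\calo(\calf^c)}(\calz_\calf)=0, \]
and this vanishing is exactly what is established by Chermak \cite{Chermak}, Oliver \cite{O-Ch}, and Glauberman--Lynd \cite{GLynd}. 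The main obstacle is precisely this vanishing: it is a deep result resting on intricate $p$-local finite group theory (Thompson-factorization and $ZJ$-type arguments, plus the Glauberman--Lynd correction), which I would quote rather than reprove. Everything else — the change-of-objects reduction — is a routine verification of the linking-system axioms together with the (standard) uniqueness of the quasicentric extension.
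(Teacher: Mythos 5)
Your proposal is correct and takes essentially the same approach as the paper: invoke the existence/uniqueness result of Chermak--Oliver--Glauberman--Lynd (reformulated as vanishing of $\varprojlim^2$ and $\varprojlim^3$ of the center functor over $\calo(\calf^c)$) for the centric case, then use the change-of-objects machinery (AKO Part~III, specifically Proposition III.4.8) to move between allowed object sets. The paper organizes the reduction slightly differently — it applies the vanishing theorem directly to the orbit categories with object set any $\calh$ between $\calf^{cr}$ and $\calf^c$, and then uses AKO III.4.8 only to enlarge from $\calh\cap\calf^c$ to $\calh$ — whereas you route everything through $\calf^q$; both routes rest on the same citations and either is fine.
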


\begin{proof}  The existence and uniqueness of a centric linking system 
associated to $\calf$ was shown by Chermak. See \cite[Main 
theorem]{Chermak} and \cite[Theorem A]{O-Ch} for two versions of his 
original proof, and \cite[Theorem 1.2]{GLynd} for the changes to the proof 
in \cite{O-Ch} needed to make it independent of the classification of 
finite simple groups.

More generally, if $\calf^{cr} \subseteq \calh\subseteq\calf^c$, the 
uniqueness of an $\calh$-linking system follows by the same obstruction 
theory (shown to vanish in \cite[Theorem 3.4]{O-Ch} and \cite[Theorem 
1.1]{GLynd}) as that used in the centric case (by the same argument as in 
the proof of \cite[Proposition 3.1]{BLO2}). For arbitrary 
$\calh\subseteq\calf^q$ containing $\calf^{cr}$, the existence and 
uniqueness now follows from \cite[Proposition III.4.8]{AKO}, applied with 
$\calh\supseteq\calh\cap\calf^c$ in the role of $\5\calh\supseteq\calh$. 
\end{proof}

\smallskip

\subsection{Normal fusion and linking subsystems}

Let $\calf$ be a fusion system over a finite $p$-group $S$. A \emph{fusion 
subsystem} of $\calf$ is a subcategory $\cale\subseteq\calf$ which is 
itself a fusion system over a subgroup $T\le S$ (in particular, 
$\Ob(\cale)$ is the set of subgroups of $T$). We write $\cale\le\calf$ when 
$\cale$ is a fusion subsystem, and also sometimes say that $\cale\le\calf$ 
is a pair of fusion systems over $T\le S$.

\begin{Defi} \label{d:isofus}
Let $\calf$ be a fusion system over a finite $p$-group $S$. 
\begin{enuma}
\item Let $R$ be another finite $p$-group and let $\alpha\colon S 
\too R$ be an isomorphism. We denote by $\9\alpha\calf$ the fusion 
system over $R$ with morphism sets
	$$ \Hom_{\9\alpha\calf}(P,Q) = \alpha 
	\circ\Hom_\calf(\alpha^{-1}(P),\alpha^{-1}(Q))\circ \alpha^{-1}$$
for each pair of subgroups $P, Q\leq R$. 

\item Let $\cale$ be another fusion system over a finite $p$-group $T$. We 
say that $\cale$ and $\calf$ are isomorphic fusion systems if there is an 
isomorphism $\alpha\colon S\too T$ such that $\cale =\9\alpha\calf$. 

\end{enuma}	        
\end{Defi}	

A more general concept of morphism between fusion systems is 
given in \cite[Definition II.2.2]{AKO}. 

Consider now the following definition from \cite[Definition I.6.1]{AKO}.

\begin{Defi} \label{d:Fnormal}
Fix a saturated fusion system $\calf$ over a finite $p$-group $S$.
\begin{enuma}

\item A fusion subsystem $\cale\leq\calf$ over $T\nsg S$ is \emph{weakly 
normal} if $\cale$ is saturated, $T$ is strongly closed in $\calf$, and the 
following conditions hold: \smallskip
\begin{itemize}

\item \emph{(invariance condition)} $\9\alpha\cale=\cale$ for each 
$\alpha\in\Aut_{\calf}(T)$, and

\item \emph{(Frattini condition)} for each $P\leq T$ and each $\varphi \in 
\Hom_{\calf}(P,T)$, there are $\alpha\in\Aut_{\calf}(T)$ and 
$\varphi_0\in\Hom_{\cale}(P,T)$ such that $\varphi=\alpha\circ\varphi_0$.

\end{itemize}

\item A fusion subsystem $\cale\leq\calf$ over $T\nsg S$ is \emph{normal 
($\cale\nsg\calf$)} if $\cale$ is weakly normal in $\calf$ and 
\smallskip
\begin{itemize}

\item \emph{(Extension condition)} each $\alpha\in\Aut_{\cale}(T)$ extends 
to $\4{\alpha}\in\Aut_{\calf}(TC_S(T))$ such that $[\4{\alpha},C_S(T)]\leq 
Z(T)$.

\end{itemize}

\item A saturated fusion system $\calf$ over a finite $p$-group $S$ is 
\emph{simple} if it contains no proper nontrivial normal fusion subsystem.

\end{enuma}
\end{Defi}

It will be convenient to say that ``$\cale\nsg\calf$ is a normal pair 
of fusion systems over $T\nsg S$'' to mean that $\calf$ is a fusion system 
over $S$ and $\cale\nsg\calf$ is a normal subsystem over $T$.

Note that if $\calf$ is a saturated fusion system over a finite 
$p$-subgroup $S$, and $P\leq S$, then  $P \trianglelefteq \calf$ if and 
only if $\calf_P(P) \trianglelefteq \calf$ \cite[(7.9)]{A-gfit}. 

Note also that what are called ``normal fusion subsystems'' 
in \cite[Definition 1.18]{AOV1} are what we are calling 
``weakly normal'' subsystems here.

When $(\call,\delta,\pi)$ is a linking system associated to the fusion 
system $\calf$ over $S$, and $\calf_0\le\calf$ is a fusion subsystem over 
$S_0\le S$, then a \emph{linking subsystem} associated to 
$\calf_0$ is a linking system $(\call_0,\delta_0,\pi_0)$ associated to 
$\calf_0$, where $\call_0$ is a subcategory of $\call$ and 
	\[ \calt_{\Ob(\call_0)}(S_0) \Right4{\delta_0} \call_0 
	\Right4{\pi_0} \calf_0 \]
are the restrictions of $\delta$ and $\pi$. In this situation, we write 
$\call_0\le\call$, and sometimes say that $\call_0\le\call$ is a pair of 
linking systems. Note in particular the special case where $S_0=S$ and 
$\calf_0=\calf$ but $\Ob(\call_0)\subseteq\Ob(\call)$: a pair of 
linking systems with possibly different object sets associated to 
the same fusion system.

\begin{Defi} \label{d:Lnormal}
Fix a pair of saturated fusion systems $\cale\leq\calf$ over finite 
$p$-groups $T\nsg S$ such that $\cale\nsg\calf$, and let $\calm\leq\call$ 
be a pair of associated linking systems. Then, \emph{$\calm$ is normal in 
$\call$ ($\calm\nsg\call$)} if:
\begin{enuma}
\item $\Ob(\call)=\{P\leq S \mid P \cap T \in \Ob(\calm)\}$, and
\item for all $\gamma\in\Aut_{\call}(T)$ and $\psi\in\Mor(\calm)$, 
$\gamma\psi\gamma^{-1}\in\Mor(\calm)$.
\end{enuma}
If $\calm\nsg\call$, then we define 
$\call/\calm=\Aut_\call(T)/\Aut_\calm(T)$.
\end{Defi}

Notice that not every normal pair of fusion systems 
has an associated normal pair of linking systems.

Definition \ref{d:Lnormal} differs from Definition 1.27 in \cite{AOV1} 
in that there is no ``Frattini condition'' in the definition we give here. 
We have omitted it since it follows from the Frattini condition for normal 
fusion subsystems, as shown in the next lemma.

\begin{Lem} \label{l:Lnormal}
If $\calm\nsg\call$ is a normal pair of linking systems associated to 
fusion systems $\cale\nsg\calf$ over finite $p$-groups $T\nsg 
S$, then for all $P,Q \in \Ob(\calm)$ and all $\psi \in \Mor_{\call}(P,Q)$, 
there are morphisms $\gamma \in \Aut_{\call}(T)$ and 
$\psi_0\in\Mor_{\calm}(\gamma(P),Q)$ such that 
$\psi=\psi_0\circ\gamma|_{P,\gamma(P)}$.
\end{Lem}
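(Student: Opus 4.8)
The plan is to deduce the linking‑system Frattini statement from the Frattini condition for the normal fusion subsystem $\cale\nsg\calf$, using the fact (Proposition~\ref{L-prop}(a)) that $\pi$ is essentially a quotient map by a free action of $C_S(P)$, and the extension properties of morphisms in $\call$. First I would fix $P,Q\in\Ob(\calm)$ and $\psi\in\Mor_\call(P,Q)$, and set $\varphi=\pi(\psi)\in\homf(P,Q)$. Since $T$ is strongly closed in $\calf$ and $P\subseteq T$ would not in general hold, I must be careful: $P$ need not lie in $T$. The right move is to pass to $P\cap T$ and $Q\cap T$, which lie in $\Ob(\calm)$ by Definition~\ref{d:Lnormal}(a), and note that $\varphi(P\cap T)\le Q\cap T$ because $T$ is strongly closed. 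Apply the Frattini condition for $\cale\nsg\calf$ to $\varphi|_{P\cap T}\in\homf(P\cap T,T)$: there are $\alpha\in\autf(T)$ and $\varphi_0\in\Hom_\cale(P\cap T,T)$ with $\varphi|_{P\cap T}=\alpha\circ\varphi_0$.

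Next I would lift $\alpha$ to $\call$. By (A1)–(A2) and the fact that $T\in\Ob(\call)$ (it contains, hence is, in $\Ob(\calm)\subseteq\Ob(\call)$), choose $\gamma\in\Aut_\call(T)$ with $\pi(\gamma)=\alpha$; such a $\gamma$ exists because $\pi_{T,T}$ is surjective (it is the orbit map of the $C_S(T)$‑action, and $\autf(T)=\pi(\Aut_\call(T))$). Then consider $\psi':=\gamma^{-1}\circ\psi|_{P\cap T,\,\alpha(P\cap T)}^{-1}$—more precisely, restrict $\psi$ to a morphism $P\cap T\to Q\cap T$ using Proposition~\ref{L-prop}(c), precompose/postcompose with the restriction of $\gamma$, and check that the resulting morphism $\psi_0:\gamma(P)\to Q$ (after first extending $\gamma$ appropriately to a morphism defined on $P$, or restricting it to $P\cap T$) projects under $\pi$ into $\cale$, i.e.\ $\pi(\psi_0)\in\Hom_\cale(\gamma(P),Q)$. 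Once $\pi(\psi_0)$ is a morphism of $\cale$, the characterization of $\Mor(\calm)$ via $\pi$ together with Definition~\ref{d:Lnormal}(b)—which guarantees $\Mor(\calm)$ is a union of $\pi$‑fibers stable under $\Aut_\call(T)$‑conjugation—forces $\psi_0\in\Mor(\calm)$, because two morphisms of $\call$ with the same image under $\pi$ differ by the free $C_S(-)$‑action, and $C_S(P\cap T)\le\Aut_\calm$‑type elements (from $\delta$ of $T$‑elements) already lie in $\calm$. Finally, $\psi=\psi_0\circ\gamma|_{P,\gamma(P)}$ holds in $\call$ by the uniqueness clause of Proposition~\ref{L-prop}(c),(d): both sides restrict to the same morphism on $P\cap T$ and have the same image under $\pi$, and morphisms in $\call$ are determined by their restriction to any object containing a given one together with their $\pi$‑image.

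The main obstacle, I expect, is the bookkeeping around subgroups $P$ that do \emph{not} lie in $T$: the Frattini condition for $\cale\nsg\calf$ only controls morphisms between subgroups of $T$, so I need the extension machinery (Proposition~\ref{L-prop}(d), and Definition~\ref{d:subgroups}(\ref{d:subgroups:g}) applied to $T\nsg\calf$) to promote a factorization defined on $P\cap T$ to one defined on all of $P$, while keeping track that the ``$\cale$‑part'' $\psi_0$ stays inside $\calm$. Concretely, one uses that $T\nsg\calf$ so $\varphi\in\homf(P,Q)$ extends data on $P\cap T$ compatibly, and that $\gamma\in\Aut_\call(T)$ can be chosen to make $\gamma^{-1}\psi$ restrict into $\calm$ on $P\cap T$; then Definition~\ref{d:Lnormal}(a) (the object set of $\call$ is exactly $\{P\le S\mid P\cap T\in\Ob(\calm)\}$) is precisely what lets the restriction-to-$P\cap T$ argument detect membership in $\calm$. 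Verifying that $\pi(\psi_0)$ genuinely lies in $\cale$ (not merely in $\calf$) is where the Frattini condition for $\cale$ is used in an essential way, and matching this with the $\pi$‑fiber description of $\Mor(\calm)$ is the one place where some care is genuinely required rather than routine.
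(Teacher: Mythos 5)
You have misread the hypotheses: since $\calm$ is a linking system associated to $\cale$, which is a fusion system over $T$, the object set $\Ob(\calm)$ consists of subgroups of $T$ (Definition~\ref{d:Linking}). So the hypothesis $P,Q\in\Ob(\calm)$ already forces $P,Q\le T$, and the entire machinery you set up to handle ``$P$ need not lie in $T$'' (passing to $P\cap T$, extending from $P\cap T$ to $P$, invoking Definition~\ref{d:Lnormal}(a) to detect membership, etc.) addresses a non-issue and obscures rather than advances the argument. This is not a mere inefficiency --- it means your actual proof of the lemma is essentially not written down; what you describe is a plan for a more complicated statement.

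The more serious gap is that you invoke the $\pi$-fiber description --- ``two morphisms of $\call$ with the same image under $\pi$ differ by the free $C_S(-)$-action'' --- without noting that axiom (A2) asserts this \emph{only when the domain $P$ is fully centralized in $\calf$}. The argument of lifting the Frattini factorization of $\pi(\psi)$ to $\call$ and then correcting by an element of $\delta_P(C_S(P))$ therefore works directly only in the fully centralized case. The paper's proof handles this by a case split: it first proves the statement when $P$ is fully centralized (exactly along the lines you sketch: write $\pi(\psi)=\varphi\circ\alpha|_{P,\alpha(P)}$ via the Frattini condition, lift $\varphi$ and $\alpha$ to $\til\varphi\in\Mor_\calm$ and $\til\alpha\in\Aut_\call(T)$, then use (A2) to find $z\in C_S(P)$ with $\til\varphi\circ\til\alpha|_{P,\alpha(P)}=\psi\circ\delta_P(z)$ and absorb $\delta_S(z)^{-1}$ into $\gamma$), and then reduces the general case to this one by choosing a fully centralized $P^*\in P^\calf$, an isomorphism $\omega\in\Iso_\call(P^*,P)$, applying the first case to $\psi\circ\omega$ and to $\omega$, and recombining --- this last step also uses Definition~\ref{d:Lnormal}(b) to see that $\gamma\psi_2\gamma^{-1}\in\Mor(\calm)$. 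Your proposal contains neither this reduction nor any substitute for it, so the central step is missing.

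A minor further point: the Frattini condition as stated gives a factorization $\varphi=\alpha\circ\varphi_0$ with the $\cale$-part first, whereas the lemma wants $\psi=\psi_0\circ\gamma|_{P,\gamma(P)}$ with the $\Aut_\call(T)$-part first. Converting between the two requires the invariance condition in Definition~\ref{d:Fnormal}(a) (conjugating the $\cale$-morphism by $\alpha$), which the paper does implicitly by writing the Frattini output in the rearranged form $\pi(\psi)=\varphi\circ\alpha|_{P,\alpha(P)}$; your proposal applies the Frattini condition in the stated order and then proposes composing with inverses in a way that is not well-formed (``$\gamma^{-1}\circ\psi|_{P\cap T,\alpha(P\cap T)}^{-1}$''). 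This would need to be cleaned up before the argument could close.
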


\begin{proof} Let $\psi\in \Mor_\call(P,Q)$ be as above, and assume first 
that $P$ is fully centralized in $\calf$. Then 
$\pi(\psi)\in\Hom_\calf(P,Q)$, and by the Frattini condition on 
$\cale\nsg\calf$, there are $\alpha\in \Aut_\calf(T)$ and 
$\varphi\in\Hom_\cale(\alpha(P), Q)$ such that $\pi(\psi)= \varphi\circ 
\alpha|_{P,\alpha(P)}$. Choose $\til\varphi\in 
\Mor_\calm(\alpha(P),Q)$ and $\til\alpha\in \Aut_\call(T)$ such that 
$\pi_\calm(\til\varphi)=\varphi$ and $\pi_\call(\til\alpha)=\alpha$. By 
axiom (A2) for the linking system $\call$, there is $z\in C_S(P)$ such that 
$\til\varphi \circ \til\alpha|_{P,\alpha(P)} = \psi \circ 
\delta_P(z)|_{P,P}$. (Here, we take restrictions of morphisms in 
$\calm$ in the sense of Proposition \ref{L-prop}(c).) Set $\gamma = 
\til\alpha\circ \delta_S(z)^{-1}\in \Aut_\call(T)$, and then $\psi = 
\til\varphi\circ \gamma|_{P,\gamma(P)}$.

If $P$ is not fully centralized in $\calf$, then choose $P^*\in P^\calf$ 
that is fully centralized, and fix $\omega\in\Iso_\call(P^*,P)$. Then 
$P^*\le T$ since $T$ is 
strongly closed in $\calf$, and so $P^*\in \Ob(\calm)$. We just showed that 
there are morphisms $\gamma_1,\gamma_2\in\Aut_\call(T)$, 
$\psi_1\in\Mor_\calm(\gamma_1(P^*),Q)$, and 
$\psi_2\in\Mor_\calm(\gamma_2(P^*),P)$ such that 
	\[ \psi\circ\omega = \psi_1\circ\gamma_1|_{P^*,\gamma_1(P^*)} 
	\qquad\textup{and}\qquad \omega = 
	\psi_2\circ\gamma_2|_{P^*,\gamma_2(P^*)} . \]
Also, $\psi_2$ is an isomorphism since $\omega$ is an isomorphism. 
Set $\gamma=\gamma_1\gamma_2^{-1}\in\Aut_\call(T)$. Then 
	\[ \psi = (\psi\circ\omega)\circ\omega^{-1} = 
	\psi_1\circ\gamma|_{\gamma_2(P^*),\gamma_1(P^*)}\circ\psi_2^{-1} = 
	\psi_1\circ(\gamma\psi_2\gamma^{-1})^{-1}
	\circ\gamma|_{P,\gamma(P)} \]
where $\gamma\psi_2\gamma^{-1}\in\Iso_\calm(\gamma_1(P^*),\gamma(P))$ 
by Definition \ref{d:Lnormal}(b).
\end{proof}

\smallskip

\subsection{Fusion subsystems of \texorpdfstring{$p$-}{p-}power index and 
index prime to \texorpdfstring{$p$}{p}}

We recall some more definitions. 

\begin{Defi} \label{d:reduced}
Let $\calf$ be a saturated fusion system over a finite $p$-group $S$.
\begin{enuma} 

\item Set $\foc(\calf)=\{g^{-1}h \,|\, g,h\in S,~ h\in g^\calf \} = 
\bigl\{g^{-1}\alpha(g) \,\big|\, g\in P\le S,~ \alpha\in\autf(P) \bigr\}$ 
(the \emph{focal subgroup} of $\calf$).

\item Set $\hyp(\calf)=\bigl\{g^{-1}\alpha(g) \,\big|\, g\in P\le S,~ 
\alpha\in O^p(\autf(P)) \bigr\}$ (the \emph{hyperfocal subgroup} of 
$\calf$).

\item A saturated fusion subsystem $\cale\le\calf$ over $T\le 
S$ has \emph{$p$-power index} if $T\ge\hyp(\calf)$, and 
$\Aut_\cale(P)\ge O^p(\autf(P))$ for all $P\le T$. The smallest normal 
subsystem of $p$-power index is denoted $O^p(\calf)$.

\item A saturated fusion subsystem $\cale\le\calf$ over $T\le 
S$ has \emph{index prime to $p$} if $T=S$ and $\Aut_\cale(P)\ge 
O^{p'}(\autf(P))$ for all $P\le T$. The smallest normal subsystem of 
index prime to $p$ is denoted $O^{p'}(\calf)$.

\end{enuma}
\end{Defi}

For the existence of the minimal subsystems $O^p(\calf)$ and 
$O^{p'}(\calf)$, see, e.g., Theorems I.7.4 and I.7.7 in \cite{AKO}.

\begin{Lem} \label{l:foc(F)}
\begin{enuma} 

\item If $G$ is a finite group with $S\in\sylp{G}$, then 
$\foc(\calf_S(G))=S\cap[G,G]$ and $\hyp(\calf_S(G))=S\cap O^p(G)$. 

\item If $\calf$ is a saturated fusion system over a finite $p$-group $S$, 
then $O^p(\calf)$ and $O^{p'}(\calf)$ are fusion subsystems over 
$\hyp(\calf)$ and $S$, respectively, and are both normal in $\calf$. Also, 
	\[ O^p(\calf)=\calf ~\iff~ \hyp(\calf)=S ~\iff~ \foc(\calf)=S. \]

\end{enuma}
\end{Lem}

\begin{proof} The first statement in (a) is the focal subgroup theorem (see 
\cite[Theorem 7.3.4]{Gorenstein}), and the second is Puig's 
hyperfocal subgroup theorem \cite[\S1.1]{Puig-hfoc}. 

Point (b) is due to Puig, and is also shown in Theorems I.7.4 and I.7.7 and 
Corollary I.7.5 in \cite{AKO}. 
\end{proof}

\begin{Lem} \label{l:C(Q)<|N(Q)}
Let $\calf$ be a saturated fusion system over a finite $p$-group $S$, fix 
$Q\le S$, and let $K\le\Aut(Q)$ be a subgroup of $p$-power order. Assume 
that $Q$ is fully centralized in $\calf$ and that 
$\Aut_S^K(Q)\in\sylp{\autf^K(Q)}$. Then $N_\calf^K(Q)$ is saturated, 
and $C_\calf(Q)$ is normal of $p$-power index in $N_\calf^K(Q)$. 
\end{Lem}

\begin{proof} By Proposition I.5.2 and Theorem I.5.5 in \cite{AKO}, the 
fusion systems $C_\calf(Q)$ and $N_\calf^K(Q)$ are saturated. So it 
suffices to prove the lemma when $\calf=N_\calf^K(Q)$; i.e., 
when $Q\nsg\calf$ and $K\ge\autf(Q)$. Then $C_\calf(Q)\nsg\calf$ by 
\cite[Proposition 8.8]{Craven}.

We claim that
	\begin{multline} 
	P\le S,~ \textup{$\alpha\in\autf(P)$ of order prime to $p$} 
	~\implies~ \\
	\textup{$\alpha$ extends to $\4\alpha\in\autf(PQ)$ with 
	$\alpha|_Q=\Id_Q$} \label{e:xyz} 
	\end{multline}
Since $Q\nsg\calf$, $\alpha$ extends to $\4\alpha\in\autf(PQ)$, 
and we can arrange that $\4\alpha$ also has order prime 
to $p$. But $\4\alpha|_Q\in K$ by assumption, hence has $p$-power order, 
and so $\4\alpha|_Q=\Id_Q$. 

We next check that $C_S(Q)\ge\hyp(\calf)$. Fix $P\le S$, and 
$\alpha\in\autf(P)$ of order prime to $p$. By \eqref{e:xyz}, $\alpha$ 
extends to $\4\alpha\in\autf(PQ)$ such that $\4\alpha|_Q=\Id_Q$. But 
then $[[\alpha,P],Q]\le[[\4\alpha,PQ],Q]=1$ by the 3-subgroup lemma (see 
\cite[Theorem 2.2.3]{Gorenstein} or \cite[8.7]{A-FGT}) and 
since $[PQ,Q]\le Q$ and $[\4\alpha,Q]=1$. So $[\alpha,P]\le C_S(Q)$. Since 
$\hyp(\calf)$ is generated by such subgroups $[\alpha,P]$, this proves that 
$\hyp(\calf)\le C_S(Q)$. 

It remains to show, for all $P\le C_S(Q)$, that $\Aut_{C_\calf(Q)}(P)$ 
contains $O^p(\autf(P))$. But this follows directly from \eqref{e:xyz}, 
which says that each $\alpha\in\autf(P)$ of order prime to 
$p$ lies in $\Aut_{C_\calf(Q)}(P)$. Thus $C_\calf(Q)$ has $p$-power index 
in $\calf$.
\end{proof}

\smallskip

\subsection{Quotient fusion systems}
We begin with the basic definition and properties.

\begin{Defi} \label{d:F/Q}
Let $\calf$ be a fusion system over a finite $p$-group $S$, and assume 
$Q\nsg S$ is strongly closed in $\calf$. Then $\calf/Q$ is defined to be 
the fusion system over $S/Q$ where
	\[ \Hom_{\calf/Q}(P/Q,R/Q) = \bigl\{\varphi/Q\,\big|\, 
	\varphi\in\homf(P,R) \bigr\} \]
for $P,R\le S$ containing $Q$. Here, $\varphi/Q\in\Hom(P/Q,R/Q)$ sends $gQ$ 
to $\varphi(g)Q$. 
\end{Defi}

Note that by definition, $\calf/Q=N_\calf(Q)/Q$. If $\calf=\calf_S(G)$ for 
some finite group $G$ with $S\in\sylp{G}$, and $H\nsg G$ is such that 
$Q=H\cap S$, then $\calf/Q\cong\calf_{SH/H}(G/H)$ (see \cite[Theorem 
5.20]{Craven}). 

\begin{Lem} \label{l:F/Q}
Let $\calf$ be a saturated fusion system over a finite $p$-group $S$, and 
assume $Q\nsg S$ is strongly closed in $\calf$. Then $\calf/Q$ is 
saturated. If $\cale\nsg\calf$ is a normal fusion subsystem over $T\nsg S$ 
such that $T\ge Q$, then $\cale/Q\nsg\calf/Q$. 
\end{Lem}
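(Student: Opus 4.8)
The plan is to handle the two assertions separately: the first is routine bookkeeping, and the content lies in the second, where the only delicate point will be the extension condition of Definition \ref{d:Fnormal}(b).

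\emph{First assertion.} Since $Q$ is strongly closed, $Q^\calf=\{Q\}$, so $Q$ is fully normalized in $\calf$ and $N_\calf(Q)$ is saturated; moreover, as noted after Definition \ref{d:F/Q}, $\calf/Q=N_\calf(Q)/Q$, and $Q$ is normal in $N_\calf(Q)$. Thus the claim reduces to the standard fact that the quotient of a saturated fusion system by a normal $p$-subgroup is saturated. (Alternatively one verifies the two conditions of Definition \ref{d:sfs}(d) directly for $\calf/Q$, using $N_{S/Q}(P/Q)=N_S(P)/Q$ and $(P/Q)^{\calf/Q}=\{R/Q\mid R\in P^\calf\}$ — which show that $P/Q$ is fully normalized in $\calf/Q$ exactly when $P$ is fully normalized in $\calf$ — together with the natural surjection $\autf(P)\twoheadrightarrow\Aut_{\calf/Q}(P/Q)$ carrying $\Aut_S(P)$ onto $\Aut_{S/Q}(P/Q)$, so that full automization and receptivity descend.)

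\emph{Second assertion, easy part.} First note that $Q\le T\nsg S$ and that $Q$ is strongly closed in $\cale$ as well (because $x^\cale\subseteq x^\calf\subseteq Q$ for $x\in Q$), so $\cale/Q$ is a saturated fusion system over $T/Q$ by the first part. It remains to check the clauses of Definition \ref{d:Fnormal}(b) for $\cale/Q\le\calf/Q$. That $T/Q\nsg S/Q$ and that $T/Q$ is strongly closed in $\calf/Q$ are immediate from $T\nsg S$ and the strong closure of $T$ in $\calf$. The invariance and Frattini conditions are obtained by lifting to $\calf$ and pushing back down: each $\alpha\in\Aut_{\calf/Q}(T/Q)$ equals $\bar\alpha/Q$ for some $\bar\alpha\in\autf(T)$, and each $\varphi\in\Hom_{\calf/Q}(P/Q,T/Q)$ equals $\bar\varphi/Q$ for some $\bar\varphi\in\homf(P,T)$; applying the corresponding clause for $\cale\nsg\calf$ to $\bar\alpha$, resp.\ $\bar\varphi$, and taking quotients — using the naturality $\9{(\psi/Q)}(\cale/Q)=(\9{\psi}\cale)/Q$ of the quotient construction, together with $\bar\varphi_0(Q)=Q$ for $\bar\varphi_0\in\Hom_\cale(P,T)$ — yields the corresponding clause for $\cale/Q\le\calf/Q$.

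\emph{The extension condition — the main obstacle.} Given $\alpha\in\Aut_{\cale/Q}(T/Q)$, lift it to $\beta\in\Aut_\cale(T)$ and apply the extension condition for $\cale\nsg\calf$ to produce $\widehat\beta\in\autf(TC_S(T))$ extending $\beta$ with $[\widehat\beta,C_S(T)]\le Z(T)$. The subtlety is that $C_{S/Q}(T/Q)$ is the image in $S/Q$ not of $C_S(T)Q$ but of the possibly larger $p$-subgroup $\widetilde C:=\{s\in S\mid[s,T]\le Q\}$, so that $(T/Q)\,C_{S/Q}(T/Q)=T\widetilde C/Q$ may strictly contain $TC_S(T)/Q$; hence before $\widehat\beta$ can be pushed to the quotient it must first be enlarged to an automorphism of $T\widetilde C$. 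Here $TC_S(T)\nsg T\widetilde C$ (since $\widetilde C$ normalizes both $T$ and $C_S(T)$, both being normal in $S$), and this enlargement is where one genuinely uses the saturation of $\calf$ — after replacing $TC_S(T)$ by a fully centralized $\calf$-conjugate, extend $\widehat\beta$ over $T\widetilde C$ using the extension axiom and conjugate back. One must then check that the resulting automorphism of $T\widetilde C/Q$ still commutes with $C_{S/Q}(T/Q)$ modulo $Z(T/Q)$, which should follow from $[\widehat\beta,C_S(T)]\le Z(T)$ together with the fact (again by strong closure) that $\beta$, and hence $\widehat\beta$ on $T$, preserves $Q$. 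I expect this enlargement step and the verification that the commutator bound descends to the quotient to be the technical heart of the proof.
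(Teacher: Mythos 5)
Your handling of the first assertion, the invariance condition, and the Frattini condition is fine and matches in spirit what the paper does (the paper simply cites references for saturation of $\calf/Q$ and asserts the normality conditions are immediate). The problem is the extension condition, which you yourself single out as ``the technical heart of the proof'' and then do not carry out. You correctly observe that $C_{S/Q}(T/Q)=\widetilde{C}/Q$ where $\widetilde{C}=\{s\in S\mid[s,T]\le Q\}$ can be strictly larger than $C_S(T)Q$, so the extension $\widehat\beta\in\autf(TC_S(T))$ supplied by the extension condition for $\cale\nsg\calf$ must be enlarged to an automorphism of $T\widetilde{C}$ before one can pass to $\calf/Q$. But you do not actually produce this enlargement: to invoke the extension axiom you would need to show $T\widetilde{C}\le N_{\widehat\beta}$, i.e.\ that $\widehat\beta\,c_g\,\widehat\beta^{-1}\in\Aut_S(TC_S(T))$ for every $g\in\widetilde{C}$, and you neither prove this nor indicate why it should hold. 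Moreover, even granting an extension $\gamma\in\autf(T\widetilde{C})$, the required bound $[\gamma/Q,\widetilde{C}/Q]\le Z(T/Q)$ is a genuine constraint (equivalently, $\gamma(g)g^{-1}\in T$ for all $g\in\widetilde{C}$), and it does not follow formally from $[\widehat\beta,C_S(T)]\le Z(T)$ together with $\gamma(\widetilde{C})=\widetilde{C}$; you only say it ``should follow.'' So the argument as written has a hole exactly at the step you flag as hard.

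Two smaller points. The phrase ``after replacing $TC_S(T)$ by a fully centralized $\calf$-conjugate'' is unnecessary: since $T\nsg S$ and $C_S(T)\nsg S$, the product $TC_S(T)$ is normal in $S$, hence weakly closed and already fully centralized, so receptivity applies directly. And note that the preimage of $Z(T/Q)$ in $T$ is $\widetilde{C}\cap T$, which may strictly contain $Z(T)Q$; this extra slack works in your favor, but it has to be used explicitly rather than waved at. For comparison, the paper gives no details either --- it simply asserts the three normality conditions ``follow immediately'' --- so you have not diverged from the paper's approach so much as stopped short of finishing the one non-routine verification.
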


\begin{proof} For a proof that $\calf/Q$ is saturated, see 
\cite[Proposition 5.11]{Craven} or \cite[Lemma II.5.4]{AKO}. If 
$\cale\nsg\calf$ over $T\ge Q$, then $\cale/Q$ is saturated since 
$\cale$ is, $T/Q$ is strongly closed in $\calf/Q$, and the invariance and 
Frattini conditions for normality of $\cale/Q\le\calf/Q$ follow immediately 
from those for $\cale\nsg\calf$ (see \cite[Lemma 5.59]{Craven} for 
details). 

It remains to prove the extension condition for $\cale/Q\le\calf/Q$. We 
must show, for each $\varphi\in\Aut_{\cale/Q}(T/Q)$, that $\varphi$ extends 
to some $\4\varphi\in\Aut_{\calf/Q}((T/Q)C_{S/Q}(T/Q))$ such 
that $[\4\varphi,C_{S/Q}(T/Q)]\le Z(T/Q)$. This clearly holds when 
$\varphi\in\Inn(T/Q)\in\sylp{\Aut_{\cale/Q}(T/Q)}$, so it will suffice to 
prove it when $\varphi$ has order prime to $p$. Let $U\le S$ be 
such that $Q\le U$ and $C_{S/Q}(T/Q)=U/Q$. 

By \cite[Theorem 5]{A-gfit} or \cite[Theorem 1]{Henke-prod}, there is a 
(unique) saturated fusion subsystem $\cale S\le\calf$ over $S$ such that 
$\cale$ is normal of $p$-power index in $\cale S$. 
Since $\cale S$ is saturated, so is $\cale S/Q$. So by the extension axiom, 
$\varphi$ extends to some $\4\varphi\in\Aut_{\cale S/Q}(TU/Q)$, and upon 
replacing $\4\varphi$ by $\4\varphi{}^k$ for some $k$, we can arrange that 
$\4\varphi$ have order prime to $p$. Then $\4\varphi=\5\varphi/T$ for some 
$\5\varphi\in\Aut_{\cale S}(TU)$, and we can again arrange that $\5\varphi$ 
have order prime to $p$. 

By definition of the hyperfocal subgroup, $[\5\varphi,TU]\le\hyp(\cale 
S)\le T$, where the last inclusion holds since $O^p(\cale S)\le\cale$. Thus 
$[\4\varphi,TU/Q]\le T/Q$, and so 
$[\4\varphi,C_{S/Q}(T/Q)]=[\4\varphi,U/Q]\le(T\cap U)/Q=Z(T/Q)$. (We 
thank the referee for pointing out this short argument.)
\end{proof}

We refer to \cite[\S\,II.5]{AKO} and \cite[\S\,5.2]{Craven} for some 
of the other properties of these quotient systems. 

The next lemma involves normal fusion subsystems of index prime to 
$p$ (see Definition \ref{d:reduced}).

\begin{Lem} \label{l:Op'(F/Z)}
Let $\calf$ be a saturated fusion system over a finite $p$-group $S$, and 
let $Z\le Z(\calf)$ be a central subgroup. Then 
$O^{p'}(\calf/Z)=O^{p'}(\calf)/Z$.
\end{Lem}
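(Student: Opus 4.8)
The plan is to show both inclusions between $O^{p'}(\calf/Z)$ and $O^{p'}(\calf)/Z$, using the characterization of $O^{p'}(-)$ as the intersection (equivalently, the smallest with respect to inclusion) of all normal fusion subsystems of index prime to $p$, together with the correspondence between normal subsystems of $\calf$ containing $Z$ and normal subsystems of $\calf/Z$ provided by Lemma \ref{l:F/Q}. Recall that a normal subsystem $\cale\nsg\calf$ over $T\nsg S$ has \emph{index prime to $p$} when $T=S$ and $\Aut_\cale(P)\ge O^p(\Aut_\calf(P))$ for all $P\le S$ (and this is detected on $\autf(S)$, via a quotient construction $\calf\mapsto\calf/O^{p'}(\calf)$ that is a finite group); in particular such subsystems are all over $S$ itself, so passing to $\calf/Z$ keeps everything over $S/Z$.

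First I would establish the easy inclusion $O^{p'}(\calf)/Z\supseteq O^{p'}(\calf/Z)$ is backwards, so more carefully: I would first show $O^{p'}(\calf)/Z$ is a normal subsystem of $\calf/Z$ of index prime to $p$. Normality is immediate from Lemma \ref{l:F/Q} since $Z\le Z(\calf)\le O^{p'}(\calf)$. For the index-prime-to-$p$ condition, one checks $\Aut_{O^{p'}(\calf)/Z}(P/Z)\ge O^p(\Aut_{\calf/Z}(P/Z))$ for each $P\ge Z$; this follows because taking the quotient by $Z$ on automorphism groups is surjective with kernel a $p$-group (as $Z$ is central, $\Aut_\calf(P)\to\Aut_{\calf/Z}(P/Z)$ has kernel inside the $p$-group $\Hom(P/Z,Z)$... actually the kernel consists of automorphisms acting trivially on $P/Z$ and on $Z$, a $p$-group), so $O^p$ is preserved under the quotient, and $\Aut_{O^{p'}(\calf)}(P)\ge O^p(\autf(P))$ by definition of $O^{p'}(\calf)$. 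Hence $O^{p'}(\calf/Z)\le O^{p'}(\calf)/Z$ by minimality.

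For the reverse inclusion $O^{p'}(\calf)/Z\le O^{p'}(\calf/Z)$, I would take the preimage in $\calf$ of $O^{p'}(\calf/Z)$: by the correspondence (Lemma \ref{l:F/Q} read in reverse, together with the fact that normal subsystems of $\calf/Z$ lift to normal subsystems of $\calf$ containing $Z$ — this lifting is a standard property of quotient fusion systems, e.g.\ from \cite[\S II.5]{AKO}), there is a normal subsystem $\cale\nsg\calf$ over $S$ with $\cale/Z=O^{p'}(\calf/Z)$ and $Z\le\cale$. Then $\cale$ has index prime to $p$ in $\calf$: for $P\le S$, $\Aut_\cale(P)$ surjects onto $\Aut_{\cale/Z}(P/Z)\ge O^p(\Aut_{\calf/Z}(P/Z))$, and since the kernel of $\Aut_\calf(P)\to\Aut_{\calf/Z}(P/Z)$ is a $p$-group, $O^p(\autf(P))$ maps onto $O^p(\Aut_{\calf/Z}(P/Z))$ and lies in any subgroup whose image contains the latter — more precisely $O^p(\autf(P))\cdot(\text{that }p\text{-group kernel})$ surjects onto $O^p(\Aut_{\calf/Z}(P/Z))$, and since $O^p$ of the larger group is generated by its $p'$-elements, a short diagram chase gives $O^p(\autf(P))\le\Aut_\cale(P)$. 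Therefore $\cale\ge O^{p'}(\calf)$ by minimality, and quotienting by $Z$ gives $O^{p'}(\calf/Z)=\cale/Z\ge O^{p'}(\calf)/Z$.

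The main obstacle I expect is the bookkeeping around how $O^p$ of automorphism groups behaves under the central quotient $\Aut_\calf(P)\to\Aut_{\calf/Z}(P/Z)$: one must pin down that its kernel is a $p$-group (it consists of automorphisms inducing the identity on both $Z$ and $P/Z$, hence unipotent), and then argue carefully in both directions that $O^p$ is neither lost nor gained. The cleanest route is probably to avoid working subgroup-by-subgroup and instead use the group-theoretic model: $O^{p'}(\calf)$ is the unique minimal normal subsystem with $\calf/O^{p'}(\calf)$ a $p'$-group, realized via $\autf(S)\to\autf(S)/O^{p'}(\autf(S))\cdots$; and the quotient-by-$Z$ functor induces a compatible map on these finite quotient groups with $p$-group kernel, so it commutes with $O^{p'}(-)$ by the elementary fact that a surjection of finite groups with $p$-group kernel sends $O^{p'}$ onto $O^{p'}$ and pulls it back. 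I would present the argument in this more structural form if the relevant framework (the hyperfocal-subgroup / $\fundfus{\calf}$ description hinted at by the macros in the preamble) is available, falling back to the subgroupwise check otherwise.
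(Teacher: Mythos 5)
Your ``easy'' direction --- showing $O^{p'}(\calf)/Z$ has index prime to $p$ in $\calf/Z$ and concluding $O^{p'}(\calf/Z)\le O^{p'}(\calf)/Z$ --- matches the paper's second half and is sound once a persistent notational slip is repaired: the definition of ``index prime to $p$'' (Definition \ref{d:reduced}(c)) involves $O^{p'}(\autf(P))$, not $O^p(\autf(P))$. You write $O^p$ throughout, and some of your side remarks (``$O^p$ is generated by its $p'$-elements'') depend on which letter you mean, so this is more than a typo to be silently fixed; but the group-theoretic input you actually need is simply that a surjection of finite groups carries $O^{p'}$ onto $O^{p'}$, which holds with no hypothesis on the kernel. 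The emphasis you place on the kernel being a $p$-group is unnecessary for this half.

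The genuine gap is in your ``reverse inclusion'' direction. You take ``the preimage in $\calf$ of $O^{p'}(\calf/Z)$'' and assert that normal subsystems of $\calf/Z$ lift to normal subsystems of $\calf$ containing $Z$, citing \cite[\S\,II.5]{AKO} as a standard source. That section, like Lemma \ref{l:F/Q} here, gives the downward direction (images of normal subsystems under a strongly-closed quotient are normal); it does not give an upward lifting, and there is no obvious reason why the naive preimage should be a \emph{saturated} fusion subsystem at all. Proving something of this kind is precisely the nontrivial content the paper has to supply. The paper instead builds the relevant subsystem $\calf_0$ directly: it composes the $p'$-index ``characteristic map'' $\theta\:\Mor((\calf/Z)^c)\to\Gamma$ from \cite[Theorem I.7.7]{AKO} with the natural map $\Phi$ out of $\Mor(\calf^\calh)$, where $\calh=\{P\in\calf^c\mid P/Z\in(\calf/Z)^c\}$, and then applies \cite[Lemma 1.6]{O-red} to conclude that $\calf_0=\gen{(\theta\Phi)^{-1}(1)}$ is saturated and contains $O^{p'}(\calf)$. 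The hypothesis of that lemma --- which your argument has no counterpart for --- is the condition \eqref{e:Op'}: for every $P\in\calf^c\sminus\calh$, i.e.\ every $\calf$-centric $P$ whose image $P/Z$ fails to be $(\calf/Z)$-centric, some conjugate $P^*$ satisfies $\Out_S(P^*)\cap O_p(\outf(P^*))\ne1$. The paper establishes this by producing an explicit $c_x\in\Aut_S(P)\sminus\Inn(P)$ that is trivial on $Z$ and on $P/Z$, then invoking \cite[Corollary 5.3.3]{Gorenstein}. Your proposed subgroup-by-subgroup check silently ignores the mismatch between $\calf^c$ and $(\calf/Z)^c$, and your ``cleanest route'' via $\autf(S)$ alone does not help either: the group $\Gamma$ controlling index-prime-to-$p$ subsystems can be a proper quotient of $\autf(S)/O^{p'}(\autf(S))\Inn(S)$ because of relations coming from automorphisms of proper subgroups, and tracking how those relations interact with the passage to $\calf/Z$ is exactly the point of the $\calh$-versus-$\calf^c$ comparison you would be discarding.
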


\begin{proof} Set $\calh=\{P\in\calf^c\,|\,P/Z\in(\calf/Z)^c\}$. (Note that 
for $P\in\calf^c$, $P\ge Z(S)\ge Z$.) For each $P\in\calf^c\sminus\calh$ 
that is fully normalized in $\calf$, $P/Z$ is fully normalized in 
$\calf/Z$, and hence $C_{S/Z}(P/Z)\nleq P/Z$. Choose $x\in S\sminus P$ such 
that $xZ\in C_{S/Z}(P/Z)$, and consider the automorphism $c_x\in\Aut_S(P)$. 
Then $c_x\notin\Inn(P)$ since $P\in\calf^c$, and $c_x$ induces the identity on 
$Z$ and on $P/Z$. Since $\{\alpha\in\autf(P)\,|\,[\alpha,P]\le Z\}$ is a 
normal $p$-subgroup of $\autf(P)$ (see \cite[Corollary 5.3.3]{Gorenstein}), 
this proves that $c_x\in O_p(\autf(P))$, and hence that 
	\beqq \textup{for each $P\in\calf^c\sminus\calh$ there is 
	$P^*\in P^\calf$ with $\Out_S(P^*)\cap O_p(\outf(P^*))\ne1$.} 
	\label{e:Op'} \eeqq

By \cite[Theorem I.7.7]{AKO}, there is a finite group $\Gamma$ of order 
prime to $p$, and a map $\theta\:\Mor((\calf/Z)^c)\too\Gamma$ that sends 
composites to products and inclusions to the identity, and is such that 
$\theta(\Aut_{\calf/Z}(S/Z))=\Gamma$ and 
$O^{p'}(\calf/Z)=\gen{\theta^{-1}(1)}$. Let $\calf^\calh\subseteq\calf$ be 
the full subcategory with object set $\calh$, and let $\Phi$ be the natural 
map from $\Mor(\calf^\calh)$ to $\Mor((\calf/Z)^c)$. Set 
$\calf_0=\gen{(\theta\Phi)^{-1}(1)}$: a fusion subsystem of $\calf$ over 
$S$. By \cite[Lemma 1.6]{O-red} and \eqref{e:Op'}, $\calf_0\ge 
O^{p'}(\calf)$ and is saturated. Thus $O^{p'}(\calf)/Z\le\calf_0/Z\le 
O^{p'}(\calf/Z)$.

Conversely, $O^{p'}(\calf)/Z$ has index prime to $p$ 
in $\calf/Z$ since for each $P/Z\le S/Z$, we have 
$\Aut_{O^{p'}(\calf)/Z}(P/Z)\ge O^{p'}(\Aut_{\calf/Z}(P/Z))$. So 
$O^{p'}(\calf)/Z\ge O^{p'}(\calf/Z)$.
\end{proof}

The following construction is needed when we want to look at the image of 
$\cale\nsg\calf$ in $\calf/Q$ but $\cale$ does not contain $Q$.

\begin{Defi} \label{d:ZE/Z}
Let $\cale\le\calf$ be a pair of saturated fusion systems over $T\le 
S$, and let $Z\le Z(\calf)$ be a central subgroup. Define 
$Z\cale\le\calf$ to be the fusion subsystem over $ZT$ where for each 
$P,Q\le ZT$, 
	\[ \Hom_{Z\cale}(P,Q) = \{ \varphi\in\homf(P,Q) \,|\, 
	\varphi|_{P\cap T}\in\Hom_\cale(P\cap T,Q\cap T) \}. \]
\end{Defi}

If $\cale\nsg\calf$, then the above definition is a special case of a 
construction of Aschbacher \cite[Theorem 5]{A-gfit}. But the definition and 
arguments in this very restricted case are much more elementary.

\begin{Lem} \label{l:E/Z<|F/Z}
Let $\cale\le\calf$ be a pair of saturated fusion systems over 
finite $p$-groups $T\le S$. Let $Z\le Z(\calf)$ be a central subgroup. 
Then $Z\cale$ is saturated, and $Z\cale\nsg\calf$ if $\cale\nsg\calf$.
\end{Lem}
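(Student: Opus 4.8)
The plan is to verify saturation of $Z\cale$ directly from Definition \ref{d:sfs}, exploiting the fact that $Z\le Z(\calf)$ is central, so that $Z$ is contained in every subgroup of interest, $Z$ acts trivially under every $\calf$-morphism, and in particular $ZT/Z\cong T$ under a canonical identification. The first step is to set up the dictionary: for $P\le ZT$, write $\4P = P\cap T$, note $P = Z\4P$, and observe that $\Hom_{Z\cale}(P,Q)$ is determined by the restriction to $\4P$; conversely any $\varphi_0\in\Hom_\cale(\4P,\4Q)$ extends (using that $Z$ is central in $\calf$, hence acts trivially) to a unique $\varphi\in\Hom_{Z\cale}(P,Q)$ with $\varphi|_{\4P}=\varphi_0$, so that $\Hom_{Z\cale}(P,Q)$ is in natural bijection with $\Hom_\cale(\4P,\4Q)$. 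Similarly $\Aut_{ZT}^{}(P)$ corresponds to $\Aut_T(\4P)$, $C_{ZT}(P)=Z\cdot C_T(\4P)$, and $N_{ZT}(P)=Z\cdot N_T(\4P)$, with the $Z$ factors contributing a common overall factor $|Z|$ to all the relevant orders. Under this correspondence a subgroup $P\le ZT$ is fully normalized (fully centralized, fully automized, receptive) in $Z\cale$ if and only if $\4P$ is so in $\cale$; the Sylow and extension axioms for $Z\cale$ then follow term by term from those for $\cale$, which is saturated by hypothesis. This proves the first assertion.

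For the second assertion, assume $\cale\nsg\calf$ over $T\nsg S$. First I would check that $ZT\nsg S$ (clear, since $Z\le Z(\calf)\le Z(S)$ and $T\nsg S$) and that $ZT$ is strongly closed in $\calf$: if $x\in ZT$ and $\psi\in\homf(\gen{x},S)$, write $x=zt$ with $z\in Z$, $t\in T$; then $\psi(z)=z$ by centrality and $\psi(t)\in T$ since $T$ is strongly closed, so $\psi(x)\in ZT$. Next, the invariance condition: for $\alpha\in\autf(ZT)$, the restriction $\alpha|_T$ lies in $\autf(T)$ (as $T$ is strongly closed, it is $\alpha$-invariant) so $\9{\alpha|_T}\cale=\cale$ by invariance for $\cale\nsg\calf$; since morphisms of $Z\cale$ are detected on the $T$-part and $\alpha$ fixes $Z$ pointwise, $\9\alpha(Z\cale)=Z\cale$. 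The Frattini condition for $Z\cale\le\calf$ is pushed down to the Frattini condition for $\cale\le\calf$: given $P\le ZT$ and $\varphi\in\homf(P,ZT)$, restrict to $\4P$ to get $\varphi|_{\4P}\in\homf(\4P,T)$, apply the Frattini condition for $\cale\nsg\calf$ to factor $\varphi|_{\4P}=\beta\circ\varphi_0$ with $\beta\in\autf(T)$ and $\varphi_0\in\Hom_\cale(\4P,T)$, extend $\beta$ to $\4\beta\in\autf(ZT)$ (fixing $Z$, which is possible since $Z\le Z(\calf)$, e.g. $\4\beta = \Id_Z\times\beta$ under $ZT\cong Z\times T$ when $Z\cap T=1$, or more carefully using that $Z$ central forces any such $\beta$ to extend) and correspondingly $\varphi_0$ to $\psi_0\in\Hom_{Z\cale}(P,ZT)$; then $\varphi$ and $\4\beta\circ\psi_0$ agree on $\4P$ and both fix the $Z$-part, hence are equal. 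Finally the extension condition for normality: given $\alpha\in\Aut_{Z\cale}(ZT)$, its restriction $\alpha|_T\in\Aut_\cale(T)$ extends by the extension condition for $\cale\nsg\calf$ to $\4\alpha\in\autf(TC_S(T))$ with $[\4\alpha,C_S(T)]\le Z(T)$; I would extend $\4\alpha$ over $ZTC_S(T)=(ZT)C_S(ZT)$ by letting it fix $Z$ (noting $C_S(ZT)=C_S(T)\cap C_S(Z)=C_S(T)$ since $Z$ is central in $S$), obtaining $[\4\alpha, C_S(ZT)]\le Z(T)\le Z(ZT)$, as required.

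The main obstacle — and the point needing the most care — is the extension/uniqueness step used repeatedly: given a morphism on the $T$-part, producing a genuine morphism of $Z\cale$ extending it and checking it is well-defined and unique. The subtlety is that $Z$ and $T$ need not intersect trivially inside $S$, so $ZT$ is not literally $Z\times T$, and one must argue that centrality of $Z$ in $\calf$ forces any $\varphi_0\in\Hom_\cale(P\cap T, Q\cap T)$ to have a \emph{unique} extension to $\Hom_{\calf}(P,Q)$ that is the identity on $Z$ (using the definition of $Z\le Z(\calf)$, Definition \ref{d:subgroups}(h), which says precisely that $\calf$-morphisms extend over $Z$-cosets fixing $Z$ pointwise), and that this extension does not depend on how elements of $P$ are written as products from $Z$ and $P\cap T$. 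Once this bookkeeping lemma is in place, every one of the saturation axioms and normality conditions for $Z\cale$ becomes a direct transcription of the corresponding statement for $\cale$, with the $Z$-factors cancelling uniformly.
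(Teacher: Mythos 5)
Your plan follows the same strategy as the paper's proof: set up a dictionary between $Z\cale$ over $ZT$ and $\cale$ over $T$ via $P\mapsto \4P:=P\cap T$, then transfer the saturation and normality axioms across it. However, the explicit correspondences you assert in the opening paragraph are false for a general $P\le ZT$: one does not have $C_{ZT}(P)=Z\cdot C_T(\4P)$, nor $N_{ZT}(P)=Z\cdot N_T(\4P)$, nor a bijection $\Hom_{Z\cale}(P,Q)\cong\Hom_\cale(\4P,\4Q)$. Concretely, take $T=D_8=\gen{a,b}$ with $|a|=4$, $Z=\gen{z}\cong C_2$, $S=ZT=Z\times T$, $\cale=\calf_T(T)$, $\calf=\calf_S(S)$, and $P=\gen{za}$. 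Then $\4P=\gen{a^2}=Z(T)$, so $C_T(\4P)=T$ and $Z\cdot C_T(\4P)=S$ has order $16$, whereas $C_{ZT}(P)=Z\times\gen{a}$ has order $8$. Likewise $\Aut_{Z\cale}(P)$ has order $2$ (inversion on $P\cong C_4$, which restricts to the identity on $\gen{a^2}$), while $\Aut_\cale(\4P)=1$; so the restriction map $\Aut_{Z\cale}(P)\to\Aut_\cale(\4P)$ is not injective and the groups are not even abstractly isomorphic.

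The ``main obstacle'' you single out at the end --- that $Z\cap T$ may be nontrivial, so that $ZT$ is not literally $Z\times T$ --- is not the actual difficulty. In the example above $Z\cap T=1$ and $ZT$ \emph{is} $Z\times T$, yet the claimed correspondences still fail, because a subgroup $P\le Z\times T$ need not be a product subgroup: it need not satisfy $P=(P\cap Z)(P\cap T)$, nor $P=Z\4P$ unless $Z\le P$. Your correspondences and your extension/uniqueness argument are correct precisely when $Z\le P$ (then $P=Z\4P$, and restriction $\Hom_{Z\cale}(P,Q)\to\Hom_\cale(\4P,\4Q)$ is a bijection by centrality of $Z$), but the saturation axioms quantify over \emph{all} subgroups of $ZT$, including the ``diagonal'' ones. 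Handling those requires an extra reduction --- for instance relating $P$ to $ZP$ and using that $N_\varphi\ge C_{ZT}(Q)\ge Z$ in the receptivity check --- which is missing from your argument. As written, the bookkeeping lemma you invoke does not hold, so the saturation half of the proof has a genuine gap; the same unproved dictionary is then reused in the normality half.
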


\begin{proof} A subgroup $P\le ZT$ is fully normalized or fully centralized 
in $Z\cale$ if and only if $P\cap T$ is fully normalized or fully 
centralized in $\cale$. The saturation axioms for $Z\cale$ follow easily 
from those for $\cale$: note, for example, that 
$\Aut_{Z\cale}(P)\cong\Aut_\cale(P\cap T)$ for $P\le ZT$. So $Z\cale$ is 
saturated. 

If $\cale\nsg\calf$, then the subgroup $ZT$ is strongly closed in $\calf$ 
since for each $x=zt$ (for $z\in Z$ and $t\in T$), each 
$\varphi\in\homf(\gen{x},S)$ extends to $\4\varphi\in\homf(Z\gen{t},S)$, 
and $\varphi(x)=z\4\varphi(t)\in ZT$. 
The extension condition for $Z\cale$ follows directly from that for 
$\cale\nsg\calf$, and the invariance and Frattini conditions for 
$Z\cale$ follow from those conditions applied to $\cale\nsg\calf$ and 
the definition of a central subgroup. 
Thus $Z\cale\nsg\calf$. 
\end{proof}

The following lemma will also be useful. 

\begin{Lem} \label{l:F/Q=E/Q0}
Let $\calf$ be a saturated fusion system over a finite $p$-group $S$, and 
let $\cale\nsg\calf$ be a normal fusion subsystem over $T\nsg S$. Let 
$Q\nsg S$ be a subgroup strongly closed in $\calf$, and assume that 
\begin{enumi} 
\item each morphism in $\calf$ between subgroups of $T$ lies in $\cale$ 
(i.e., $\cale$ is a full subcategory of $\calf$), and 
\item $S=QT$.
\end{enumi}
Then $Q\cap T$ is strongly closed in $\cale$ and $\calf/Q\cong\cale/(Q\cap T)$. 
\end{Lem}

\begin{proof} Since an intersection of strongly closed subgroups is 
strongly closed, $Q\cap T$ is strongly closed in $\calf$ and hence in 
$\cale$.

By (ii), the inclusion of $T$ into $S$ induces an isomorphism 
$\alpha\:T/(Q\cap T)\xto{~\cong~}S/Q$. Then $\9\alpha(\cale/(Q\cap 
T))\le\calf/Q$ as fusion systems over $S/Q$, and we will show that they are 
equal.

Assume $\varphi\in\homf(P,R)$ for some $P,R\le S$ containing $Q$, and set 
$\psi=\varphi|_{P\cap T}$ as a morphism from $P\cap T$ to $R\cap T$. Then 
$\psi\in\Hom_\cale(P\cap T,R\cap T)$ by (i), and $\9\alpha(\psi/(Q\cap 
T))=\varphi/Q$ as homomorphisms from $P/Q$ to $R/Q$. Thus 
$\varphi/Q\in\Mor(\9\alpha(\cale/(Q\cap T)))$. 
Since $\varphi/Q\in\Mor(\calf/Q)$ was arbitrary, this proves that 
$\calf/Q\le\9\alpha(C_\calf(Q)/Z(Q))$.
\end{proof}

\bigskip

\section{Automorphism groups and tameness} 
\label{s:aut}

The main aim of this section is to introduce the concept of tameness for 
fusion systems. This was originally defined in \cite{AOV1} and it is one of 
the main subjects of this article. 

\smallskip

\subsection{Automorphisms of fusion and linking systems} 
Before defining tameness, we must define automorphism and outer automorphism 
groups of fusion and linking systems.

\begin{Defi}\label{d:autF} 
Fix a saturated fusion system $\calf$ over a finite $p$-group $S$. Then 
	\begin{enuma}
	\item $\Aut(\calf) = \{ \alpha \in \Aut(S) \mid 
	\9\alpha\calf = \calf \}$: the group of automorphisms of $S$ that 
	send $\calf$ to itself; 
	\item $\Out(\calf)=\Aut(\calf)/\Aut_{\calf}(S)$ is the group of outer automorphisms 
		of $\calf$; and 
	\item for each $\alpha\in\Aut(\calf)$, we let 
	$\8\alpha\:\calf\too\calf$ denote the functor that sends an object 
	$P$ to $\alpha(P)$ and a morphism $\varphi\in\Hom_{\calf}(P,Q)$ to 
	$\alpha \varphi \alpha^{-1}\in \Hom_{\calf}(\alpha(P),\alpha(Q))$.
	\end{enuma}
\end{Defi}

Now that we have defined automorphisms, we can define characteristic subsystems:

\begin{Defi} \label{d:characteristic}
Fix a saturated fusion system $\calf$ over a finite $p$-group $S$. A fusion 
subsystem $\cale\leq\calf$ over $T\nsg S$ is \emph{characteristic} if 
$\cale$ is normal in $\calf$ and $\8\alpha(\cale)=\cale$ for all 
$\alpha\in\Aut(\calf)$. Likewise, a subgroup $P$ of $S$ is characteristic 
in $\calf$ if $P\nsg\calf$ and $\alpha(P)=P$ for all 
$\alpha\in\Aut(\calf)$; equivalently, if $\calf_P(P)$ is a characteristic 
subsystem of $\calf$.
\end{Defi}

For example, when $\calf$ is a saturated fusion system over a finite 
$p$-group $S$, then the subsystems  $O^p(\calf)$ and $O^{p'}(\calf)$ 
(see Definition \ref{d:reduced}(b,c)) and the subgroups $O_p(\calf)$ 
and $Z(\calf)$ are all characteristic in  $\calf$.

\begin{Lem} \label{l:E<|F}
Let $\cale\nsg\calf$ be a normal pair of fusion systems over finite 
$p$-groups $T\nsg S$. Then
\begin{enuma} 

\item if $\cald\nsg\cale$ is characteristic in $\cale$, then 
$\cald\nsg\calf$; and

\item $O_p(\cale) \leq O_p(\calf)$. 
\end{enuma}
\end{Lem}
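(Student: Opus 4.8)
The plan is to verify directly the defining conditions for $\cald\nsg\calf$ --- strong closure of the underlying group, the invariance and Frattini conditions, and the extension condition --- by combining, for each, the corresponding condition for $\cale\nsg\calf$ with the one for $\cald\nsg\cale$ and the hypothesis that $\cald$ is characteristic in $\cale$; part (b) will then follow easily. Write $\cald$ as a fusion system over $U\nsg T$. First I would show that $U$ is strongly closed in $\calf$. For any $\alpha\in\autf(T)$, the invariance condition for $\cale\nsg\calf$ gives $\9\alpha\cale=\cale$, so $\alpha\in\Aut(\cale)$, whence $\9\alpha\cald=\cald$ since $\cald$ is characteristic in $\cale$; in particular $\alpha(U)=U$. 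Given $x\in U$ and $\varphi\in\homf(\gen x,S)$, strong closure of $T$ forces $\varphi(x)\in T$, so the Frattini condition for $\cale\nsg\calf$ lets us write $\varphi=\alpha\circ\varphi_0$ with $\alpha\in\autf(T)$ and $\varphi_0\in\Hom_\cale(\gen x,T)$, and then $\varphi(x)=\alpha(\varphi_0(x))\in\alpha(U)=U$ (using that $U$ is strongly closed in $\cale$). Hence $U$ is strongly closed, so weakly closed, so $U\nsg S$, and $U$ is fully normalized and fully centralized in $\calf$.

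Next I would check the invariance and Frattini conditions for $\cald\le\calf$. Given $\beta\in\autf(U)$, applying the Frattini condition for $\cale\nsg\calf$ to $\beta\in\homf(U,T)$ gives $\beta=\alpha|_U\circ\beta_0$ with $\alpha\in\autf(T)$ and $\beta_0\in\Aut_\cale(U)$ (strong closure of $U$ in $\cale$ puts the image of $\beta_0$ inside $U$, whence $\alpha(U)=U$); then $\9\alpha\cald=\cald$ as above and $\9{\beta_0}\cald=\cald$ by the invariance condition for $\cald\nsg\cale$, so $\9\beta\cald=\cald$. The Frattini condition is analogous: for $P\le U$ and $\varphi\in\homf(P,U)$, factor $\varphi=\alpha\circ\psi$ through $\cale$ (with $\psi\in\Hom_\cale(P,U)$ by strong closure and $\alpha(U)=U$), then write $\psi=\beta_1\circ\psi_0$ with $\beta_1\in\Aut_\cale(U)$ and $\psi_0\in\Hom_\cald(P,U)$ via the Frattini condition for $\cald\nsg\cale$; then $\varphi=(\alpha|_U\circ\beta_1)\circ\psi_0$ gives the required factorization.

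The extension condition is where the real work lies. Given $\beta\in\Aut_\cald(U)$, the extension condition for $\cald\nsg\cale$ first provides an extension $\gamma\in\Aut_\cale(UC_T(U))$ of $\beta$ with $[\gamma,C_T(U)]\le Z(U)$. To extend past $C_T(U)$, I would use that $U$, being strongly closed, is fully normalized hence receptive in $\calf$: since $c_g|_U=\Id_U$ for every $g\in C_S(U)$ one has $C_S(U)\le N_\beta$, so $\beta$ extends to a morphism on $N_\beta\supseteq UC_S(U)$ which carries $C_S(U)$ (hence $UC_S(U)$) onto itself, yielding some $\bar\beta\in\Aut_\calf(UC_S(U))$ extending $\beta$. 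The hard part will be to arrange and verify the commutator bound $[\bar\beta,C_S(U)]\le Z(U)$: the idea is to choose $\bar\beta$ so that $\bar\beta|_{UC_T(U)}=\gamma$ (giving $[\bar\beta,C_T(U)]\le Z(U)$), observe that $\bar\beta$ preserves $T\cap UC_S(U)=UC_T(U)$ by strong closure of $T$ in $\calf$, and then propagate the bound from $C_T(U)$ to all of $C_S(U)$ using the extension condition for $\cale\nsg\calf$ applied to a suitable automorphism of $T$. This propagation --- needed precisely because $C_S(U)$ can be strictly larger than $C_T(U)\cdot C_S(T)$ --- is the step I expect to be the main obstacle.

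Finally, part (b) follows from part (a): taking $V=O_p(\cale)$, which is characteristic in $\cale$, the subsystem $\calf_V(V)$ is a characteristic normal subsystem of $\cale$ (since $V\nsg\cale$ iff $\calf_V(V)\nsg\cale$), so $\calf_V(V)\nsg\calf$ by part (a), i.e.\ $V\nsg\calf$, and therefore $V\le O_p(\calf)$.
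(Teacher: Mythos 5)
Your reduction of part (b) to part (a) is exactly the paper's argument. The issue is part (a): the paper does not reprove it but simply cites Aschbacher \cite[7.4]{A-gfit}, and your attempt to establish it directly from the definitions does not close the gap you yourself flag.

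Concretely, your verifications of strong closure of $U$, the invariance condition, and the Frattini condition are fine. But the extension condition is left unfinished: you correctly observe that $C_S(U)$ may be strictly larger than $C_T(U)\,C_S(T)$, so that neither the extension condition for $\cald\nsg\cale$ (which only controls the action on $C_T(U)$) nor the one for $\cale\nsg\calf$ (which only controls $\Aut_\cale(T)$ on $C_S(T)$) directly gives the bound $[\4\beta,C_S(U)]\le Z(U)$. You do not supply the ``propagation'' step, and there is no short way to do so: this is precisely where Aschbacher's proof of 7.4 does real work (his argument passes through the normalizer $N_\calf(U)$ and properties of $\cale$-invariance that go beyond the three defining conditions). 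As written, your proof is a sketch with an acknowledged missing step, not a complete argument; the honest fix is to cite \cite[7.4]{A-gfit} as the paper does, unless you want to reproduce Aschbacher's argument in full.
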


\begin{proof} Point (a) is shown in \cite[7.4]{A-gfit}. Since $O_p(\cale)$ 
	is characteristic in $\cale$, $O_p(\cale)\nsg\calf$ by (a), and so 
	$O_p(\cale) \leq O_p(\calf)$, proving (b). 
\end{proof}

The following condition for a subnormal fusion system to be normal is due 
to Aschbacher.

\begin{Lem} [{\cite[7.4]{A-gfit}}] \label{l:D<E<F}
Let $\cald\nsg\cale\nsg\calf$ be saturated fusion systems over finite 
$p$-groups $U\nsg T\nsg S$ such that $c_\alpha(\cald)=\cald$ for 
each $\alpha\in\autf(T)$. Then $\cald\nsg\calf$. 
\end{Lem}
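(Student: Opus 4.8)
The plan is to deduce this from the characterization that $\cald \nsg \calf$ iff $\cald$ is normal inside $N_\calf(T)$ together with invariance under $\autf(T)$, but since the excerpt only gives us Lemma \ref{l:E<|F}(a) (a characteristic subsystem of a normal subsystem is normal), the natural strategy is to reduce to that case. First I would observe that the hypothesis $c_\alpha(\cald)=\cald$ for all $\alpha\in\autf(T)$ says precisely that $\cald$ is invariant under the full automorphism group of $T$ induced by $\calf$, so $\cald$ is ``$\calf$-invariant'' as a subsystem of $\cale$; combined with $\cald\nsg\cale$ this looks close to saying $\cald$ is characteristic in $\cale$, except that ``characteristic in $\cale$'' requires invariance only under $\Aut(\cale)$, i.e. under $\auts$-extendable automorphisms of $T$ normalizing $\cale$, which is generally \emph{smaller} than $\autf(T)$. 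So the implication ``$c_\alpha(\cald)=\cald$ for all $\alpha\in\autf(T)$'' $\Rightarrow$ ``$\cald$ characteristic in $\cale$'' does not hold on the nose, and the argument has to be more careful.

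The cleaner approach is to work one step at a time up the subnormal series. Writing the subnormal chain $\cald\nsg\cale\nsg\calf$ over $U\nsg T\nsg S$, I would first show $\cald\nsg\cale$ is already given, and then show that the $\autf(T)$-invariance of $\cald$ lets us promote normality across the top step. Concretely: by Definition \ref{d:Fnormal}, $\cale\nsg\calf$ means $T$ is strongly closed, $\9\alpha\cale=\cale$ for $\alpha\in\autf(T)$, the Frattini condition holds, and the extension condition holds. To verify $\cald\nsg\calf$, I need: $U$ strongly closed in $\calf$; $\9\beta\cald=\cald$ for all $\beta\in\autf(U)$; the Frattini condition for $\cald\le\calf$; and the extension condition for $\cald\le\calf$. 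The key point is that each $\varphi\in\homf(P,Q)$ with $P,Q\le U$ factors, via the Frattini condition for $\cale\nsg\calf$, as $\alpha\circ\varphi_0$ with $\alpha\in\autf(T)$ and $\varphi_0\in\Hom_\cale(P,T)$ — but I need $\varphi_0$ to land in $U$ and lie in $\cald$, which is where I invoke $\cald\nsg\cale$ (its own Frattini and strong closure properties) and the $\autf(T)$-invariance of $\cald$ to absorb the $\alpha$-part. For strong closure of $U$ in $\calf$: $U$ is strongly closed in $\cale$, and any $\calf$-conjugate of $x\in U$ is obtained by applying some $\varphi\in\homf(\gen x,S)$; since $T$ is strongly closed $\varphi(x)\in T$, and using the Frattini factorization plus $c_\alpha(\cald)=\cald$ one pushes $\varphi(x)$ into $U$.

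For the extension condition for $\cald\nsg\calf$: given $\beta\in\Aut_\cald(U)$, I need $\4\beta\in\autf(UC_S(U))$ extending $\beta$ with $[\4\beta,C_S(U)]\le Z(U)$. Since $\cald\nsg\cale$, $\beta$ extends to $\hat\beta\in\Aut_\cale(UC_T(U))$ with $[\hat\beta,C_T(U)]\le Z(U)$; then since $\cale\nsg\calf$ one extends further up to $C_S$ using the extension condition for $\cale$, being careful that the commutator bound is preserved — this is a diagram-chase using that $C_S(U)\cap T\le C_T(U)$ and strong closure. I would expect the $\autf(T)$-invariance hypothesis to be exactly what is needed to make the invariance and Frattini conditions for $\cald$ go through, since without it the $\Hom_\cale$-part $\varphi_0$ in the Frattini factorization need not lie in the $\autf(U)$-orbit structure of $\cald$.

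The main obstacle I anticipate is the Frattini condition for $\cald\le\calf$: one must take an arbitrary $\varphi\in\homf(P,T)$ with $P\le U$, factor it through $\cale$ as $\alpha\circ\varphi_0$, then factor $\varphi_0$ (now a morphism in $\cale$ between subgroups of $U$, using strong closure of $U$ in $\cale$) through $\cald$ as $\alpha'\circ\varphi_1$ with $\alpha'\in\Aut_\cale(U)\le\autf(U)$ and $\varphi_1\in\Hom_\cald$, and finally verify that the composite $\alpha\alpha'$, restricted appropriately, represents an element of $\autf(U)$ under which $\cald$ is invariant — so that absorbing it keeps us in $\cald$. Checking that $\alpha|_U$ makes sense (i.e. $\alpha(U)=U$, which follows from strong closure of $U$ once we know $U$ is strongly closed in $\calf$, giving a mild circularity to untangle by doing strong closure first) and that $\alpha|_U\in\autf(U)$ normalizes $\cald$ by hypothesis is the crux. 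Everything else is a bookkeeping exercise in the four defining conditions of Definition \ref{d:Fnormal}, so I would present strong closure, then invariance, then Frattini, then extension, in that order, citing Lemma \ref{l:E<|F}(a) only if a slicker route through ``$\cald$ characteristic in $N_\cale(\text{something})$'' becomes available.
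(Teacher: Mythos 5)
The paper provides no proof of this lemma; it is stated purely as a citation to \cite[7.4]{A-gfit}, where Aschbacher works in his own framework for normal subsystems rather than directly with the three conditions of Definition~\ref{d:Fnormal}. So your proposal can only be judged on its own merits. Your overall strategy (verify strong closure of $U$, invariance, Frattini, and extension for $\cald\le\calf$ directly from Definition~\ref{d:Fnormal}) is reasonable, and the first three can be made to work essentially as you sketch. In fact there is no circularity to untangle: $\alpha(U)=U$ for $\alpha\in\autf(T)$ follows immediately from the hypothesis $c_\alpha(\cald)=\cald$, since $\9\alpha\cald$ is a fusion system over $\alpha(U)$. However, your opening aside has a reversed inclusion: $\Aut(\cale)$ is by Definition~\ref{d:autF}(a) just $\{\alpha\in\Aut(T)\mid\9\alpha\cale=\cale\}$ (there is no ``$\auts$-extendability'' requirement), and the invariance condition for $\cale\nsg\calf$ gives $\autf(T)\le\Aut(\cale)$ --- so $\Aut(\cale)$ is in general \emph{larger}, not smaller. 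Your bottom-line conclusion (the hypothesis does not give ``$\cald$ characteristic in $\cale$'') is still right, but for the opposite reason; with the inclusion as you stated it, the implication you reject would hold trivially.

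The genuine gap is the extension condition, which you dismiss as ``a diagram-chase.'' Given $\beta\in\Aut_\cald(U)$, normality of $\cald$ in $\cale$ produces $\hat\beta\in\Aut_\cale(UC_T(U))$ with $[\hat\beta,C_T(U)]\le Z(U)$, and you want to promote this to $\bar\beta\in\autf(UC_S(U))$ with $[\bar\beta,C_S(U)]\le Z(U)$. But the extension condition for $\cale\nsg\calf$ only concerns elements of $\Aut_\cale(T)$ and extends them to $\autf(TC_S(T))$; it says nothing about automorphisms of the proper subgroup $UC_T(U)\le T$. You do not explain how to extend $\hat\beta$ to an element of $\Aut_\cale(T)$ (this is not possible in general), nor why the extension of $\beta$ to $UC_S(U)$ provided by receptivity of $U$ (which holds, $U$ being strongly closed) can be arranged to satisfy the commutator bound. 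The observation $C_S(U)\cap T=C_T(U)$ that you invoke does not close this; the difficulty here is exactly what separates ``normal'' from ``weakly normal,'' and it is the substantive content of Aschbacher's 7.4. Until this step is made precise, the proposal is not a proof.
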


The following definitions of automorphism groups are taken from 
\cite[Definition 1.13 \& Lemma 1.14]{AOV1}. Recall that for each pair of 
objects $P\le Q$ in a linking system $(\call,\delta,\pi)$, we write 
$\iota_{P,Q}=\delta_{P,Q}(1)\in\Mor_\call(P,Q)$, and regard it as the 
``inclusion'' in $\call$ of $P$ into $Q$.

\begin{Defi} \label{d:aut(L)}
Let $\calf$ be a fusion system over a finite $p$-group $S$ and let 
$(\call,\delta,\pi)$  be an associated linking system. For each $P$ in 
$\call$, we call $\delta_P(P)\le\Aut_\call(P)$ the \emph{distinguished 
subgroup} of $\Aut_\call(P)$. 
\begin{enuma}

\item Let $\Aut(\call)$ be the group of automorphisms of the category 
$\call$  that send inclusions to inclusions and distinguished subgroups to 
distinguished subgroups. 
		
\item For $\gamma\in\Aut_\call(S)$, let $c_\gamma\in\Aut(\call)$ be the 
automorphism which sends an object $P$ to $c_\gamma(P)\defeq 
\pi(\gamma)(P)$, and sends $\psi\in\Mor_\call(P,Q)$ to $c_\gamma(\psi) 
\defeq \gamma|_{Q,c_\gamma(Q)} \circ \psi \circ (\gamma|_{P,c_\gamma 
(P)})^{-1}$. Set 
	\[ \Out(\call)
	=\Aut(\call)\big/\{c_\gamma\,|\,\gamma\in\Aut_\call(S)\}\,. \]
		
\end{enuma}
\end{Defi}

The notation in Definitions \ref{d:autF} and \ref{d:aut(L)} is 
slightly different from that used in \cite{AOV1} and \cite{AKO}, as 
described in the following table:

\begin{center}
		\renewcommand{\arraystretch}{1.3}
		\renewcommand{\arraycolsep}{3mm}
		\begin{tabular}{l|cccc}
			Notation used here & $\Aut(\calf)$ & $\Out(\calf)$ & $\Aut(\call)$ &
			$\Out(\call)$ \\ \hline
			Used in \cite{AOV1,AKO} & $\Aut(S,\calf)$ &
			$\Out(S,\calf)$ & $\Aut\typ^I(\call)$ & $\Out\typ(\call)$ 
		\end{tabular}
\end{center}

By \cite[Lemma 1.14]{AOV1}, the above definition of $\Out(\call)$  is 
equivalent to $\Out\typ(\call)$ in \cite{BLO2}, and by \cite[Lemma 
8.2]{BLO2}, both are equivalent to $\Out\typ(\call)$ in \cite{BLO1}.  So by 
\cite[Theorem 4.5(a)]{BLO1}, $\Out(\call_S^c(G))\cong\Out(BG\pcom)$: the 
group of homotopy classes of self homotopy equivalences of the space 
$BG\pcom$.

The next result shows how an automorphism of a linking system automatically 
preserves the structure functors. For use in the next section, we state 
this for certain full subcategories of a linking system that need not 
themselves be linking systems because their objects might not be closed 
under overgroups. (Compare with Proposition 6 in \cite{O-linkext}.)

For a group $G$, a set $\calh$ of subgroups of $G$, and $\beta\in\Aut(G)$ 
that permutes the members of $\calh$, let 
$\calt(\beta)\:\calt_\calh(G)\too\calt_\calh(G)$ denote the functor that 
sends $H\in\calh$ to $\beta(H)$, and sends $g\in T_G(H,K)$ to $\beta(g)\in 
T_G(\beta(H),\beta(K))$. 

\begin{Prop} \label{AutI}
Let $(\call,\delta,\pi)$ be a linking system associated to a fusion system 
$\calf$ over a finite $p$-group $S$, let $\call_0\subseteq\call$ be a 
full subcategory such that $\Ob(\call_0)\supseteq\calf^{cr}$, and let 
$\Aut(\call_0)$ be the group of automorphisms of the category $\call_0$ 
that send inclusions to inclusions and distinguished subgroups to 
distinguished subgroups. 
Fix $\alpha\in\Aut(\call_0)$, and let $\beta\in\Aut(S)$ be the unique 
automorphism such that 
$\alpha(\delta_S(g))=\delta_S(\beta(g))$ for all $g\in{}S$.  Then 
$\beta\in\Aut(\calf)$, $\alpha(P)=\beta(P)$ for each $P\in\Ob(\call_0)$, 
and the following diagram of functors 
	\begin{equation}\label{AutI-diag}
	\vcenter{\xymatrix@C=40pt@R=25pt{ 
	\calt_{\Ob(\call_0)}(S) \ar[r]^-{\delta} \ar[d]_{ \calt(\beta) }
	&  \call_0 \ar[r]^-{\pi} \ar[d]^\alpha &  \calf
	\ar[d]^{\8\beta}\\
	\calt_{\Ob(\call_0)}(S) \ar[r]^-{\delta}  
	& \call_0 \ar[r]^-{\pi}  & \calf }}
	\end{equation}
commutes. 
\end{Prop}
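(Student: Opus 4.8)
The plan is to extract $\beta$ directly from the behavior of $\alpha$ on distinguished subgroups, then bootstrap upward: first verify the diagram commutes on $\delta$, then that $\alpha$ agrees with $\beta$ on objects, then that $\pi$ intertwines, and finally that $\beta$ normalizes $\calf$. The hypothesis already gives us $\beta\in\Aut(S)$ with $\alpha\circ\delta_S=\delta_S\circ\beta$ on $\Aut_\call(S)$; the first task is to propagate this to all of $\calt_{\Ob(\call_0)}(S)$, i.e.\ to show $\alpha(\delta_{P,Q}(g))=\delta_{\beta(P),\beta(Q)}(\beta(g))$ for all objects $P\le Q$ in $\call_0$ and $g\in T_S(P,Q)$. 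Since $\alpha$ sends inclusions to inclusions and distinguished subgroups to distinguished subgroups, $\alpha(\iota_{P,S})$ must be the inclusion $\iota_{\alpha(P),S}$ of $\alpha(P)$ into $S$; combined with the factorization $\delta_{P,S}(g)=\delta_S(g)\circ\iota_{P,S}$ (using $\delta_S(g)=\delta_{S,S}(g)$) and the restriction property of Proposition~\ref{L-prop}(c), this pins down $\alpha(P)=\beta(P)$ and forces $\alpha(\delta_{P,Q}(g))=\delta_{\beta(P),\beta(Q)}(\beta(g))$ by the uniqueness of restrictions. This gives commutativity of the left-hand square and the identity $\alpha(P)=\beta(P)$ on objects simultaneously.

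For the right-hand square, fix $\psi\in\Mor_{\call_0}(P,Q)$ and set $\varphi=\pi(\psi)\in\homf(P,Q)$; I must show $\pi(\alpha(\psi))=\beta\varphi\beta^{-1}$, viewed as a morphism $\beta(P)\to\beta(Q)$. The key is axiom (A) (axiom (C) of Definition~\ref{d:Linking}): for each $g\in P$, $\psi\circ\delta_P(g)=\delta_Q(\varphi(g))\circ\psi$. Apply $\alpha$ and use the left-square commutativity already established: $\alpha(\psi)\circ\delta_{\beta(P)}(\beta(g))=\delta_{\beta(Q)}(\beta\varphi(g))\circ\alpha(\psi)$. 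Axiom (A3) for $\call$ (really, the fact that $\pi$ is determined by how a morphism conjugates distinguished subgroups, together with axiom (A2)) then forces $\pi(\alpha(\psi))$ to send $\beta(g)$ to $\beta\varphi(g)=\beta\varphi\beta^{-1}(\beta(g))$ for all $g\in P$; since $\beta(P)=\{\beta(g)\mid g\in P\}$, this identifies $\pi(\alpha(\psi))=\beta\varphi\beta^{-1}$ as required. One should be slightly careful that $\pi$ is an orbit map only when the source is fully centralized (axiom (A2)), but since $\call_0$ contains $\calf^{cr}$ and every object is $\call$-isomorphic to a fully centralized one inside $\call$, we can reduce to that case, or else argue via (A3) directly as above without invoking the orbit-map property.

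Finally, $\beta\in\Aut(\calf)$, i.e.\ $\9\beta\calf=\calf$: this follows from the commutativity of the right-hand square together with Alperin's fusion theorem (Theorem~\ref{t:AFT}). Every morphism of $\calf$ is a composite of restrictions of automorphisms of $\calf$-centric $\calf$-radical subgroups, and all such subgroups lie in $\Ob(\call_0)$; the right square shows that $\beta$ conjugates each such automorphism group $\autf(P)$ onto $\autf(\beta(P))$, and hence $\9\beta\calf\subseteq\calf$. Applying the same argument to $\alpha^{-1}$ (with $\beta^{-1}$) gives the reverse inclusion, so $\beta\in\Aut(\calf)$. That $\til\mu_{\call_0}\colon\alpha\mapsto\beta$ is a homomorphism is immediate from uniqueness of $\beta$: if $\alpha_1\circ\delta_S=\delta_S\circ\beta_1$ and $\alpha_2\circ\delta_S=\delta_S\circ\beta_2$ on $\Aut_\call(S)$, then $(\alpha_1\alpha_2)\circ\delta_S=\delta_S\circ(\beta_1\beta_2)$, and $\beta$ is uniquely determined by $\alpha$ since $\delta_S$ is injective (Proposition~\ref{L-prop}(a)). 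The main obstacle I anticipate is the bookkeeping in the first step—correctly handling the interplay between inclusions, the distinguished subgroup condition, and restrictions to conclude $\alpha(P)=\beta(P)$—rather than any deep conceptual difficulty; the rest is a diagram chase once that anchor is in place.
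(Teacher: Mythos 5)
Your proof is correct and follows essentially the same route as the paper's: establish the left square and $\alpha(P)=\beta(P)$ from the inclusion- and distinguished-subgroup-preserving hypotheses, chase the right square using axiom~(C) together with injectivity of $\delta$ and the epimorphism property of Proposition~\ref{L-prop}(b), and then deduce $\beta\in\Aut(\calf)$ from Alperin's fusion theorem and the hypothesis $\Ob(\call_0)\supseteq\calf^{cr}$. One small caution: there is no ``axiom~(A3)'' in Definition~\ref{d:Linking} — the axioms are (A1), (A2), (B), (C) — and the step you attribute to it is carried by axiom~(C) plus the injectivity/epimorphism facts, as you correctly note in the sentence just before; your closing worry about the orbit-map property of (A2) is unnecessary, since the argument never needs it.
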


\begin{proof} Clearly, $\alpha(S)=S$, and hence $\alpha_S$ sends 
$\delta_S(S)\in\sylp{\Aut_\call(S)}$ to itself.  Thus $\beta$ is well 
defined.  Since $\alpha$ sends inclusions to inclusions, it commutes with 
restrictions.  So for $P,Q\in\Ob(\call_0)$ and $g\in T_S(P,Q)$ (the 
transporter set), we have 
	\beqq \alpha(\delta_{P,Q}(g))=\delta_{\alpha(P),\alpha(Q)}(\beta(g)) 
	\label{e:AutI-1} \eeqq
since $\alpha(\delta_S(g))=\delta_S(\beta(g))$. In particular, the 
left-hand square in \eqref{AutI-diag} commutes. 

When $Q=P$, \eqref{e:AutI-1} says that 
$\delta_{\alpha(P)}(\beta(P))=\alpha_P(\delta_P(P))$, and 
$\alpha_P(\delta_P(P))=\delta_{\alpha(P)}(\alpha(P))$ since $\alpha$ sends 
distinguished subgroups to distinguished subgroups. So $\alpha(P)=\beta(P)$ 
since $\delta_{\alpha(P)}$ is a monomorphism (Proposition \ref{L-prop}(a)).  

Fix $P,Q\in\Ob(\call)$ and $\psi\in\Mor_\call(P,Q)$, and set 
$\varphi=\pi(\psi)\in\homf(P,Q)$.  For each $g\in{}P$, consider the 
following three squares:
	\[ 
	\vcenter{\xymatrix@C=30pt{
	P \ar[r]^{\psi} \ar[d]^{\delta_P(g)} & Q 
	\ar[d]^{\delta_Q(\varphi(g))} \\
	P \ar[r]^{\psi} & Q }} 
	\quad 
	\vcenter{\xymatrix@C=40pt{
	\alpha(P) \ar[r]^{\alpha(\psi)} \ar[d]^{\delta_{\alpha(P)}(\beta(g))} 
	& \alpha(Q) \ar[d]^{\delta_{\alpha(Q)}(\beta(\varphi(g)))} \\
	\alpha(P) \ar[r]^{\alpha(\psi)} & \alpha(Q) } }
	\quad 
	\vcenter{\xymatrix@C=40pt{
	\alpha(P) \ar[r]^{\alpha(\psi)} \ar[d]^{\delta_{\alpha(P)}(\beta(g))} 
	& \alpha(Q) 
	\ar[d]^{\delta_{\alpha(Q)}(\pi(\alpha(\psi))(\beta(g)))} \\
	\alpha(P) \ar[r]^{\alpha(\psi)} & \alpha(Q)\rlap{\,.} } }  \]
The first and third of these squares commute by axiom (C) in Definition 
\ref{d:Linking}, and the second commutes since it is the image under 
$\alpha$ of the first. Since morphisms in $\call$ are epimorphisms and 
$\delta_{\alpha(Q)}$ is injective (Proposition \ref{L-prop}(a,b)), 
this implies $\beta(\varphi(g))=\pi(\alpha(\psi))(\beta(g))$. Thus 
$\pi(\alpha(\psi))=\beta\varphi\beta^{-1} = \8\beta(\pi(\psi))$, 
proving that the right-hand square in \eqref{AutI-diag} commutes.

In particular, since $\pi$ is surjective on morphism sets (axiom (A2) in 
Definition \ref{d:Linking}), 
$\beta\varphi\beta^{-1}\in\homf(\beta(P),\beta(Q))$ for each 
$P,Q\in\Ob(\call_0)$ and each $\varphi\in\homf(P,Q)$.  Since $\Ob(\call_0)$ 
includes all subgroups which are $\calf$-centric and $\calf$-radical, all 
morphisms in $\calf$ are composites of restrictions of morphisms between 
objects of $\call_0$ by Theorem \ref{t:AFT}. Hence 
$\9\beta\calf\le\calf$ with equality since $\calf$ is a finite category, 
and so $\beta\in\Aut(\calf)$.
\end{proof}

Proposition \ref{AutI} motivates the following definition.

\begin{Defi} \label{d:til.mu}
Let $(\call,\delta,\pi)$ be a linking system associated to a fusion system 
$\calf$ over a finite $p$-group $S$. Let 
$\til\mu_{\call}\:\Aut(\call)\too\Aut(\calf)$ denote the homomorphism that 
sends $\alpha\in\Aut(\call_0)$ to $\beta\in\Aut(\calf)$ such that diagram 
\eqref{AutI-diag} commutes. Let $\mu_\call\:\Out(\call)\too\Out(\calf)$ be 
the induced homomorphisms on the quotient groups.
\end{Defi}

That $\til\mu_\call$ is a homomorphism follows easily from its definition 
via diagram \eqref{AutI-diag}. For $\gamma\in\Aut_\call(S)$, we have 
$\til\mu_\call(c_\gamma)=\pi(\gamma)\in\autf(S)$ since $\pi$ is a 
functor, so $\mu_\call$ is well defined. 
 
\smallskip

\subsection{Tameness of fusion systems} 
We next define a homomorphism $\kappa_G$ that connects the 
automorphisms of a group to those of its linking system. We refer to 
\cite[\S\,2.2]{AOV1} for more details about $\kappa_G$ and the proof that 
it is well defined.

\begin{Defi}\label{d:kappaG}
	Let $G$ be a finite group and choose $S\in \sylp G$.
Let
	\begin{equation*}
	    \kappa_G\:\Out(G)\Right2{}\Out(\call_S^c(G))
	\end{equation*}
denote the homo\m\ that sends the class of 
$\alpha\in\Aut(G)$ such that $\alpha(S)=S$ to the class of the
automorphism of $\call_S^c(G)$ induced by $\alpha$.
\end{Defi}

In these terms, tameness can be defined as follows.

\begin{Defi} \label{d:tame} 
Let $\calf$ be a saturated fusion system over a finite $p$-group $S$. Then
\begin{enuma}
\item $\calf$ is \emph{tamely realized by} a finite group $G$ 
if $\calf\cong\calf_{S^*}(G)$ for some $S^*\in\sylp{G}$
and the homomorphism
$\kappa_G \colon 
\Out(G)\longrightarrow\Out(\call_{S^*}^c(G))$  is split surjective; 
and
\item $\calf$ is \emph{tame} if it is tamely realized by some finite group.
\end{enuma}
\end{Defi}

\smallskip

\subsection{Centric fusion and linking subsystems}

Some of the results in later sections need the hypothesis that a 
certain fusion or linking subsystem be \emph{centric}, which we now define.

\begin{Defi} \label{d:centric.f.s.}
Let $\cale\nsg\calf$ be a normal pair of saturated fusion systems over 
finite $p$-groups $T\nsg S$.
\begin{enuma}

\item Let $C_S(\cale)$ denote the unique largest subgroup $X\le C_S(T)$
such that $C_\calf(X)\ge\cale$. Such a largest subgroup exists by
\cite[6.7]{A-gfit}, or (via a different proof) by \cite[Theorem
1(a)]{Henke}.

\item The subsystem $\cale$ is \emph{centric} in $\calf$ if $C_S(\cale)\le 
T$; i.e., if there is no $x\in C_S(T)\sminus T$ such that 
$C_\calf(x)\ge\cale$.

\item If $\calm\nsg\call$ are linking systems associated to 
$\cale\nsg\calf$, set 
	\[ C_{\Aut_\call(T)}(\calm) = \bigl\{ \gamma\in\Aut_\call(T) 
	\,\big|\, c_\gamma=\Id_\calm \bigr\} \]
where $c_\gamma$ is a well defined element of $\Aut(\calm)$ by Definition 
\ref{d:Lnormal}(b). 

\item If $\calm\nsg\call$ are linking systems associated to 
$\cale\nsg\calf$, then $\calm$ is \emph{centric} in $\call$ if 
$C_{\Aut_\call(T)}(\calm) \le \Aut_\calm(T)$; equivalently, if 
$c_\psi\ne\Id_\calm$ for each $\psi\in\Aut_\call(T)\sminus\Aut_\calm(T)$. 

\end{enuma}
\end{Defi}

For pairs of linking systems, this is the definition used in \cite{AOV1} 
(Definition 1.27). The term ``centric fusion subsystem'' was not used in 
\cite{O-red}, but the condition in Definition \ref{d:centric.f.s.}(b) 
appears in Proposition 2.1 and Theorem 2.3 of that paper (and the term is 
used in \cite{O-red-corr}).

In the next lemma, we look at the relation between normal centric 
fusion subsystems and normal centric linking subsystems.

\begin{Lem} \label{centric<=>centric}
Let $\cale\nsg\calf$ be a normal pair of saturated fusion systems over 
finite $p$-groups
$T\nsg S$ with associated linking systems $\calm\nsg\call$, and set 
	\[ C_S(\calm) = \bigl\{ x\in S \,\big|\, 
	c_{\delta_T(x)}=\Id_\calm \bigr\} \nsg S. \]
Then 
\begin{enuma} 

\item $C_{\Aut_\call(T)}(\calm) = \delta_T(C_S(\calm))$ and 
$Z(\cale)Z(\calf)\le C_S(\calm)\le C_S(\cale)$; 

\item $\calm$ is centric in $\call$ if and only if $C_S(\calm)\le T$, and 
this holds if $\cale$ is centric in $\calf$; and 

\item the conjugation action of $\Aut_\call(T)$ on 
$C_S(\calm)\cong\delta_T(C_S(\calm))\nsg\Aut_\call(T)$ induces an action of 
the quotient group $\call/\calm=\Aut_\call(T)/\Aut_\calm(T)$ on 
$C_S(\calm)$, and $C_{C_S(\calm)}(\call/\calm)=Z(\calf)$.

\end{enuma}
\end{Lem}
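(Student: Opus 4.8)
The plan is to prove (a)--(d) in order, leaning on the two characterizations of $C_S(\cale)$ from \cite[6.7]{A-gfit} and \cite[Theorem 1]{Henke} recorded in Definition \ref{d:centric.f.s.}(a), together with the explicit description of $C_S(\calm)$ via the distinguished monomorphism $\delta_T$. First, for (a): by axiom (C) in Definition \ref{d:Linking}, conjugation by $\delta_T(x)$ on a morphism $\psi\in\Mor_\calm(P,Q)$ is computed through $\delta$ on the source and target, so $c_{\delta_T(x)}=\Id_\calm$ exactly when $x$ centralizes $T$ (so that $\delta_T(x)$ actually lies in $\Aut_\call(T)$ in the relevant sense) and conjugation induces the identity on all of $\Mor(\calm)$; this gives $C_S(\calm)\le C_S(T)$ and identifies $C_{\Aut_\call(T)}(\calm)$ with $\delta_T(C_S(\calm))$ using injectivity of $\delta_T$ (Proposition \ref{L-prop}(a)). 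For the inclusion $C_S(\calm)\le C_S(\cale)$, I would observe that if $c_{\delta_T(x)}=\Id_\calm$ then applying $\pi$ shows $c_x$ acts trivially on $\cale$, hence $x$ centralizes $\cale$ in the sense that $C_\calf(x)\ge\cale$, so $x\le C_S(\cale)$ by the defining maximality in Definition \ref{d:centric.f.s.}(a). For $Z(\cale)Z(\calf)\le C_S(\calm)$: elements of $Z(\calf)$ are central in $\calf$ hence act trivially on everything including $\calm$ (using that $\delta$ is a functor and axiom (C)), and elements of $Z(\cale)$ likewise act trivially on $\Mor(\calm)$ since they are fixed by every morphism of $\cale$ and hence of $\calm$ by functoriality of $\pi$.

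For (b): $\calm$ centric in $\call$ means $C_{\Aut_\call(T)}(\calm)\le\Aut_\calm(T)$ by Definition \ref{d:centric.f.s.}(c); combining with (a), $C_{\Aut_\call(T)}(\calm)=\delta_T(C_S(\calm))$, and $\delta_T(C_S(\calm))\le\Aut_\calm(T)$ iff $C_S(\calm)\le T$ (again by injectivity of $\delta_T$ and the fact that $\delta_T(x)\in\Aut_\calm(T)$ forces $x\in T$ since $\delta_T(T)$ is the distinguished subgroup of $\Aut_\calm(T)$). The final clause follows from the inclusion $C_S(\calm)\le C_S(\cale)$ proved in (a): if $\cale$ is centric then $C_S(\cale)\le T$, so $C_S(\calm)\le T$. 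For (c): the conjugation action of $\Aut_\call(T)$ on the normal subgroup $\delta_T(C_S(\calm))$ has $\Aut_\calm(T)$ acting trivially (since $C_S(\calm)$ centralizes $\calm$ by definition — more precisely, for $x\in C_S(\calm)$ and $\psi\in\Aut_\calm(T)$, $c_\psi(\delta_T(x))=\delta_T(x)$ by axiom (C) and the definition of $C_S(\calm)$), so the action factors through $\call/\calm=\Aut_\call(T)/\Aut_\calm(T)$. To compute the fixed points: $x\in C_{C_S(\calm)}(\call/\calm)$ iff $\delta_T(x)$ is fixed by every $\gamma\in\Aut_\call(T)$, iff (applying $\pi$ and using that distinct elements of $C_S(T)$ have distinct images $\delta_T$) $c_x$ is fixed under $\autf(T)$, iff $x$ is fixed by all of $\autf(T)$ while centralizing $T$; one checks this is exactly $Z(\calf)$, using that $Z(\calf)$ is the largest subgroup of $Z(S)$ central in $\calf$ and that being centralized by $\cale$ and fixed by $\autf(T)$ forces centrality in $\calf$ via the Frattini/invariance conditions for $\cale\nsg\calf$ and Alperin's fusion theorem (Theorem \ref{t:AFT}).

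The main obstacle is part (d), where oddness of $p$ enters. The goal is the two equalities $C_S(\calm)=C_S(\cale)$ and $C_S(\cale)Z(T)=C_S(T)$. For the first, given (a) it remains to show $C_S(\cale)\le C_S(\calm)$, i.e., that every $x\in C_S(\cale)$ induces $c_{\delta_T(x)}=\Id_\calm$. The subtlety is that $C_\calf(x)\ge\cale$ only tells us $x$ acts trivially on $\cale$ as a fusion system, and we must lift this to the statement that $\delta_T(x)$ acts trivially on all morphisms of $\calm$, not just on their $\pi$-images; the gap is measured by the free $C_S(P)$-action in axiom (A1), and closing it requires a cohomological vanishing argument — the relevant obstruction group is an $H^1$ or $H^2$ with coefficients in a module on which multiplication by $2$ (or by the $p$-part) is invertible, which is where $p$ odd is used (compare the uniqueness arguments behind Theorem \ref{t:unique.l.s.} and the treatment in \cite{O-red}, \cite{AOV1}). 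For the second equality $C_S(\cale)Z(T)=C_S(T)$: the inclusion $\supseteq$ is clear once we know $Z(T)\le C_S(\cale)$ (which follows since $Z(T)$ is centralized by $\cale=\cale$, acting trivially on $T\ge$ everything in $\cale$), so the content is $C_S(T)\le C_S(\cale)Z(T)$; I would argue by taking $x\in C_S(T)$, noting $\cale\nsg\calf$ forces $\autf(T)$ to stabilize $\cale$, analyzing the action of $x$ on $\cale$ modulo $Z(T)$ and showing that after correcting by an element of $Z(T)$ one lands in $C_S(\cale)$ — again this correction step is where the $2$-torsion obstruction must vanish, so $p$ odd is essential. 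I expect the hard part to be precisely packaging these two obstruction-vanishing arguments cleanly, probably by citing the relevant lemma from \cite{O-red} or \cite{AOV1} rather than redoing the cohomology.
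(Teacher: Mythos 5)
Your overall structure is reasonable, and your intuition about where odd $p$ enters (a cohomological vanishing theorem) is in the right neighborhood, but there is a genuine logical gap in the central step of part (a), and the paper's treatment differs from yours in a way that your sketch cannot be patched to reach.

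The problem is your argument for $C_S(\calm)\le C_S(\cale)$. You claim that ``if $c_{\delta_T(x)}=\Id_\calm$ then applying $\pi$ shows $c_x$ acts trivially on $\cale$, hence $C_\calf(x)\ge\cale$.'' But applying $\pi$ to the identity $c_{\delta_T(x)}(\psi)=\psi$ just gives $c_x\circ\pi(\psi)\circ c_x^{-1}=\pi(\psi)$, and this is automatic for \emph{any} $x\in C_S(T)$ (since $c_x$ restricts to the identity on any subgroup of $T$). So your chain of implications would prove $C_S(T)\le C_S(\cale)$, which is false in general, and cannot detect the difference between $C_S(\calm)$ and $C_S(T)$. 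The content of $c_{\delta_T(x)}=\Id_\calm$ lives precisely in the part that $\pi$ kills (the free $C_S(P)$-action of axiom (A2)); you cannot see it by projecting. The paper's argument instead goes the other way: it takes $\psi\in\Mor_\calm(P,Q)$, uses $\psi\delta_P(x)=\delta_Q(x)\psi$ together with the extension criterion in Proposition~\ref{L-prop}(d) to produce an extension $\4\psi\in\Mor_\call(P\gen{x},Q\gen{x})$, and then reads off from axiom (C) that $\pi(\4\psi)(x)=x$. This is what gives $C_\calf(x)\ge\cale$. Your sketch omits this extension step, which is where the hypothesis $x\in C_S(\calm)$ (rather than merely $x\in C_S(T)$) is actually used. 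The mirror direction $Z(\cale)\le C_S(\calm)$ in your sketch has an analogous issue: ``fixed by every morphism of $\cale$ and hence of $\calm$ by functoriality of $\pi$'' pushes information in the wrong direction, since $\pi$ is a quotient rather than an inclusion; one must lift morphisms of $\cale$ to $\calm$ and control the $C_T(P)$-ambiguity in the lift.

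Two smaller remarks. In (c), ``$x$ is fixed by all of $\autf(T)$, and one checks this is exactly $Z(\calf)$'' is not enough: being fixed by $\autf(T)$ and lying in $C_S(T)$ does not by itself give centrality in $\calf$, and the paper instead shows that $\delta_T(x)$ centralizes \emph{all} of $\call$ by decomposing an arbitrary $\psi\in\Mor_\call(P,Q)$ via the Frattini property of the normal pair (Lemma~\ref{l:Lnormal}) and then invoking uniqueness of extensions. In (d), you correctly locate the role of odd $p$ in an obstruction-vanishing result, but the specific input the paper uses is that the map $\mu_\calm\:\Out(\calm)\too\Out(\cale)$ is an isomorphism for $p$ odd (\cite[Theorem C]{O-Ch}, \cite[Theorem 1.1]{GLynd}), applied to the class of $c_{\delta_T(x)}$ to produce a correcting element $\delta_T(t)$ with $t\in Z(T)$; this is not the uniqueness statement for linking systems itself, and pointing to \cite{O-red} or \cite{AOV1} would not hand you the needed statement directly.
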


\begin{proof} Throughout the proof, ``axiom (--)'' always refers to one of 
the axioms in the definition of a linking system (Definition 
\ref{d:Linking}). 

We first claim that 
	\beqq \forall x\in S, \qquad \delta_T(x)\in\Aut_\calm(T) \iff x\in T .
	\label{e:centric-1} \eeqq 
The implication ``$\Longleftarrow$'' is clear. To see the converse, fix 
$x\in S$ such that $\delta_T(x)\in\Aut_\calm(T)$. Then 
$c_x\in\Aut_\cale(T)$, and $c_x\in\Inn(T)\in\sylp{\Aut_\cale(T)}$ since it 
has $p$-power order. Thus there is $t\in T$ such that $xt^{-1}\in C_S(T)$, 
$\delta_T(xt^{-1})=\delta_T(x)\delta_T(t)^{-1}\in\Aut_\calm(T)$, and so 
$xt^{-1}\in Z(T)$ by axiom (A2) applied to $\calm$ and since 
$\delta_T$ 
is injective (Proposition \ref{L-prop}(a)). Hence $x\in T$.

We next claim that 
	\beqq C_S(\cale)\cap T = Z(\cale). \label{e:centric-1x} \eeqq
The inclusion $Z(\cale)\le C_S(\cale)\cap T$ is immediate from the 
definitions. If $x\in C_S(\cale)\cap T\le Z(T)$, then since $\cale\le 
C_\calf(x)$, each $\varphi\in\Hom_\cale(\gen{x},S)$ extends to a 
morphism in $\calf$ that sends $x$ to itself, and hence $\varphi(x)=x$. 
Thus $x^\cale=\{x\}$, so $x\in Z(\cale)$ by \cite[Lemma I.4.2]{AKO}, 
finishing the proof of \eqref{e:centric-1x}.

\smallskip

\noindent\textbf{(a) } Fix $\alpha\in C_{\Aut_\call(T)}(\calm)$. By axiom 
(C) for the linking system $\call$, for all 
$g\in T$, $\alpha\delta_T(g)\alpha^{-1}=\delta_T(\pi(\alpha)(g))$, so 
$\delta_T(g)=\delta_T(\pi(\alpha)(g))$ since $c_\alpha=\Id_\calm$, and 
$g=\pi(\alpha)(g)$ since $\delta_T$ is injective (see Proposition 
\ref{L-prop}(a)). So $\pi(\alpha)=\Id_T$, and $\alpha=\delta_T(x)$ for 
some $x\in C_S(T)$ by axiom (A2). Then $x\in C_S(\calm)$ by definition, and 
thus $C_{\Aut_\call(T)}(\calm)=\delta_T(C_S(\calm))$.

If $x\in Z(\cale)= C_S(\cale)\cap T$, then for each $P,Q\le T$ and 
$\psi\in\Mor_\calm(P,Q)$, $\pi(\psi)$ extends to some 
$\4\varphi\in\Hom_\cale(P\gen{x},Q\gen{x})$, and $\4\varphi=\pi(\5\psi)$ 
for some $\5\psi\in\Mor_\calm(P\gen{x},Q\gen{x})$. Set 
$\psi'=\5\psi|_{P,Q}\in\Mor_\calm(P,Q)$. Then 
$\psi'\delta_{P}(x)=\delta_{Q}(x)\psi'$ since 
$\5\psi\delta_{P\gen{x}}(x)=\delta_{Q\gen{x}}(x)\5\psi$ by axiom (C). Also, 
$\pi(\psi')=\pi(\psi)$, so if $P$ is fully centralized in $\cale$, then 
$\psi'=\psi\delta_P(y)$ for some $y\in C_T(P)$. Then $[x,y]=1$ since 
$x\in Z(T)$, so $c_{\delta_T(x)}(\delta_P(y))=\delta_P(y)$, and thus 
$c_{\delta_T(x)}(\psi)=\psi$. This holds for all $\psi\in\Mor(\calm)$ whose 
domain is fully centralized, and hence for all morphisms in $\calm$. So 
$x\in C_T(\calm)\le C_S(\calm)$. 

Thus $Z(\cale)\le C_S(\calm)$. By a similar argument but working in $\call$ 
and $\calf$ instead of $\calm$ and $\cale$, we also have $Z(\calf)\le 
C_S(\call)\le C_S(\calm)$, and so $Z(\cale)Z(\calf)\le C_S(\calm)$.

It remains to show that $C_S(\calm)\le C_S(\cale)$. Fix $x\in C_S(\calm)$; 
we must show that $\cale\le C_\calf(x)$. Fix $P,Q\le T$ and 
$\varphi\in\Hom_\cale(P,Q)$, and choose $\psi\in\Mor_\calm(P,Q)$ such that 
$\pi(\psi)=\varphi$. Set $\4P=P\gen{x}$ and $\4Q=Q\gen{x}$. Then 
$\psi\delta_P(x)=\delta_Q(x)\psi$ since 
$c_{\delta_T(x)}=\Id_\calm$, and 
$\psi\delta_P(g)=\delta_Q(\varphi(g))\psi$ for all $g\in P$ by axiom (C) 
applied to $\calm$. So for each $g\in\4P$, there is $h\in\4Q$ such 
that $\psi\delta_P(g)=\delta_{Q}(h)\psi$ in $\Mor_\call(P,Q)$, and by 
Proposition \ref{L-prop}(d), $\psi$ extends to a unique morphism 
$\4\psi\in\Mor_\call(\4P,\4Q)$. Set 
$\4\varphi=\pi(\4\psi)\in\homf(\4P,\4Q)$. By axiom (C) again (but applied 
to $\call$) we have 
$\4\psi\delta_{\4P}(x)=\delta_{\4Q}(\4\varphi(x))\4\psi$, and after 
restriction to $P$ and $Q$ this gives 
$\psi\delta_P(x)=\delta_Q(\4\varphi(x))\psi$. Hence 
$\delta_Q(\4\varphi(x))\psi=\delta_Q(x)\psi$, so 
$\delta_Q(\4\varphi(x))=\delta_Q(x)$ since $\psi$ is an epimorphism in the 
categorical sense (see Proposition \ref{L-prop}(b)), and 
$\4\varphi(x)=x$ by the injectivity of $\delta_Q$. Thus each 
morphism in $\cale$ extends to a morphism in $\calf$ that sends $x$ to 
itself, and hence $C_\calf(x)\ge\cale$ and $x\in C_S(\cale)$. 

\smallskip

\noindent\textbf{(b) } By (a) and Definition \ref{d:centric.f.s.}(c), 
$\calm$ is centric in $\call$ if and only if 
$\delta_T(C_S(\calm))\le\Aut_\calm(T)$, and this holds exactly when 
$C_S(\calm)\le T$ by \eqref{e:centric-1}. Since $C_S(\calm)\le C_S(\cale)$ 
by (a), this is the case whenever $\cale$ is centric in $\calf$.

\smallskip

\noindent\textbf{(c) } By (a), $\delta_T(C_S(\calm))$ is the kernel of the 
homomorphism from $\Aut_\call(T)$ to $\Aut(\calm)$ induced by conjugation. 
So $\delta_T(C_S(\calm))\nsg\Aut_\call(T)$, and $\Aut_\call(T)$ acts by 
conjugation on $C_S(\calm)\cong\delta_T(C_S(\calm))$. Since this subgroup 
centralizes $\calm$ by definition, $\Aut_\calm(T)$ acts trivially, and 
hence the action of $\Aut_\call(T)$ factors through an action of the 
quotient group $\call/\calm$.

By Definitions \ref{d:subgroups}(g) and \ref{d:centric.f.s.}(a), 
$Z(\calf)=C_S(\calf)$. So by (a) applied when $\calm=\call$,
	\beqq C_{\Aut_\call(S)}(\call) = \delta_S(Z(\calf)). 
	\label{e:d(Z(F))} \eeqq
Thus for each $x\in Z(\calf)$, $\delta_S(x)$ acts trivially on $\call$ and 
hence $\delta_T(x)$ acts trivially on $\Aut_\call(T)$. So $Z(\calf)\le 
C_{C_S(\calm)}(\Aut_\call(T)) = C_{C_S(\calm)}(\call/\calm)$, and it 
remains to prove the opposite inclusion.

Fix $x\in C_S(\calm)$ such that $\delta_T(x)\in Z(\Aut_\call(T))$. In 
particular, $\delta_T(x)\in Z(\delta_T(S))$, so $x\in Z(S)$ by 
the injectivity of $\delta_T$ (Proposition \ref{L-prop}(a)), and 
$c_{\delta_T(x)}(P)=\9xP=P$ for each $P\in\Ob(\call)$. We must show 
that $\delta_T(x)$ acts trivially on $\call$. Fix $P,Q\le S$ and 
$\psi\in\Mor_\call(P,Q)$, and set $P_0=P\cap T$, 
$Q_0=Q\cap T$, and $\psi_0=\psi|_{P,Q}$ (see Proposition 
\ref{L-prop}(c)). By the Frattini condition on a normal linking subsystem 
(Lemma \ref{l:Lnormal}), $\psi_0$ is the composite of the 
restriction of a morphism $\gamma\in\Aut_\call(T)$ followed by some 
$\chi\in\Mor(\calm)$. Since $\delta_T(x)$ commutes with $\gamma$ by 
assumption and commutes with $\chi$ by (a) ($Z(\calf)\le C_S(\calm)$), 
we have 
	\[ (\psi\delta_P(x))|_{P_0,Q_0} = \psi_0\delta_{P_0}(x) = 
	\delta_{Q_0}(x)\psi_0 = (\delta_Q(x)\psi)|_{P_0,Q_0}. \]
Then $\psi\delta_P(x)=\delta_Q(x)\psi$ by 
the uniqueness of extensions in a linking system (Proposition 
\ref{L-prop}(d)), and hence $c_{\delta_T(x)}(\psi)=\psi$. 

Thus $c_{\delta_S(x)}$ is the identity on all objects and morphisms 
in $\call$. So $x\in Z(\calf)$ by \eqref{e:d(Z(F))} and the injectivity of 
$\delta_S$ (Proposition \ref{L-prop}(a)).
\end{proof}

The following consequence of Lemma \ref{centric<=>centric} will be needed 
in Section \ref{s:comp}. 

\begin{Lem} \label{l:Ker(kappa)}
Let $H\nsg G$ be finite groups, choose $S\in\sylp{G}$, and set $T=S\cap H$. 
Set $\calf=\calf_S(G)$ and $\cale=\calf_T(H)\nsg\calf$. If $\Ker(\kappa_H)$ 
has order prime to $p$, then $Z(\calf)\le Z(\cale) C_S(H)$. 
\end{Lem}

\begin{proof} Set $G_0=SH$ and $\calf_0=\calf_S(G_0)$, and set 
$\calh=\{P\le S\,|\,P\cap T\in\cale^c\}$. For each $P\in\calh$, 
$C_H(P\cap H)=Z(P\cap H)\times O_{p'}(C_H(P\cap H))$ since $P\cap 
H\in\cale^c$, so $C_H(P)=(Z(P)\cap H)\times O_{p'}(C_H(P))$, and this has 
$p$-power index in $C_{G_0}(P)$. Thus $O^p(C_{G_0}(P))$ has order prime to 
$p$, so $P$ is $\calf_0$-quasicentric by \cite[Lemma III.4.6(e)]{AKO}. 

Thus $\calh\subseteq\calf_0^q$. Set 
$\call_0=\call_S^\calh(G_0)$ (see \cite[p. 146]{AKO}), and 
$\calm=\call_T^c(H)$. Then $\calm\nsg\call_0$ is a normal pair of linking 
systems associated to $\cale\nsg\calf_0$, so by Lemma 
\ref{centric<=>centric}(a), 
	\beqq Z(\calf_0) \le C_S(\calm) = \{ x\in S\,|\, 
	c_{\delta_T(x)}=\Id_{\calm} \}. \label{e:kappa} \eeqq

Fix $x\in Z(\calf)\le Z(\calf_0)$. By \eqref{e:kappa}, 
$[c_x]\in\Ker(\kappa_H)$, and so $c_x\in\Inn(H)$ since $\Ker(\kappa_H)$ has 
order prime to $p$. Thus $c_x$ is conjugation by some element $y\in 
C_H(T)=Z(T)\times O_{p'}(C_H(T))$, and since $c_x$ has $p$-power order, we can 
assume $y\in Z(T)$. Then $c_y$ induces the identity on $\calm$ since $c_x$ 
does by \eqref{e:kappa}, so $y\in Z(\cale)$ by the exact sequence in 
\cite[Lemma 1.14(a)]{AOV1}. Also, $y^{-1}x\in C_S(H)$, and so $x\in 
Z(\cale) C_S(H)$. 
\end{proof}

\bigskip

\section{Products of fusion and linking systems}
\label{s:prod}

In this section, we first define centric linking systems 
associated to products of two or more fusion systems (Lemma 
\ref{prod.link1}).  This is followed by a description of the group of 
automorphisms of such a product linking systems that leave the factors 
invariant up to permutation, as well as conditions on the fusion 
systems that guarantee that these are the only automorphisms of the linking 
system (Proposition \ref{p:prod.tame}(a,b)). As a consequence, we show 
that a product of tame fusion systems that satisfy these same conditions 
is always tame (Proposition \ref{p:prod.tame}(c)).

Recall that if $\calf_1$ and $\calf_2$ are fusion systems over finite 
$p$-groups $S_1$ and $S_2$, then $\calf_1\times\calf_2$ is the fusion 
system over $S_1\times S_2$ generated by all morphisms 
$\varphi_1\times\varphi_2\in\Hom(P_1\times P_2,Q_1\times Q_2)$ for 
$\varphi_i\in\Hom_{\calf_i}(P_i,Q_i)$ ($i=1,2$). See \cite[Definition 
I.6.5]{AKO} for more details. When $\calf_1$ and $\calf_2$ are both 
saturated, then so is $\calf_1\times\calf_2$ \cite[Theorem 
I.6.6]{AKO}. 

The following notation and hypotheses will be used throughout the 
section. 

\begin{Hyp} \label{hyp:prod.fus}
Let $\calf_1,\dots,\calf_k$ be saturated fusion systems over finite 
$p$-groups $S_1,\dots,S_k$ (some $k\ge2$), and set $S=\xxx{S}$ and 
$\calf=\xxx{\calf}$. For each $i$, let $\pr_i\:S\too S_i$ be the 
projection. For each $P\le S$, we write $P_i=\pr_i(P)$ (for $1\le i\le k$) 
and $\5P=\xxx{P}\le S$. Thus $\5P\ge P$ for each $P$.
\end{Hyp}

We first check which subgroups are centric in a product of fusion systems.

\begin{Lem} \label{prod.fus}
Assume Hypotheses \ref{hyp:prod.fus}. Then a subgroup $P\le S$ 
is $\calf$-centric if and only if $P_i$ is $\calf$-centric for 
all $i$ and $Z(P_1)\times\dots \times Z(P_k)\leq P$. 
\end{Lem}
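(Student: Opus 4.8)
The plan is to reduce the statement to two elementary facts about the product fusion system $\calf=\xxx{\calf}$. First, by the construction of $\calf$ (see \cite[Definition~I.6.5]{AKO}), every morphism of $\calf$ is the restriction of a product of morphisms: given $\varphi\in\homf(P,S)$ there are $\varphi_i\in\Hom_{\calf_i}(P_i,S_i)$ with $\varphi=(\varphi_1\times\cdots\times\varphi_k)|_P$. (If this needs justification: the set of such restrictions already contains $\Hom_S(P,Q)$ for all $P,Q$ — since $c_{(s_1,\dots,s_k)}=c_{s_1}\times\cdots\times c_{s_k}$ — and is closed under composition and inversion, hence is a fusion subsystem of $\calf$ containing the generators of $\calf$, so it is all of $\calf$.) Consequently $\pr_i(\varphi(P))=\varphi_i(P_i)$, so if $Q=\varphi(P)\in P^\calf$ then $Q_i\in P_i^{\calf_i}$ for each $i$, and $\varphi_i$ restricts to an isomorphism $Z(P_i)\xto{\ \sim\ }Z(Q_i)$. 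Second, a one-line commutator computation gives, for \emph{any} $Q\le S$, that $(x_1,\dots,x_k)$ centralizes $Q$ iff $x_i$ centralizes $\pr_i(Q)=Q_i$ for every $i$; hence $C_S(Q)=\prod_{i=1}^k C_{S_i}(Q_i)=C_S(\5Q)$.

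For the implication ``$\Leftarrow$'', assume each $P_i$ is $\calf_i$-centric and $Z(P_1)\times\cdots\times Z(P_k)\le P$. Fix $Q\in P^\calf$ and write $Q=\varphi(P)$ with $\varphi=(\varphi_1\times\cdots\times\varphi_k)|_P$ as above. Then $Q_i\in P_i^{\calf_i}$, so $C_{S_i}(Q_i)\le Q_i$ and hence $C_{S_i}(Q_i)=Z(Q_i)$; and since $Z(P_1)\times\cdots\times Z(P_k)\le P$, applying $\varphi$ gives $Z(Q_1)\times\cdots\times Z(Q_k)=\varphi(Z(P_1)\times\cdots\times Z(P_k))\le\varphi(P)=Q$. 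Therefore $C_S(Q)=\prod_i C_{S_i}(Q_i)=\prod_i Z(Q_i)\le Q$, and as $Q$ was arbitrary, $P$ is $\calf$-centric. For ``$\Rightarrow$'', assume $P$ is $\calf$-centric. Taking $Q=P$ yields $Z(P_1)\times\cdots\times Z(P_k)\le\prod_i C_{S_i}(P_i)=C_S(P)\le P$. Next fix $i$, say $i=1$, and $R_1\in P_1^{\calf_1}$, so $R_1=\psi_1(P_1)$ for some $\psi_1\in\Hom_{\calf_1}(P_1,S_1)$; put $Q=\varphi(P)$ with $\varphi=(\psi_1\times\Id_{S_2}\times\cdots\times\Id_{S_k})|_P\in\homf(P,S)$, so that $Q_1=R_1$ and $Q_i=P_i$ for $i\ge2$. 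For any $x\in C_{S_1}(R_1)$ the element $(x,1,\dots,1)$ lies in $C_S(Q)\le Q$, so $x=\pr_1(x,1,\dots,1)\in Q_1=R_1$; thus $C_{S_1}(R_1)\le R_1$. Since $R_1\in P_1^{\calf_1}$ was arbitrary, $P_1$ is $\calf_1$-centric, and the same argument in each coordinate finishes the proof.

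The only real content beyond bookkeeping is the structural description of morphisms in a product fusion system as restrictions of product morphisms; I expect this to be the one point deserving an explicit (if brief) justification or citation, and it is the step where care is needed because a general $P\le S$ is not itself a product. Everything else — the centralizer formula in a direct product, the behavior of projections under $\calf$-conjugacy, and the choice of the ``test'' conjugate $Q$ obtained by perturbing only one coordinate — is routine, so the final write-up should be short.
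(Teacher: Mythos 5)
Your proof is correct. The underlying computations are the same as the paper's — the centralizer formula $C_S(Q)=\prod_i C_{S_i}(Q_i)$ and the fact (which you rightly flag as the only nonformal step) that every morphism of $\calf_1\times\cdots\times\calf_k$ is the restriction of a product of morphisms — but the reduction is organized a little differently. The paper exploits the standard characterization ``$P$ is $\calf$-centric iff $P$ is fully centralized in $\calf$ and $C_S(P)\le P$,'' which lets it check only the single subgroup $P$ (plus the matching equivalence ``$P$ fully centralized iff each $P_i$ is fully $\calf_i$-centralized,'' where the morphism factorization enters implicitly). You instead verify the definition of $\calf$-centric directly, quantifying over all $Q\in P^\calf$ for the ``$\Leftarrow$'' direction and producing the coordinatewise test conjugates for the ``$\Rightarrow$'' direction. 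The paper's route is shorter once the fully-centralized characterization is in hand; yours avoids that characterization and makes the use of the product structure of morphisms explicit, which is arguably a gain in transparency since the paper leaves that dependence unstated in this proof (it is cited, via \cite[Theorem~I.6.6]{AKO}, only later in the section). Either write-up is acceptable.
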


\begin{proof} For each $P\le S$, we have 
$C_S(P)=C_{S_1}(P_1)\times\cdots\times C_{S_k}(P_k)=C_S(\5P)$. Hence 
\begin{itemize} 
\item $P$ is fully centralized in $\calf$ if and only if $P_i$ is fully 
centralized in $\calf_i$ for each $i$, and 
\item $C_S(P)\le P$ if and only if $C_{S_i}(P_i)\le P_i$ for each $i$ 
and $Z(\5P)\le P$.
\end{itemize}
The result now follows since $P$ is $\calf$-centric if and only if $P$ is 
fully centralized in $\calf$ and $C_S(P)\le P$.
\end{proof}

Note also that under Hypotheses \ref{hyp:prod.fus}, if $P\le S$ is 
$\calf$-centric and $\calf$-radical, then $P=\5P=\xxx{P}$ 
(see, e.g., \cite[Lemma 3.1]{AOV1}). But that will not be needed here.

The following easy consequence of the Krull-Remak-Schmidt theorem will be 
needed. Recall that a group is indecomposable if it is not 
the direct product of two of its proper subgroups.

\begin{Prop} \label{Krull-Schmidt}
Assume $G_1,\dots,G_k$ are finite, indecomposable groups, and set 
$G=\xxx{G}$. Then the following hold for each $\alpha\in\Aut(G)$.
\begin{enuma} 
\item There is $\sigma\in\Sigma_k$ such that $\alpha(G_iZ(G))=G_{\sigma(i)}Z(G)$ 
for each $1\le i\le k$. 
\item If $\bigl(|Z(G)|,|G/[G,G]|\bigr)=1$, then there is 
$\sigma\in\Sigma_k$ such that $\alpha(G_i)=G_{\sigma(i)}$ for each $i$.
\end{enuma} 
\end{Prop}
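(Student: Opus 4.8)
The plan is to reduce both statements to the Krull--Remak--Schmidt theorem. For part (a), I would first pass to the quotient $\widebar{G} = G/Z(G)$. Since each $G_i$ is indecomposable, I would like to say that the images $\widebar{G}_i = G_iZ(G)/Z(G)$ give a decomposition of $\widebar{G}$ into indecomposable factors, but this needs a little care because $\widebar{G}_i$ need not be indecomposable and the intersections $G_i \cap Z(G)$ muddy the picture. Instead I would argue directly on $G$ using the central decomposition: write $G = \xxx{G}$, and for $\alpha \in \Aut(G)$ consider the composites $p_j \circ \alpha|_{G_i} \colon G_i \to G_j$ where $p_j \colon G \to G_j$ is the projection. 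The standard exchange argument in the proof of Krull--Remak--Schmidt (see e.g. \cite[\S3.3]{Gorenstein} or Suzuki) shows that after permuting the factors by some $\sigma \in \Sigma_k$, the map $p_{\sigma(i)} \circ \alpha|_{G_i}$ is an isomorphism $G_i \xrightarrow{\ \cong\ } G_{\sigma(i)}$ modulo the center; more precisely, $\alpha(G_i) \le G_{\sigma(i)} \cdot (\text{product of the other factors})$ with the projection to $G_{\sigma(i)}$ surjective, and comparing orders forces $\alpha(G_i) Z(G) = G_{\sigma(i)} Z(G)$. This is exactly the content of the Krull--Remak--Schmidt theorem in the form ``any two decompositions into directly indecomposable factors are related by a central automorphism and a permutation''.

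For part (b), I would use the extra coprimality hypothesis $(|Z(G)|, |G/[G,G]|) = 1$ to upgrade the conclusion $\alpha(G_i)Z(G) = G_{\sigma(i)}Z(G)$ to the on-the-nose equality $\alpha(G_i) = G_{\sigma(i)}$. The point is that a ``central automorphism'' — one of the form $g \mapsto g\cdot\chi(g)$ for a homomorphism $\chi \colon G \to Z(G)$, which necessarily factors through $G/[G,G]$ — must be trivial on each $G_i$ under the coprimality assumption: the image $\chi(G_i)$ is both a quotient of $G_i/[G_i,G_i]$ (hence of $G/[G,G]$, a group whose order is prime to $|Z(G)|$) and a subgroup of $Z(G)$, so it is trivial. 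Thus after composing with the permutation $\sigma$ from part (a), the resulting automorphism preserves each $G_i Z(G)$ and differs from one preserving each $G_i$ by a central automorphism, which we have just seen restricts trivially on each factor; hence $\alpha(G_i) = G_{\sigma(i)}$.

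The main obstacle I anticipate is bookkeeping in part (a): making the exchange/replacement argument precise while only assuming each $G_i$ is indecomposable (not directly indecomposable in the strong sense, though for finite groups these coincide) and while the $G_i$ genuinely overlap $Z(G)$. The cleanest route is probably to quote the Krull--Remak--Schmidt theorem as a black box in the form: if $G = \xxx{G} = \prod_{i=1}^k H_i$ are two decompositions into indecomposable subgroups then $k$ is the same for both and there is $\sigma \in \Sigma_k$ with $G_i \equiv H_{\sigma(i)} \pmod{Z(G)}$ and a corresponding central automorphism carrying one decomposition to the other; applying this with $H_i = \alpha(G_i)$ immediately yields (a), and (b) follows as above. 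If a self-contained argument is preferred, the exchange lemma for central automorphisms of finite groups is entirely routine but tedious, and I would relegate it to a citation.
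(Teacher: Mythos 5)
Your final route — quoting the Krull--Remak--Schmidt theorem in the form that produces a normal (central) automorphism carrying one indecomposable decomposition to another up to a permutation of the factors, and then killing that central automorphism via the coprimality of $|Z(G)|$ and $|G/[G,G]|$ for part (b) — is exactly the paper's proof, which invokes \cite[Theorem 2.4.8]{Sz1} for precisely this form of the theorem. Your preliminary worries about passing to $G/Z(G)$ or running the exchange argument by hand are correctly set aside; the black-box form handles both parts at once.
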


\begin{proof} An automorphism $\beta\in\Aut(G)$ is \emph{normal} if $\beta$ 
commutes with all inner automorphisms of $G$; equivalently, if 
$[\beta,G]\le Z(G)$. For such $\beta$, one can define a homomorphism $\delta\:G\too Z(G)$ by setting $\delta(g)= \beta(g)g^{-1}$, and $\delta$ factors through
$G/[G,G]$. In particular, if 
$\bigl(|Z(G)|,|G/[G,G]|\bigr)=1$, then the only normal automorphism of $G$ 
is the identity.

By the Krull-Remak-Schmidt theorem in the form stated in \cite[Theorem 
2.4.8]{Sz1} (and applied with $\Omega=1$ or $\Omega=\Inn(G)$), any two 
direct product decompositions of $G$ with indecomposable factors have the 
same number of factors, and there is always a normal automorphism of $G$ 
that sends one to the other up to a permutation of the factors . 
Points (a) and (b) follow immediately from this, applied to the 
decompositions of $G$ as the product of the $G_i$ and of the $\alpha(G_i)$. 
\end{proof}

One immediate consequence of Proposition \ref{Krull-Schmidt} is the 
following description of $\Out(G)$ when $G$ is a product of simple groups. 

\begin{Prop} \label{p:Out(G1x...xGk)}
Assume $G=\xxx{G}$, where $G_1,\dots,G_k$ are finite indecomposable 
groups and $\bigl(|Z(G)|,|G/[G,G]|\bigr)=1$. 
Let $\Gamma$ be the group of all $\gamma\in\Sigma_k$ such that 
$G_{\gamma(i)}\cong G_i$ for each $i$. Then there is an isomorphism 
	\[ \Phi_G\:\bigl(\Aut(G_1)\times\cdots\times\Aut(G_k)\bigr) 
	\rtimes\Gamma \Right5{\cong} \Aut(G) \] 
with the property that $\Phi_G(\beta_1,\dots,\beta_k)=\xxx{\beta}$ for 
each $k$-tuple of automorphisms $\beta_i\in\Aut(G_i)$. Also, $\Phi_G$ sends 
$\prod_{i=1}^k\Inn(G_i)$ isomorphically to $\Inn(G)$, and hence induces an 
isomorphism
	\[ \4\Phi_G\:\bigl(\Out(G_1)\times\cdots\times\Out(G_k)\bigr) 
	\rtimes\Gamma \Right5{\cong} \Out(G). \] 

To define $\Phi_G$ more precisely, fix isomorphisms $\lambda_{ij}\:G_i\too 
G_j$ for each $1\le i<j\le k$ such that $G_i\cong G_j$, chosen so that 
$\lambda_{i\ell}=\lambda_{j\ell}\lambda_{ij}$ whenever $G_i\cong G_j\cong 
G_\ell$, and set $\lambda_{ji}=\lambda_{ij}^{-1}$. Also, set 
$\lambda_{ii}=\Id_{G_i}$ for each $i$. Then $\Phi_G$ can be chosen so 
that for each $\gamma\in\Gamma$, 
	\[ \Phi_G(\gamma)(g_1,\dots,g_k) = 
	\bigl(\lambda_{\gamma^{-1}(1),1}(g_{\gamma^{-1}(1)}),\dots,
	\lambda_{\gamma^{-1}(k),k}(g_{\gamma^{-1}(k)})\bigr). \]
\end{Prop}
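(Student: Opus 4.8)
The plan is to build $\Phi_G$ out of two pieces and then verify it is a well-defined isomorphism. First I would fix the coherent family $\{\lambda_{ij}\}$ whose existence is asserted in the statement: partition $\{1,\dots,k\}$ into isomorphism classes, choose in each class a representative $i_0$ together with arbitrary isomorphisms $\lambda_{i_0,j}\:G_{i_0}\too G_j$ for the other $j$ in that class (and $\lambda_{i_0i_0}=\Id$), and set $\lambda_{ij}=\lambda_{i_0,j}\circ\lambda_{i_0,i}^{-1}$; a one-line check then gives $\lambda_{i\ell}=\lambda_{j\ell}\lambda_{ij}$, $\lambda_{ji}=\lambda_{ij}^{-1}$, and $\lambda_{ii}=\Id$. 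Using this family I would define $\Phi_G(\gamma)\in\Aut(G)$ by the displayed formula and check, directly from the cocycle relation, that $\gamma\mapsto\Phi_G(\gamma)$ is a homomorphism $\Gamma\too\Aut(G)$ with $\Phi_G(\gamma)^{-1}=\Phi_G(\gamma^{-1})$. I would then compute $\Phi_G(\gamma)\circ(\beta_1\times\cdots\times\beta_k)\circ\Phi_G(\gamma)^{-1}$ and observe that it equals $\beta_1'\times\cdots\times\beta_k'$ with $\beta_j'=\lambda_{\gamma^{-1}(j),j}\circ\beta_{\gamma^{-1}(j)}\circ\lambda_{j,\gamma^{-1}(j)}$. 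Taking this formula as the definition of the $\Gamma$-action on $\Aut(G_1)\times\cdots\times\Aut(G_k)$, the map $\Phi_G((\beta_i)_i,\gamma)=(\beta_1\times\cdots\times\beta_k)\circ\Phi_G(\gamma)$ is a homomorphism from the semidirect product essentially by construction, and it has the stated effect on $k$-tuples of automorphisms.

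Next I would prove $\Phi_G$ is bijective. Injectivity is immediate: if $(\beta_1\times\cdots\times\beta_k)\circ\Phi_G(\gamma)=\Id_G$ then $\Phi_G(\gamma)$ stabilizes each internal factor $G_i$; since $\Phi_G(\gamma)$ carries $G_i$ onto $G_{\gamma(i)}$ and the $G_i$ are distinct subgroups, $\gamma=1$, and then each $\beta_i=\Id$. Surjectivity is the step where the hypothesis is used: given $\alpha\in\Aut(G)$, Proposition~\ref{Krull-Schmidt}(b) applies because the $G_i$ are indecomposable and $\bigl(|Z(G)|,|G/[G,G]|\bigr)=1$, producing $\sigma\in\Sigma_k$ with $\alpha(G_i)=G_{\sigma(i)}$ for all $i$; as $\alpha$ restricts to an isomorphism $G_i\cong G_{\sigma(i)}$, we get $\sigma\in\Gamma$. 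Since $\Phi_G(\sigma)$ also carries $G_i$ onto $G_{\sigma(i)}$, the automorphism $\alpha\circ\Phi_G(\sigma)^{-1}$ stabilizes every factor and therefore equals $\beta_1\times\cdots\times\beta_k$ for suitable $\beta_i\in\Aut(G_i)$; hence $\alpha=\Phi_G((\beta_i)_i,\sigma)$.

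Finally I would treat inner automorphisms. Evaluating the formulas gives $\Phi_G(c_{g_1}\times\cdots\times c_{g_k},1)=c_{(g_1,\dots,g_k)}$, so $\Phi_G$ carries $\prod_i\Inn(G_i)$ (embedded with trivial $\Gamma$-coordinate) bijectively onto $\Inn(G)$. Moreover $\prod_i\Inn(G_i)$ is normal in $\bigl(\prod_i\Aut(G_i)\bigr)\rtimes\Gamma$: it is normal in $\prod_i\Aut(G_i)$, and it is $\Gamma$-invariant since conjugating an inner automorphism by an isomorphism $\lambda_{ij}$ yields an inner automorphism. Therefore $\Phi_G$ descends to an isomorphism $\4\Phi_G$ from $\bigl(\prod_i\Out(G_i)\bigr)\rtimes\Gamma$ onto $\Out(G)=\Aut(G)/\Inn(G)$ with the stated compatibility on products of automorphisms.

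Almost all of this is index bookkeeping; the only genuine input is Proposition~\ref{Krull-Schmidt}(b), which supplies the permutation $\sigma$ needed for surjectivity and is the single place where the coprimality of $|Z(G)|$ and $|G/[G,G]|$ enters. The point requiring the most care is the verification that $\gamma\mapsto\Phi_G(\gamma)$ respects composition, which rests on the coherence identity $\lambda_{i\ell}=\lambda_{j\ell}\lambda_{ij}$ for the chosen isomorphisms; everything else is formal.
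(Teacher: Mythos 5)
Your proof is correct and follows essentially the same route as the paper: both construct $\Phi_G$ from the factorwise automorphisms and the permutation part, and both invoke Proposition~\ref{Krull-Schmidt}(b) to obtain the permutation needed for surjectivity. The only cosmetic difference is that the paper first reduces (without loss of generality) to the case $G_i=G_j$ and $\lambda_{ij}=\Id$ whenever $G_i\cong G_j$, so that $\Phi_G(\gamma)$ is literally a coordinate permutation, whereas you carry the general coherent family $\{\lambda_{ij}\}$ through the computation and spell out the bookkeeping the paper leaves implicit.
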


\begin{proof} Without loss of generality, we can assume that $G_i=G_j$ and 
$\lambda_{ij}=\Id_{G_i}$ for each $i,j$ such that $G_i\cong G_j$. Thus 
$\Phi_G(\gamma)(g_1,\dots,g_k)= 
(g_{\gamma^{-1}(1)},\dots,g_{\gamma^{-1}(k)})$ for each $\gamma\in\Gamma$. 
Then $\Phi_G$ is clearly an injective homomorphism, and it factors 
through a homomorphism $\4\Phi_G$ as above since
$\Inn(G)=\Phi_G\bigl(\prod_{i=1}^k\Inn(G_i)\bigr)$. Each 
automorphism of $G$ permutes the factors by Proposition \ref{Krull-Schmidt}(b), 
and hence $\Phi_G$ is surjective. 
\end{proof}

In the next proposition, we describe one way to construct linking systems 
associated to products of fusion systems.

\begin{Prop} \label{prod.link1}
Assume Hypotheses \ref{hyp:prod.fus}. For each $1\le i\le k$, let 
$\call_i$ be a centric linking system associated to 
$\calf_i$, with structure functors $\delta_i$ and $\pi_i$. 
Let $\call$ be the category whose objects are the $\calf$-centric subgroups 
of $S$, and where for each $P,Q\in\Ob(\call)$,
	\[ \Mor_\call(P,Q) = \bigl\{ (\varphi_1,\dots,\varphi_k)\in
	\textstyle\prod_{i=1}^k\Mor_{\call_i}(P_i,Q_i) \,\big|\, 
	(\pi_1(\varphi_1),\dots,\pi_k(\varphi_k))(P)\le Q \bigr\}. \]
Define 
	\[ \calt_{\Ob(\call)}(S) \Right4{\delta} \call \Right4{\pi} \calf 
	\]
by setting, for all $P,Q\in\calf^c=\Ob(\call)$, 
	\begin{align*} 
	\delta_{P,Q}(g) &= \bigl((\delta_1)_{P_1,Q_1}(g_1),\dots, 
	(\delta_k)_{P_k,Q_k}(g_k) \bigr) && 
	\textup{all $g=(g_1,\dots,g_k)\in T_S(P,Q)$} \\
	\pi_{P,Q}(\varphi) &= \bigl(
	\pi_1(\varphi_1),\dots,\pi_k(\varphi_k)\bigr) && \textup{all 
	$\varphi=(\varphi_1,\dots,\varphi_k)\in\Mor_\call(P,Q)$.} 
	\end{align*}
Then the following hold:
\begin{enuma} 

\item The functors $\delta$ and $\pi$ make $\call$ into a centric linking 
system associated to $\calf$.

\item Let $\xxx\call$ be the product of the \emph{categories} 
$\call_i$. Define $\xi_\call\:\xxx{\call}\too\call$ by setting 
	\[ \xi_\call(P_1,\dots,P_k) = \xxx{P} 
	\qquad\textup{and}\qquad
	\xi_\call(\varphi_1,\dots,\varphi_k) = 
	(\varphi_1,\dots,\varphi_k). \]
Then $\xi_\call$ is an isomorphism of categories from 
$\xxx{\call}$ to the full subcategory 
$\5\call\subseteq\call$ whose objects are those $P\in\calf^c$ such that 
$P=\5P$; equivalently, the products $\xxx{P}$ 
for $P_i\in\calf_i^c$. Also, the following square commutes
	\[ \xymatrix@C=40pt@R=25pt{ 	
	\calt_{\Ob(\call_1)}(S_1) \times\cdots\times 
	\calt_{\Ob(\call_k)}(S_k) 
	\ar[r]^-{\eta}_-{\cong} \ar[d]_{\xxx{\delta}} &
	\calt_{\Ob(\5\call)}(S) \ar[d]^{\delta} \\
	\xxx{\call} \ar[r]^-{\xi_\call} & \call 
	} \] 
where $\eta$ is the natural isomorphism that sends $(P_1,\dots,P_k)$ to 
$\Xxx{P}$.

\item Let $\rho_i\:\call_i\too\call$ be the functor that sends 
$P_i\in\Ob(\call_i)$ to its product with the $S_j$ for $j\ne i$, and sends 
$\varphi_i\in\Mor(\call_i)$ to its product with $\Id_{S_j}$ for $j\ne i$. 
Then $\rho_i$ is injective on objects and on morphism sets. 
If $\alpha\in\Aut(\call)$ is such that 
$\alpha_S(\delta_S(S_i))=\delta_S(S_i)$ for each $i$, then 
$\alpha(\rho_i(\call_i))=\rho_i(\call_i)$ for each $i$.

\end{enuma}
\end{Prop}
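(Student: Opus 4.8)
The plan is to verify the three parts in order, with most of the work going into part (a). For part (a), I would check axioms (A1), (A2), (B), (C) from Definition \ref{d:Linking} directly, using the fact that each $\call_i$ is a centric linking system associated to $\calf_i$ and the explicit formulas for $\delta$ and $\pi$. The object set $\Ob(\call)=\calf^c$ is closed under $\calf$-conjugacy and overgroups and contains $\calf^{cr}$ by general theory, and every $\calf$-centric subgroup is fully centralized in its conjugacy class, so (A1) is routine. For (A2) and (C), the key point is that these axioms are stated componentwise in the $\call_i$, so they transfer to $\call$ once one checks that the product morphism genuinely lands in $\Mor_\call(P,Q)$ (i.e.\ carries $P$ into $Q$), which is built into the definition. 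The one genuine verification is the freeness/orbit-map part of axiom (A1)(=(A2) in the (A1)/(A2) numbering of Definition \ref{d:Linking}): for $P\in\calf^c$, one has $C_S(P)=C_{S_1}(P_1)\times\cdots\times C_{S_k}(P_k)$ by Lemma \ref{prod.fus} (or rather its proof), so the free action of $C_S(P)$ on $\Mor_\call(P,Q)$ via $\delta_P$ and right composition decomposes as the product of the free actions of $C_{S_i}(P_i)$ on $\Mor_{\call_i}(P_i,Q_i)$, and $\pi_{P,Q}$ is visibly the orbit map since each $\pi_i$ is. This gives (a).

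For part (b), the map $\xi_\call$ is well defined because if $P_i\in\calf_i^c$ then $\xxx{P}\in\calf^c$ by Lemma \ref{prod.fus} (each factor is centric and $Z(\xxx{P})=Z(P_1)\times\cdots\times Z(P_k)\le\xxx{P}$ trivially). It is a functor by inspection of the formulas, and it is bijective on objects onto $\Ob(\5\call)=\{P\in\calf^c : P=\5P\}$ with inverse $P\mapsto(P_1,\dots,P_k)$. On morphisms, $\Mor_{\xxx{\call}}(\prod P_i,\prod Q_i)=\prod\Mor_{\call_i}(P_i,Q_i)$, and for such products of objects the constraint ``$(\pi_1(\varphi_1),\dots)(P)\le Q$'' in the definition of $\Mor_\call$ is automatic, so $\xi_\call$ is bijective on all morphism sets. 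Thus $\xi_\call$ is an isomorphism onto $\5\call$. Commutativity of the displayed square with $\eta$ is immediate from the definition of $\delta$ as the product of the $\delta_i$ (componentwise) and of $\eta$ as the product map on transporter categories.

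For part (c), injectivity of $\rho_i$ on objects and morphisms follows from injectivity of each $\delta_j$ (Proposition \ref{L-prop}(a)) and the fact that $\rho_i$ pads with the fixed objects $S_j$ and identity morphisms $\delta_{S_j}(1)$ for $j\ne i$; distinct objects or morphisms of $\call_i$ give distinct tuples. For the invariance statement, suppose $\alpha\in\Aut(\call)$ satisfies $\alpha_S(\delta_S(S_i))=\delta_S(S_i)$ for each $i$. Applying Proposition \ref{AutI} (with $\call_0=\call$), there is $\beta\in\Aut(\calf)\subseteq\Aut(S)$ with $\alpha(\delta_S(g))=\delta_S(\beta(g))$ for all $g\in S$ and $\alpha(P)=\beta(P)$ for all objects $P$; the hypothesis forces $\beta(S_i)=S_i$ for each $i$, hence $\beta(\xxx[j\ne i]{S_j}\,)=\xxx[j\ne i]{S_j}$ as well. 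The subcategory $\rho_i(\call_i)$ consists of objects of the form $P_i\times\prod_{j\ne i}S_j$ together with morphisms that, under $\pi$, restrict to $\Id$ on $\prod_{j\ne i}S_j$ and whose $j$-th components ($j\ne i$) are distinguished identity morphisms $\delta_{S_j}(1)$; I would characterize $\rho_i(\call_i)$ intrinsically inside $\call$ this way and then observe that each clause of that characterization is preserved by $\alpha$ — the object shape is preserved because $\beta$ fixes $S_j$ for $j\ne i$, and the morphism shape is preserved because $\alpha$ sends inclusions to inclusions, distinguished subgroups to distinguished subgroups, and intertwines $\pi$ with $\8\beta$ (the commuting right-hand square of \eqref{AutI-diag}), so $\pi(\alpha(\psi))=\beta\circ\pi(\psi)\circ\beta^{-1}$ still restricts to the identity on $\prod_{j\ne i}S_j$. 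The main obstacle I anticipate is precisely formulating this intrinsic characterization of $\rho_i(\call_i)$ cleanly enough that ``$\alpha$ preserves it'' is a short check rather than a long case analysis; once that characterization is pinned down using the inclusion/distinguished-subgroup conditions on $\Aut(\call)$ and the $\pi$-equivariance from Proposition \ref{AutI}, the verification is mechanical.
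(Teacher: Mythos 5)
Your treatment of parts (a) and (b) is essentially the same as the paper's: (A1), (B), (C) are routine, (A2) reduces to $C_S(P)=Z(P_1)\times\cdots\times Z(P_k)$ and the componentwise freeness/orbit-map property (together with the fact, from \cite[Theorem~I.6.6]{AKO}, that every $\varphi\in\homf(P,Q)$ restricts from a product morphism $\prod\varphi_i$, which you need for surjectivity of $\pi$); and (b) is immediate from Lemma~\ref{prod.fus} and the definitions.

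Part (c) has a genuine gap. You correctly reduce the object statement to $\beta=\til\mu_\call(\alpha)$ fixing $S_i$ and $\4S_i$, and you correctly observe that $\pi(\alpha(\rho_i(\varphi_i)))=\8\beta(\pi(\rho_i(\varphi_i)))$ restricts to the identity on $\4S_i=\prod_{j\ne i}S_j$. But this $\pi$-level condition only forces each $j$-th component of $\alpha(\rho_i(\varphi_i))$, for $j\ne i$, to lie in $\delta_{S_j}(Z(S_j))$ — axiom (A2) leaves a free central ambiguity — not to equal $\delta_{S_j}(1)$, which is what membership in $\rho_i(\call_i)$ actually requires. The second clause of your proposed ``intrinsic characterization'' (``whose $j$-th components are distinguished identity morphisms $\delta_{S_j}(1)$'') is not intrinsic to the abstract triple $(\call,\delta,\pi)$: it refers to the product decomposition of $\Mor_\call$, which $\alpha\in\Aut(\call)$ has no reason to respect. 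So the ``mechanical verification'' you anticipate does not close; you have no argument that the central correction $z'\in Z(\4S_i)$ vanishes.

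The paper kills that correction by a different mechanism: it shows $z'=1$ whenever $\varphi_i$ is an automorphism of order prime to $p$ (because $z'$ is a $p$-element) and also whenever $\rho_i(\varphi_i)=\delta_S(g_i)$ for $g_i\in S_i$, and then invokes Alperin's fusion theorem for linking systems (\cite[Theorem~1.12]{AOV1} together with $\delta_P(N_S(P))\in\sylp{\Aut_{\call_i}(P)}$) to express every morphism of $\call_i$ as a composite of restrictions of such generators. Without some such generation argument, the ``characterize and preserve'' strategy cannot distinguish $\rho_i(\call_i)$ from the larger subcategory of morphisms whose $j$-th components ($j\ne i$) merely lie in $\delta_{S_j}(Z(S_j))$.
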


\begin{proof} By Lemma \ref{prod.fus}, for each $P\in\Ob(\call)=\calf^c$, 
$\pr_i(P)$ is $\calf_i$-centric for each $i$. So the definitions of 
$\Mor_\call(P,Q)$ and $\delta$ make sense.

\smallskip

\noindent\textbf{(a) } Axiom (A1) is clear. Fix $P,Q\in\Ob(\call)$ and 
set $P_i=\pr_i(P)$ and $Q_i=\pr_i(Q)$; then 
$C_S(P)=Z(P)=Z(P_1)\times\cdots\times Z(P_k)$. So by axiom (A2) for the 
$\call_i$, for $\varphi,\varphi'\in\Mor_\call(P,Q)$, 
$\pi(\varphi)=\pi(\varphi')$ if and only if 
$\varphi'=\varphi\circ\delta_P(z)$ for some $z\in Z(P)$. 
For each $P,Q\in\Ob(\call)$, each $\varphi\in\homf(P,Q)$ is the restriction 
of some morphism $\prod_{i=1}^k\varphi_i\in\homf(\5P,\5Q)$ 
(see \cite[Theorem I.6.6]{AKO}), and 
hence the surjectivity of $\pi$ on morphism sets follows from that of the 
$\pi_i$.
The rest of (A2) 
(the effect of $\varphi$ and $\pi$ on objects) is clear. Likewise, axioms 
(B) and (C) for $\call$ follow immediately from the corresponding axioms 
for the $\call_i$. Thus $\call$ is a centric linking system with structure 
functors $\delta$ and $\pi$.

\smallskip

\noindent\textbf{(b) } Both statements ($\xi_\call$ is an isomorphism of 
categories and the diagram commutes) are immediate from the definitions 
and since $\xxx{P}$ is $\calf$-centric if $P_i$ is 
$\calf_i$-centric for each $i$ (Lemma \ref{prod.fus}).

\smallskip

\noindent\textbf{(c) } Let $\alpha\in\Aut(\call)$ be such that 
$\alpha(\delta_S(S_i))=\delta_S(S_i)\le\Aut_\call(S)$ for each $i$. We must 
show that $\alpha(\rho_i(\call_i))=\rho_i(\call_i)$ for each $i$. Fix some 
$i$, let $\4S_i\le S$ be the product of the $S_j$ for $j\ne i$, and 
identify $S_i\times\4S_i$ with $S$. Thus $\rho_i(P_i)=P_i\times\4S_i$ and 
$\rho_i(\varphi_i)=(\varphi_i\times\Id_{\4S_i})$ for $P_i\in\Ob(\call_i)$ 
and $\varphi_i\in\Mor(\call_i)$. 

Set $\beta=\til\mu_\call(\alpha)\in\Aut(\calf)$ (see Definition 
\ref{d:til.mu}). By Proposition \ref{AutI}, $\alpha(P)=\beta(P)$ for 
$P\in\Ob(\call)$, and $\pi\circ\alpha=\8\beta\circ\pi$ as functors from 
$\call$ to $\calf$. By assumption, $\beta(S_i)=S_i$ and 
$\beta(\4S_i)=\4S_i$. So for each $P_i\in\Ob(\call_i)$, we have 
$\alpha(\rho_i(P_i))=\beta(P_i\times\4S_i)=P_i^*\times\4S_i$ for some 
$P_i^*\le S_i$. Thus $\alpha$ permutes the objects in $\rho_i(\call_i)$.

Now fix a morphism $\varphi_i\in\Mor_{\call_i}(P_i,Q_i)$. Since 
$\pi\circ\alpha=\8\beta\circ\pi$, we have 
	\[ \pi(\alpha(\rho_i(\varphi_i))) = 
	\8\beta(\pi(\varphi_i\times\Id_{\4S_i})) = 
	\8\beta(\pi(\varphi_i)\times\Id_{\4S_i}) = \pi(\psi)\times\Id_{\4S_i} 
	= \pi(\rho_i(\psi)) \]
for some $\psi\in\Mor(\call_i)$. So by axiom (A2) in Definition 
\ref{d:Linking}, $\alpha(\rho_i(\varphi_i))=\rho_i(\psi)\delta_S(z,z')$ 
for some $z\in Z(P_i)$ and $z'\in Z(\4S_i)$. Since $z'$ has $p$-power 
order, this shows that $z'=1$ and 
$\alpha(\rho_i(\varphi_i))\in\Mor(\rho_i(\call_i))$ if $\varphi_i$ is an 
automorphism of order prime to $p$. Also, 
	\[ 
	\alpha(\rho_i(\delta_{S_i}(g_i))) = \alpha(\delta_S(g_i))= 
	\delta_S(\beta(g_i))\in\Mor(\rho_i(\call_i))  \]
for all $g\in S_i$ since $\beta(g_i)\in\beta(S_i)=S_i$. 

By \cite[Theorem 1.12]{AOV1}, each morphism in $\call_i$ is a composite of 
restrictions of elements of $\Aut_{\call_i}(P)$ for fully normalized 
subgroups $P\in\calf_i^{cr}$. Also, when $P$ is fully normalized, 
$\delta_P(N_S(P))\in\sylp{\Aut_{\call_i}(P)}$ (see \cite[Proposition 
III.4.2(c)]{AKO}), and so $\Aut_{\call_i}(P)$ 
is generated by $\delta_P(N_S(P))$ and elements of order prime to $P$. Thus 
each morphism in $\call_i$ is a composite of restrictions of 
automorphisms of order prime to $p$ and elements of $\delta_i(S_i)$, and so 
$\alpha(\rho_i(\call_i))=\rho_i(\call_i)$. 
\end{proof}

As one example, if $G_1,\dots,G_k$ are finite groups, $S_i\in\sylp{G_i}$, 
and $\call_i=\call_{S_i}^c(G_i)$, then it is an easy exercise to show 
that the linking system $\call$ defined in Proposition \ref{prod.link1} is 
the centric linking system of $\xxx{G}$.

The subcategory $\5\call\subseteq\call$ defined in Proposition 
\ref{prod.link1}(b) is not a linking system, since $\Ob(\5\call)$ is not 
closed under overgroups. However, 
	\[ \Ob(\5\call) = \bigl\{ P=\xxx{P} \,\big|\, P_i\le S_i,~ 
	P\in\calf^c \bigr\} \]
does include all subgroups of $S$ that are $\calf$-centric and 
$\calf$-radical: this is shown in \cite[Lemma 3.1]{AOV1} when $k=2$ and 
follows in the general case by iteration. So Proposition \ref{AutI} 
applies to the automorphism group
	\[ \Aut(\5\call) = \bigl\{ \alpha\in\Aut_{\textup{cat}}(\5\call) 
	\,\big|\, \alpha(\delta_{P,S}(1))=\delta_{\alpha(P),S}(1),~ 
	\alpha(\delta_P(P))=\delta_{\alpha(P)}(\alpha(P))~ 
	\forall~ P\in\Ob(\5\call) \bigr\} . \]

By analogy with finite groups, a saturated fusion system is 
\emph{indecomposable} if it is not the direct product of two proper fusion 
subsystems. 

\begin{Lem} \label{prod.link2}
Assume Hypotheses \ref{hyp:prod.fus}. Let $\call_1,\dots,\call_k$ be 
centric linking systems associated to $\calf_1,\dots,\calf_k$, 
respectively, and let $\call$ be the centric linking system associated to 
$\calf$ defined as in Proposition \ref{prod.link1}. Let 
$\5\call\subseteq\call$ be the full subcategory with objects the subgroups 
$\xxx{P}\le S$ for $P_i\in\calf_i^c=\Ob(\call_i)$, and let 
$\xi_\call\:\prod_{i=1}^k\call_i\xto{~\cong~}\5\call\le\call$ be as in Proposition 
\ref{prod.link1}(b). Then 
\begin{enuma} 
\item $\xi_\call$ induces a homomorphism 
	\[ c_\xi\: \Aut(\call_1)\times\cdots\times\Aut(\call_k) 
	\Right5{} \Aut(\5\call) \]
that sends $(\alpha_1,\dots,\alpha_k)$ to 
$\xi_\call(\xxx{\alpha})\xi_\call^{-1}$;

\item each $\5\alpha\in \Aut(\5\call)$ has a unique extension 
$E_\call(\5\alpha)$ to an automorphism of $\call$, in this way defining 
an injective homomorphism $E_\call\:\Aut(\5\call)\too\Aut(\call)$; and

\item if $Z(\calf_i)=1$ and $\calf_i$ is indecomposable for each 
$i$, then $E_\call$ is an isomorphism.

\end{enuma}
\end{Lem}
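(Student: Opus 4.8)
The plan is to treat (a) as formal manipulation, prove (b) by extending an automorphism of $\5\call$ over all of $\call$ using the restriction operation in a linking system, and reduce (c) to a Krull--Remak--Schmidt uniqueness statement for the factorization $\calf=\xxx\calf$. For (a): given $(\alpha_1,\dots,\alpha_k)\in\Aut(\call_1)\times\cdots\times\Aut(\call_k)$, the functor $\xxx\alpha$ is a self-equivalence of $\xxx\call$ that sends inclusions to inclusions and distinguished subgroups to distinguished subgroups, because in $\xxx\call$ an inclusion is a tuple of inclusions and the distinguished subgroup of $\Aut_{\xxx\call}(\xxx P)$ is the product of the $\delta_{P_i}(P_i)$. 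By Proposition \ref{prod.link1}(b) (and the explicit formulas for $\delta$ on $\5\call$) the same is true of $\xi_\call$, so conjugation by $\xi_\call$ sends $\xxx\alpha$ into $\Aut(\5\call)$; the resulting map $(\alpha_1,\dots,\alpha_k)\mapsto\xi_\call(\xxx\alpha)\xi_\call^{-1}$ is obviously a homomorphism $c_\xi$.

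For (b), fix $\5\alpha\in\Aut(\5\call)$ and put $\beta=\til\mu_{\5\call}(\5\alpha)\in\Aut(\calf)$, so that $\5\alpha(P)=\beta(P)$ for $P\in\Ob(\5\call)$ by Proposition \ref{AutI}. The key observation is that $\beta$ respects the product structure:
\[
(\star)\qquad\beta(\5P)=\5{\beta(P)}\qquad\text{for every }P\in\calf^c.
\]
Indeed, $\5P\in\Ob(\5\call)$ when $P\in\calf^c$ by Lemma \ref{prod.fus}, and $\5P$ is the least member of $\Ob(\5\call)$ containing $P$; since $\beta$ agrees on $\Ob(\5\call)$ with the category automorphism $\5\alpha$ it permutes $\Ob(\5\call)$, and being an automorphism of $S$ it is order-preserving in both directions, so it carries the least member of $\Ob(\5\call)$ above $P$ to the least one above $\beta(P)$, namely $\5{\beta(P)}$. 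Next, every $\psi\in\Mor_\call(P,Q)$ is the restriction to $(P,Q)$ of a unique $\til\psi\in\Mor_\call(\5P,\5Q)$: writing $\psi$ coordinatewise as $(\psi_1,\dots,\psi_k)$ with $\psi_i\in\Mor_{\call_i}(P_i,Q_i)$, the morphism $\til\psi:=\xi_\call(\psi_1,\dots,\psi_k)$ lies in $\Mor_\call(\5P,\5Q)$ because $\prod_i\pi_i(\psi_i)(P_i)\le\5Q$, it restricts to $\psi$, and any extension of $\psi$ to $(\5P,\5Q)$ must have the coordinates $\psi_i$. I then define $E_\call(\5\alpha)$ by $P\mapsto\beta(P)$ on objects and $\psi\mapsto\5\alpha(\til\psi)|_{\beta(P),\beta(Q)}$ on morphisms; using $(\star)$ one checks that this restriction is legitimate, that $\widetilde{\psi'\circ\psi}=\til\psi'\circ\til\psi$ and restrictions compose (so $E_\call(\5\alpha)$ is a functor), that it sends inclusions to inclusions and distinguished subgroups to distinguished subgroups (by commutativity of \eqref{AutI-diag} for $\5\alpha$ together with $(\star)$ and the fact that restrictions of inclusions, resp. of distinguished elements, are again of that form), that $E_\call(\5\alpha)|_{\5\call}=\5\alpha$, and that $E_\call(\5\alpha^{-1})$ is a two-sided inverse --- so $E_\call(\5\alpha)\in\Aut(\call)$. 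Uniqueness is then immediate: any $\Psi\in\Aut(\call)$ extending $\5\alpha$ commutes with restrictions (it preserves inclusions) and induces the same $\beta$ (as $S\in\Ob(\5\call)$), hence $\Psi(\psi)=\Psi(\til\psi)|_{\beta(P),\beta(Q)}=\5\alpha(\til\psi)|_{\beta(P),\beta(Q)}$. So $E_\call$ is well defined, it is a homomorphism, and it is injective because $E_\call(\5\alpha)|_{\5\call}=\5\alpha$.

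For (c): by (b), $E_\call$ is onto exactly when every $\alpha\in\Aut(\call)$ preserves the full subcategory $\5\call$, equivalently when $\beta=\til\mu_\call(\alpha)$ maps $\Ob(\5\call)$ onto itself, equivalently when $\beta$ respects the product decomposition of $S$. By Proposition \ref{prod.link1}(c) it is enough to show that $\beta$ permutes the direct factors $S_1,\dots,S_k$ of $S$: if $\beta$ carries the $i$-th factor isomorphically onto the $\sigma(i)$-th for some $\sigma\in\Sigma_k$, then it restricts to an isomorphism $\calf_i\xto{\,\cong\,}\calf_{\sigma(i)}$ sending $\calf_i^c$ to $\calf_{\sigma(i)}^c$, hence sends every $\xxx P\in\Ob(\5\call)$ to a product of $\calf_\ell^c$-subgroups, giving $\alpha(\5\call)=\5\call$ and, by (b), $\alpha=E_\call(\alpha|_{\5\call})$. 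That $\beta$ permutes the $S_i$ is precisely the assertion that the direct factorization $\calf=\xxx\calf$ into indecomposables is unique up to permutation --- a Krull--Remak--Schmidt phenomenon for saturated fusion systems --- and it is here that the hypotheses enter: indecomposability of the $\calf_i$ identifies them as the indecomposable direct factors, while the triviality of $Z(\calf)=Z(\calf_1)\times\cdots\times Z(\calf_k)$ (since each $Z(\calf_i)=1$) removes the ambiguity by a ``central automorphism'' that the group-theoretic version, Proposition \ref{Krull-Schmidt}(a), would otherwise allow.

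The one substantial point, and the place I expect to spend real effort, is this Krull--Remak--Schmidt statement for fusion systems. The plan is to deduce it from the group-theoretic version: either by applying Proposition \ref{Krull-Schmidt} to the group $\Aut_\call(S)=\Aut_{\call_1}(S_1)\times\cdots\times\Aut_{\call_k}(S_k)$ equipped with its distinguished subgroup $\delta_S(S)=\prod_i\delta_{S_i}(S_i)$ --- here $C_{S_i}(\call_i)=Z(\calf_i)=1$ by Lemma \ref{centric<=>centric}(a), which is the input that rules out central ambiguities --- or, failing that, by a direct argument showing that a strongly closed subgroup of $\calf$ which is a direct factor in the appropriate sense is a product of some of the $S_i$, in the style of Aschbacher's treatment of direct factors. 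Everything else above is routine bookkeeping with restrictions and the linking-system axioms.
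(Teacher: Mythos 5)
Your treatment of parts (a) and (b) matches the paper's in substance: the same appeal to Proposition \ref{AutI} to produce $\beta$, the same observation that $\beta(\5P)=\5{\beta(P)}$ because both sides are the least objects of $\5\call$ above $P$ and $\beta(P)$, and the same mechanism (a morphism of $\call$ from $P$ to $Q$ is determined by, and extends uniquely to, a morphism of $\5\call$ from $\5P$ to $\5Q$) for defining the extension and checking uniqueness.

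The issue is in (c), and you have named it yourself: everything reduces to a Krull--Remak--Schmidt theorem for fusion systems, namely that under the hypotheses $Z(\calf_i)=1$ and $\calf_i$ indecomposable, any $\beta\in\Aut(\calf)$ permutes the factors $\calf_i$, and hence the $S_i$. The paper does not prove this; it cites \cite[Corollary 5.3]{O-KRS} and is done. You do not have a proof of it, and the two routes you sketch are doubtful. The route via Proposition \ref{Krull-Schmidt} applied to $\Aut_\call(S)=\prod_i\Aut_{\call_i}(S_i)$ needs the factors $\Aut_{\call_i}(S_i)$ to be indecomposable as groups and needs $\bigl(|Z(\Aut_\call(S))|,|\Aut_\call(S)^{\textup{ab}}|\bigr)=1$; neither follows from indecomposability of $\calf_i$ and $Z(\calf_i)=1$ (e.g.\ $\Aut_{\call_i}(S_i)$ can easily decompose even when $\calf_i$ does not, and $\Aut_\call(S)$ can have nontrivial center and abelianization of common order). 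Your hedge ``by Lemma \ref{centric<=>centric}(a), which is the input that rules out central ambiguities'' controls $Z(\calf_i)$, not $Z(\Aut_{\call_i}(S_i))$, so the obstruction to applying Proposition \ref{Krull-Schmidt}(b) is not removed. The fallback you mention (a direct argument in the style of Aschbacher's treatment of direct factors) is exactly the content of the cited result, so gesturing at it does not discharge the gap. Until the uniqueness of the factorization $\calf=\xxx{\calf}$ into indecomposables with trivial center is actually established or correctly cited, (c) is incomplete.

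One smaller slip: in (c) you invoke Proposition \ref{prod.link1}(c) as the step that converts ``$\beta$ permutes the $S_i$'' into ``$\alpha(\5\call)=\5\call$.'' That proposition only treats automorphisms fixing each $\delta_S(S_i)$; here the $S_i$ may be permuted nontrivially. The correct step is elementary and is what the paper does: if $\beta(S_i)=S_{\gamma(i)}$ for all $i$, then for $P=\xxx{P}\in\Ob(\5\call)$ one has $\beta(P)=\prod_i\beta(P_i)$, a product over the factors, hence again in $\Ob(\5\call)$. No appeal to Proposition \ref{prod.link1}(c) is needed or appropriate.
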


\begin{proof} \textbf{(a) } This formula clearly defines a homomorphism to the 
group $\Aut_{\textup{cat}}(\5\call)$ of all automorphisms of $\5\call$ as a 
category. That $\xi_\call(\xxx{\alpha})\xi_\call^{-1}$ (for 
$\alpha_i\in\Aut(\call_i)$) sends inclusions in $\5\call$ to inclusions and 
sends distinguished subgroups to distinguished subgroups 
follows from the commutativity of the square in 
Proposition \ref{prod.link1}(b).

\smallskip

\noindent\textbf{(b) } We will show that each 
$\5\alpha\in\Aut(\5\call)$ extends to some $\alpha\in\Aut(\call)$. By the 
definition in Proposition \ref{prod.link1}, each morphism in $\call$ is a 
restriction of a morphism in $\5\call$, and hence there is at most one such 
extension. So upon setting $E_\call(\5\alpha)=\alpha$, we get a well 
defined injective homomorphism from $\Aut(\5\call)$ to $\Aut(\call)$.

Fix $\5\alpha\in\Aut(\5\call)$, and let 
$\beta=\til\mu_{\call}(\5\alpha)\in\Aut(\calf)$ be the automorphism of 
Proposition \ref{AutI} and Definition \ref{d:til.mu}. Thus 
$\pi\circ\5\alpha=\8\beta\circ\pi$, and $\5\alpha(P)=\beta(P)$ for all 
$P\in\Ob(\5\call)$. In terms of $\call$ and $\5\call$, the definition 
of morphisms in Proposition 3.5 takes the form 
	\beqq \Mor_\call(P,Q) = \bigl\{ \psi\in\Mor_{\5\call}(\5P,\5Q) 
	\,\big|\, \pi(\psi)(P)\le Q \bigr\} \label{e:MorL1xLk} \eeqq
for all $P,Q\in\Ob(\call)$,
where $\5P$ and $\5Q$ are as in Hypotheses \ref{hyp:prod.fus}. Also, 
$\beta(\5P)=\5{\beta(P)}$ for each $P\in\Ob(\call)$, since $\5P$ and 
$\5{\beta(P)}$ are the unique minimal objects of $\5\call$ containing $P$ 
and $\beta(P)$, and since $\beta$ permutes the objects of $\5\call$ and of 
$\call$. (We are \emph{not} assuming here that $\beta$ permutes the factors 
$S_i$.)

For all $P,Q\in\Ob(\call)$ and 
$\psi\in\Mor_\call(P,Q)\subseteq\Mor_{\5\call}(\5P,\5Q)$, we have 
	\[ \pi(\5\alpha(\psi))(\beta(P)) = \8\beta(\pi(\psi))(\beta(P)) 
	= \beta(\pi(\psi)(P)) \le \beta(Q): \]
the first equality since $\pi\circ\5\alpha=\8\beta\circ\pi$ and the 
inequality since $\pi(\psi)(P)\le Q$ by \eqref{e:MorL1xLk}.
So $\5\alpha(\psi)\in\Mor_\call(\beta(P),\beta(Q))$ by 
\eqref{e:MorL1xLk} again. We can thus define $\alpha\in\Aut(\call)$ 
extending $\5\alpha$ by setting $\alpha(P)=\beta(P)$ for all $P$, and 
letting $\alpha_{P,Q}$ be the restriction of $\5\alpha_{\5P,\5Q}$ for 
$P,Q\in\Ob(\call)$.

\smallskip

\noindent\textbf{(c) } Now assume that $Z(\calf_i)=1$ and $\calf_i$ is 
indecomposable for each $i$. By \cite[Corollary 5.3]{O-KRS}, $\calf$ has a 
unique factorization as a product of indecomposable fusion systems. 

Fix $\alpha\in\Aut(\call)$, and set 
$\beta=\til\mu_\call(\alpha)\in\Aut(\calf)$ (Definition 
\ref{d:til.mu}). 
By the uniqueness of the 
factorization, there is $\gamma\in\Sigma_k$ such that 
$\8\beta(\calf_i)=\calf_{\gamma(i)}$ for each $1\le i\le k$. 
In particular, $\beta(S_i)=S_{\gamma(i)}$ for each $i$. So for each object 
$P=\xxx{P}$ in $\5\call$, $\beta(P)=\prod_{i=1}^k\beta(P_i)$ 
is also an object in $\5\call$. Hence $\alpha(\5\call)=\5\call$ and 
$\alpha=E_\call(\alpha|_{\5\call})$. Since $\alpha\in\Aut(\call)$ was 
arbitrary, $E_\call$ is onto.
\end{proof}

We are now ready to prove our main results concerning the automorphism 
group of a product of linking systems. 

\begin{Prop} \label{p:prod.tame}
Assume Hypotheses \ref{hyp:prod.fus}. Let $\call_i$ be a centric linking 
system associated to $\calf_i$ for each $1\le i\le k$, and let $\call$ 
be the centric linking system associated to $\calf$ defined as in 
Proposition \ref{prod.link1}. Set 
	\[ \Gamma = \bigl\{ \sigma\in\Sigma_k \,\big|\, 
	\calf_{\sigma(i)}\cong\calf_i ~\textup{for each $1\le i\le k$} 
	\bigr\}. \]
\begin{enuma} 

\item There is an injective homomorphism 
	\[ \Phi_\call\:\bigl(\Aut(\call_1)\times\cdots\times\Aut(\call_k)\bigr)
	\rtimes\Gamma \Right5{} \Aut(\call) \]
with the property that for each 
$(\alpha_1,\dots,\alpha_k)\in\prod_{i=1}^k\Aut(\call_i)$, 
$(P_1,\dots,P_k)\in\prod_{i=1}^k\Ob(\call_i)$, and 
$(\varphi_1,\dots,\varphi_k)\in\textstyle\prod_{i=1}^k\Mor(\call_i)$, 
we have 
	\beqq \begin{split} 
	\Phi_\call\bigl(\alpha_1,\dots,\alpha_k\bigr)
	\bigl(\xxx{P}\bigr) &= 
	\alpha_1(P_1)\times\cdots\times\alpha_k(P_k) \\
	\Phi_\call\bigl(\alpha_1,\dots,\alpha_k\bigr)
	\bigl(\varphi_1,\dots,\varphi_k\bigr) &= 
	\bigl(\alpha_1(\varphi_1),\dots,\alpha_k(\varphi_k)\bigr) 
	\end{split} \label{e:Phi0-def} \eeqq 
for $\alpha_i\in\Aut(\call_i)$. Furthermore, 
	\[ \Phi_\call\bigl(\textstyle\prod_{i=1}^k\Aut(\call_i)\bigr)
	=\Aut^0(\call) \defeq \bigl\{ \alpha\in\Aut(\call) \,\big|\, 
	\alpha_S(\delta_S(S_i))=\delta_S(S_i) ~\textup{for each $1\le i\le 
	k$} \bigr\}. \]

\item If $Z(\calf)=1$, and $\calf_i$ is indecomposable for each $i$, 
then $\Phi_\call$ is an isomorphism, and induces an isomorphism  
	\[ \4\Phi_\call\:\bigl(\Out(\call_1)\times\cdots\times
	\Out(\call_k)\bigr) \rtimes\Gamma \Right5{\cong} \Out(\call). \]

\item Assume that $Z(\calf)=1$, and that $\calf_i$ is indecomposable and 
tame for each $i$. Then $\calf$ is tame. If $G_1,\dots,G_k$ are finite 
groups such that $O_{p'}(G_i)=1$ and $\calf_i$ is tamely realized by $G_i$ 
for each $i$, and such that $\calf_i\cong\calf_j$ implies $G_i\cong G_j$, 
then $\calf$ is tamely realized by the product $\xxx{G}$.

\end{enuma}
\end{Prop}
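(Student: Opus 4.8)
The plan is to build $\Phi_\call$ as the product of a ``diagonal'' homomorphism, essentially $E_\call\circ c_\xi$ from Lemma \ref{prod.link2}, and a ``permutation'' homomorphism defined on $\Gamma$, and then to read off its image and kernel from Proposition \ref{prod.link1}, Lemma \ref{prod.link2}, and Proposition \ref{AutI}. For part~(a), I would first fix, for each pair $i,j$ with $\calf_i\cong\calf_j$, an isomorphism of linking systems $\lambda_{ij}\colon\call_i\to\call_j$ lying over an isomorphism $S_i\cong S_j$, chosen coherently (so that $\lambda_{i\ell}=\lambda_{j\ell}\lambda_{ij}$ whenever $\calf_i\cong\calf_j\cong\calf_\ell$, and $\lambda_{ii}=\Id$); this is possible by Theorem \ref{t:unique.l.s.}, using the standard device of choosing one representative per isomorphism class as in Proposition \ref{p:Out(G1x...xGk)}. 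On $\Aut(\call_1)\times\cdots\times\Aut(\call_k)$ I set $\Phi_\call=E_\call\circ c_\xi$; this is injective since $c_\xi$ is injective ($\xi_\call$ being an isomorphism of categories) and $E_\call$ is injective (Lemma \ref{prod.link2}(b)), and it satisfies \eqref{e:Phi0-def} by construction. For $\sigma\in\Gamma$, I let $\Phi_\call(\sigma)$ be the automorphism of $\5\call\cong\call_1\times\cdots\times\call_k$ permuting the factors by means of the $\lambda_{ij}$ — exactly as $\Phi_G(\gamma)$ is defined in Proposition \ref{p:Out(G1x...xGk)} — followed by the extension $E_\call$ to $\call$; the semidirect-product relations then follow directly from these formulas. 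The inclusion $\Phi_\call\bigl(\prod_i\Aut(\call_i)\bigr)\subseteq\Aut^0(\call)$ is immediate from \eqref{e:Phi0-def}. For the reverse inclusion, given $\alpha\in\Aut^0(\call)$, set $\beta=\til\mu_\call(\alpha)$ (Proposition \ref{AutI}); then $\beta(S_i)=S_i$ for each $i$ (since $\alpha_S$ fixes $\delta_S(S_i)$ and $\delta_S$ is injective), so $\beta$ fixes every product subgroup, $\alpha$ preserves $\5\call$, and $\alpha=E_\call(\alpha|_{\5\call})$; moreover $\alpha$ preserves each $\rho_i(\call_i)$ by Proposition \ref{prod.link1}(c), so, transported through $\xi_\call$, $\alpha|_{\5\call}=c_\xi(\alpha_1,\dots,\alpha_k)$ for suitable $\alpha_i\in\Aut(\call_i)$, i.e.\ $\alpha=\Phi_\call(\alpha_1,\dots,\alpha_k)$. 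Finally, any $\sigma\ne1$ in $\Gamma$ moves some $\delta_S(S_i)$, so $\Phi_\call(\sigma)\notin\Aut^0(\call)$; together with injectivity on the diagonal this yields injectivity of $\Phi_\call$.

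For part~(b), note that each $\calf_i$ embeds into $\calf$ as a coordinate subsystem, so $Z(\calf_i)\le Z(\calf)=1$; hence $E_\call$ is an isomorphism by Lemma \ref{prod.link2}(c). To see that $\Phi_\call$ is onto, take $\alpha\in\Aut(\call)$ and $\beta=\til\mu_\call(\alpha)$; by the uniqueness of the factorization of $\calf$ into indecomposable fusion systems (\cite[Corollary 5.3]{O-KRS}), $\8\beta$ permutes the $\calf_i$, say $\8\beta(\calf_i)=\calf_{\sigma(i)}$ with $\sigma\in\Gamma$, and then $\Phi_\call(\sigma)^{-1}\alpha$ fixes each $\delta_S(S_i)$, so lies in $\Aut^0(\call)=\Phi_\call\bigl(\prod_i\Aut(\call_i)\bigr)$ by~(a). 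Thus $\Phi_\call$ is an isomorphism. It descends to $\Out$: since $S$ is $\calf$-centric, $\Aut_\call(S)=\prod_i\Aut_{\call_i}(S_i)$, and under \eqref{e:Phi0-def} the product of the subgroups $\{c_\gamma\mid\gamma\in\Aut_{\call_i}(S_i)\}$ maps onto $\{c_\gamma\mid\gamma\in\Aut_\call(S)\}$ while no nontrivial $\sigma\in\Gamma$ maps into it; so $\Phi_\call$ induces the asserted isomorphism $\4\Phi_\call$.

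For part~(c), take the $G_i$ as in the second assertion, put $G=\xxx G$, $S_i\in\sylp{G_i}$, $S=\xxx S\in\sylp{G}$, and (after replacing $\calf$ by an isomorphic system, which affects none of the hypotheses or conclusions) assume $\calf_i=\calf_{S_i}(G_i)$, so that $\calf=\calf_S(G)$ and, by the remark after Proposition \ref{prod.link1}, $\call=\call^c_S(G)$ with $\call_i=\call^c_{S_i}(G_i)$. From $O_{p'}(G_i)=1$ together with $O_p(Z(G_i))\le Z(\calf_i)=1$ (Lemma \ref{l:NG(Q)}(b)) one gets $Z(G_i)=O_{p'}(Z(G_i))\le O_{p'}(G_i)=1$; and then each $G_i$ is indecomposable, because a nontrivial proper direct factor would, by indecomposability of $\calf_i$, have trivial Sylow $p$-subgroup, hence be a nontrivial normal $p'$-subgroup of $G_i$, contradicting $O_{p'}(G_i)=1$. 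Therefore $Z(G)=1$, and Proposition \ref{p:Out(G1x...xGk)} gives an isomorphism $\4\Phi_G\colon\bigl(\prod_i\Out(G_i)\bigr)\rtimes\Gamma\to\Out(G)$, the relevant permutation group being $\Gamma$ since $G_i\cong G_j\iff\calf_i\cong\calf_j$. The crux is then the identity
	\[ \kappa_G\circ\4\Phi_G \;=\; \4\Phi_\call\circ\bigl((\kappa_{G_1}\times\cdots\times\kappa_{G_k})\rtimes\Id_\Gamma\bigr), \]
which I would verify by unwinding Definition \ref{d:kappaG} and Definition \ref{d:aut(L)}(c) and comparing with \eqref{e:Phi0-def}: the automorphism of $\call^c_S(G)$ induced by $\beta_1\times\cdots\times\beta_k\in\Aut(G)$ equals $\Phi_\call$ of the tuple of automorphisms of the $\call^c_{S_i}(G_i)$ induced by the $\beta_i$, and a permutation of the factors of $G$ induces the corresponding permutation of the factors of $\call$. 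Granting this, if $s_i\colon\Out(\call_i)\to\Out(G_i)$ splits $\kappa_{G_i}$ (tameness of $\calf_i$), chosen to depend only on the isomorphism type of $\calf_i$ so that $(s_1\times\cdots\times s_k)\rtimes\Id_\Gamma$ is a homomorphism, then $\4\Phi_G\circ\bigl((s_1\times\cdots\times s_k)\rtimes\Id_\Gamma\bigr)\circ\4\Phi_\call^{-1}$ splits $\kappa_G$, so $\calf$ is tamely realized by $G$. For the first assertion, given only that each $\calf_i$ is tame, pick for each isomorphism type a group $H$ tamely realizing it, replace $H$ by $H/O_{p'}(H)$ (which changes neither the fusion system, the centric linking system, nor $\kappa_H$), and use that one group for every index of that type; the resulting $G_i$ satisfy the hypotheses of the second assertion.

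The step I expect to be the main obstacle is the displayed compatibility in part~(c) — showing that $\kappa_G$ is multiplicative over direct products and equivariant under permutations of the factors — since it requires tracing carefully through the construction of $\kappa_G$ and the identification of $\call^c_S(G)$ with the product linking system of Proposition \ref{prod.link1}. The coherent choice of the isomorphisms $\lambda_{ij}$ in~(a) and the observation that the $G_i$ in~(c) are forced to be indecomposable are smaller points that also need attention.
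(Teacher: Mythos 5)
Your proposal is correct and follows essentially the same route as the paper: $\Phi_\call$ is built from $E_\call\circ c_\xi$ on the diagonal together with a permutation action on $\Gamma$, the reverse inclusion $\Aut^0(\call)\le\Im(\Phi_\call)$ uses Proposition \ref{prod.link1}(c) to get componentwise automorphisms, surjectivity in (b) uses $\til\mu_\call$ and the unique factorization of $\calf$ into indecomposables, and (c) is proved by the same commutative-diagram comparison of $\kappa_G$ and the $\kappa_{G_i}$ via $\4\Phi_G$ and $\4\Phi_\call$. The only cosmetic differences are that the paper fixes a WLOG identification $\calf_i=\calf_j$, $\call_i=\call_j$ in place of your coherent $\lambda_{ij}$, and is slightly less explicit about passing from a tame realizer $H$ to $H/O_{p'}(H)$ to get the first claim of (c) from the second.
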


\begin{proof} Let $\5\call\subseteq\call$ be the full subcategory defined 
in Proposition \ref{prod.link1}(b). Thus $\Ob(\5\call)$ is the set of all 
$P\in\Ob(\call)=\calf^c$ such that $P=\5P$.

Without loss of generality, for each pair of indices $i,j$ such that 
$\calf_i\cong\calf_j$, we can assume that $\calf_i=\calf_j$ and $S_i=S_j$. 
Then $\call_i\cong\call_j$ by Theorem \ref{t:unique.l.s.}, and so we can 
also assume that $\call_i=\call_j$.

\smallskip

\noindent\textbf{(a) } Define $\Phi_\call$ to be the composite
	\[ \Phi_\call \: \bigl( 
	\Aut(\call_1)\times\cdots\times\Aut(\call_k) \bigr) \rtimes\Gamma 
	\Right4{\5c_\xi} \Aut(\5\call)  \Right4{E_\call} \Aut(\call) 
	 \]
where $E_\call$ is the homomorphism of Lemma \ref{prod.link2}(b), where 
the restriction of $\5c_\xi$ to $\prod_{i=1}^k\Aut(\call_i)$ is the 
homomorphism $c_\xi$ of Lemma \ref{prod.link2}(a), and where 
	\beqq \5c_\xi(\gamma)(\varphi_1,\dots,\varphi_k)= 
	(\varphi_{\gamma^{-1}(1)},\dots,\varphi_{\gamma^{-1}(k)}) 
	\label{e:Phi1-def} \eeqq
for $\gamma\in\Gamma$ and $\varphi_i\in\Mor(\call_i)$. Then 
\eqref{e:Phi0-def} holds by the definition of $c_\xi$. One easily checks 
using \eqref{e:Phi0-def} and \eqref{e:Phi1-def} that $\Phi_\call$ is an 
injective homomorphism.

It remains to check that $\Phi_\call\bigl(\prod_{i=1}^k\Aut(\call_i)\bigr) 
=\Aut^0(\call)$: the subgroup of those $\alpha\in\Aut(\call)$ such that 
$\alpha_S(\delta_S(S_i))=\delta_S(S_i)$ for each $i$. The inclusion of the 
first group in $\Aut^0(\call)$ is clear. By Proposition \ref{prod.link1}(c), 
there are embeddings of categories $\rho_i\:\call_i\too\call$ sending 
$P_i\le S_i$ to its product with the $S_j$ for all $j\ne i$, and 
$\alpha(\rho_i(\call_i))=\rho_i(\call_i)$ for each 
$\alpha\in\Aut^0(\call)$. We can thus define 
$\Psi_{\call,i}\:\Aut^0(\call)\too\Aut(\call_i)$ by sending $\alpha$ to 
$\rho_i^{-1}\alpha\rho_i$. Set 
$\Psi^0_\call=(\Psi_{\call,1},\dots,\Psi_{\call,k})$; then 
$\Phi_\call\circ\Psi^0_\call$ is the inclusion of $\Aut^0(\call)$ into 
$\Aut(\call)$, and so $\Aut^0(\call)\le\Im(\Phi_\call)$.

\smallskip

\noindent\textbf{(b) } Fix $\alpha\in\Aut(\call)$, and set 
$\beta=\til\mu_\call(\alpha)\in\Aut(\calf)$ (see Definition 
\ref{d:til.mu}). By \cite[Proposition 3.6]{AOV1} and since $Z(\calf)=1$ 
and the $\calf_i$ are indecomposable, $\8{\beta}$ permutes the factors 
$\calf_i$.

Let $\gamma\in\Sigma_k$ be such that $\8{\beta}(\calf_i)=\calf_{\gamma(i)}$ 
for each $i$, and hence also $\beta(S_i)=S_{\gamma(i)}$ and 
$\alpha_S(\delta_S(S_i))=\delta_S(S_{\gamma(i)})$. In particular,  
$\gamma\in\Gamma$. Then 
$\Phi_\call(\gamma)^{-1}\circ\alpha\in\Aut^0(\call)$, and since 
$\Aut^0(\call)\le\Im(\Phi_\call)$ by (a), this shows that 
$\Aut(\call)\le\Im(\Phi_\call)$ and hence that $\Phi_\call$ is onto.

Since $\Aut_\call(S)=\Aut_{\call_1}(S_1)\times\cdots 
\times\Aut_{\call_k}(S_k)$, $\Phi_\call$ induces an isomorphism of quotient 
groups 
	\[ \4\Phi_\call\: \bigl(\Out(\call_1)\times\cdots 
	\times\Out(\call_k)\bigr) \rtimes \Gamma \Right5{\cong} 
	\Out(\call). \]

\smallskip

\noindent\textbf{(c) } Assume now that $Z(\calf)=1$, and that $\calf_i$ is 
indecomposable and tame for each $i$. Let $G_1,\dots,G_k$ be such that 
$O_{p'}(G_i)=1$ and $\calf_i$ is tamely realized by $G_i$ for each $i$, and 
such that $\calf_i\cong\calf_j$ implies $G_i\cong G_j$. For each $i$, 
$Z(G_i)$ is a $p$-group and hence $Z(G_i)\le Z(\calf_i)=1$ by Lemma 
\ref{l:NG(Q)}(b). Without loss of generality, we can 
assume that $G_i=G_j$ whenever $G_i\cong G_j$ and also (by the uniqueness 
of linking systems again) that $\calf_i=\calf_{S_i}(G_i)$ and 
$\call_i=\call_{S_i}^c(G_i)$ for each $i$. Note that each $G_i$ is 
indecomposable: since $O_{p'}(G_i)=1$, a nontrivial factorization of $G_i$ 
would induce a nontrivial factorization of $\calf_i$.

Set $\kappa_i=\kappa_{G_i}\:\Out(G_i)\too\Out(\call_i)$ for short. Fix 
splittings $s_i\:\Out(\call_i)\too\Out(G_i)$ for all $i$, chosen so 
that $s_i=s_j$ if $G_i=G_j$. 

Consider the following diagram
	\[ \xymatrix@C=40pt@R=25pt{ 	
	\bigl(\Out(\call_1)\times\cdots\times\Out(\call_k)\bigr) 
	\rtimes\Gamma \ar[d]^{(s_1,\dots,s_k)\rtimes\Id_\Gamma} 
	\ar[r]^-{\4\Phi_\call}_-{\cong} \ar@<-25mm>@/_2pc/[dd]_{\Id} 
	& \Out(\call) \ar[d]^{s} \\
	\bigl(\Out(G_1)\times\cdots\times\Out(G_k)\bigr) 
	\rtimes\Gamma \ar[d]^{(\kappa_1,\dots,\kappa_k)\rtimes\Id_\Gamma} 
	\ar[r]^-{\4\Phi_G}_-{\cong} & \Out(G)  \ar[d]^{\kappa_G} \\ 
	\bigl(\Out(\call_1)\times\cdots\times\Out(\call_k)\bigr) 
	\rtimes\Gamma \ar[r]^-{\4\Phi_\call}_-{\cong} & \Out(\call) 
	} \] 
where $\4\Phi_G$ is the isomorphism of Proposition \ref{p:Out(G1x...xGk)} 
(as defined when taking $\lambda_{ij}=\Id_{G_i}$ for each $i<j$ such that 
$G_i=G_j$), where $s$ is defined to make the top square commute, and where 
the commutativity of the bottom square is immediate from the definitions. 
Thus $\kappa_G\circ s=\Id_{\Out(\call)}$, so $\kappa_G$ is split 
surjective, and $\calf$ is tamely realized by $G$. 
\end{proof}

\bigskip

\section{Components of groups and of fusion systems}
\label{s:comp}

\newcommand{\snsg}{\nsg\,\nsg}
\newcommand{\KC}{{\scrk\scrc}}

In this section, we set up some tools that will be used later when proving 
inductively that all realizable fusion systems are tame. The starting point 
for the inductive procedure is Theorem \ref{OR-ThA}, which summarizes the 
main results in \cite{pprime}. 
Note that if $\calf$ is a saturated fusion system such that $O^{p'}(\calf)$ is 
simple, then for each finite group $G$ with $O_{p'}(G)=1$ that realizes 
$\calf$, $O^{p'}(G)$ is simple (so $G$ is almost simple), and $O^{p'}(G)$ 
realizes $\calf$ if $\calf$ is simple.

Let $\Comp(G)$ denote the set of components of a finite group $G$; 
i.e., the set of subnormal subgroups of $G$ that are quasisimple. 
(Recall that a subgroup $H$ of $G$ is subnormal, denoted $H\snsg 
G$, if there is a sequence $H=H_0\nsg H_1\nsg\cdots\nsg H_k=G$ with 
each subgroup normal in the following one, and $H$ is quasisimple if 
$H$ is perfect and $H/Z(H)$ is simple.) The components of $G$ commute 
with each other pairwise (see \cite[\S\,31]{A-FGT} or \cite[Lemma 
A.12]{AKO}). In particular, when $O_q(G)=1$ for all primes $q$, they are all 
simple groups, and the subgroup $E(G)\defeq\gen{\Comp(G)}$ is their 
direct product. 

Similarly, the components of a saturated fusion system $\calf$ over a 
finite $p$-group $S$ are its subnormal fusion subsystems 
$\calc\snsg\calf$ that are quasisimple (i.e., 
$O^p(\calc)=\calc$ and $\calc/Z(\calc)$ is simple). The set of 
components of $\calf$ will be denoted $\Comp(\calf)$.

By analogy with the case for groups, a \emph{central product} of fusion 
systems $\cale_1,\dots,\cale_k$ is a fusion system 
$\cale\cong(\xxx\cale)/Z$, for some central subgroup 
$Z\le\prod_{i=1}^kZ(\cale_i)$ that intersects trivially with each factor 
$Z(\cale_i)$. More precisely, if $\calf$ is a fusion system over $S$ and 
$\cale_1,\dots,\cale_k\le\calf$ are fusion subsystems over 
$T_1,\dots,T_k\le S$, then the subsystems commute in $\calf$ if 
the $T_i$ commute pairwise, and for each $k$-tuple of morphisms 
$(\varphi_1,\dots,\varphi_k)$, where $\varphi_i\in\Hom_{\cale_i}(P_i,Q_i)$, 
there is a morphism $\4\varphi\in\homf(P_1\cdots P_k,Q_1\cdots Q_k)$ that 
extends each of the $\varphi_i$. Note in particular that 
	\[ \cale_1,\dots,\cale_k ~\textup{commute}~ \implies 
	\cale_i\le C_\calf(T_j) ~\textup{for each $i\ne j$.} \] 
In this situation, the (internal) central 
product of the $\cale_i$ is the fusion subsystem 
	\begin{multline*} 
	\cale_1\cdots\cale_k = \Gen{ \4\varphi\in\homf(P_1\cdots P_k, 
	Q_1\cdots Q_k) \,\big|\, P_i,Q_i\le T_i,~ \\ 
	\4\varphi|_{P_i}\in \Hom_{\cale_i}(P_i,Q_i) ~ 
	\forall\,1\le i\le k } \le \calf
	\end{multline*}
over $T_1\cdots T_k\le S$. See Definition 2.4 and Lemma 2.8 in \cite{O-KRS} 
for some more details, and see \cite[Proposition 3.3]{Henke} for a slightly 
different approach to defining central products of fusion subsystems.

By \cite[9.8--9.9]{A-gfit}, the components of a saturated fusion system 
$\calf$ commute, and also commute with $O_p(\calf)$. So by analogy with 
finite groups, when $\calf$ is a saturated fusion system over a finite 
$p$-group and $\Comp(\calf)=\{\calc_1,\dots,\calc_k\}$, one defines 
	\[ E(\calf) = \calc_1\cdots\calc_k \qquad\textup{and}\qquad 
	F^*(\calf)=E(\calf)O_p(\calf) \]
(central products). In particular, $F^*(\calf)$ is the \emph{generalized 
Fitting subsystem} of $\calf$.

Note that when $O_p(\calf)=1$, the components of $\calf$ are 
all simple, and $F^*(\calf)=E(\calf)$ is their direct product. 

\begin{Lem} \label{l:A2-Th6}
Let $\calf$ be a saturated fusion system over a finite $p$-group $S$. Then 
\begin{enuma} 

\item $E(\calf)$ is characteristic in $\calf$;

\item $F^*(\calf)$ is characteristic and centric in $\calf$; and

\item if $\cale\nsg\calf$ and $\calc\in\Comp(\calf)\sminus\Comp(\cale)$, 
then $\calf$ contains a central product of $\calc$ and $\cale$. 

\end{enuma}
\end{Lem}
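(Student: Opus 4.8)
The plan is to obtain all three parts from Aschbacher's theory of the generalized Fitting subsystem in \cite{A-gfit} (see also \cite[Part~II]{AKO}), supplying only the small observations needed for the precise statements here. Three facts from that theory will be used repeatedly: distinct components of $\calf$ commute; subnormality is transitive, so every component of a subnormal (in particular, normal) fusion subsystem of $\calf$ is a component of $\calf$; and the notion of component is intrinsic to $\calf$ as a category, so every $\alpha\in\Aut(\calf)$ induces through $\8\alpha$ a permutation of $\Comp(\calf)$.

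For part (a): that $E(\calf)=\gen{\Comp(\calf)}$ is the internal central product of the components of $\calf$ and is normal in $\calf$ is part of \cite[Theorem~6]{A-gfit}. Since $\8\alpha$ permutes $\Comp(\calf)$ for every $\alpha\in\Aut(\calf)$, it fixes the subsystem they generate, so $E(\calf)$ is $\Aut(\calf)$-invariant, hence characteristic (Definition \ref{d:characteristic}). To see that $\cald\defeq C_\calf(E(\calf))$ (which is saturated, by \cite{A-gfit}) is constrained, I would check that it has no components: if $\calc'\in\Comp(\cald)$ then, since $\cald\nsg\calf$, $\calc'$ is a quasisimple subnormal subsystem of $\calf$, so $\calc'\in\Comp(\calf)$ and $\calc'\le E(\calf)$; but $\calc'\le C_\calf(E(\calf))$ then forces the support of $\calc'$ to lie in the centre of the support of $E(\calf)$, hence to be abelian, which is impossible since the underlying $p$-group of a quasisimple fusion system is non-abelian. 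A saturated fusion system with no components has $F^*=O_p$ by part (b), so $C_\cald(O_p(\cald))=Z(O_p(\cald))$, i.e. $\cald$ is constrained.

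For part (b): the decomposition $F^*(\calf)=O_p(\calf)E(\calf)$ and the identity $C_\calf(F^*(\calf))=Z(F^*(\calf))$ are precisely \cite[Theorem~6]{A-gfit}. It remains only to note that this product is genuinely central: each component $\calc_i$ is subnormal in $\calf$ and is not a $p$-group, so it cannot lie inside the normal $p$-subgroup $O_p(\calf)$ and therefore centralizes it; hence $O_p(\calf)$ commutes with $E(\calf)$, and $O_p(\calf)\cap E(\calf)$ lies in $Z(E(\calf))$ and meets each factor trivially. Characteristicity of $F^*(\calf)$ follows as in (a), since $O_p(\calf)$ and $E(\calf)$ are both characteristic.

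For part (c): the crux is the fusion-system analogue of the standard fact that a component of a finite group is either contained in, or centralizes, a given normal subgroup. Thus if $\cale\nsg\calf$ and $\calc\in\Comp(\calf)\sminus\Comp(\cale)$, so $\calc\not\le\cale$, then $\calc$ and $\cale$ commute in $\calf$; I would derive this from Aschbacher's analysis of components relative to normal subsystems in \cite{A-gfit} --- for instance by first observing that a component not lying in $\cale$ commutes with the central product of the components that do, and then bootstrapping to the whole of $\cale$. Granting this, the central product $\calc\cale\le\calf$ exists by the construction of central products of commuting subsystems recalled just before the statement, and $\calc\cap\cale\le Z(\calc)\cap Z(\cale)$ meets each of the two factors trivially, so $\calc\cale$ is a central product in the precise sense. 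The main obstacle is exactly this step: parts (a) and (b) are essentially restatements of \cite[Theorem~6]{A-gfit} plus elementary bookkeeping, whereas (c) requires the subnormality argument to be carried out in the fusion-system setting, where the assertion ``$\calc$ centralizes $\cale$'' must be read through the definition of centralizers of normal subsystems.
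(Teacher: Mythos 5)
Your proposal is correct and takes essentially the same route as the paper: both defer the substance of all three parts to Aschbacher's memoir \cite{A-gfit} (the paper cites 9.8.1, 9.8.2, 9.12.3 for (a), 9.9.1, 9.9.2 for (b), and 9.13 for (c), while you cite Theorem 6 and supply the elementary bookkeeping around it). Your honest acknowledgement that the heart of (c) lies in Aschbacher's analysis is exactly right, since the paper's proof of (c) is likewise just a citation.
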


\begin{proof} These are all shown in Chapter 9 of \cite{A-gfit}: point (a) 
in 9.8.1 and 9.8.2, point (b) in 9.9 and 9.11, and (c) in 9.13. 
More precisely, $F^*(\calf)$ is centric in $\calf$ since 
$C_S(F^*(\calf))=Z(F^*(\calf))$ by \cite[9.11]{A-gfit}.
\end{proof}

\begin{Lem} \label{l:same.comps}
Let $\cale\nsg\calf$ be a normal pair of saturated fusion systems over 
finite $p$-groups. Then 
\begin{enuma} 

\item $\Comp(\cale)$ is equal to the set of all $\calc\in\Comp(\calf)$ such 
that $\calc\le\cale$; 

\item if $\cale$ is centric in $\calf$ or has $p$-power index in 
$\calf$, then $\Comp(\cale)=\Comp(\calf)$; and  

\item if $O_p(\calf)=1$ and $\Comp(\cale)=\Comp(\calf)$, then $\cale$ 
is centric in $\calf$. 

\end{enuma}
\end{Lem}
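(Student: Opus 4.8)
The plan is to establish (a) first and then deduce (b) and (c) from it, the key external inputs being Lemma~\ref{l:A2-Th6} and the definition of $C_S(\cale)$. Throughout I would write $T_\calc$ for the $p$-group over which a component $\calc$ is defined; note that $O^p(\calc)=\calc$ forces $\hyp(\calc)=T_\calc$, that $T_\calc\ne1$ (a quasisimple system has a simple, hence nontrivial, central quotient), and that consequently $\calc\ne\calf_{T_\calc}(T_\calc)$ since $O^p(\calf_U(U))\ne\calf_U(U)$ whenever $U\ne1$.

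\textbf{(a).} One inclusion is immediate: if $\calc\in\Comp(\cale)$ then $\calc$ is subnormal in $\cale$, hence subnormal in $\calf$ (as $\cale\nsg\calf$), and it is quasisimple and contained in $\cale$, so $\calc\in\Comp(\calf)$. For the converse I would take $\calc\in\Comp(\calf)$ with $\calc\le\cale$ and assume $\calc\notin\Comp(\cale)$, then apply Lemma~\ref{l:A2-Th6}(c): $\calf$ contains a central product of $\calc$ and $\cale$. Feeding its defining extension property an \emph{arbitrary} $\varphi\in\Hom_\calc(P,Q)$ (with $P,Q\le T_\calc\le T$) together with $\Id_T\in\Aut_\cale(T)$ produces $\4\varphi\in\Aut_\calf(T)$ extending both; since $\4\varphi|_T=\Id_T$ we get $\4\varphi=\Id_T$, so $\varphi=\4\varphi|_P=\Id_P$. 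Thus every morphism of $\calc$ is an identity, i.e.\ $\calc=\calf_{T_\calc}(T_\calc)$, contradicting $O^p(\calc)=\calc$.

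\textbf{(b).} By the easy half of (a) it suffices to prove $\calc\le\cale$ for every $\calc\in\Comp(\calf)$. If $\cale$ has $p$-power index then $O^p(\calf)\le\cale$, so it is enough to show $\calc\le O^p(\calf)$: I would use monotonicity of the hyperfocal subgroup under passage to subsystems to get $T_\calc=\hyp(\calc)\le\hyp(\calf)$, which is contained in the underlying group of $O^p(\calf)$; then if $\calc\not\le O^p(\calf)$ we have $\calc\in\Comp(\calf)\sminus\Comp(O^p(\calf))$, and the central-product argument of (a), applied to $O^p(\calf)\nsg\calf$, again forces all morphisms of $\calc$ to be identities, a contradiction. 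If instead $\cale$ is centric in $\calf$, I would assume $\calc\notin\Comp(\cale)$, obtain a central product of $\calc$ and $\cale$ from Lemma~\ref{l:A2-Th6}(c), and observe that feeding in $\Id_{T_\calc}$ shows every morphism of $\cale$ extends to one of $\calf$ fixing $T_\calc$ pointwise, i.e.\ $\cale\le C_\calf(T_\calc)$; hence $T_\calc\le C_S(\cale)\le T$ by centricity, and the argument of (a) (which uses only $T_\calc\le T$ at the group level) gives the same contradiction. In either case (a) now identifies $\calc$ as a component of $\cale$.

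\textbf{(c).} Since $O_p(\calf)=1$, Lemma~\ref{l:E<|F}(b) gives $O_p(\cale)=1$, and Lemma~\ref{l:A2-Th6}(b) gives $F^*(\calf)=E(\calf)$ with $C_\calf(E(\calf))=Z(E(\calf))$; in particular $C_S(E(\calf))=Z(E(\calf))$ lies in the underlying group $T_{E(\calf)}$ of $E(\calf)$. The crux is $E(\calf)\le\cale$: by Lemma~\ref{l:A2-Th6}(a) applied to $\cale$ the components of $\cale$ commute pairwise in $\cale$, and since $\Comp(\cale)=\Comp(\calf)$ each generating morphism of $E(\calf)$ --- an extension of a compatible tuple of morphisms of the components --- already lies in $\cale$, such an extension being unique because it is determined on each factor $T_\calc$. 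Granting this, $T_{E(\calf)}\le T$ and $C_\calf(C_S(\cale))\ge\cale\ge E(\calf)$, so the maximality clause in Definition~\ref{d:centric.f.s.}(a) yields $C_S(\cale)\le C_S(E(\calf))=Z(E(\calf))\le T_{E(\calf)}\le T$; that is, $\cale$ is centric in $\calf$.

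The step I expect to be most delicate is the repeated passage between underlying $p$-groups and morphism sets: in (a) and (b) one must check carefully that having $T_\calc$ inside the larger supporting group genuinely collapses the joint extension to the identity on the smaller domain, and in (c) one must verify that the central product of the common family of components is literally the same subsystem whether it is formed inside $\cale$ or inside $\calf$. The real weight is carried by Lemma~\ref{l:A2-Th6}(c), by the identity $C_\calf(F^*(\calf))=Z(F^*(\calf))$, and by the maximality defining $C_S(\cale)$; the rest is routine.
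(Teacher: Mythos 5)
Your proof is correct and follows the paper's own approach: parts (a) and (b) both rest on the central product supplied by Lemma~\ref{l:A2-Th6}(c) when $\calc\in\Comp(\calf)\sminus\Comp(\cale)$, and part (c) on $\cale\ge E(\calf)=F^*(\calf)$ being centric via Lemma~\ref{l:A2-Th6}(b). You spell out several implications the paper leaves as one-liners (why the central product is incompatible with $T_\calc\le T$, and why $p$-power index forces $T_\calc\le\hyp(\calf)\le T$ via monotonicity of the hyperfocal subgroup), but the substance is the same.
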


\begin{proof} \textbf{(a,b) } In all cases, each fusion subsystem 
subnormal in $\cale$ is subnormal in $\calf$, and hence 
$\Comp(\cale)\subseteq\Comp(\calf)$. If 
$\calc\in\Comp(\calf)\sminus\Comp(\cale)$, then by Lemma \ref{l:A2-Th6}(c), 
$\calf$ contains a central product of $\calc$ and $\cale$, and in 
particular, $\calc\nleq\cale$. This proves (a), and also shows that 
$\cale$ is not centric in $\calf$ in this case, proving the first part 
of (b). If $\cale$ has $p$-power index in $\calf$, then $\calf$ cannot 
be a central product of $\cale$ with a quasisimple system, so 
$\Comp(\cale)=\Comp(\calf)$ also in this case.

\smallskip

\noindent\textbf{(c) } If $O_p(\calf)=1$ and 
$\Comp(\cale)=\Comp(\calf)$, then $\cale\ge E(\calf)=F^*(\calf)$ is the 
generalized Fitting subsystem of $\calf$. Since $F^*(\calf)$ is centric 
in $\calf$ by Lemma \ref{l:A2-Th6}(b), so is $\cale$. 
\end{proof}

In the proof of the next lemma, we need to work with the centralizer 
fusion subsystem $C_\calf(\cale)$ of a normal fusion subsystem 
$\cale\nsg\calf$ over $T\nsg S$. This was defined by Henke \cite{Henke} to 
be the unique fusion subsystem over $C_S(\cale)$ of $p$-power index in 
$C_\calf(T)$. (There is such a subsystem by \cite[Theorem I.7.4]{AKO} and 
since $C_S(\cale)\ge\foc(C_\calf(T))$ by \cite[Proposition 1]{Henke}.) By 
\cite[Proposition 6.3]{Henke}, it is equal to the subsystem 
$C_\calf(\cale)$ defined by Aschbacher in \cite[Chapter 6]{A-gfit}.

\begin{Lem} \label{l:ZC/Z}
Let $\calf$ be a saturated fusion system over a finite $p$-group $S$. 
Set $\Comp(\calf)=\{\calc_1,\dots,\calc_k\}$ where $\calc_i$ is a 
fusion subsystem over $U_i$ for each $1\le i\le k$. Let $Z\le Z(\calf)$ 
be a central subgroup. Then $\Comp(\calf/Z) = \{Z\calc_1/Z,\dots,Z\calc_k/Z\}$.
\end{Lem}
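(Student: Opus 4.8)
The plan is to establish the two inclusions of sets separately.

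\textbf{Proof that $\{Z\calc_1/Z,\dots,Z\calc_k/Z\}\subseteq\Comp(\calf/Z)$.} Since $Z\le Z(\calf)$, every $x\in Z$ satisfies $x^\calf=\{x\}$, hence $x^{\calc_i}=\{x\}$, so $U_i\cap Z\le Z(\calc_i)$ by \cite[Lemma I.4.2]{AKO}. I would first show $Z\calc_i\snsg\calf$: starting from a subnormal chain $\calc_i=\cale_0\nsg\cale_1\nsg\dots\nsg\cale_m=\calf$ and applying Lemma \ref{l:E/Z<|F/Z} repeatedly with ambient system $Z\cale_{j+1}$ at the $j$-th stage — which is legitimate because $\cale_j\nsg Z\cale_{j+1}$ (morphisms of $Z\cale_{j+1}$ between subgroups of the underlying group of $\cale_{j+1}$ are exactly those of $\cale_{j+1}$) and $Z\le Z(Z\cale_{j+1})$ (central elements of $\calf$ are fixed by all $\calf$-morphisms) — one gets $Z\cale_j\nsg Z\cale_{j+1}$ for each $j$, hence $Z\calc_i\snsg Z\calf=\calf$. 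By Lemma \ref{l:F/Q} this passes to $Z\calc_i/Z\snsg\calf/Z$. It remains to see $Z\calc_i/Z$ is quasisimple. Using the isomorphism $\Aut_{Z\calc_i}(P)\cong\Aut_{\calc_i}(P\cap U_i)$ for $P\le ZU_i$ from the proof of Lemma \ref{l:E/Z<|F/Z}, together with $ZU_i/Z\cong U_i/(Z\cap U_i)$, one identifies $Z\calc_i/Z$ with the central quotient $\calc_i/(Z\cap U_i)$. A central quotient of a quasisimple fusion system is quasisimple: its centre is the image of the centre, and $O^p$ of a quotient by a central subgroup is the image of $O^p$ — the exact analogue of Lemma \ref{l:Op'(F/Z)}, whose proof is even simpler here. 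So each $Z\calc_i/Z$ is a component of $\calf/Z$.

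\textbf{Proof that $\Comp(\calf/Z)\subseteq\{Z\calc_1/Z,\dots,Z\calc_k/Z\}$.} Here I would use the generalized Fitting subsystem. Since $Z\le Z(\calf)\le O_p(\calf)$, the underlying group of $F^*(\calf)$ contains $Z$, so $F^*(\calf)/Z$ is defined and, by Lemma \ref{l:F/Q}, $F^*(\calf)/Z\nsg\calf/Z$. Moreover $F^*(\calf)$ is centric in $\calf$ (Lemma \ref{l:A2-Th6}(b)), and I claim this is inherited by the quotient: $F^*(\calf)/Z$ is centric in $\calf/Z$. Granting this, Lemma \ref{l:same.comps}(b) yields $\Comp(F^*(\calf)/Z)=\Comp(\calf/Z)$. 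On the other hand, $F^*(\calf)$ is the internal central product of $O_p(\calf)$ with $\calc_1,\dots,\calc_k$ (Lemma \ref{l:A2-Th6}(b)), so $F^*(\calf)/Z$ is the central product of $O_p(\calf)/Z$ (the fusion system of a $p$-group) with $Z\calc_1/Z,\dots,Z\calc_k/Z$, and hence its components are precisely the quasisimple factors $Z\calc_1/Z,\dots,Z\calc_k/Z$. Combining the two inclusions gives the claim.

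I expect the main obstacle to be the claim that centricity of $F^*(\calf)$ in $\calf$ is preserved on passing to $\calf/Z$: one must compare $C_{\calf/Z}(F^*(\calf)/Z)$ with $C_\calf(F^*(\calf))=Z(F^*(\calf))$ and show that no $xZ\in S/Z$ outside the underlying group of $F^*(\calf)/Z$ can centralize $F^*(\calf)/Z$, using that the corresponding statement holds in $\calf$. A secondary technical point, used in the first inclusion, is the $O^p$-analogue of Lemma \ref{l:Op'(F/Z)}, which guarantees that central quotients of quasisimple systems remain quasisimple. Should the centricity claim prove awkward, an alternative route for the second inclusion is: given $\cald\in\Comp(\calf/Z)$ over $V/Z$, form its preimage $\cale\le\calf$ over $V$; then $\cale\snsg\calf$, $\cale/Z=\cald$, and $Z\le Z(\cale)$, and since $\cale/Z$ is quasisimple the self-centralizing property $C_\cale(F^*(\cale))=Z(F^*(\cale))$ (Lemma \ref{l:A2-Th6}(b)) forces $\cale$ to have a single component $\calc$; as $\cale\snsg\calf$ we have $\calc\in\Comp(\calf)$, so $\calc=\calc_i$ for some $i$, and then $\cald=Z\calc_i/Z$.
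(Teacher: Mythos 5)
Your easy inclusion follows the paper's route (iterate Lemma \ref{l:E/Z<|F/Z} along a subnormal chain, then pass to $\calf/Z$ via Lemma \ref{l:F/Q}), and the idea of identifying $Z\calc_i/Z$ with $\calc_i/(Z\cap U_i)$ to see quasisimplicity is the right one. But both parenthetical justifications need to be unpacked: $Z\le Z(Z\cale_{j+1})$ is not simply "central elements are fixed by morphisms" (that is not the definition of central) — one must check that the $\calf$-extension of a $Z\cale_{j+1}$-morphism that fixes $Z$ pointwise is again a morphism of $Z\cale_{j+1}$, which requires identifying $ZP\cap T_{j+1}$ and using $Z\cap T_{j+1}\le Z(\cale_{j+1})$; and $\cale_j\nsg Z\cale_{j+1}$ requires re-verifying the normality axioms, not just noting that $Z\cale_{j+1}$ restricted to $T_{j+1}$ is $\cale_{j+1}$. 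These are fillable, and the paper is at least as terse here, so this is fine.

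The hard inclusion is where your main route has a genuine gap, and it is the one you flag. If $xZ\in C_{S/Z}(F^*(\calf)/Z)$, then $x$ normalizes $T^*=O_p(\calf)U$ with $[c_x,T^*]\le Z$, and $F^*(\calf)$-morphisms extend to $\calf$-morphisms fixing $x$ only \emph{modulo} $Z$; that is strictly weaker than $x\in C_S(F^*(\calf))$, and I do not see how to close the gap without essentially already knowing $F^*(\calf)/Z=F^*(\calf/Z)$, which is what the lemma is about. The paper never proves centricity of anything in $\calf/Z$. Instead it takes $\cale=E(\calf)$ over $U$, identifies $C_{\calf/Z}(ZU/Z)$ with $N_\calf^K(ZU)/Z$ for the $p$-group $K=\{\alpha\in\autf(U)\mid[\alpha,U]\le Z\}$, and then observes that $C_\calf(\cale)$ is constrained (Lemma \ref{l:A2-Th6}(a)) and sits with $p$-power index in $N_\calf^K(ZU)$. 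Any extra component of $\calf/Z$, being a component of $C_{\calf/Z}(ZU/Z)$, is then trapped inside the constrained quotient $C_\calf(\cale)/Z$ — contradiction. In other words, the slack between "centralizing $\cale$" and "centralizing $\cale/Z$" is exactly a $p$-power-index phenomenon, and it is absorbed by a constrained subsystem rather than being absent.

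Your alternative is a genuinely different (and closer to sound) argument: attack an arbitrary component $\cald$ of $\calf/Z$ directly. The endgame is the same as the paper's — a would-be extra component is pushed into a constrained system — but as written it still leans on two unsupplied points: the existence of a saturated, subnormal preimage $\cale\le\calf$ of $\cald$ with $\cale/Z=\cald$ (the paper has no such construction), and the step from $Z\calc/Z=\cale/Z$ to $Z\calc=\cale$, which requires comparing morphism sets in $\calf$, not just images in $\calf/Z$. Also, "$C_\cale(F^*(\cale))=Z(F^*(\cale))$ forces a single component" wants splitting: no components gives $\cale$, hence $\cale/Z$, constrained (contradicting quasisimplicity of $\cald$), while two or more components commuting and each covering $\cale$ modulo $Z$ forces them to be over abelian $p$-groups, which is impossible. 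So the alternative should be the primary argument rather than a fallback; the centricity shortcut is not available.
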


\begin{proof} Set $\Comp_0(\calf/Z)=\{Z\calc_1/Z,\dots,Z\calc_k/Z\}$ 
for short. For each $i$, $Z\calc_i/Z\snsg\calf/Z$ by Lemmas 
\ref{l:E/Z<|F/Z} and \ref{l:F/Q} and since $\calc_i\snsg\calf$. Also, 
$Z\calc_i/Z\cong\calc_i/(Z\cap U_i)$ by Lemma \ref{l:F/Q=E/Q0}
and hence is quasisimple. Thus 
$Z\calc_i/Z\in\Comp(\calf/Z)$ for each $i$, and $\Comp(\calf/Z) 
\supseteq \Comp_0(\calf/Z)$.

It remains to prove the opposite inclusion. Set $\cale=E(\calf)$: the 
central product of the $\calc_i$. It is a saturated fusion system over 
$U=U_1\cdots U_k$, and is normal in $\calf$ by Lemma \ref{l:A2-Th6}(a). Set 
$K=\{\alpha\in\autf(ZU)\,|\,[\alpha,U]\le Z\}$: a $p$-group of 
automorphisms by \cite[Corollary 5.3.3]{Gorenstein}. Each 
$\varphi\in\Hom_{C_{\calf/Z}(ZU/Z)}(P/Z,Q/Z)$ (for $Z\le P,Q\le 
N_S^K(ZU)$) extends to $\4\varphi\in\Hom_{\calf/Z}(PU/Z,QU/Z)$ such 
that $\4\varphi|_{ZU/Z}=\Id$, and this in turn lifts to 
$\psi\in\homf(PU,QU)$ with $\psi|_{ZU}\in K$. Thus 
$N_\calf^K(ZU)/Z=C_{\calf/Z}(ZU/Z)$.

Recall that $K$ is a $p$-group. Hence by Lemma \ref{l:C(Q)<|N(Q)}, the 
centralizer $C_\calf(ZU)$ is normal of $p$-power index in 
$N_\calf^K(ZU)$. Also, $C_\calf(\cale)$ has $p$-power index in 
$C_\calf(U)=C_\calf(ZU)$ by Henke's definition in \cite{Henke}, and so we 
have inclusions 
	\[ C_\calf(\cale)/Z \le C_\calf(ZU)/Z \le N_\calf^K(ZU)/Z = 
	C_{\calf/Z}(ZU/Z), \]
each of $p$-power index in the next. Each component of $\calf/Z$ not in 
$\Comp_0(\calf/Z)$ commutes with the $Z\calc_i/Z$, hence is contained 
in $C_{\calf/Z}(ZU/Z)$, hence is contained in $C_\calf(\cale)/Z$ by Lemma 
\ref{l:same.comps}(b), and hence lies in $\Comp(C_\calf(\cale)/Z)$ by 
Lemma \ref{l:same.comps}(a) and since $C_\calf(\cale)/Z\nsg\calf/Z$ by 
Lemma \ref{l:F/Q}. 

By \cite[9.12.3]{A-gfit} and since $\cale=E(\calf)$, the centralizer 
subsystem $C_\calf(\cale)$ is constrained. Hence $C_\calf(\cale)/Z$ is also 
constrained by \cite[Lemma 2.10]{Henke-subcentric}, so 
$\Comp(C_\calf(\cale)/Z)=\emptyset$, finishing the proof that
$\Comp(\calf/Z)=\Comp_0(\calf/Z)$. 
\end{proof}

In the next lemma, we use the following notation to describe certain 
automorphism groups. For a fixed prime $p$ and integers $k\mid m\mid(p-1)$ and 
$n,\ell\ge1$, let $G(m,k,n)\le\GL_n(\Z/p^\ell)$ be the subgroup 
	\[ G(m,k,n) = \bigl\{ \diag(u_1,\dots,u_n)\in\GL_n(\Z/p^\ell) 
	\,\big|\, u_i^m=1~\forall i,~ (u_1\cdots u_n)^{m/k}=1 \bigr\} \cdot 
	\mathfrak{Perm}(n), \]
where $\mathfrak{Perm}(n)\simeq\Sigma_n$ is the group of all permutation 
matrices. Thus $G(m,1,n)\cong C_m\wr\Sigma_n$, the 
group of all monomial matrices whose nonzero entries are $m$-th roots of 
unity in $\Z/p^\ell$, and $G(m,k,n)$ has index $k$ in $G(m,1,n)$. 

The following is a version of \cite[Lemma 4.7]{pprime} that has been 
reformulated so as to not depend on the classification of finite simple 
groups.

\begin{Lem} \label{l:exotic}
Let $\calf$ be a saturated fusion system over a finite $p$-group $S$, 
for some prime $p\ge5$, and assume $A\nsg S$ is abelian and 
$\calf$-centric. Assume also, for some $\ell\ge1$, $\kappa\ge p$, and $2< 
m\mid(p-1)$, that $A$ is homocyclic of rank $\kappa$ and exponent $p^\ell$, 
and that with respect to some basis $\{a_1,\dots,a_\kappa\}$ for $A$ as a 
$\Z/p^\ell$-module, $\autf(A)$ contains $G(m,m,\kappa)$ with index prime to 
$p$, and 
	\[ \autf(A)\cap G(m,1,\kappa)=G(m,r,\kappa) \le 
	\GL_\kappa(\Z/p^\ell) \qquad\textup{for some $2<r\mid m$.} \]
Then either $A\nsg\calf$, or $O^{p'}(\calf)$ is simple and $\calf$ is 
not realized by any known finite almost simple group.
\end{Lem}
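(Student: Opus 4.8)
The plan is to assume $A\ntrianglelefteq\calf$ and to show that then $O^{p'}(\calf)$ is simple and $\calf$ is not realizable by any known finite almost simple group. First I record some elementary facts about $R:=G(m,m,\kappa)$, valid because $m>2$ and $\kappa\ge p\ge5$: writing $R=D\rtimes\Sigma_\kappa$ with $D$ the group of diagonal $\mu_m$-matrices whose entries multiply to $1$ (of order $m^{\kappa-1}$, prime to $p$) and $\Sigma_\kappa$ the group of permutation matrices, one has $O_p(R)=1$; a Sylow $p$-subgroup of $R$ is a Sylow $p$-subgroup of $\Sigma_\kappa$; and, since $m\mid p-1$ gives an embedding $\mu_m\hookrightarrow\F_p^\times$, the characters by which $D$ acts on the $\kappa$ coordinate lines of $A/pA$ are pairwise distinct and transitively permuted by $\Sigma_\kappa$, so $A/pA$ is an irreducible $\F_pR$-module; hence the only $R$-submodules --- a fortiori the only $\autf(A)$-submodules --- of the homocyclic module $A$ are the subgroups $p^jA$, $0\le j\le\ell$. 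Moreover, since $A$ is $\calf$-centric, $C_S(A)=A$; since $A\nsg S$, $A$ is fully normalized, hence fully automized, so $\Aut_S(A)=S/A$ is a Sylow $p$-subgroup of $\autf(A)$, $\autf(A)$-conjugate into $\Sigma_\kappa$; therefore $S/A$ acts faithfully on $\Omega_1(A)=p^{\ell-1}A$ and $C_S(B)=A$ for every nontrivial $\autf(A)$-invariant $B\le A$, and $A$ is $\calf$-radical (as $\outf(A)=\autf(A)$ and a normal $p$-subgroup of $\autf(A)$ lies in a Sylow $p$-subgroup, hence in $R$, hence in $O_p(R)=1$).

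\textbf{Step 1: $O_p(\calf)=1$.} By normality of $O_p(\calf)$ its image under restriction $O_p(\calf)\to\Aut_S(A)\le\autf(A)$ is a normal $p$-subgroup of $\autf(A)$, hence trivial; so $O_p(\calf)\le C_S(A)=A$, and being strongly closed it is an $\autf(A)$-submodule of $A$, so $O_p(\calf)=p^jA$. If $O_p(\calf)\ne1$, then $j<\ell$, so $C_S(O_p(\calf))=A$, whence $C_\calf(O_p(\calf))$ is a saturated fusion system over the abelian group $A$ with $C_\calf(O_p(\calf))\nsg\calf$. But any saturated fusion system over an abelian group $A$ contains the trivial subsystem $\calf_A(A)$ as a characteristic subsystem (by Alperin's fusion theorem \ref{t:AFT} it is generated by $\Aut$ of the whole group, so $\calf_A(A)$ is normal in it, and $\calf_A(A)$ is preserved by every automorphism); hence $\calf_A(A)\nsg\calf$ by Lemma \ref{l:E<|F}(a), and so $A\nsg\calf$ by \cite[(7.9)]{A-gfit}, a contradiction. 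Thus $O_p(\calf)=1$.

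\textbf{Step 2: $O^{p'}(\calf)$ is simple.} By Step 1 and Lemma \ref{l:A2-Th6}, $F^*(\calf)=E(\calf)=\calc_1\times\cdots\times\calc_t$ is the direct product of the (simple) components $\calc_i$ over $T_i$, and $C_\calf(E(\calf))=Z(E(\calf))=1$. Each $T_i$ is strongly closed, so $A\cap T_i=p^{j_i}A$ is $\autf(A)$-invariant; since $T_i\cap T_j=1$ for $i\ne j$, $p^{\max(j_i,j_j)}A=1$, so at most one $T_i$ meets $A$ nontrivially. The delicate part is now to deduce that $t=1$, that $A$ lies in $T_1$, that $C_\calf(\calc_1)=1$, and that $\calc_1$ (necessarily over all of $S$) equals $O^{p'}(\calf)$, so that $O^{p'}(\calf)$ is simple. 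Here one uses that $A\nsg S$ normalizes the set of components, that $\kappa\ge p$ forces $\Aut_S(A)=S/A$ to contain a $p$-cycle, and that $\autf(A)\supseteq R$ acts irreducibly on $A$ (so $\hyp(\calf)\supseteq[A,R]=A$); together with Alperin's fusion theorem \ref{t:AFT} and Lemmas \ref{l:same.comps} and \ref{l:E<|F} this rules out a nontrivial permutation action of $S$ on the components (which would force $t\ge p$ and an action of $A$ on a $p$-fold direct product incompatible with $C_S(A)=A$) as well as the case $t\ge2$ with $A\cap T_1=\cdots=A\cap T_t=1$ (in which $A$ acts faithfully on some $T_i$, and comparing $A\hookrightarrow\Aut_S(T_i)$ with the reflection-group action of $\autf(A)$ on $A$ yields a contradiction), and then gives $C_S(T_1)=1$ and $\hyp(\calf)=S$, whence $O^{p'}(\calf)=O^p(\calf)=\calc_1$.

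\textbf{Step 3, and the main obstacle.} Suppose $\calf\cong\calf_S(G)$ for a known finite almost simple group $G$; since $O^{p'}(\calf)$ is simple by Step 2, we may (by the remark preceding Theorem \ref{OR-ThA}) replace $G$ by $G/O_{p'}(G)$ and assume $O_{p'}(G)=1$, so that $L:=O^{p'}(G)$ is a known simple group, $L\le G\le\Aut(L)$, and $S\in\sylp{G}$. As $A\nsg S$, $A$ is fully normalized, so $\autf(A)=\Aut_G(A)$; and $C_S(A)=A$ is the normal Sylow $p$-subgroup of $C_G(A)$ (Lemma \ref{l:NG(Q)}), so $C_G(A)=A\times O_{p'}(C_G(A))$ and $N_G(A)$ is a $p$-local subgroup of $G$ with a self-centralizing abelian normal $p$-subgroup $A$ of rank $\kappa\ge p$ whose automizer contains $G(m,m,\kappa)$ with index prime to $p$ and meets $G(m,1,\kappa)$ exactly in $G(m,r,\kappa)$, $2<r\mid m\mid p-1$. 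One then runs through the known finite simple groups $L$: for $L$ alternating the automizer of an abelian subgroup normal in $S$ is a subgroup of a symmetric group, giving $m=1$; for $L$ sporadic one excludes the configuration by direct inspection (using $p^\kappa\mid|L|$ with $\kappa\ge p\ge5$); for $L$ of Lie type in characteristic $p$ such automizers are, modulo a normal $p$-subgroup, sections of Levi factors acting on unipotent radicals, and these cannot contain $G(m,m,\kappa)$ with $m>2$ and index prime to $p$ while $\kappa\ge p$; and for $L$ of Lie type in characteristic different from $p$, $A$ must, up to a $p'$-complement, be the $p$-torsion of a maximal torus and $\Aut_G(A)$ is essentially the relative Weyl group of a $\Phi_d$-torus, whose known list --- wreath products $C_d\wr\Sigma_a=G(d,1,a)$ for linear and unitary types, their subgroups of index at most $2$ for the remaining classical types, and a finite list of exceptional cases --- never produces $\Aut_G(A)\cap G(m,1,\kappa)=G(m,r,\kappa)$ with $2<r\mid m$. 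The crux of the whole proof is this last case: matching the rigid condition ``$2<r\mid m$'' against the catalogue of relative Weyl groups of $\Phi_d$-tori in classical and exceptional groups. The only other genuinely delicate point is the reduction to a single component in Step 2, for which $\kappa\ge p$ and the irreducibility of the reflection-group action are used essentially; Step 1 and the module theory are routine once the structure of $G(m,m,\kappa)$ is recorded.
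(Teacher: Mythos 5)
Your overall two-part strategy — first show $O^{p'}(\calf)$ is simple, then rule out the known almost simple groups — matches the paper, and your preliminary observations about $G(m,m,\kappa)$, the irreducibility of $\Omega_1(A)$ under $G_0(m,m,\kappa)\cong(C_m)^{\kappa-1}\rtimes A_\kappa$, and $C_S(\Omega_1(A))=A$ are exactly what the paper records. Your Step~1 ($O_p(\calf)=1$) is correct but is not needed in the paper's argument.

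The genuine gap is in your Step~2. You route through the components $\calc_i$ of $\calf$ and assert that ``each $T_i$ is strongly closed,'' from which you deduce $A\cap T_i=p^{j_i}A$. This is false in general: if $\calf$ permutes several isomorphic components, the individual Sylows $T_i$ are $\calf$-conjugate to each other and hence not strongly closed (only their product is); this is precisely the complication that Proposition~\ref{p:NF(EJ)} of the paper is built to handle. The remainder of your Step~2 is then left as a sketch (``the delicate part\dots''), with imprecise claims (a nontrivial permutation action of $S$ on components need not force $t\ge p$). The paper's argument avoids all of this and is essentially self-contained: take any proper nontrivial $\cale\nsg O^{p'}(\calf)$ over $T\ne1$; since $T$ is strongly closed, $T\cap A$ is an $\Aut_{O^{p'}(\calf)}(A)$-submodule, so $T\cap A=\Omega_k(A)$; a $Z(S/\Omega_k(A))$ argument shows either $T=\Omega_k(A)$ or $T>A$. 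In the first case $T$ is abelian and strongly closed, hence normal in $\calf$ by \cite[Corollary~I.4.7(a)]{AKO}, and the extension property plus $C_S(\Omega_1(A))=A$ forces $A$ to be strongly closed and hence $A\nsg\calf$, a contradiction. In the second case $p\mid|\Aut_\cale(A)|$ and $\Aut_\cale(A)\nsg\Aut_{O^{p'}(\calf)}(A)$, and since $(C_m)^{\kappa-1}\rtimes A_\kappa$ (with $\kappa\ge p\ge5$) has no proper normal subgroup of order divisible by $p$, one gets $\Aut_\cale(A)\ge G_0(m,m,\kappa)$, hence $T=S$ and $\cale$ has index prime to $p$ in $O^{p'}(\calf)$, forcing $\cale=O^{p'}(\calf)$ — again a contradiction. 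No appeal to $F^*(\calf)$, $E(\calf)$, or the component structure is needed.

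Your Step~3 correctly identifies the case division and correctly flags the cross-characteristic Lie type case as the crux, but it is still a sketch. In the paper this case hinges on two specific inputs you do not supply: the fact (via \cite[10-2]{GL}, together with a rank-counting argument showing $\Omega_1(A)$ is the \emph{unique} elementary abelian subgroup of $S$ of rank $\kappa$) that $A$ is determined up to conjugacy, and the explicit catalogue of the resulting automizers in Tables~4.2 and~4.3 of \cite{pprime}, against which the condition $\autf(A)\cap G(m,1,\kappa)=G(m,r,\kappa)$ with $2<r\mid m$ is tested. Without these, ``never produces'' is a hope, not a proof.
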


\begin{proof} Since $\autf(A)$ contains $G(m,m,\kappa)$ with index prime to 
$p$, some subgroup conjugate to $\Aut_S(A)$ is contained in 
$G(1,1,\kappa)\cong\Sigma_\kappa$, and hence $\Aut_S(A)$ permutes some 
basis of $\Omega_1(A)$. Also, $G(m,m,\kappa)$ acts faithfully on 
$\Omega_1(A)$, as does each subgroup of $\autf(A)$ of order prime to $p$ 
(see \cite[Theorem 5.2.4]{Gorenstein}). So by the assumptions on 
$\autf(A)$, 
	\beqq \parbox{95mm}{$\autf(A)$ acts faithfully on $\Omega_1(A)$, 
	$\Aut_S(A)$ permutes a basis of $\Omega_1(A)$, and 
	$C_S(\Omega_1(A))=A$.} \label{e:CS(O1(A))}
	\eeqq

We next claim that
	\beqq \textup{$\Omega_1(A)$ is the only elementary abelian subgroup 
	of $S$ of rank $\kappa$.} \label{e:A-uniq} \eeqq
This is well known, but the proof is simple enough that we give it here. 
Set $V=\Omega_1(A)$ for short, let $W\le S$ be another elementary abelian 
subgroup, and set $\4W=\Aut_W(V)$ and $r=\rk(\4W)$. Then 
$r=\rk(W/C_W(V))$ where $C_W(V)=W\cap A=W\cap V$ by \eqref{e:CS(O1(A))}. 
Let $\calb$ be a basis for $V$ permuted by $\4W$, and assume $\4W$ acts on 
$\calb$ with $s$ orbits (including fixed orbits) of lengths 
$p^{m_1},\dots,p^{m_s}$. Then $p^r=|\4W|\le p^{m_1}\cdots p^{m_s}$, 
and hence $m_1+\dots+m_s\ge r$. So 
	\[ \rk(W) = r+\rk(W\cap V) \le r+\rk(C_V(\4W)) 
	= r + s \le \sum_{i=1}^s(m_i+1) < 
	\sum_{i=1}^sp^{m_i}=\rk(V), \]
proving \eqref{e:A-uniq}. In particular, $\Omega_1(A)$ and 
$A=C_S(\Omega_1(A))$ are weakly closed in $\calf$.

Set $G_0(m,m,\kappa)=O^{p'}(G(m,m,\kappa))\cong(C_m)^{\kappa-1}\rtimes 
A_\kappa$: the unique subgroup of index $2$ in $G(m,m,\kappa)$. There are 
exactly $\kappa$ 1-dimensional subspaces of $\Omega_1(A)$ invariant under 
the action of $O_{p'}(G_0(m,m,\kappa))\cong(C_m)^{\kappa-1}$, and they are 
permuted transitively by the alternating group $A_\kappa$. Hence 
	\beqq \textup{$\Omega_1(A)$ is a simple 
	$\F_p[G_0(m,m,\kappa)]$-module.} \label{e:A-simple} \eeqq

Set $\calf_0=O^{p'}(\calf)$. Set $\Gamma=\autf(A)$ and 
$\Gamma_0=\Aut_{\calf_0}(A)$. Thus $\Gamma_0\ge G_0(m,m,\kappa)$ since 
$\Gamma\ge G(m,m,\kappa)$.

\smallskip

\noindent\textbf{Step 1: } Assume $\calf_0$ is not simple, and let 
$\cale\nsg\calf_0$ be a proper nontrivial normal subsystem over $1\ne T\nsg 
S$. Then $T$ is strongly closed in $\calf_0$, so $T\cap A$ is normalized by 
the action of $\Gamma_0$ on $A$, and $T\cap A=\Omega_k(A)$ for some $1\le 
k\le\ell$ since $\Omega_1(A)$ is simple by \eqref{e:A-simple}. Also, 
$T/\Omega_k(A)$ is normal in $S/\Omega_k(A)$, so if $k<\ell$ and 
$T>\Omega_k(A)$, then $T/\Omega_k(A)\cap Z(S/\Omega_k(A))\ne1$. Since 
$Z(S/\Omega_k(A))\le A/\Omega_k(A)$, this implies that 
$T\cap A>\Omega_k(A)$, contradicting the choice of $k$. Thus either 
$T=\Omega_k(A)$ for some $1\le k\le\ell$, or $T>A$.

If $T=\Omega_k(A)$ for some $k\le\ell$, then since $T$ is abelian and 
strongly closed, $T=\Omega_k(A)\nsg\calf$ by \cite[Corollary 
I.4.7(a)]{AKO}. Hence for each $a\in A$ and each $x\in a^\calf$, there is 
$\varphi\in\homf(\Omega_k(A)\gen{a},S)$ such that $\varphi(a)=x$ and 
$\varphi(\Omega_k(A))=\Omega_k(A)$. Then $x\in 
C_S(\Omega_1(A))=A$ by \eqref{e:CS(O1(A))}, so $A$ is strongly closed in 
this case, and $A\nsg\calf$ by \cite[Corollary I.4.7(a)]{AKO} again.

Thus if $\calf_0$ is not simple and $A\nnsg\calf$, then there is a proper 
normal subsystem $\cale\nsg\calf_0$ over $T\nsg S$ such that $T>A$. Set 
$\Delta=\Aut_\cale(A)\nsg\Gamma_0$. Then $\Delta\ge\Aut_T(A)\ne1$ since 
$T>A$, so $p\mid|\Delta|$. Since $\Gamma_0$ contains $G_0(m,m,\kappa)$ with 
index prime to $p$, we have 
	\[ \Delta\cap G_0(m,m,\kappa)\nsg 
	G_0(m,m,\kappa)\cong(C_m)^{\kappa-1}\rtimes A_\kappa \]
where $\kappa\ge p\ge5$ and $p\mid|\Delta\cap G_0(m,m,\kappa)|$. Since 
$A_\kappa$ and hence $G_0(m,m,\kappa)$ have no proper normal 
subgroups of order a multiple of $p$, it follows that $\Delta\ge 
G_0(m,m,\kappa)$, and hence that $\Delta$ has index prime to $p$ in 
$\autf(A)$. But then $\Aut_T(A)=\Aut_S(A)$, so $T=S$ since $A$ is 
$\calf$-centric, and $\cale$ has index prime to $p$ in $\calf_0$ and 
$\calf$ by \cite[Lemma 1.26]{AOV1}. Thus $\cale=\calf_0=O^{p'}(\calf)$, 
contradicting our assumption that $\cale$ is proper.

\smallskip

\noindent\textbf{Step 2: } It remains to show, when $A\nnsg\calf$, 
that $\calf$ is not realized by any known finite almost simple group. 
Assume otherwise: assume $\calf=\calf_S(G)$ where $G$ is almost simple, 
and set $G_0=O^{p'}(G)$. Since $O^{p'}(\calf)$ is simple, $G_0$ is a  
known simple group.

We claim this is impossible. Note that $A$ is a radical $p$-subgroup 
of $G_0$, since $O_p(\Aut_{G_0}(A))=1$ and $p\nmid|C_{G_0}(A)/A|$ (i.e., 
$A$ is $\calf_0$-centric). Although we do not know $\Aut_{G_0}(A)$ 
precisely, we know that it is contained in $\autf(A)$ and contains 
$G_0(m,m,\kappa)\cong(C_m)^{\kappa-1}\rtimes A_\kappa$.

Since $p\ge5$ and $\rk_p(G_0)\ge p$, $G_0$ cannot be a sporadic group by 
\cite[Table 5.6.1]{GLS3}. 

By \cite[\S\,2]{AF}, for each abelian radical $p$-subgroup 
$B\le\Sigma_\kappa$, $\Aut_{\Sigma_\kappa}(B)$ is a product of wreath 
products of the form $\GL_c(p)\wr\Sigma_\kappa$ for $c\ge1$ and 
$\kappa\ge1$. Thus $\Aut_{A_\kappa}(B)$ can have index $2$ in 
$C_{p-1}\wr\Sigma_\kappa$ for some $\kappa$, but not index larger than $2$. 
So $G_0$ cannot be an alternating group.

If $G_0\in\Lie(p)$, then $N_{G_0}(A)$ is a parabolic subgroup by the 
Borel-Tits theorem \cite[Corollary 3.1.5]{GLS3} and since $A$ is centric 
and radical. So in the notation of \cite[\S\,2.6]{GLS3}, $A=U_J$ and 
$N_{G_0}(A)=P_J$ (up to conjugacy) for some set $J$ of primitive roots for 
$G_0$. Hence by \cite[Theorem 2.6.5(f,g)]{GLS3}, 
$O^{p'}(N_{G_0}(A)/A)\cong O^{p'}(L_J)$ is a central product of groups in 
$\Lie(p)$, contradicting the assumption that $O^{p'}(\Aut_G(A))\cong 
G_0(m,m,\kappa)$. 

Now assume that $G_0\in\Lie(q_0)$ for some prime $q_0\ne p$. By 
\cite[10-2]{GL} (and since $p\ge5$), $S\in\sylp{G_0}$ contains a unique 
elementary abelian $p$-subgroup of maximal rank, and by 
\eqref{e:A-uniq}, it must be equal to $\Omega_1(A)$. Hence 
$\autf(A)$ must be as in one of the entries in Table 4.2 or 4.3 in 
\cite{pprime}. \\
$\bullet$ If $G_0$ is a classical group and hence 
$\autf(A)\cong G(\5m,\5r,\kappa)$ for $\5m=\mu$ or $2\mu$ and $\5r\le2$ 
(see the next-to-last column in \cite[Table 4.2]{pprime} 
and recall that $G(\5m,1,\kappa)\cong C_{\5m}\wr\Sigma_\kappa$), 
then the identifications $\autf(A)\cong G(\5m,\5r,\kappa)$ and $\autf(A)\ge 
G(m,r,\kappa)$ are based on the same decompositions of $A$ as a direct sum 
of cyclic subgroups, and hence we have $m\mid\5m$ and $r\le2$, 
contradicting our original assumption. \\
$\bullet$ If $G_0$ is an 
exceptional group, then by \cite[Table 4.3]{pprime}, either 
$\kappa=\rk(A)<p$, or $p=3$, or (in case (b)) $m^{\kappa-1}\cdot\kappa!$ 
does not divide $|\autf(A)|$ for any $m>2$ and hence $\autf(A)$ cannot 
contain any such $G(m,r,\kappa)$. 
\end{proof}

As noted at the beginning of the section, if $\calf$ is a realizable 
fusion system such that $O^{p'}(\calf)$ is simple, then $\calf$ is realized 
by a finite almost simple group, and by a simple group if $\calf$ is 
simple. We now consider the converse, by determining which fusion 
systems of known finite simple groups are simple or almost simple.

\begin{Thm} \label{OR-ThA}
Fix a prime $p$ and a known finite quasisimple group $L$ such that 
$Z(L)$ is a $p$-group and $p\mid|L|$. Fix $T\in\sylp{L}$, and set 
$\cale=\calf_T(L)$ and $\calc=O^{p'}(\cale)$. Then $T>Z(L)$, 
and either 
\begin{enuma} 

\item $T\nsg\cale$ and hence $\calc$ is not quasisimple; or 

\item $p=3$ and $L\cong G_2(q)$ for some $q\equiv\pm1$ (mod $9$), in which 
case $|O_3(\cale)|=3$, $Z(\cale)=1$, and $\calc<\cale$ is 
quasisimple and is realized by $\SL_3^\pm(q)$; or 

\item $p\ge5$, $L$ is one of the simple classical groups $\PSL_n^\pm(q)$, 
$\PSp_{2n}(q)$, $\varOmega_{2n+1}(q)$, or $\POmega_{2n+2}^\pm(q)$ where 
$n\ge2$ and $q\not\equiv0,\pm1$ (mod $p$), in which case $\calc$ is simple 
and is not realized by any known finite simple group; or 

\item $\calc$ is quasisimple, $Z(\calc)=Z(L)$, and $\calc$ is 
realized by a known finite quasisimple group with center $Z(\calc)$. 

\end{enuma}
In cases (b), (c), and (d), there is a normal fusion subsystem 
$\calc^*\nsg\cale$ over $T$ containing $\calc$ that is realized by a known 
finite quasisimple group, and is such that for each saturated fusion system 
$\cale'$ over $T$ such that $O^{p'}(\cale')=\calc$, $\cale'$ is realized by 
a known finite quasisimple group only if it contains $\calc^*$. Thus 
$\calc^*=\calc$ in cases (b) and (d), while $\calc^*>\calc$ in case 
(c). 
\end{Thm}

\begin{proof} In all cases, $T\ne1$ since $p\mid|L|$ by assumption, and 
$T>Z(L)$ since otherwise $p\nmid|L/Z(L)|$ while $p\mid|Z(L)|$, 
contradicting the assumption that $L$ is perfect. 

When $L$ is simple, this is essentially \cite[Theorem 4.8]{pprime}, but 
restated to make its proof independent of the classification of finite 
simple groups. The only difference between the proof of this version and 
that of Theorem 4.8 in \cite{pprime} is that we replace \cite[Lemma 
4.7]{pprime} by the above Lemma \ref{l:exotic}. Note in case (a) that 
$\calc=\calf_T(T)$ is not quasisimple since $O^p(\calc)=1$, and in case (d) 
that $\calc$ is simple and is realized by a known finite simple group (thus 
with center $Z(L)=L(\calc)=1$). In case (b), $Z(\cale)=1$ since $\Aut_L(T)$ 
acts nontrivially on $O_3(\cale)=O_3(\calf_T(L))\cong C_3$.

Now assume that $Z(L)\ne1$. Then $Z(L)\le T$ since $Z(L)$ is a $p$-group by 
assumption, and $\calf_{T/Z(L)}(L/Z(L)) \cong \cale/Z(L)$. Also, $L/Z(L)$ 
is not isomorphic to $G_2(q)$ for any prime power $q$ and is not one of the 
groups in case (c) (see \cite[6-1]{GL}, or Theorem 6.1.4 and Table 
6.1.2 in \cite{GLS3}, for the description of Schur multipliers of groups of 
Lie type in cross characteristic). So either $T/Z(L)\nsg\cale/Z(L)$ (case 
(a)), in which case $T\nsg\cale$; or else $O^{p'}(\cale/Z(L))=\calc/Z(L)$ 
is simple and is realized by a known simple group (case (d)). 

If $\calc/Z(L)$ is realized by a known simple group $H$, then $\calc$ is 
realized by a central extension $\5H$ of $Z(L)$ by $H$. This follows from 
the proof of \cite[Corollary 6.14]{BCGLO2} (the statement itself only says 
that $\calc$ is realizable). We have $\foc(\calc/Z(L))=T/Z(L)$ since $H$ is 
simple, and hence $T=Z(L)\foc(\calc)$. Also, 
	\[ T = \foc(\cale) = \foc(\calc)[\Aut_\cale(T),T] = \foc(\calc), 
	\]
where the first equality holds since $\cale=\calf_T(L)$ where $L$ is 
perfect, the second holds since $\cale$ is generated by 
$\calc=O^{p'}(\cale)$ and $\Aut_\cale(T)$ by the Frattini condition for 
$O^{p'}(\cale)\nsg\cale$ (see \cite[Theorem I.7.7]{AKO}), and the last 
equality holds since $T=Z(L)\foc(\calc)$ where $\Aut_\cale(T)$ acts 
trivially on $Z(L)$ and sends $\foc(\calc)$ to itself. Thus $\calc$ and 
$\5H$ are quasisimple by the focal subgroup theorems (Lemma 
\ref{l:foc(F)}(a,b)), and $\5H$ is a known quasisimple group. So the last 
statement in the theorem holds in this case with $\calc^*=\calc$.
\end{proof}

The following terminology will be useful when stating many of the results 
throughout the rest of the paper. 

\begin{Defi} \label{d:KC-gps}
Let $G$ be a finite group. We say that 
\begin{enuma} 

\item $G$ is \emph{$p'$-reduced} if $O_{p'}(G)=1$; and 

\item $G$ is a \emph{$\KC$-group} if all of its 
components are known quasisimple groups.

\end{enuma}
\end{Defi}

Most of our statements from now on about groups will be formulated in terms 
of ``finite $p'$-reduced $\KC$-groups'', a restriction that allows us to 
avoid assuming the classification of finite simple groups in our proofs. 
Note that when working with the $p$-local structure of a finite group $G$, 
there's not much point in assuming that all components of $G$ are known 
without also assuming that $O_{p'}(G)=1$.

\begin{Lem} \label{l:CG(QT)}
Let $G$ be a finite $p'$-reduced $\KC$-group. Then for $U\in\sylp{F^*(G)}$, 
the centralizer $C_G(U)$ is $p$-solvable.
\end{Lem}

\begin{proof} Let $L_1,\dots,L_k$ be the components of $G$, and fix 
$T_i\in\sylp{L_i}$. Set $L=L_1\cdots L_k$ and $T=T_1\cdots T_k$, and set 
$Q=O_p(G)$. Then $F^*(G)=QL$ and $QT\in\sylp{QL}$, so we 
can assume $U=QT$. 

Conjugation by each element of $G$ permutes the subgroups $L_i$. Since 
$O_{p'}(G)=1$, and since the Schur multiplier of a group of order prime to 
$p$ has order prime to $p$, we have $T_i>Z(L_i)$ for each $i$. So each 
element of $C_G(U)$ normalizes each of the $L_i$. 

Consider the homomorphism $\gamma\:C_G(U)\too\Out(L)$ that sends $g\in 
C_G(U)$ to the class of $c_g|_L$. We just showed that 
$\Im(\gamma)\le\prod_{i=1}^k\Out(L_i)$, when identified with a subgroup of 
$\Out(L)$ in the obvious way. Moreover, $\Out(L_i)\le\Out(L_i/Z(L_i))$ is 
solvable for each $i$ by the Schreier conjecture and since $L_i/Z(L_i)$ is 
a known simple group (see \cite[Theorem 7.1.1]{GLS3}), so $\Im(\gamma)$ is 
also solvable. 

If $g\in\Ker(\gamma)$, then $c_g|_L=c_h|_L$ for some $h\in L$, so 
$gh^{-1}\in C_G(L)$. Then $gh^{-1}\in C_G(QL)$ since $h\in L\le C_G(Q)$, 
and $h\in C_L(T)$ since $g$ and $gh^{-1}$ both commute with $T$. Thus 
$\Ker(\gamma)\le C_G(QL)C_L(T)$ (and the opposite inclusion is obvious). 
Also, $C_G(QL)\nsg G$ since $QL\nsg G$, and so $C_G(QL)\nsg\Ker(\gamma)$. 
By \cite[31.13]{A-FGT} or \cite[Theorem A.13(c)]{AKO}, $C_G(QL)\le QL$, and 
hence $C_G(QL)=Z(QL)=Z(Q)$: an abelian $p$-group. Also, $C_L(T)/Z(T)$ has 
order prime to $p$ since $T\in\sylp{L}$. So $\Ker(\gamma)$, 
and hence $C_G(U)$, are $p$-solvable.
\end{proof}

The following notation will be used in several of the results in the 
rest of the section. 

\begin{Not} \label{h:Comp(F)}
Let $G$ be a finite $p'$-reduced $\KC$-group, fix $S\in\sylp{G}$, and set 
$\calf=\calf_S(G)$. Set $\Comp(G)=\{L_1,\dots,L_k\}$; thus each $L_i$ is a 
known quasisimple group. For each $1\le i\le k$, set 
	\[ T_i=S\cap L_i, \quad \cale_i=\calf_{T_i}(L_i)\le\calf, 
	\quad\textup{and}\quad \calc_i=O^{p'}(\cale_i). \]
Assume the $L_i$ were ordered so that for some $0\le m\le k$, 
$\calc_i$ is quasisimple if and only if $i\le m$. 
\end{Not}

\begin{Prop} \label{p:Comp(F)-1}
Let $G$ be a finite $p'$-reduced $\KC$-group, and assume Notation 
\ref{h:Comp(F)}. Then 
	\[ \Comp(\calf)=\{\calc_1,\dots,\calc_m\} \qquad\textup{and}\qquad
	O_p(\calf)\ge O_p(G)T_{m+1}\cdots T_k. \]
\end{Prop}

\begin{proof} Set $L=L_1\cdots L_k$ and $T=T_1\cdots T_k$. For each 
$1\le i\le k$, we have $T_i\in\sylp{L_i}$ since $L_i\nsg L\nsg G$, and 
also
	\[ \calc_i = O^{p'}(\calf_{T_i}(L_i)) \nsg \calf_{T_i}(L_i) 
	\nsg \calf_T(L) \nsg \calf_S(G) = \calf: \]
the first normality relation by \cite[Theorem I.7.7]{AKO} and the other 
two by \cite[Proposition I.6.2]{AKO} and since $L_i\nsg L\nsg G$. Thus 
$\calc_i\snsg\calf$ for each $i$. If $i\le m$, then $\calc_i$ is 
quasisimple and hence is a component of $\calf$. If $i\ge m+1$, then 
$T_i\nsg\cale_i$ by Theorem \ref{OR-ThA}, and hence $T_i\le 
O_p(\calf_T(L))\le O_p(\calf)$ by Lemma \ref{l:E<|F}(b).

Set $Q=O_p(G)$ for short. Thus $QT_{m+1}\cdots T_k\le O_p(\calf)$, and 
$T_1\cdots T_m$ is the Sylow of the central product 
$\calc_1\cdots\calc_m$. 

Assume $\cald\snsg\calf$ is another component. By 
\cite[9.8--9.9]{A-gfit}, the components of $\calf$ commute with each 
other and with $O_p(\calf)$. Hence $\cald\le C_\calf(QT)$, where 
$C_\calf(QT)$ is the fusion system of $C_G(QT)$ by Lemma \ref{l:NG(Q)}. 
(Note that $QT$ is fully centralized in $\calf$ since it is normal in $S$.)

By Lemma \ref{l:CG(QT)}, the centralizer $C_G(QT)$ is $p$-solvable. 
Hence $C_\calf(QT)=\calf_{C_S(QT)}(C_G(QT))$ is solvable in the sense of 
\cite[Definition II.12.1]{AKO}, and its saturated fusion subsystems are 
all solvable by \cite[Lemma II.12.8]{AKO}. So $\cald$ is solvable, and 
hence is constrained by \cite[Lemma II.12.5(b)]{AKO} (see also 
\cite[Definition I.4.8]{AKO}), which is impossible since $\cald$ was 
assumed to be quasisimple. We conclude that $\calc_1,\dots,\calc_m$ are 
the only components of $\calf$.
\end{proof}

In the next section, we will be working mostly with fusion systems that are 
(tamely) realized by finite $p'$-reduced $\KC$-groups. It will be important 
to know that in such situations, the fusion system and the group always 
have the same center. The following technical lemma is needed to prove 
that.

\begin{Lem} \label{l:ZF=ZG}
Fix an odd prime $p$. Let $G$ be a central product of known $p'$-reduced 
quasisimple groups, choose $S\in\sylp{G}$, and set $\calf=\calf_S(G)$. Then 
$Z(\calf)=Z(G)$, and $\Ker(\kappa_G)$ (see Definition \ref{d:kappaG}) has 
order prime to $p$.
\end{Lem}

\begin{proof} Assume the lemma holds for $G/Z(G)$. Then 
$Z(\calf/Z(G))=Z(G/Z(G))=1$, so $Z(\calf)\le Z(G)$, while the opposite 
inclusion holds since $G$ is $p'$-reduced. Also, $\Ker(\kappa_G)$ has order 
prime to $p$ by \cite[Lemma 2.17]{AOV1}. 

It thus suffices to prove the lemma when $G$ is a product of known simple 
groups. 

\smallskip

\noindent\boldd{$p\nmid|\Ker(\kappa_G)|$: } We first claim that if $G$ is a 
finite group such that $Z(G)=1$, and $U\in\sylp{\Aut(G)}$ and 
$S=U\cap\Inn(G)\in\sylp{\Inn(G)}$, then 
	\beqq C_U(S)\le S \quad\implies\quad p\nmid|\Ker(\kappa_G)|. 
	\label{e:CU(S)} \eeqq
To simplify notation, we identify $G$ with $\Inn(G)$ (recall $Z(G)=1$), and 
thus identify $S$ with a Sylow $p$-subgroup of $G$. Assume \eqref{e:CU(S)} 
does not hold: thus $C_U(S)\le S$ and $p\mid|\Ker(\kappa_G)|$. Let 
$\alpha\in\Aut(G)$ be such that $[\alpha]$ has order $p$ in $\Out(G)$ and 
$\kappa_G([\alpha])=1$. Since $\Aut(G)=\Inn(G)N_{\Aut(G)}(S)$ by the 
Frattini argument, we can assume that $\alpha(S)=S$ without changing the 
class $[\alpha]$. Then $\kappa_G([\alpha])=1$ implies in particular that 
$\alpha|_S\in\Aut_G(S)$, and thus $\alpha|_S=c_x|_S$ for some $x\in 
N_G(S)$. So upon replacing $\alpha$ by $c_x^{-1}\alpha$, we can assume that 
$\alpha$ centralizes $S$, and upon replacing $\alpha$ by $\alpha^k$ for some 
appropriate $k$, we can also arrange that $\alpha$ have $p$-power 
order in $\Aut(G)$. Also, $\alpha\in N_{\Aut(G)}(S)$ and 
$U\in\sylp{N_{\Aut(G)}(S)}$ imply that $\beta\alpha\beta^{-1}\in 
C_U(S)\sminus S$ for some $\beta\in N_{\Aut(G)}(S)$. This 
contradicts our original assumption, and finishes the proof of \eqref{e:CU(S)}. 

By \cite[Theorem B]{Gross}, $C_U(S)\le S$ whenever $G$ is a known simple 
group. The corresponding relation for products of known simple groups then 
follows from the description in Proposition \ref{p:Out(G1x...xGk)} of the 
automorphism group of a product. So $p\nmid|\Ker(\kappa_G)|$ for all such 
$G$ by \eqref{e:CU(S)}.

\smallskip

\noindent\boldd{$Z(\calf)=Z(G)$: } If $S\nnsg\calf$, then by Theorem 
\ref{OR-ThA}, either $G$ is as in case (b) and $Z(\calf)=Z(G)=1$; or it is 
as in case (c) or (d) and $Z(O^{p'}(\calf))=Z(G)=1$. Since $Z(\calf)\le 
Z(O^{p'}(\calf))$ for every saturated fusion system over a finite 
$p$-group, this proves that $Z(\calf)=Z(G)=1$.

Now assume that $S\nsg\calf$; i.e., that $G$ is $p$-Goldschmidt in the 
terminology of Aschbacher. Then $Z(\calf)=C_{Z(S)}(N_G(S)/S)$, and we must 
show this is trivial in all cases (recall $G$ is simple). If $S$ is 
abelian, then since $\Aut_G(S)\cong N_G(S)/S$ has order prime to $p$, we 
have $S=C_S(\Aut_G(S))\times[\Aut_G(S),S]$ (see \cite[Theorem 
5.2.3]{Gorenstein}), where the second factor is the focal subgroup 
$\foc(\calf)$. Also, $\foc(\calf)=S$ since $G$ is simple, and thus 
$Z(\calf)=C_S(\Aut_G(S))=1$. 

If $S\nsg\calf$ and $S$ is nonabelian, then by 
\cite[Theorem 15.6]{A-gfit}, $(G,p)$ is one of pairs listed in Table 
\ref{tbl:NL(T)}.
\begin{table}[ht]
\[ \renewcommand{\arraystretch}{1.2}
\newcommand{\OK}{\textup{OK}}
\begin{array}{|c|c||c|c|c|c|} 
p & G & S & N_G(S)/S \\\hline
p & \PSU_3(q) ~(q=p^k) & p^{k+2k} & C_{(q^2-1)[/3]} \\ 
3 & \lie2G2(q)~(q=3^k) & 3^{k+k+k} & C_{q-1} \\
3 & G_2(q) ~(q\equiv\pm2,\pm4\!\!\pmod9) & 3^{1+2}_+ & \SD_{16} \\
3 & J_2 & 3^{1+2}_+ & C_8 \\
3 & J_3 & \textup{order $3^5$} & C_8 \\
5 & \McL & 5^{1+2}_+ & C_3\rtimes C_8 \\
5 & \HS & 5^{1+2}_+ & \textup{order 16} \\
5 & \Co_2 & 5^{1+2}_+ & 4S_4 \\
5 & \Co_3 & 5^{1+2}_+ & \textup{order 48} \\
11 & J_4 & 11^{1+2}_+ & 5\times2S_4 \\
\end{array} \]
\caption{} 
\label{tbl:NL(T)} 
\end{table}
In all cases, $C_S(N_G(S))\le[S,S]$ by an argument similar to that used 
when $S$ is abelian. If $G\cong\PSU_3(q)$ or $\lie2G2(q)$, then the explicit 
description of $S$ and $N_G(S)/S$ in \cite[II.10.12(b)]{Huppert} or 
\cite[\S XI.3]{HB3}, respectively, shows that $C_S(\Aut_G(S))=1$. 
When $p=3$ and $G\cong G_2(q)$ for $q\equiv\pm2,\pm4$ (mod $9$), the 
action of the Weyl group $W\cong D_{12}$ on $Z(S)\cong C_3$ is nontrivial. 
In all other cases where $S\cong p^{1+2}_+$, the generators of $Z(S)$ are 
conjugate (by \cite[\S5]{GL} or \cite[Tables 5.3]{GLS3}), and so $N(S)/S$ 
acts nontrivially on $Z(S)=[S,S]$. 

This leaves the case $G\cong J_3$ and $p=3$. By \cite[Lemma 5.4]{Janko} 
(where $S$ is denoted $W_1$), $Z(S)\cong E_9$, and there is a subgroup 
$W\nsg N_G(S)$ such that $Z(S)\le W\cong E_{27}$ and $S\nsg N_G(W)$. Also, 
$N_G(W)/S\cong C_8$; and all elements in $Z(S)^\#$ and all those in 
$W\sminus Z(S)$ are conjugate in $N_G(W)$. Thus $W\ge[S,S]$, and 
$C_S(N_G(S))=1$ in this case.
\end{proof}

By \cite[Theorem 5.1]{GLynd2} and Glauberman's $Z^*$-theorem, Lemma 
\ref{l:ZF=ZG} also holds when $p=2$. More generally, in the same theorem, 
Glauberman and Lynd showed that for each prime $p$ for which the 
$Z^*$-theorem holds for all almost simple groups, one also has that 
$\Ker(\kappa_G)$ has order prime to $p$ for all simple groups. 

We are now ready to prove the $Z^*$-theorem at odd primes for all finite 
$p'$-reduced $\KC$-groups. 

\begin{Prop} \label{p:ZF=ZG}
Fix a prime $p$, let $G$ be a $p'$-reduced $\KC$-group, and choose 
$S\in\sylp{G}$. Then $Z(\calf_S(G))=Z(G)$. 
\end{Prop}

\begin{proof} When $p=2$, this is just Glauberman's $Z^*$-theorem 
\cite[Corollary 1]{Glauberman}, and holds for all finite $2'$-reduced 
groups. So it remains to prove the proposition when $p$ is odd. 

Set $H=E(G)$: the central product of the components of $G$. Set $T=S\cap 
H$, and set $\calf=\calf_S(G)$ and $\cale=\calf_T(H)$. By Lemma 
\ref{l:ZF=ZG}, $\Ker(\kappa_{H})$ has order prime to $p$ and $Z(\cale)=Z(H)$. 
Also, $C_G(O_p(G)H)=Z(O_p(G)H)=Z(O_p(G))$ (see \cite[31.13]{A-FGT}), so 
$O_p(G)\nsg C_G(H)$ is a centric subgroup. In particular, $Z(C_G(H))\le 
Z(O_p(G))$.

Assume $x\in Z(\calf)$. By Lemma \ref{l:Ker(kappa)}, there are $y\in 
C_S(H)$ and $z\in Z(\cale)=Z(H)$ such that $x=yz$. If $g\in C_G(H)$ is such 
that $\9gy\in C_S(H)$, then $\9gx=(\9gy)z\in S$, so $\9gx=x$ since $x\in 
Z(\calf)$ and hence $\9gy=y$. Thus 
	\[ y\in Z(C_G(H)) \le Z(O_p(G)), \] 
and since $z\in Z(H)\le Z(O_p(G))$, we have $x\in Z(O_p(G))$. Since $x\in 
Z(\calf)$, it must be invariant under the action of $\autf(Z(O_p(G)))$, and 
so $x\in C_{Z(O_p(G))}(G)=Z(G)$. 

This proves that $Z(\calf)\le Z(G)$, and the opposite inclusion 
holds since $G$ is $p'$-reduced.
\end{proof}

A very similar proof of the $Z^*$-theorem at odd primes was given by 
Guralnick and Robinson \cite[Theorem 4.1]{GR}. If we give our own proof 
here, it is mostly to make our assumptions completely clear: we assume only 
that all components of $G$ are known quasisimple groups. Other proofs of 
the $Z^*$-theorem, where the analogous assumptions are less restrictive or 
less clear, are given in \cite[Theorem 1]{Xiao} and in \cite[Remark 
7.8.3]{GLS3}.

The next proposition is somewhat more technical. 

\begin{Prop} \label{p:Comp(F)-2}
Let $G$ be a finite $p'$-reduced $\KC$-group, assume Notation 
\ref{h:Comp(F)}, and set 
	\[ L=L_1\cdots L_m,\quad T=T_1\cdots T_m, \quad 
	\cale=\calf_T(L), \quad\textup{and}\quad \calc=O^{p'}(\cale). \]
Then $\calc\nsg\cale$ are the central products of the subsystems 
$\calc_i\nsg\cale_i$ for $1\le i\le m$. 
\begin{enuma} 

\item There is a unique minimal normal fusion subsystem $\calc^*\nsg\calf$ 
among those subsystems containing $\calc=E(\calf)$ that are realized by 
finite $p'$-reduced $\KC$-groups, and $\calc^*$ itself is realized by a 
central product of known finite quasisimple groups. More precisely, 
$\calc^*=\calc_1^*\cdots\calc_m^*$, where for each $1\le i\le m$, 
$\calc_i^*\le\cale_i$ is a fusion system over $T_i$ such that 
$O^{p'}(\calc_i^*)=\calc_i$, and $\calc_i^*$ is the fusion system of a 
known finite quasisimple group. 

\item We have $\calc=O^{p'}(\calc^*)$ and $\Aut(\calc^*)=\Aut(\calc)$, and 
$\calc$ and $\calc^*$ are both characteristic in $\calf$.

\end{enuma}
\end{Prop}

\begin{proof} Let $\varphi\:L_1\times\cdots\times L_m \too G$ be the 
homomorphism induced by the inclusions $L_i\le G$, and let 
	\[ \5\varphi\: \cale_1\times\cdots\times\cale_m = 
	\calf_{T_1\times\cdots\times T_m}(L_1\times\cdots\times L_m) 
	\Right3{} \calf_S(G) = \calf \]
be the induced functor between the fusion systems. Then 
$\Im(\5\varphi)=\cale_1\cdots\cale_m$, the central product of the 
$\cale_i$, and is equal to $\calf_{T_1\cdots T_m}(L_1\cdots L_m)$. Set 
$Z=\Ker(\varphi)\le\prod_{i=1}^mZ(L_i)$. Then 
$\cale_1\cdots\cale_m\cong(\prod_{i=1}^m\cale_i)/Z$, and hence 
	\[ O^{p'}(\cale_1\cdots\cale_m) \cong 
	O^{p'}(\textstyle\prod\nolimits_{i=1}^m\cale_i)/Z \cong 
	(\textstyle\prod\nolimits_{i=1}^mO^{p'}(\cale_i))/Z \cong 
	O^{p'}(\cale_1)\cdots O^{p'}(\cale_m): \]
the first isomorphism by Lemma \ref{l:Op'(F/Z)} and the second by 
\cite[Proposition 3.4]{AOV1}. Since these are finite categories and 
$O^{p'}(\cale_1\cdots\cale_m)\le O^{p'}(\cale_1)\cdots O^{p'}(\cale_m)$, 
the two are equal.

\smallskip

\noindent\textbf{(a) } For each $1\le i\le m$, let $\calc_i^*\le\cale_i$ be 
the minimal realizable fusion subsystem containing $\calc_i$ of Theorem 
\ref{OR-ThA}. More precisely, $\calc_i^*$ is realized by a known finite 
quasisimple group, and each saturated fusion subsystem of $\cale_i$ that is 
realized by a known finite quasisimple group and contains $\calc_i$ also 
contains $\calc_i^*$. Since $\calc_i^*\le\cale_i$ for each $i$ and the 
$\cale_i$ commute, the $\calc_i^*$ also commute.

Set $\calc^*=\calc_1^*\cdots\calc_m^*\le\calf$, the central product of 
these subsystems. Thus $\calc\le\calc^*\le\cale$ where all three fusion 
systems are over $T=T_1\cdots T_m$, and where $\calc=E(\calf)\nsg\calf$, 
and $\cale\nsg\calf$ since $L_1\cdots L_m\nsg G$. 

It remains to show that each normal fusion subsystem of $\calf$ containing 
$\calc$ and realized by a finite $p'$-reduced $\KC$-group 
also contains $\calc^*$. Assume that $H$ is such a 
group. In particular, there is $R\in\sylp{H}$ such that $T\le R\nsg S$ and 
$\calc\le\calf_R(H)\nsg\calf$. Let $H_1,\dots,H_\ell\snsg H$ be the 
components of $H$ (by assumption, known quasisimple groups), and set 
$R_i=R\cap H_i$. By 
Proposition \ref{p:Comp(F)-1}, the components of $\calf_R(H)$ are those 
subsystems $O^{p'}(\calf_{R_i}(H_i))$ for $1\le i\le \ell$ that are 
quasisimple. Each component of $\calf_R(H)$ is subnormal in $\calf$ and 
hence a component of $\calf$ (recall $\calf_R(H)\nsg\calf$), and each 
component of $\calf$ is contained in and hence a component of $\calf_R(H)$ 
by assumption. Thus $m\le\ell$, and we can assume that the indices are 
chosen so that $T_i=R_i$ and $O^{p'}(\calf_{T_i}(H_i))=\calc_i$ for each 
$i\le m$.

For each $i$, $\calf_{T_i}(H_i)$ and $\calc_i^*$ are both realizable fusion 
systems over $T_i$ where 
$O^{p'}(\calf_{T_i}(H_i))=O^{p'}(\calc_i^*)=\calc_i$, and so 
$\calf_{T_i}(H_i)\ge\calc_i^*$ by Theorem \ref{OR-ThA} (the last statement) 
and the minimality assumption. So $\calf_R(H)$ contains 
$\calc^*$, and thus $\calc^*$ is minimal among normal subsystems of $\calf$ 
containing $\calc$ and realized by finite $p'$-reduced $\KC$-groups.

\smallskip

\noindent\textbf{(b) } Since $\calc\le\calc^*\le\cale$ and 
$\calc=O^{p'}(\cale)$, we also have $\calc=O^{p'}(\calc^*)$. It remains to 
prove that $\Aut(\calc^*)=\Aut(\calc)$, and that $\calc$ and $\calc^*$ are 
characteristic in $\calf$. 

Assume the central factors $\calc_i$ are ordered so 
that for some $\ell\le m$, we have $\calc_i<\calc_i^*$ if and only if $1\le 
i\le\ell$. For each $1\le i\le\ell$, $L_i$ falls under case (c) in Theorem 
\ref{OR-ThA}, so $\calc_i$ is simple, and $Z(\calc_i^*)=1$. So if we set 
$T_0=T_{\ell+1}\cdots T_m$ and 
$\calc_0=\calc_{\ell+1}\cdots\calc_m$, the central products of the remaining 
factors, we get direct product decompositions 
	\[ T=T_0\times\cdots\times T_\ell, \qquad
	\calc=\calc_0\times\cdots\times\calc_\ell 
	\qquad\textup{and}\qquad 
	\calc^*=\calc_0^*\times\cdots\times\calc_\ell^*. \]

Since $\calc=O^{p'}(\calc^*)$, we have $\Aut(\calc^*)\le\Aut(\calc)$, and 
it remains to prove the opposite inclusion.
Fix $\alpha\in\Aut(\calc)\le\Aut(T)$. Then $\alpha(T_0)=T_0$ and 
$\9\alpha\calc_0=\calc_0$, and $\alpha$ permutes the factors $T_i$ and 
$\calc_i$ for $1\le i\le\ell$. Thus there is $\sigma\in\Sigma_\ell$ such 
that $\9\alpha\calc_i^*\ge\9\alpha\calc_i=\calc_{\sigma(i)}$ for each $1\le 
i\le\ell$. For each $i$, $\calc_i^*$ and $\calc_{\sigma(i)}^*$ were chosen 
to be the unique smallest saturated fusion systems over $T_i$ and 
$T_{\sigma(i)}$ containing $\calc_i$ and $\calc_{\sigma(i)}$, respectively, 
that are realized by known finite quasisimple groups. Since 
$\9\alpha\calc_i^*$ is also realized by a known quasisimple group, we have 
$\9\alpha\calc_i^*=\calc_{\sigma(i)}^*$. 
Upon taking the direct product of these systems, it now follows that 
$\9\alpha\calc^*=\calc^*$. Hence $\alpha\in\Aut(\calc^*)$, finishing the 
proof that $\Aut(\calc)=\Aut(\calc^*)$. 

Now, $\calc=E(\calf)$ since $\Comp(\calf)=\{\calc_1,\dots,\calc_m\}$ by 
Proposition \ref{p:Comp(F)-1}, and hence $\calc$ is characteristic in 
$\calf$ by Lemma \ref{l:A2-Th6}(a). The Frattini condition for 
$\calc^*\le\calf$ holds since it holds for $\calc\nsg\calf$, and the 
extension condition holds since it holds for $\cale\nsg\calf$. For each 
$\alpha\in\Aut(\calf)$, $\alpha|_T\in\Aut(\calc)=\Aut(\calc^*)$ since 
$\calc$ is characteristic in $\calf$, and hence $\9\alpha\calc^*=\calc^*$. 
Thus the invariance condition holds (so $\calc^*\nsg\calf$), and 
$\calc^*$ is characteristic in $\calf$. 
\end{proof}

We need to understand the role played by the components of $G$ and of 
$\calf_S(G)$ when determining automorphisms of the linking system 
$\call_S^c(G)$ and tameness. The next proposition is a first step towards 
that. 

\begin{Prop} \label{p:Comp(F)-3}
Let $G$ be a finite $p'$-reduced $\KC$-group, and assume Notation 
\ref{h:Comp(F)}. Assume also that $O_p(G)=1=O_p(\calf)$. 
Thus for each $1\le i\le k$, $L_i$ is a known simple group and $\calc_i$ is 
a simple fusion system. Set 
	\[ L=\xxx{L}\nsg G \qquad\textup{and}\qquad 
	T=\xxx{T}=S\cap L\in\sylp{L}; \]
and also $\cale=\calf_T(L)$ and $\calc=O^{p'}(\cale)$. 
\begin{enuma} 

\item There is a unique minimal normal fusion subsystem $\calc^*\le\cale$ 
containing $\calc$ that is realized by a product of known finite simple 
groups. Also, $\calc^*=\calc_1^*\times\dots\times\calc_k^*$, where for each 
$1\le i\le k$, the subsystem $\calc_i^*$ is realized by a known finite 
simple group and $\calc_i\le\calc_i^*\le\cale_i$. 

\item The fusion subsystems $\calc^*$ and $\calc$ are both centric and 
characteristic in $\calf$.

\end{enuma}
\end{Prop}

\begin{proof} \textbf{(a) } This is the special case of Proposition 
\ref{p:Comp(F)-2} when $O_p(G)=1=O_p(\calf)$.

\smallskip

\noindent\textbf{(b) } The subsystems $\calc$ and $\calc^*$ are 
characteristic in $\calf$ by Proposition \ref{p:Comp(F)-2}. 

Since $O_p(\calf)=1=O_p(G)$, each of the subsystems $\calc_i$ is simple, 
and hence $\Comp(\calf)=\{\calc_1,\dots,\calc_k\}$ by Proposition 
\ref{p:Comp(F)-1}. So $F^*(\calf)=E(\calf)=\calc$, and $\calc$ is centric 
in $\calf$ by Lemma \ref{l:A2-Th6}(b). So 
$\calc^*\ge\calc$ is also centric in $\calf$. 
\end{proof}

We finish the section with two more specialized results. The first shows 
that in a saturated fusion system $\calf$ where $E(\calf)$ is ``almost 
realizable'' in the sense that there is a minimal realizable fusion 
subsystem $\calc^*\le\calf$ with $O^{p'}(\calc^*)=E(\calf)$, the subsystem 
$\calc^*$ is always contained and normal in $C_\calf(O_p(\calf))$.

\begin{Lem} \label{l:C*nsgCF(Q)}
Let $\calf$ be a saturated fusion system over a finite $p$-group $S$. Let 
$\calc_1,\dots,\calc_m$ be its components, where each 
$\calc_i\nsg\nsg\calf$ is a fusion system over $T_i\nsg\nsg S$. Set 
$T=T_1\cdots T_m$ and $\calc=\calc_1\cdots\calc_m$. Set $Q=O_p(\calf)$. 

Assume, for each $i=1,\dots,m$, that $\calc_i^*\le\calf$ is a saturated 
fusion subsystem over $T_i$ containing $\calc_i$ such that 
$\calc_i=O^{p'}(\calc_i^*)$, such that $\calc_i^*$ is realized by a known 
finite quasisimple group, and such that $\calc_i^*=\calc_i$ for each $i$ 
such that $\calc_i$ is realized by a known finite quasisimple group. Assume 
also that the subsystems $\calc_1^*,\dots,\calc_m^*$ commute in $\calf$, 
and that their central product $\calc^*=\calc_1^*\cdots\calc_m^*$ is normal 
in $\calf$. Then $\calc^*$ is contained in $C_\calf(Q)$, and is normal in 
$C_\calf(Q)$ and in $N_\calf^{\Inn(Q)}(Q)$. 
\end{Lem}

\begin{proof} Assume that the indices are chosen so that for some 
$0\le\ell\le m$, $\calc_i^*=\calc_i$ for each $1\le i\le\ell$ and 
$\calc_i^*>\calc_i$ for each $\ell+1\le i\le m$. For each $1\le i\le m$, 
let $L_i$ be a known finite quasisimple group with $T_i\in\sylp{L_i}$ such 
that $\calc_i^*=\calf_{T_i}(L_i)$ (there is such a group by assumption). By 
Theorem \ref{OR-ThA} applied to $L_i$, we are in case (d) of the theorem 
whenever $i\le\ell$ and in case (c) whenever $i\ge\ell+1$. (Case (b) cannot 
occur since we assume $\calc_i=O^{p'}(\calc^*_i)=\calc_i^*$ whenever 
$\calc_i$ is realizable by a known finite quasisimple group.) In 
particular, the $\calc_i$ and $L_i$ are all simple for $\ell+1\le i\le m$ 
by Theorem \ref{OR-ThA}(c).

Set 
	\[ \calc_{I}=\calc_1\cdots\calc_\ell=\calc_1^*\cdots\calc_\ell^*, 
	\qquad \calc_{II}=\calc_{\ell+1}\cdots\calc_m, 
	\qquad\textup{and}\qquad
	\calc_{II}^*=\calc_{\ell+1}^*\cdots\calc_m^*, \]
and also $T_{I}=T_1\cdots T_\ell$ and $T_{II}=T_{\ell+1}\cdots T_m$. Since 
$\calc_i$ is simple for $i\ge\ell+1$, we have 
	\[ Z(\calc_{II})=1, \qquad \calc=\calc_{I}\times\calc_{II}, \qquad
	\calc^*=\calc_{I}\times\calc_{II}^*,\qquad\textup{and}\qquad 
	T=T_{I}\times T_{II}. \] 

By \cite[9.9]{A-gfit} or \cite[Theorem 7.10(e)]{CH}, we have $\calc\nsg 
C_\calf(Q)$. By the Frattini condition for $\calc\nsg\calc^*$, the fusion 
system $\calc^*$ is 
generated by $\calc$ and automorphisms $\alpha\in\Aut_{\calc^*}(T)$ of 
order prime to $p$ that are the identity on $T_{I}$. By the extension 
condition for $\calc^*\nsg\calf$, each such $\alpha$ extends to an element 
$\4\alpha\in\autf(TC_S(T))$ such that $[\4\alpha,C_S(T)]\le Z(T)$, and since 
$Q\nsg\calf$, this implies that $[\4\alpha,Q]\le Q\cap Z(T)=Z(\calc)\le 
T_{I}$. Upon replacing $\4\alpha$ by $\4\alpha^k$ for some appropriate $k$, 
we can arrange that $\4\alpha$ have order prime to $p$. Then 
	\[ Q=C_{Q}(\4\alpha)[\4\alpha,Q] \le C_Q(\4\alpha) T_{I}\le 
	C_{QT}(\4\alpha) \]
(see \cite[Theorem 5.3.5]{Gorenstein} for the first equality), and so 
$\4\alpha|_Q=\Id_Q$. Thus $\alpha$ is a morphism in 
$C_\calf(Q)$, finishing the proof that $\calc^*\le C_\calf(Q)$. 

It remains to prove that $\calc^*$ is normal in  $C_\calf(Q)$ and in 
$N_\calf^{\Inn(Q)}(Q)$. The Frattini condition holds since it holds for 
$\calc\nsg C_\calf(Q)$ and $\calc\nsg N_\calf^{\Inn(Q)}(Q)$, and the 
invariance condition holds for both inclusions since it holds for 
$\calc^*\nsg\calf$. We just showed that $\calc^*$ is generated by morphisms 
in $\calc$ and morphisms $\alpha\in\Aut_{\calc^*}(T)$ that extend to 
$\4\alpha\in\autf(TC_S(T))$ such that $\4\alpha|_Q=\Id_Q$ and 
$[\4\alpha,C_S(T)]\le Z(T)$, and hence the extension condition holds for 
both inclusions since $\calc\nsg C_\calf(Q)$ and $\calc\nsg 
N_\calf^{\Inn(Q)}(Q)$. 
\end{proof}

In the following proposition, we show that each saturated fusion system 
$\calf$ has a maximal characteristic subsystem that normalizes all 
components of $\calf$.

\begin{Prop} \label{p:NF(EJ)}
Let $\calf$ be a saturated fusion system over a finite group $S$. 
Let $\calc_1,\dots,\calc_k$ be the components of $\calf$, where 
$\calc_i$ is a fusion system over $U_i$. Assume, for each $1\le i\le k$, 
that $Z(\calc_i)=1$ (i.e., that $\calc_i$ is simple). Set 
	\begin{align*} 
	U &= \xxx{U} \le S \,, \qquad 
	N = \bigcap\nolimits_{i=1}^k N_S(U_i) \,, \\
	\calh &= \{ P\le N \,|\, P\cap U_i\ne1 ~\textup{for each 
	$1\le i\le k$} \}\,, \qquad\textup{and} \\
	\caln &= \Gen{ \varphi\in\homf(P,Q) \,\big|\, 
	P,Q\in\calh,~ \varphi(P\cap U_i)\le Q\cap U_i ~\textup{for 
	each $1\le i\le k$}}_{N}.
	\end{align*}
Thus $E(\calf)$ is the direct product of the $\calc_i$, and is a fusion 
subsystem over $U$ by Lemma \ref{l:A2-Th6}(a). 
Then the fusion subsystem $\caln$ is saturated and characteristic in 
$\calf$, $E(\calf)\nsg \caln$, and $\calc_i\nsg\caln$ for each $1\le 
i\le k$. 
\end{Prop}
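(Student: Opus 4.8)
The plan is to identify $\caln$ with the ``component-structure-preserving'' part of $\calf$, to establish that $N$ is strongly closed in $\calf$, and then to transfer the saturation of $\calf$ and the normality of $E(\calf)$ down to $\caln$. First I would record the basic structure: by Lemma~\ref{l:A2-Th6}(a), since each $Z(\calc_i)=1$, $E(\calf)=\calc_1\times\cdots\times\calc_k$ is a saturated fusion subsystem over $U=\xxx{U}$ that is normal in $\calf$; hence $U$ is strongly closed, so $U^\calf=\{U\}$, $U\nsg S$, and $U$ is fully normalized, hence fully automized, in $\calf$. Since $E(\calf)\nsg\calf$, the invariance condition gives $c_\alpha(E(\calf))=E(\calf)$ for every $\alpha\in\autf(U)$, and because $\{\calc_1,\dots,\calc_k\}$ is the intrinsic set of components of $E(\calf)$, each such $\alpha$ permutes $\{U_1,\dots,U_k\}$. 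This yields a homomorphism $\autf(U)\too\Sigma_k$ whose restriction along $\Aut_S(U)$ has kernel exactly $N=\bigcap_i N_S(U_i)$; in particular $N\nsg S$.

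Next I would show that $N$ is strongly closed in $\calf$ and describe $\caln$ concretely. For the first point, given $x\in N$ and $\varphi\in\homf(\gen x,S)$, one extends $\varphi$ over the strongly closed normal subgroup $U$ to a morphism fixing $U$ setwise (using the extension of morphisms available because $E(\calf)\nsg\calf$); then $c_{\varphi(x)}|_U$ is conjugate to $c_x|_U$ by an element of $\autf(U)$, which merely permutes $\{U_1,\dots,U_k\}$, so $c_{\varphi(x)}|_U$ still stabilizes each $U_i$ and $\varphi(x)\in N$. For the second point, one checks that the ``$\gen{-}_N$'' in the definition of $\caln$ only adds $\Hom_N$-morphisms and closes under composition and restriction, so that $\caln$ is exactly the fusion subsystem of $\calf$ generated by the decomposition-preserving morphisms between the subgroups in $\calh$; in particular every morphism of $\caln$ is decomposition-preserving, and every fully normalized $\calf$-centric $\calf$-radical subgroup $R$ meets each $U_i$ nontrivially (otherwise $U_i\le C_S(R)\le R$) and carries a decomposition-preserving automorphism group.

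The heart of the proof is then the saturation of $\caln$ together with $\caln\nsg\calf$. I would verify the saturation axioms for $\caln$ over $N$ from those of $\calf$: a subgroup of $N$ that is fully normalized (resp. fully centralized) in $\caln$ is shown to be fully automized (resp. receptive) in $\caln$, the key inputs being the product structure $E(\calf)=\calc_1\times\cdots\times\calc_k$ and the control of $\calf$ on centric radical subgroups via Theorem~\ref{t:AFT}. For the normality of $\caln$ in $\calf$: $N\nsg S$ is strongly closed; the invariance condition holds because conjugation by $\alpha\in\autf(N)$ permutes $\{U_1,\dots,U_k\}$ compatibly on source and target and hence preserves decomposition-preserving morphisms; the Frattini condition amounts to showing that every permutation of the $U_i$ realized by an $\calf$-morphism is already realized inside $\autf(N)$, so that the remaining factor lies in $\caln$, which I would derive from the extension condition for $E(\calf)\nsg\calf$ together with Alperin's fusion theorem; and the extension condition for $\caln\nsg\calf$ follows in the same way. (Alternatively one may recognise $\caln$ as the intersection of the normalizer subsystems of the components $\calc_i$ in the sense of Aschbacher's component theory and quote the saturation and normality statements from \cite{A-gfit}.) Characteristicity is then immediate: any $\beta\in\Aut(\calf)$ permutes $\Comp(\calf)$, hence $\{U_1,\dots,U_k\}$, hence fixes $N$ and preserves the defining condition, so $c_\beta(\caln)=\caln$.

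Finally, for the containments: $E(\calf)\le\caln$ over $U\le N$, and since $E(\calf)\nsg\calf$ and $U$ is strongly closed in $\caln$, the invariance, Frattini and extension conditions for $E(\calf)\nsg\caln$ are inherited from those for $E(\calf)\nsg\calf$ (the factorizations witnessing the Frattini condition in $\calf$ are already decomposition-preserving, so they lie in $\caln$). For each $i$, $U_i$ is strongly closed in $\caln$ by construction, and $c_\alpha(\calc_i)=\calc_i$ for every $\alpha\in\Aut_\caln(U)$ since a decomposition-preserving automorphism of $U$ cannot move a component; as $\calc_i\nsg E(\calf)\nsg\caln$, Lemma~\ref{l:D<E<F} then gives $\calc_i\nsg\caln$. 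The step I expect to be the main obstacle is the saturation of $\caln$: unlike a normalizer subsystem $N_\calf^K(Q)$ it is not controlled by the automorphisms of a single subgroup, so reconciling the ``generated by decomposition-preserving morphisms on $\calh$'' description with a verification of the Sylow and extension axioms --- equivalently, proving the Frattini condition for $\caln\nsg\calf$ --- is where the real work lies.
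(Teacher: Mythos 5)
Your overall strategy matches the paper's: show $N$ is strongly closed, exploit the Frattini condition of $E(\calf)\nsg\calf$ to control how $\calf$-morphisms permute the $U_i$, verify the normality conditions for $\caln\nsg\calf$, and finish the component containments via Lemma~\ref{l:D<E<F}. But you correctly flag the saturation of $\caln$ as the crux, and that step is left as a sketch with a genuine gap.

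What is missing is the specific mechanism. The paper does not try to verify the Sylow and extension axioms directly for arbitrary subgroups of $N$ (as your sketch suggests, via Theorem~\ref{t:AFT}). Instead it first notes that $\caln^c\subseteq\calh$ — because each $U_i\nsg N$ forces $Z(N)\cap U_i\ne1$, so any $\caln$-centric subgroup meets each $U_i$ — and then invokes the $\calh$-generated, $\calh$-saturated criterion of \cite[Theorem I.3.10]{AKO}: one only needs every $P\in\calh$ to be $\caln$-conjugate to a subgroup that is fully automized and receptive in $\caln$. This in turn rests on two observations that you gesture at but neither state nor prove: \textbf{(i)} for every $P\in\calh$ and every $\varphi\in\homf(P,S)$, there is a permutation $\delta$ (induced by $\autf(U)$ on $\{U_1,\dots,U_k\}$) with $\varphi(P\cap U_i)\le U_{\delta(i)}$ for all $i$ — this follows from the Frattini condition for $E(\calf)\nsg\calf$ applied after reducing to $P\le U$; and \textbf{(ii)} every such $\delta$ is realized by some $\alpha\in\autf(N)$ — this is \emph{strictly stronger} than saying the permutation is realized on $U$, and its proof needs a Sylow argument ($\Aut_N(U)$ and its conjugate under a $\beta\in\autf(U)$ realizing $\delta$ are both Sylow $p$-subgroups of $\Aut_\caln(U)$, so after correcting by an element of $\Aut_\caln(U)$ one may assume $\beta$ normalizes $\Aut_N(U)$ and apply the extension axiom). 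These two facts are the engine behind the $\calh$-saturation verification, the Frattini condition for $\caln\nsg\calf$, and the fact that any $P\in\calh$ fully normalized in $\calf$ is automatically fully automized and receptive in $\caln$. Without (i) and (ii) written out, the proposal does not actually prove saturation.

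A secondary point: your argument that $N$ is strongly closed claims to ``extend $\varphi$ over the strongly closed normal subgroup $U$ to a morphism fixing $U$ setwise.'' No such extension is available in general. The paper instead sets $K=\{\alpha\in\autf(U)\mid\alpha(U_i)=U_i\ \forall i\}\nsg\autf(U)$, so that $N=N_S^K(U)$, and uses Puig's theorem (\cite[Theorem~5.14]{Craven}) on the quotient $\calf/U$ to produce $\psi\in\homf(PU,QU)$ agreeing with $\varphi$ modulo $U$; then $c_{\psi(x)}=(\psi|_U)\,c_x\,(\psi|_U)^{-1}\in K$ since $K\nsg\autf(U)$, so $\psi(x)\in N$ and $\varphi(x)\in\psi(x)U\subseteq N$. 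This detour through $\calf/U$ is necessary; a direct extension over $U$ does not exist.

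Finally, for $E(\calf)\nsg\caln$ you argue that the factorizations witnessing the Frattini condition for $E(\calf)\nsg\calf$ are ``already decomposition-preserving,'' which is not immediate (the $\alpha\in\autf(U)$ appearing there a priori permutes the $U_i$). The paper sidesteps this by quoting \cite[9.8.3]{A-gfit} to get $E(\caln)=E(\calf)$ once $\caln\nsg\calf$ is established, which is both shorter and avoids the gap; your invocation of Lemma~\ref{l:D<E<F} for $\calc_i\nsg\caln$ is then correct.
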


\begin{proof} By Lemma \ref{l:A2-Th6}(a) and 
since $Z(\calc_i)=1$ for each $i$, $E(\calf)$ is the direct product of the 
$\calc_i$ and is characteristic in $\calf$. In particular, 
$E(\calf)\nsg\calf$, and $U$ is strongly closed in $\calf$.

Set 
	\[ \Delta=\{\delta\in\Sigma_k \,|\, \exists\,\alpha\in\autf(U) 
	~\textup{such that}~ \alpha(U_i)=U_{\delta(i)}~ \forall\,1\le i\le 
	k \}. \] 
We first claim that 
	\beqq \parbox{130mm}{for each $P\in\calh$ and each 
	$\varphi\in\homf(P,S)$, there is $\delta\in\Delta$ such that 
	$\varphi(P\cap U_i)\le U_{\delta(i)}$ for all $1\le i\le k$.}
	\label{e:yy1} \eeqq
It suffices to prove this when $P\le U$ (hence $\varphi(P)\le U$). 
Since $E(\calf)\nsg\calf$ by Lemma \ref{l:A2-Th6}(a), the Frattini condition 
implies that $\varphi=\alpha\varphi'$ for some $\alpha\in\autf(U)$ and 
some $\varphi'\in\Hom_{E(\calf)}(P,U)$. Since $\alpha$ permutes the 
components of $\calf$, there is $\delta\in\Delta$ such that 
$\alpha(U_i)=U_{\delta(i)}$ for each $i$. Since $\varphi'$ is in 
$E(\calf)$, we have $\varphi(P\cap U_i)\le \alpha(U_i) = U_{\delta(i)}$ 
for each $1\le i\le k$.

We next claim that 
	\beqq \textup{for each $\delta\in\Delta$, there is 
	$\alpha\in\autf(N)$ such that $\alpha(U_i)=U_{\delta(i)}$ for 
	each $1\le i\le k$.} \label{e:yy2} \eeqq
To see this, fix $\delta\in\Delta$, and choose $\beta\in\autf(U)$ such 
that $\beta(U_i)=U_{\delta(i)}$ for each $1\le i\le k$. Since 
$\Aut_S(U)\in\sylp{\autf(U)}$ and $\Aut_{\caln}(U)\nsg\autf(U)$, 
we have that $\Aut_{N}(U)$ and 
$\9\beta\Aut_{N}(U)$ are both Sylow $p$-subgroups of 
$\Aut_{\caln}(U)$. So there is $\gamma\in\Aut_{\caln}(U)$ 
such that $\gamma\beta$ normalizes $\Aut_{N}(U)$, and hence by the 
extension axiom extends to $\alpha\in\autf(N)$. By construction, 
$\alpha(U_i)=U_{\delta(i)}$ for each $1\le i\le k$.

Fix $\alpha\in\autf(N)$, and let $\delta\in\Sigma_k$ be such that 
$\alpha(U_i)=U_{\delta(i)}$ for all $1\le i\le k$. For each $P,Q\in\calh$ 
and $\varphi\in\Hom_{\caln}(P,Q)$, $\varphi(P\cap U_i)\le 
Q\cap U_i$ for each $1\le i\le k$, and hence 
$\alpha\varphi\alpha^{-1}(\alpha(P)\cap U_i)\le \alpha(Q)\cap U_i$ for 
each $1\le i\le k$. Thus $\9\alpha\varphi\in\Mor(\caln)$. So 
$\alpha$ normalizes the subsystem $\caln$, and we have shown
	\beqq \textup{for all $\alpha\in\autf(N)$,\quad 
	$\9\alpha\caln=\caln$.} \label{e:yy3} \eeqq

We show in Step 1 that $\caln$ is saturated, in Step 2 that $\caln$ 
is characteristic, and in Step 3 that $E(\calf)\nsg \caln$ and 
$\calc_i\nsg\caln$ for $1\le i\le k$. 

\smallskip

\noindent\textbf{Step 1: } For each $1\le i\le k$, since $U_i\nsg N$, we 
have $Z(N)\cap U_i\ne1$. So for each $P\in \caln^c$, we have 
$P\cap U_i\ge Z(N)\cap U_i\ne1$ for each $1\le i\le k$, and hence 
$P\in\calh$. Thus $\caln^c\subseteq\calh$.

By definition, $\caln$ is $\calh$-generated. So by \cite[Theorem 
I.3.10]{AKO}, to prove that $\caln$ is saturated, it suffices to 
prove that it is $\calh$-saturated; i.e., that each $P\in\calh$ is 
$\caln$-conjugate to a subgroup that is fully automized and 
receptive in $\caln$ (see \cite[Definition I.3.9]{AKO}).

If $P\in\calh$ is receptive in $\calf$ and 
$\varphi\in\Iso_{\caln}(Q,P)$ for some $Q\le N$, then $\varphi$ 
extends to some $\4\varphi\in\homf(N_\varphi^\calf,S)$, and 
$\4\varphi(N_\varphi^\calf\cap U_i)\le U_i$ for each $1\le i\le k$ by 
\eqref{e:yy1}. Hence $\4\varphi$ restricts to an element of 
$\Hom_{\caln}(N_\varphi^{\caln},N)$. Thus $P$ is receptive 
in $\caln$. 

Assume $P\in\calh$ is fully automized in $\calf$. By \eqref{e:yy1}, each 
$\beta\in\autf(P)$ permutes the subgroups $P\cap U_i$ for $1\le i\le k$, 
while $\beta\in\Aut_{\caln}(P)$ if and only if it sends each $P\cap U_i$ 
to itself. So $\Aut_{\caln}(P)$ is normal in $\autf(P)$. Also, 
$\Aut_{N}(P)=\Aut_S(P)\cap\Aut_{\caln}(P)$: if 
$c_x\in\Aut_{\caln}(P)$ for $x\in N_S(P)$, then $\9x(P\cap U_i)\le U_i$ 
for each $1\le i\le k$ and hence $x\in N$. So 
$\Aut_{N}(P)\in\sylp{\Aut_{\caln}(P)}$ since 
$\Aut_S(P)\in\sylp{\autf(P)}$, and we conclude that $P$ is fully automized 
in $\caln$.

Now fix $P\in\calh$, and let $\chi\in\homf(P,N)$ be such that $\chi(P)$ is 
fully normalized in $\calf$. Then $\chi(P)\in\calh$, and we just showed 
that $\chi(P)$ is fully automized and receptive in $\caln$. By 
\eqref{e:yy1} and \eqref{e:yy2}, there is $\alpha\in\autf(N)$ such that 
$\alpha\chi\in\Hom_{\caln}(P,N)$. Since $\9\alpha\caln=\caln$ by 
\eqref{e:yy3}, the subgroup $\alpha\chi(P)$ is also fully automized and 
receptive in $\caln$, and is $\caln$-conjugate to $P$. Since $P\in\calh$ 
was arbitrary, this proves that $\caln$ is $\calh$-saturated, and finishes 
the proof that it is saturated. 

\smallskip

\noindent\textbf{Step 2: } We first check that $N$ is 
strongly closed in $\calf$. Set 
	\[ K=\{\alpha\in\autf(U)\,|\,\alpha(U_i)=U_i,~\textup{all $1\le i\le 
	k$}\}\le\autf(U). \]
Thus $N=N_S^K(U)$. 
Let $x,y\in S$ be such that $x\in N$ and $y\in x^\calf$; we claim 
that $y\in N$.

Let $P,Q\le S$ and $\varphi\in\homf(P,Q)$ be such that $x\in P$, $y\in 
Q$, and $\varphi(x)=y$. Then $\varphi(P\cap U)\le Q\cap U$ since $U$ is 
strongly closed in $\calf$ as noted above, and so $\varphi$ induces a 
homomorphism $\4\varphi$ from $PU/U\cong P/(P\cap U)$ to $QU/U\cong 
Q/(Q\cap U)$. By a theorem of Puig (see \cite[Theorem 5.14]{Craven}), 
$\4\varphi\in\Hom_{\calf/U}(PU/U,QU/U)$. In other words, there is 
$\psi\in\homf(PU,QU)$ such that for each $g\in P$, 
$\psi(g)\in\varphi(g)U$. 

Now, $c_{\psi(x)}=(\psi|_U)c_x(\psi|_U)^{-1}$ where 
$\psi|_U\in\autf(U)$. Also, $c_x\in K$ since $x\in N=N_S^K(U)$, and 
$K$ is normal in $\autf(U)$ since each $\alpha\in\autf(U)$ permutes the 
$U_i$. So $c_{\psi(x)}\in K$, and hence $\psi(x)\in N$. Hence $y\in 
\psi(x)U\subseteq N$, finishing the proof that $N$ is strongly 
closed.

If $x\in C_S(N)\le C_S(U)$, then $\9xU_i=U_i$ for each $1\le i\le k$ 
(hence for each $1\le i\le k$), and so $x\in N$. Thus $C_S(N)\le N$, 
so the extension condition holds for $\caln\le\calf$. 

By \eqref{e:yy1} and \eqref{e:yy2}, for each $P,Q\le N$ and 
$\varphi\in\homf(P,Q)$, there is $\delta\in\Delta$ such that $\varphi(P\cap 
U_i)\le Q\cap U_{\delta(i)}$ for all $1\le i\le k$, and $\alpha\in\autf(N)$ 
such that $\alpha(U_i)=U_{\delta(i)}$ for each $1\le i\le k$. So 
$\alpha^{-1}\varphi\in\Hom_{\caln}(P,\alpha^{-1}(Q))$, and the Frattini 
condition for normality holds. The invariance condition holds by 
\eqref{e:yy3}, and thus $\caln\nsg\calf$.

For each $\beta\in\Aut(\calf)$, $\beta$ permutes the components of 
$\calf$, and hence permutes the subgroups $U_i$ and the members of the set 
$\calh$. So $c_\beta(\caln)=\caln$ by the above definition of 
$\caln$, and $\caln$ is characteristic in $\calf$.

\smallskip

\noindent\textbf{Step 3: } By \cite[9.8.3]{A-gfit} and since 
$\caln\nsg\calf$, we have $E(\calf)=E(\caln)\nsg\caln$. For each 
$1\le i\le k$, we have $\calc_i\nsg E(\calf)$ by \cite[9.8.2]{A-gfit}, and 
$c_\alpha(\calc_i)=\calc_i$ for each $\alpha\in\Aut_\caln(U)$ by 
definition of $\caln$. So $\calc_i\nsg\caln$ by Lemma 
\ref{l:D<E<F}.
\end{proof}

By construction, $\caln_{}$ is the largest saturated subsystem of 
$\calf$ that contains each of the $\calc_i$ for $1\le i\le k$ as a normal 
subsystem.

\bigskip

\section{Tameness of realizable fusion systems}
\label{s:tame-real}

We are now ready to show that realizable fusion systems are tame, assuming 
the classification of finite simple groups. This has already been shown in 
earlier papers for fusion systems of known simple groups (see Proposition 
\ref{p:simple=>tame}). When $\calf$ is the fusion system of an arbitrary 
finite $p'$-reduced $\KC$-group $G$, we will show that it is tame via a 
series of reductions based on an examination of the components of $G$. 

We first restrict attention to tameness of fusion systems of finite simple 
groups. This was shown in most cases in earlier papers, and will be 
summarized below, but there were two cases whose proofs assumed 
earlier results that were in error: 

\begin{Lem} \label{l:FS(L).tame}
Let $(G,p)$ be one of the pairs $(\He,3)$ or $(\Co_1,5)$, choose 
$S\in\sylp{G}$, and set $\calf=\calf_S(G)$ and $\call=\call_S^c(G)$. Then 
$\Out(\call)=1$, and so $\calf$ is tamely realized by $G$.
\end{Lem}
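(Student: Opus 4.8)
The plan is to reduce the assertion to the computation of $\Out(\calf)$, and then to show $\Out(\calf)=1$ using the known $p$-local structure of $G$. Since $p$ is odd in both cases, the natural homomorphism $\mu_\call\:\Out(\call)\too\Out(\calf)$ induced by $\til\mu_\call$ (Proposition \ref{AutI}) is an isomorphism, by \cite[Theorem C]{O-Ch} together with \cite[Theorem 1.1]{GLynd} --- this is the special case $\calm=\call$ of the fact used in the proof of Lemma \ref{centric<=>centric}(d). So $\Out(\call)=1$ if and only if $\Out(\calf)=1$; and once $\Out(\call)=1$, the homomorphism $\kappa_G\:\Out(G)\too\Out(\call)$ is trivially split surjective, so $\calf$ is tamely realized by $G$ by Definition \ref{d:tame}. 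It therefore suffices to show $\Out(\calf)=1$ in each of the two cases.

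For that, the first point is that every $\alpha\in\Aut(\calf)$ satisfies $\9\alpha\Aut_\calf(S)=\Aut_{\9\alpha\calf}(\alpha(S))=\Aut_\calf(S)$, so $\Aut(\calf)\le N_{\Aut(S)}(\Aut_\calf(S))$, where $\Aut_\calf(S)=\Aut_G(S)=N_G(S)/Z(S)$ (using $O_{p'}(G)=1$). More precisely, by Alperin's fusion theorem (Theorem \ref{t:AFT}) $\calf$ is generated by $\Aut_\calf(S)$ and the automizers $\Aut_\calf(E)$ of the fully normalized $\calf$-centric $\calf$-radical subgroups $E$, so the condition $\9\alpha\calf=\calf$ is equivalent to requiring $\alpha\Aut_\calf(E)\alpha^{-1}=\Aut_\calf(\alpha(E))$ for every such $E$. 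I would then extract from the $p$-local structure of $G$ (see \cite{GLS3} and the Atlas) the isomorphism type of $S$ --- for $(\He,3)$ one has $S\cong 3^{1+2}_+$, so $\Out(S)\cong\GL_2(3)$, and for $(\Co_1,5)$ one has $|S|=5^4$ --- the subgroup $\Aut_\calf(S)\le\Aut(S)$, and the finite list of proper $\calf$-centric $\calf$-radical subgroups $E$ of $S$ together with their automizers $\Aut_\calf(E)$ and Sylow subgroups $\Aut_S(E)$. Given $\alpha\in\Aut(\calf)$, it permutes the $\calf$-conjugacy classes of these subgroups; after composing with a suitable element of $\Aut_\calf(S)$ I may assume $\alpha$ fixes a representative $E$ of each class and that $\alpha|_E\in N_{\Aut(E)}(\Aut_\calf(E))$. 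The local data should then show either that $\Aut_\calf(S)$ is already self-normalizing in $\Aut(S)$ --- which gives $\Aut(\calf)=\Aut_\calf(S)$ at once --- or, failing that, that $N_{\Aut(E)}(\Aut_\calf(E))$ is generated by $\Aut_\calf(E)$ and $\Aut_S(E)$ for each such $E$, from which one deduces $\alpha\in\Aut_\calf(S)$ and hence $\Out(\calf)=1$.

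The step I expect to be the main obstacle is this last one: the explicit identification of all $\calf$-centric $\calf$-radical subgroups of $S$, of their automizers, and of the normalizers of those automizers in $\Aut(E)$ and of $\Aut_\calf(S)$ in $\Aut(S)$, followed by the verification that no $\alpha\in\Aut(S)\sminus\Aut_\calf(S)$ can simultaneously be compatible with all of them. This is exactly the point at which the earlier treatments of these two cases invoked results that turned out to be in error, so the substance of the lemma is to carry out this $p$-local bookkeeping carefully and independently. One subtlety to handle along the way is ruling out that $\alpha$ interchanges two distinct $\calf$-classes of essential subgroups of the same order, for which one needs a distinguishing invariant such as the isomorphism type of $\Aut_\calf(E)$ or of $N_S(E)$.
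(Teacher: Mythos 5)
Your reduction is correct and is exactly how the paper begins: for odd $p$, the map $\mu_\call\:\Out(\call)\to\Out(\calf)$ is an isomorphism (\cite[Theorem C]{O-Ch}, \cite[Theorem 1.1]{GLynd}), so it suffices to prove $\Out(\calf)=1$, and once that is done split surjectivity of $\kappa_G$ is automatic. But from that point on your proposal is a plan rather than a proof: you defer the entire content of the lemma --- determining the radical subgroups, their automizers, and the normalizers of the automizers, and then checking that no outer automorphism of $S$ is compatible with all of them --- to ``the local data should then show,'' and you acknowledge this is exactly where the earlier flawed treatments went wrong. As written, the proposal establishes nothing beyond the (correct) reduction.

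For the record, here is what the paper actually does, and it is worth noting that for $\Co_1$ it takes a noticeably different and shorter route than the one you sketch. For $(\He,3)$ the argument is in the spirit of yours but collapses quickly: $S\cong 3^{1+2}_+$, $\Out_G(S)\cong D_8$, and $N_{\Out(S)}(\Out_G(S))\cong\SD_{16}$, so $\Out(\calf)$ can have order at most $2$; a nontrivial element would have to come from an element of $N_{\Aut(S)}(\Aut_G(S))\sminus\Aut_G(S)$, and any such element interchanges two subgroups of order $9$ whose $\calf$-automizers are nonisomorphic, so it cannot normalize $\calf$. For $(\Co_1,5)$ the paper does \emph{not} enumerate radical subgroups at all. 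It uses instead that $S$ has a unique $E_{5^3}$ subgroup $Q$ of index $5$ with $N_G(Q)/Q\cong C_4\times\Sigma_5$, sets $H=N_G(Q)$, and invokes \cite[Lemma 1.2(b)]{sportame} (applicable since $Q$ is weakly closed and $C_H(Q)=Q$) to get $|\Out(\calf)|\le|\Out(H)|$. It then computes $\Out(H)=1$ via the exact sequence of \cite[Lemma 1.2]{OV2} together with a five-term cohomology sequence showing $H^1(H/Q;Q)=0$ and a group-theoretic calculation showing $N_{\GL_3(5)}(C_4\times\Sigma_5)=C_4\times\Sigma_5$. This is a cleaner and more robust route than a direct radical-subgroup census, and you would do well to learn the $|\Out(\calf)|\le|\Out(N_G(Q))|$ trick for weakly closed $Q$ with self-centralizing normalizer, since it is the standard way to bound $\Out(\calf)$ for sporadic groups.
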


\begin{proof} Since $p$ is odd, $\Out(\calf)\cong\Out(\call)$ in both cases. 
The simplest proof of this is given in \cite[Theorem 
C]{limz-odd} (and the sporadic groups are handled in Proposition 4.4 of that 
paper). A more general result is shown in \cite[Theorem C]{O-Ch} and 
\cite[Theorem 1.1]{GLynd}. 

When $G=\He$ and $p=3$, the argument in \cite[p. 
139]{sportame} claimed (wrongly) that $\calf$ is simple, but 
did not actually use this. Since $S$ is extraspecial of order $27$ and 
exponent $3$ and $\Out_G(S)\cong D_8$, we have 
	\[ D_8 \cong \Out_G(S) \le \Aut(\calf)/\Inn(S) \le 
	N_{\Out(S)}(\Out_G(S)) \cong \SD_{16}. \]
Elements in $N_{\Aut(S)}(\Aut_G(S))\sminus\Aut_G(S)$ exchange subgroups of 
$S$ of order $9$ with non-isomorphic automizers, and hence do not normalize 
$\calf$. So $\Aut(\calf)=\Aut_G(S)$, and $\Out(\calf)=1$.

When $G=\Co_1$ and $p=5$, the proof that $\Out(\calf)=1$ in \cite[p. 
138]{sportame} used the incorrect claim that $\calf$ has a normal subsystem 
of index $2$. So we replace that argument with the following one. By 
\cite[Theorem 5.1]{Curtis} and the correction in \cite[p. 145]{Wilson-Co1}, 
$S$ contains a unique elementary abelian subgroup $Q$ of order $5^3$ and 
index $5$, and 
$N_G(Q)/Q\cong\Aut_G(Q)\cong C_4\times\Sigma_5$. Set $H=N_G(Q)$. By 
\cite[Lemma 1.2(b)]{sportame} and since $C_H(Q)=Q$, we have 
$|\Out(\calf)|\le|\Out(H)|$. By \cite[Lemma 1.2]{OV2}, there is an exact 
sequence 
	\[ 0 \Right2{} H^1(H/Q;Q) \Right4{} \Out(H) \Right4{} 
	N_{\Out(Q)}(\Out_H(Q))/\Out_H(Q), \]
and by \cite[p. 110]{Benson2}, there is a 5-term exact sequence for the 
homology of $H/Q$ as an extension of $C_4$ by $\Sigma_5$ that begins with 
	\[ 0 \Right2{} H^1(\Sigma_5;H^0(C_4;Q)) \Right4{} H^1(H/Q;Q) \Right4{} 
	H^0(\Sigma_5;H^1(C_4;Q)). \]
Since $H^0(C_4;Q)=H^1(C_4;Q)=0$ ($C_4$ acts on $Q\cong C_5\times C_5\times 
C_5$ via multiplication by scalars), this proves that $H^1(H/Q;Q)=0$. Also, 
	\[ N_{\Out(Q)}(\Out_H(Q))/\Out_H(Q) \cong 
	N_{\GL_3(5)}(C_4\times\Sigma_5)/(C_4\times\Sigma_5) \]
is trivial since $\GL_3(5)\cong C_4\times\PSL_3(5)$ and $\GO_3(5)\cong\Sigma_5$ is 
a maximal subgroup of $\PSL_3(5)$ (see, e.g., \cite[Theorem 
6.5.3]{GLS3}). So $\Out(\calf)=\Out(H)=1$. 
\end{proof}

We now summarize what we need to know here about tameness of fusion 
systems of finite simple groups.

\begin{Prop} \label{p:simple=>tame}
Fix a known simple group $G$, choose $S\in\sylp{G}$, 
and assume that $S\nnsg\calf_S(G)$. Then $\calf_S(G)$ is tamely realized by 
some known simple group $G^*$. 
\end{Prop}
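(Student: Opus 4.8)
The plan is to reduce Proposition \ref{p:simple=>tame} to results already in the literature, via a case analysis over the known simple groups $G$. Since by hypothesis $S\nnsg\calf_S(G)$, Theorem \ref{OR-ThA} tells us that we are in case (b), (c), or (d). In case (d), $O^{p'}(\calf_S(G))$ is simple and is realized by a known finite simple group $G^*$; the standard tameness statements for fusion systems of known simple groups then apply to $G^*$, and one invokes the comprehensive tameness results of \cite{AOV1,BMO,sportame} (with the corrections supplied by Lemma \ref{l:FS(L).tame} above for the pairs $(\He,3)$ and $(\Co_1,5)$). In case (b), $p=3$ and $G\cong G_2(q)$ with $q\equiv\pm1\pmod9$; here $O^{3'}(\calf)$ is realized by $\SL_3^\pm(q)$, and one checks tameness of that fusion system, which was done in the Lie-type analysis. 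In case (c), the relevant fusion system $O^{p'}(\calf_S(G))$ is exotic — not realized by any known simple group — but $\calf_S(G)$ itself is still realized by the classical group $G$, which is a known simple group; so in that case the $G^*$ we seek is just $G$, and we must verify that $\calf_S(G)$ is tamely realized by $G$.

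First I would fix notation and dispose of the easy cases: if $p=2$, or more generally whenever the pair $(G,p)$ is covered directly by the main theorem of \cite{BMO} (tameness of fusion systems of groups of Lie type in the defining or a cross characteristic, together with \cite{AOV1} and \cite{sportame} for alternating and sporadic groups), we are done — noting that the two problematic pairs are handled by Lemma \ref{l:FS(L).tame}. This reduces us to the situation where $p$ is odd and $G$ is one of the classical or exceptional groups of Lie type in characteristic different from $p$, i.e.\ precisely the configurations analyzed in \cite{pprime}. The second step is to split according to Theorem \ref{OR-ThA}. In cases (b) and (d), I would set $G^*$ to be the simple (or quasisimple, then pass to its simple quotient) group furnished by the theorem, observe that $\calf_S(G)$ and $\calf_{S}(G^*)$ have the same $O^{p'}$, and then invoke the fact (used repeatedly via \cite[Theorem A]{AOV1}) that tameness only depends on $\calf$ up to the operations defining the reduction — more precisely, that a fusion system realized by a group whose fusion system is tame is itself tame — to transfer tameness from the known group realizing $O^{p'}(\calf)$ up to $\calf_S(G)$.

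The case I expect to be the main obstacle is case (c): the classical groups $\PSL_n^\pm(q)$, $\PSp_{2n}(q)$, $\varOmega_{2n+1}(q)$, $\POmega_{2n+2}^\pm(q)$ with $p\ge5$ and $q\not\equiv 0,\pm1\pmod p$, where $O^{p'}(\calf)$ is exotic. Here there is no smaller known simple group to fall back on, so one must prove directly that $\calf=\calf_S(G)$ is tamely realized by $G$ itself. For this I would use the explicit description of $S$ and of $\autf(A)$ for the relevant homocyclic abelian self-centralizing subgroup $A\nsg S$ (as in Lemma \ref{l:exotic} and \cite[\S4]{pprime}): $\calf$ is essentially controlled by $N_\calf(A)$, whose outer automorphism group embeds, via \cite[Lemma 1.2]{sportame} type arguments, into $\Out(N_G(A))$, and one computes $\Out(N_G(A))$ using the structure of $N_G(A)$ as a normalizer of a torus-like subgroup, together with the known structure of $\Out(G)$ for classical $G$ (field, diagonal, and graph automorphisms). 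One then checks that the natural map $\Out(G)\to\Out(\call_S^c(G))$ hits everything and admits a section — the splitting coming from the fact that field and graph automorphisms of $G$ act on $S$ and on $\call_S^c(G)$ compatibly with their action on $N_G(A)$. This computation is the technical heart; it parallels the corresponding computations in \cite{BMO} for the cases $q\equiv\pm1\pmod p$, but must be carried out in the $q\not\equiv0,\pm1$ regime where the Sylow subgroup is a wreath-product-like extension of $A$. Once all three cases of Theorem \ref{OR-ThA} yield a known simple group $G^*$ that tamely realizes $\calf_S(G)$ (with $G^*=G$ in case (c)), the proposition follows.
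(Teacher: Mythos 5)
There is a genuine gap. Your plan routes the proof through Theorem \ref{OR-ThA}, identifies case~(c) (the classical groups with $p\ge5$ and $q\not\equiv0,\pm1\pmod p$) as the ``main obstacle,'' and proposes a fresh computation of $\Out(\call_S^c(G))$ via the normalizer of the homocyclic abelian subgroup $A$ as ``the technical heart.'' But no such computation is needed, and none is done in the paper: \cite[Theorem~B]{BMO2} already covers \emph{every} finite simple group of Lie type in cross characteristic, including all of the case~(c) groups, and delivers directly that $\calf_S(G)$ is tamely realized by some (possibly different) simple group $G^*$ of Lie type. The case analysis that actually drives the proof is the one over the four families in the classification (alternating via \cite[Proposition~4.8]{AOV1}; Lie type in defining characteristic via \cite[Theorems~A and~D]{BMO2}; Lie type in cross characteristic via \cite[Theorem~B]{BMO2}; sporadic via \cite[Theorem~A]{sportame} together with Lemma~\ref{l:FS(L).tame}), with a handful of small exceptional pairs such as $(M_{11},2)$ and $\SL_3(2)$ handled ad~hoc. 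Theorem~\ref{OR-ThA} plays no role here; it concerns the realizability of $O^{p'}(\calf)$ and is used elsewhere in the paper, not in this proposition.

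Your treatment of case~(d) has a second, more subtle gap: you want to deduce tameness of $\calf_S(G)$ from tameness of $O^{p'}(\calf_S(G))$ realized by $G^*$, invoking ``a fusion system realized by a group whose fusion system is tame is itself tame.'' That is not a statement one can just cite; passing tameness up from $O^{p'}(\calf)$ to $\calf$ requires an argument of the type of Proposition~\ref{p:tame=>tame}(c) (characteristic subsystem of index prime to $p$, plus control of $Z(G_0)$), and that machinery is what the rest of Section~\ref{s:tame-real} is built to provide. For the proposition at hand it is cleaner, and is what the paper does, to avoid the detour entirely: the cited theorems already assert tameness of $\calf_S(G)$ itself, not merely of $O^{p'}(\calf_S(G))$.
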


\begin{proof} Set $\calf=\calf_S(G)$ and $\call=\call_S^c(G)$ for short. 
Note that $G$ is nonabelian since $S\nnsg\calf_S(G)$.

Assume first that $G\cong A_n$ for some $n\ge5$. By \cite[Proposition 
4.8]{AOV1}, if $p=2$ and $n\ge8$ or if $p$ is odd and $p^2\le n\equiv0,1$ (mod 
$p$), then $\kappa_G$ is an isomorphism. If $p$ is odd and $p^2<n\equiv k$ (mod 
$p$) where $2\le k\le p-1$, then $\calf$ is still tamely realized by $A_n$: 
$\Out(\call)=1$ since $\calf$ is isomorphic to 
the fusion system of $\Sigma_n$ and also that of $\Sigma_{n-k}$. If $p=2$ 
and $n=6,7$, then $\calf$ is tamely realized by $A_6\cong\PSL_2(9)$ (and 
$\kappa_{A_6}$ is an isomorphism). In all other cases, $S$ is abelian and 
hence $S\nsg\calf$. 

If $G$ is of Lie type in defining characteristic $p$, or if $p=2$ and 
$G\cong\lie2F4(2)'$, then by \cite[Theorems A and D]{BMO2}, $\kappa_G$ is 
an isomorphism except when $p=2$ and $G\cong\SL_3(2)$. In this exceptional 
case, $\calf$ is tamely realized by $A_6$ again. 

If $G$ is of Lie type in defining characteristic $q_0$ for some prime 
$q_0\ne p$, then by \cite[Theorem B]{BMO2}, $\calf$ is tamely realized by 
some other simple group $G^*$ of Lie type. See also Tables 0.1--0.3 in 
\cite{BMO2} for a list of which groups of Lie type do tamely realize their 
fusion system, and when they do not, which other groups they can be 
replaced by.

If $G$ is a sporadic simple group (and $S\nnsg\calf$), then by 
\cite[Theorem A]{sportame} and Lemma \ref{l:FS(L).tame}, $\kappa_G$ is an 
isomorphism except when $(G,p)$ is one of the pairs $(M_{11},2)$ or 
$(\He,3)$. If $(G,p)=(\He,3)$, then by the same theorem, $|\Out(G)|=2$ and 
$\Out(\call)=1$, so $\calf$ is still tamely realized by $G$. If 
$(G,p)=(M_{11},2)$, then $\calf$ is the unique simple fusion system over 
$\SD_{16}$, and is tamely realized by $G^*=\PSU_3(5)$ (and $\kappa_{G^*}$ 
is an isomorphism) by \cite[Proposition 4.4]{AOV1}. 
\end{proof}

The statements in the next proposition are very similar to results proven 
in \cite{AOV1}, but except for part (a) are not stated there explicitly. 
Their proof consists mostly of repeating those arguments. Since 
many of the results referred to in \cite[\S2]{AOV1} require considering 
linking systems that are not centric, they depend in a crucial way on 
\cite[Lemma 1.17]{AOV1}, which states that 
$\Out(\call_0)\cong\Out(\call)$ whenever $\call_0\le\call$ are linking 
systems associated to the same fusion system $\calf$ and 
$\Ob(\call_0)\subseteq\Ob(\call)$ are both $\Aut(\calf)$-invariant.

\begin{Prop} \label{p:tame=>tame}
Let $\calf$ be a saturated fusion system over a finite $p$-group $S$.
\begin{enuma} 

\item If $\calf/Z(\calf)$ is tamely realized by the finite $p'$-reduced 
$\KC$-group $\4G$, then $\calf$ is tamely realized by a finite $p'$-reduced 
$\KC$-group $G$ such that $G/Z(G)\cong\4G$. 

\item Assume $\calf_0\nsg\calf$ is a characteristic subsystem over $S_0\nsg 
S$, with the property that $\calf_0^{cr}\subseteq\calf^c$. If $\calf_0$ is 
tamely realized by a finite $p'$-reduced $\KC$-group $G_0$, then $\calf$ is 
tamely realized by a finite $p'$-reduced $\KC$-group $G$ such that $G_0\nsg 
G$. 

\item If $\calf_0\nsg\calf$ is a characteristic subsystem of index prime to 
$p$, and $\calf_0$ is tamely realized by a finite $p'$-reduced $\KC$-group 
$G_0$, then $\calf$ is tamely realized by a finite $p'$-reduced $\KC$-group 
$G$ such that $G_0\nsg G$. 

\item If $\calf_0\nsg\calf$ is a characteristic subsystem of $p$-power 
index, $\calf_0$ is tamely realized by a finite $p'$-reduced $\KC$-group 
$G_0$, and $Z(\calf)=1$, then $\calf$ is tamely realized by a finite 
$p'$-reduced $\KC$-group $G$ such that $G_0\nsg G$.

\end{enuma}
\end{Prop}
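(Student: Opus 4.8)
\textbf{Plan for Proposition \ref{p:tame=>tame}(d).}

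The plan is to mimic closely the argument for part (c), which handles the case of index prime to $p$, adapting it to the case of $p$-power index with the extra hypothesis $Z(\calf)=1$. First I would set up the group $G$: since $\calf_0$ has $p$-power index in $\calf$, we have $S_0\ge\hyp(\calf)$ and $\Aut_{\calf_0}(P)\ge O^p(\autf(P))$ for all $P\le S_0$. By the theory of fusion subsystems of $p$-power index (\cite[\S I.7]{AKO}), $\calf$ is recovered from $\calf_0$ together with the data of $\outf(S_0)$, or rather from the action of a $p$-group on the linking system of $\calf_0$; concretely, there is a finite group $\pi$ of $p$-power order with $\calf_0\nsg\calf$ corresponding to an extension, and I would use Theorem \ref{t:unique.l.s.} together with the group $G_0$ realizing $\calf_0$ to build $G$ as an extension of $G_0$ (or of $G_0/Z(G_0)$ after quotienting by the $p$-group $Z(G_0)$) by the relevant $p$-group, exactly as is done in the $\call$-setting in \cite[\S2]{AOV1}. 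The point of assuming $Z(G_0)$ is a $p$-group (rather than $Z(G_0)=Z(\calf_0)$ as in (c)) is that here the central subgroup can absorb into the $p$-group being used to build the extension; and the point of assuming $Z(\calf)=1$ is to pin down $Z(G)$ and to make the splitting argument work, since $\Out(\call)$ for $p$-power-index extensions interacts with the center of $\calf$.

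The key steps, in order: (1) Produce $G$ with $G_0\nsg G$, $G/G_0$ a $p$-group, $S\in\sylp G$, and $\calf_S(G)=\calf$; verify $O_{p'}(G)=O_{p'}(G_0)=1$ (using that $\calf_0$ is tamely realized, hence realized by $G_0$ with $O_{p'}(G_0)=1$ up to passing to $G_0/O_{p'}(G_0)$), so that $\Comp(G)=\Comp(G_0)$ follows from Lemma \ref{l:same.comps}(b) (a subsystem of $p$-power index has the same components) together with the group-theoretic fact that $O^p(G)\supseteq E(G)$ and $G_0\supseteq O^p(G)$ forces the components to coincide. (2) Check $C_G(S_0)\le G_0$: since $S_0$ is strongly closed and $Z(\calf)=1$, one shows $C_S(S_0)\le S_0$, hence $C_G(S_0)$ has normal Sylow $p$-subgroup contained in $S_0$; combined with $O_{p'}(G)=1$ this gives $C_G(S_0)\le S_0\le G_0$ — this is the analogue of the containment statements in (b),(c). (3) Prove $\kappa_G$ is split surjective. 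Here I would run the diagram-chase of \cite[\S2.2--2.3]{AOV1}: the linking system $\call=\call_S^c(G)$ sits over $\call_0=\call_{S_0}^c(G_0)$ as a normal linking subsystem of $p$-power index, there is an exact-sequence/splitting relating $\Out(\call)$ to $\Out(\call_0)$ and the $p$-group $S/S_0$ (this uses \cite[Lemma 1.17]{AOV1} to pass between linking systems with different object sets, as flagged in the paragraph preceding the proposition), and the given splitting $s_0$ of $\kappa_{G_0}$ together with the naturality of $\kappa$ produces a splitting $s$ of $\kappa_G$.

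The main obstacle I expect is step (3): unlike the prime-to-$p$ case, an extension of $p$-power index genuinely changes the linking system in a way that can introduce new outer automorphisms coming from $H^*(S/S_0;Z(S_0))$-type obstruction groups, and one must show these are controlled — this is precisely where $Z(\calf)=1$ is used, to kill the relevant cohomological contributions (via an argument like the one in \cite[Lemma 1.17]{AOV1} and the computations in \cite[\S2]{AOV1} showing $\Out(\call)$ fits in an exact sequence with $\Out(\call_0)$ on one side and a group built from $\Out_S(S_0)$-normalizer data on the other). A secondary subtlety is matching up $Z(G)$: one needs $Z(G)$ to be a $p$-group (so the statement stays within the same framework if iterated), which should follow from $Z(\calf)=1$ via Lemma \ref{l:NG(Q)}(b) applied to conclude $O_{p'}(Z(G))\le Z(\calf)=1$ and hence $Z(G)$ is a $p$-group. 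I would be careful that the construction of $G$ is set up so that these center conditions hold on the nose rather than merely up to a harmless quotient.
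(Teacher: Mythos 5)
Your high-level plan — build a normal pair of linking systems $\call_0\nsg\call$, check that the hypotheses of the extension machinery of \cite[\S2]{AOV1} hold, and apply the appropriate result there to produce $G$ and a splitting of $\kappa_G$ — matches the paper's strategy. But the proposal misses the concrete technical moves that make it work, and the speculation about how $Z(\calf)=1$ enters is off the mark.

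First, object sets: for a subsystem of $p$-power index one cannot work with centric objects, because $\calf_0^c$ need not sit correctly inside $\calf^c$. The paper instead takes $\Ob(\call_0)=\calf_0^q$ and $\Ob(\call)=\{P\le S\mid P\cap S_0\in\calf_0^q\}$, using \cite[Proposition 3.8(b) and Theorem 4.4]{BCGLO2} to get a compatible normal pair of linking systems with $\calf_0$-quasicentric objects. Your proposal never addresses which object sets to use; without this, the construction of $\call_0\nsg\call$ doesn't get off the ground.

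Second, and more importantly, the role of $Z(\calf)=1$ is not cohomological. The paper uses it twice in a purely group-theoretic way. (i) By Lemma \ref{centric<=>centric}(c), there is an action of $\call/\call_0$ on the $p$-group $C_S(\call_0)$ with $C_{C_S(\call_0)}(\call/\call_0)=Z(\calf)=1$; since $\call/\call_0$ is a $p$-group and a $p$-group acting on a nontrivial $p$-group always has nontrivial fixed points, this forces $C_S(\call_0)=1$, hence $\call_0$ is centric in $\call$. This centricity is a hypothesis of \cite[Proposition 2.16]{AOV1}, so without it the application fails. (ii) Writing $Z_0=Z(\calf_0)$, one has $\autf(Z_0)$ is a $p$-group (again because $\calf_0$ has $p$-power index), and $1=Z(\calf)=C_{Z_0}(\autf(Z_0))$, so $Z_0=1$; combined with Lemma \ref{l:NG(Q)}(b) this forces $Z(G_0)\le Z(\calf_0)=1$, which is needed to identify $\call_0$ with $\call_{S_0}^c(G_0)$ via the uniqueness of linking systems. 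Your proposal gestures at the hypothesis on $Z(G_0)$ but does not identify either of these two arguments; the discussion of obstruction groups $H^*(S/S_0;Z(S_0))$ is not what happens and would not by itself close the gap. Finally, your step (2) asserts $C_G(S_0)\le G_0$ as a goal; that conclusion belongs to part (b), not part (d), and is not needed here.
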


\begin{proof} \textbf{(a) } Assume $\calf/Z(\calf)$ is tamely realized by a 
finite $p'$-reduced $\KC$-group $\4G$. Then $Z(\4G)=Z(\calf/Z(\calf))$ by 
Proposition \ref{p:ZF=ZG}. So by \cite[Proposition 2.18]{AOV1}, $\calf$ is 
tamely realized by a finite $p'$-reduced group $G$ such that 
$G/Z(G)\cong\4G$. For each component $C$ of $G$, the subgroup 
$CZ(G)/Z(G)\cong C/(Z(C)\cap Z(G))$ is a component of $\4G$, and so $G$ is 
also a $\KC$-group.

\smallskip

\noindent\textbf{(b) } Assume $\calf_0\nsg\calf$ is characteristic over 
$S_0\nsg S$ with $\calf_0^{cr}\subseteq\calf^c$. Let $\calh_0$ be the set 
of all $P\in\calf^c$ such that $P\le S_0$, and let $\calh$ be the set of 
all $P\le S$ such that $P\cap S_0\in\calh_0$. For each $P\in\calh$, $P\cap 
S_0\in\calf^c$ by assumption, and hence $P\in\calf^c$. Thus 
$\calh\subseteq\calf^c$, and $\calf_0^{cr}\subseteq\calh_0$ since 
$\calf_0^{cr}\subseteq\calf^c$. So by \cite[Lemma 1.30]{AOV1} and the 
existence of a centric linking system associated to $\calf$ (Theorem 
\ref{t:unique.l.s.}), there is a normal pair of linking systems 
$\call_0\nsg\call$ associated to $\calf_0\nsg\calf$ with object sets 
$\calh_0$ and $\calh$. Furthermore, 
	\[ C_{\Aut_\call(S_0)}(\call_0) = \delta_{S_0}(C_S(\call_0)) 
	\leq\delta_{S_0}(C_S(\calf_0)) \leq\delta_{S_0}(C_S(S_0)) 
	\leq\delta_{S_0}(S_0) \leq \Aut_{\call_0}(S_0): \]
the equality and first inequality by Lemma \ref{centric<=>centric}(a), the 
third inequality since $S_0\in\calf_0^{cr}\subseteq\calf^c$, and the other 
two by definition. So $\call_0$ is centric in $\call$. 

Assume $\calf_0$ is tamely realized by a finite $p'$-reduced 
$\KC$-group 
$G_0$ with $S_0\in\sylp{G_0}$. By Proposition \ref{p:ZF=ZG}, we have 
$Z(G_0)=Z(\calf_0)$. Then $\call_0\cong\call_{S_0}^{\calh_0}(G_0)$ (the 
full subcategory of $\call_{S_0}^c(G_0)$ with objects the set $\calh_0$) by 
the uniqueness of linking systems. By definition, the sets of objects 
$\calh_0$ and $\calh$ are invariant under the actions of $\Aut(\calf_0)$ 
and $\Aut(\calf)$, respectively. Also, $\call_0$ is 
$\Aut(\call)$-invariant, since $\calf_0$ is characteristic in $\calf$, and 
$\call_0=\pi^{-1}(\calf_0)$ by \cite[Lemma 1.30]{AOV1} (where 
$\pi\:\call\too\calf$ is the structure functor for $\call$). All hypotheses 
in \cite[Proposition 2.16]{AOV1} are thus satisfied, and so $\calf$ is 
tamely realized by some finite group $G$ such that $G_0\nsg G$ and 
$G/G_0\cong\Aut_\call(S_0)/\Aut_{\call_0}(S_0)$. 

By the Frattini argument, $G=G_0N_G(S_0)$, and hence 
	\[ N_G(S_0)/N_{G_0}(S_0) \cong G/G_0 \cong 
	\Aut_\call(S_0)/\Aut_{\call_0}(S_0). \]
Since $\Aut_\call(S_0)=N_G(S_0)/O^p(C_G(S_0))$ and similarly for $\call_0$, 
this proves that $O^p(C_G(S_0))=O^p(C_{G_0}(S_0))$. Also, 
	\[ C_{G_0}(S_0)=Z(S_0)\times O^p(C_{G_0}(S_0)) 
	\qquad\textup{and}\qquad 
	C_{G}(S_0)=Z(S_0)\times O^p(C_{G}(S_0)): \]
the last equality since $S_0\in\calf_0^{cr}\subseteq\calf^c$. So 
$C_G(S_0)=C_{G_0}(S_0)\le G_0$. In particular, since $[O_{p'}(G),G_0]\le 
O_{p'}(G_0)=1$, this implies that $O_{p'}(G)\le C_G(S_0)\le G_0$. So 
$O_{p'}(G)\le O_{p'}(G_0)=1$, and hence $G$ is $p'$-reduced. 

If $C$ is a component of $G$, then since $G_0\nsg G$, either $C$ is a 
component of $G_0$ or $[C,G_0]=1$ (see \cite[31.4]{A-FGT} or \cite[Lemma 
A.12]{AKO}). Since $C_G(G_0)\le C_G(S_0)\le G_0$, and since all components 
of $G$ contained in $G_0$ are subnormal in $G_0$ and hence in $\Comp(G_0)$, 
this shows that $\Comp(G)\subseteq\Comp(G_0)$, and hence that $G$ is also a 
$\KC$-group.

\smallskip

\noindent\textbf{(c) } By \cite[Lemma I.7.6(a)]{AKO}, we have 
$\calf_0^c=\calf^c$. (See also Definition \ref{d:reduced}(c).) Hence (c) 
is a special case of (b).

\smallskip

\noindent\textbf{(d) } Assume $\calf_0$ has $p$-power index in $\calf$ and 
$Z(\calf)=1$. By \cite[Proposition 3.8(b)]{BCGLO2}, a subgroup $P\le S_0$ 
is $\calf$-quasicentric if and only if it is $\calf_0$-quasicentric. Let 
$\calh$ be the set of all $P\le S$ such that $P\cap S_0$ is 
$\calf_0$-quasicentric. Then $\calh\subseteq\calf^q$ since overgroups of 
quasicentric subgroups are quasicentric, and $\calh\supseteq\calf^{rc}$ by 
\cite[Lemma 1.20(d)]{AOV1}. By Theorem \ref{t:unique.l.s.}, there is a 
unique linking system $\call$ associated to $\calf$ with 
$\Ob(\call)=\calh$, and by \cite[Theorem 4.4]{BCGLO2}, there is a unique 
linking system $\call_0\le\call$ associated to $\calf_0$ with 
$\Ob(\call_0)=(\calf_0)^q$. Then $\call_0\nsg\call$: the condition on 
objects (Definition \ref{d:Lnormal}(a)) holds by construction, and the 
invariance condition (\ref{d:Lnormal}(b)) holds by the uniqueness of 
$\call_0$.

By Lemma \ref{centric<=>centric}(c), there is an action of $\call/\call_0$ 
on $C_S(\call_0)$ such that $C_{C_S(\calf_0)}(\call/\call_0)=Z(\calf)$, 
where $Z(\calf)=1$ by assumption. Also, 
$\call/\call_0=\Aut_\call(S_0)/\Aut_{\call_0}(S_0)$ is a $p$-group since 
$\calf_0$ has $p$-power index in $\calf$, so $C_S(\call_0)=1$. Hence 
$Z(\calf_0)=1$ and $C_{\Aut_\call(S_0)}(\call_0)=1$ by Lemma 
\ref{centric<=>centric}(a), and in particular, $\call_0$ is centric in 
$\call$. 

If in addition, $\calf_0$ is characteristic in $\calf$, then each 
$\alpha\in\Aut(\call)$ induces an element 
$\beta=\til\mu_\call(\alpha)\in\Aut(\calf)$ (Definition 
\ref{d:til.mu}), and $\8\beta(\calf_0)=\calf_0$ by assumption. Hence 
$\alpha(\call_0)=\call_0$ by Proposition \ref{AutI} and the 
uniqueness of the linking systems in \cite[Theorem 4.4]{BCGLO2}. Also, by 
construction, $\Ob(\call_0)$ and $\Ob(\call)$ are invariant under the 
actions of $\Aut(\calf_0)$ and $\Aut(\calf)$.

Assume $\calf_0$ is tamely realized by a finite $p'$-reduced 
$\KC$-group $G_0$. By Proposition \ref{p:ZF=ZG}, we have 
$Z(G_0)=Z(\calf_0)$. By Theorem \ref{t:unique.l.s.} (the uniqueness of 
centric linking systems), $\call_0\cong\call_{S_0}^q(G_0)$. The hypotheses 
of \cite[Proposition 2.16]{AOV1} thus hold, and so $\calf$ is tamely 
realized by some group $G$ such that $G_0\nsg G$ and 
$G/G_0\cong\Aut_\call(S_0)/\Aut_{\call_0}(S_0)$. In particular, $G/G_0$ is 
a $p$-group, so all components of $G$ are in $G_0$, and 
$\Comp(G)=\Comp(G_0)$. Also, $O_{p'}(G)=O_{p'}(G_0)=1$, and so $G$ is 
a $p'$-reduced $\KC$-group. 
\end{proof}

We are now ready to prove our main theorem. As explained in the 
introduction, Theorem \ref{t:E.real}, as well as Theorems \ref{t:comp.real} 
and \ref{t:all.tame}, have been formulated so that their proofs are 
independent of the classification of finite simple groups. 

Recall that by Lemma \ref{l:same.comps}(b,c), the condition 
$\Comp(\cale)=\Comp(\calf)$ in the statement of Theorem \ref{t:E.real} is 
satisfied whenever $\cale$ is centric in $\calf$, and these two conditions 
are in fact equivalent if $O_p(\calf)=1$. 

\begin{Thm} \label{t:E.real}
Let $\cale\nsg\calf$ be a normal pair of fusion systems over $T\nsg S$ such 
that $\Comp(\cale)=\Comp(\calf)$. Assume that $\cale$ is realized by a 
finite $p'$-reduced group all of whose components are known quasisimple 
groups. Then $\calf$ is tamely realized by a finite $p'$-reduced group all 
of whose components are known quasisimple groups.
\end{Thm}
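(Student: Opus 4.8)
I would argue by induction on $|S|$, and for fixed $S$ on $|\Mor(\calf)|$, strengthening the conclusion to: \emph{$\calf$ is tamely realized by a finite group $G$ with $O_{p'}(G)=1$, $Z(G)=Z(\calf)$, and all components of $G$ known quasisimple.} The realizability hypothesis on $\cale$ enters only through Propositions~\ref{l:Comp(G)} and \ref{l:Comp(F)} applied to a realizing group of $\cale$ (after removing its $O_{p'}$): together with $\Comp(\cale)=\Comp(\calf)$ these show that each component $\calc_i$ of $\calf$ sits inside a fusion system $\calc_i^*\supseteq\calc_i$ over $U_i$ with $O^{p'}(\calc_i^*)=\calc_i$, with $\calc_i^*=\calf_{U_i}(H_i)$ for a known finite simple group $H_i$ (so $\calc_i^*$ is itself simple), and with $\calc_i^*$ the \emph{unique} minimal fusion system over $U_i$ containing $\calc_i$ that is realized by a known almost simple group, hence intrinsically attached to $\calc_i$.

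\textbf{Reductions.} If $Z=Z(\calf)\ne1$, then $|S/Z|<|S|$, and using Lemmas~\ref{l:F/Q}, \ref{l:E/Z<|F/Z}, \ref{l:ZC/Z} the pair $Z\cale/Z\nsg\calf/Z$ (Definition~\ref{d:ZE/Z}) again satisfies the hypotheses over $S/Z$; by the inductive hypothesis $\calf/Z$ is tamely realized by a group with trivial $O_{p'}$ and center $Z(\calf/Z)$, so Proposition~\ref{p:tame=>tame}(a) gives the conclusion for $\calf$. So assume $Z(\calf)=1$. If $\calf$ is constrained, it is tamely realized by its model, a $p$-constrained group with $O_{p'}=1$ and hence no components, and we are done. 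Otherwise $\calf$ has components. If $O_p(\calf)=Q\ne1$, put $\calf_0=N_\calf^{\Inn(Q)}(Q)$: this is characteristic in $\calf$, has $\calf_0^{cr}\subseteq\calf^c$, and is proper in $\calf$ (equality would force $\Aut_\calf(Q)=\Inn(Q)$, hence $Z(Q)\le Z(\calf)=1$ and $Q=1$); applying the inductive hypothesis to a realizable normal subsystem of $\calf_0$ with the same components and then Proposition~\ref{p:tame=>tame}(b) settles $\calf$. Passing from $\calf$ to $O^p(\calf)$ or $O^{p'}(\calf)$ (which have the same components, by Lemma~\ref{l:same.comps}(b), and fewer morphisms when proper) reduces us to the core case $\calf$ reduced with $O_p(\calf)=Z(\calf)=1$.

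\textbf{Core case.} Now $\calc_1,\dots,\calc_k$ are simple, $E(\calf)=F^*(\calf)=\calc_1\times\cdots\times\calc_k$ is characteristic in $\calf$ (Lemma~\ref{l:A2-Th6}(b)) and centric in $\calf$ (Lemma~\ref{l:same.comps}(c)). Let $\cale^*=\calc_1^*\times\cdots\times\calc_k^*$ be as furnished by Proposition~\ref{l:Comp(F)} applied to a realizing group of $\cale$; then $\cale^*\le\cale\le\calf$, and $E(\calf)\le\cale^*$ gives $C_S(\cale^*)\le C_S(E(\calf))\le \xxx{U}$, so $\cale^*$ is centric in $\calf$. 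Moreover $\cale^*$ is characteristic in $\cale$ (Proposition~\ref{l:Comp(F)}(c)), hence normal in $\calf$ (Lemma~\ref{l:E<|F}(a)), and it is $\Aut(\calf)$-invariant since an automorphism of $\calf$ permutes the $\calc_i$ and hence the intrinsically attached $\calc_i^*$; thus $\cale^*$ is characteristic in $\calf$. By Proposition~\ref{p:simple=>tame} each simple system $\calc_i^*=\calf_{U_i}(H_i)$ is tamely realized by a known simple group (being simple and nontrivial it is not constrained); choosing a common realizer for isomorphic factors and invoking Proposition~\ref{p:prod.tame}(c) when $k\ge2$ (legitimate since $Z(\cale^*)=1$ and each $\calc_i^*$ is indecomposable and tame), or that proposition directly when $k=1$, shows $\cale^*$ is tamely realized by a product $G_0$ of known simple groups with $Z(G_0)=1=Z(\cale^*)$. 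Finally Proposition~\ref{p:tame=>tame}(b) with $\calf_0=\cale^*$ produces a finite group $G$ tamely realizing $\calf$ with $G_0\nsg G$, $C_G(\xxx U)\le G_0$, and $\Comp(G)=\Comp(G_0)$; then $Z(G)\le Z(G_0)=1=Z(\calf)$, and $O_{p'}(G)\le C_G(F^*(G))\le G_0$ forces $O_{p'}(G)=1$, so $G$ has all the required properties and the induction closes.

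\textbf{Main obstacle.} The crux is that the realizable subsystem one can extract, namely $\cale^*$ built from the \emph{enlarged} components $\calc_i^*$, has components strictly larger than those of $\calf$ whenever some $\calc_i$ is exotic; so one cannot simply feed a proper characteristic subsystem with the same components back into the inductive hypothesis, and must instead climb from $\cale^*$ up to $\calf$ via the lifting Proposition~\ref{p:tame=>tame}(b), carrying along at each stage the properties that the realizing group has trivial $O_{p'}$, center exactly $Z(\calf)$, and only known components. The two reductions that do not lower $|S|$ — past $O_p(\calf)$ and past $O^p(\calf)$, $O^{p'}(\calf)$ — depend on the secondary induction on $|\Mor(\calf)|$ and on checking that the intermediate subsystems remain centric (for the hypothesis $\calf_0^{cr}\subseteq\calf^c$) and realizable with an unchanged set of components.
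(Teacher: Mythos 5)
Your proposal shares the global shape of the paper's argument (pass to a central quotient, strip off $O_p$, ultimately invoke Propositions~\ref{l:Comp(F)}, \ref{p:simple=>tame}, \ref{p:prod.tame}(c), and \ref{p:tame=>tame} to build a realizing group from the enlarged components $\calc_i^*$), but there are two genuine gaps in the middle. First, the one-line reduction ``passing from $\calf$ to $O^p(\calf)$ or $O^{p'}(\calf)$ reduces us to the case $\calf$ reduced'' does not transfer the hypothesis of the theorem: to invoke your inductive hypothesis on, say, $O^{p'}(\calf)$ you need a normal realizable subsystem of $O^{p'}(\calf)$ with the same components, and neither $\cale$ nor $\cale^*$ is in general contained in $O^{p'}(\calf)$ (e.g.\ if some $\calc_i$ is exotic then $\calc_i^*$ has index prime to $p$ in $\calf_{U_i}(L_i)$ but is not contained in $\calc_i=O^{p'}(\calf_{U_i}(L_i))$). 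Second, and more seriously, your core-case finale applies Proposition~\ref{p:tame=>tame}(b) directly with $\calf_0=\cale^*$, but that proposition also requires $(\cale^*)^{cr}\subseteq\calf^c$, which you do not check. Centricity of $\cale^*$ in $\calf$ (in the sense of Definition~\ref{d:centric.f.s.}) says only $C_S(\cale^*)\le T$; it does not give $C_S(Q)\le Q$ for $Q\in(\cale^*)^c$, because an element of $S\sminus T$ centralizing $Q$ need not centralize all of $\cale^*$. Indeed $T=\xxx{U}$ need not even be $\calf$-centric.

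The paper's proof deals with exactly this obstacle by a different route: after killing $O_p(\calf)$ it does \emph{not} reduce to $\calf$ reduced, but first replaces $\calf$ by the characteristic subsystem $\caln$ of Proposition~\ref{p:NF(EJ)} (the largest saturated subsystem in which every $\calc_i$ is individually normal), for which the inclusion $\caln^c\subseteq\calf^c$ \emph{is} provable because $\caln$ lives over $N=\bigcap_iN_S(U_i)$ and every $P\in\caln^c$ meets each $U_i$ nontrivially. Having $\calc_i\nsg\calf$ for all $i$ then forces $\Aut_\calf(T)$ to preserve each $U_i$, so $\Aut_\calf(T)/\Aut_{\calc^*}(T)$ embeds in $\prod_i\Out(\calc_i^*)$ and is solvable; this is precisely what is needed to invoke \cite[Theorem~5(b)]{O-red-corr} and produce a tower $\calc^*=\calf_0\nsg\cdots\nsg\calf_m=\calf$ of characteristic subsystems each of $p$-power or prime-to-$p$ index in the next. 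The climb then goes through Proposition~\ref{p:tame=>tame}(c) and (d) (which avoid the $\calf_0^{cr}\subseteq\calf^c$ hypothesis) rather than a single application of (b). To repair your argument you would need to insert both of these ingredients: the reduction to $\caln$ and the tower from $\calc^*$ up to $\calf$.
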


\begin{proof} Let $\scrs$ be the set of all triples $(\calf,\cale,H)$ such 
that 
\begin{itemize} 

\item $\cale\nsg\calf$ are saturated fusion 
systems over finite $p$-groups $T\nsg S$ such that 
$\Comp(\cale)=\Comp(\calf)$; 

\item $H$ is a $p'$-reduced $\KC$-group such that $T\in\sylp{H}$ and 
$\cale=\calf_T(H)$; and 

\item $\calf$ is not tamely realized by any finite $p'$-reduced 
$\KC$-group. 

\end{itemize}
Assume the theorem does not hold; i.e., that $\scrs\ne\emptyset$. Let 
$(\calf,\cale,H)\in\scrs$ be such that 
$(|\Mor(\cale)|,|\Mor(\calf)|)\in\N^2$ is the smallest possible under the 
lexicographic ordering. In other words, there are no triples 
$(\calf^*,\cale^*,H^*)$ in $\scrs$ where $|\Mor(\cale^*)|<|\Mor(\cale)|$; 
and among those where $|\Mor(\cale^*)|=|\Mor(\cale)|$, there are none where 
$|\Mor(\calf^*)|<|\Mor(\calf)|$. 

We show in Step 1 that $O_p(\calf)=1$, and that $H$ can be chosen to 
be a product of known finite simple groups. We then show in Step 2 that 
the components of $\calf$ are all normal in $\calf$, and reduce this to a 
contradiction in Step 3.

\smallskip

\noindent\textbf{Step 1: } Let $L_1,\dots,L_k$ be the components of $H$, 
and set $U_i=T\cap L_i\in\sylp{L_i}$. Set $\cald_i=\calf_{U_i}(L_i)$ 
and $\calc_i=O^{p'}(\cald_i)$ for each $i$. Assume that the $L_i$ 
are ordered so that for some $m$, $\calc_i$ is quasisimple if and 
only if $i\le m$. We are thus in the situation of Proposition 
\ref{p:Comp(F)-1}, with $H$, $T$, $U_i$, and $\cald_i$ in the roles of $G$, 
$S$, $T_i$, and $\cale_i$. So 
$\Comp(\calf)=\Comp(\cale)=\{\calc_1,\dots,\calc_m\}$ by that proposition. 

Set $U=U_1\cdots U_m$ and $\cale_0=\calf_U(L_1\cdots L_m)\le\cale$. By 
Proposition \ref{p:Comp(F)-2}(a), there is a unique minimal subsystem 
$\calc^*\le\cale_0$ over $U$ containing $O^{p'}(\cale_0)$ that is realized 
by a $p'$-reduced $\KC$-group, and $\calc^*$ is realized by a central 
product $H_0$ of known finite $p'$-reduced quasisimple groups. By 
Proposition \ref{p:Comp(F)-2}(b), $\calc^*$ is characteristic in $\cale$ 
and hence normal in $\calf$. Hence $(\calf,\calc^*,H_0)\in\scrs$, so 
$\cale=\calc^*$ by the minimality of $|\Mor(\cale)|$, and we can take 
$H=H_0$. In particular, $m=k$, and $H$ is a central product of known 
finite $p'$-reduced quasisimple groups.

Set $Q=O_p(\calf)$, and set $S_0=N_S^{\Inn(Q)}(Q)=QC_S(Q)$ and 
$\calf_0=N_\calf^{\Inn(Q)}(Q)$. Thus $\calf_0$ is a fusion subsystem over 
$S_0$. Since $Q^\calf=\{Q\}$, the subgroup $Q$ is 
fully $\Inn(Q)$-normalized in $\calf$ and hence $\calf_0$ is saturated (see 
Definition I.5.1 and Theorem I.5.5 in \cite{AKO}). Also, $\calf_0$ is 
weakly normal in $\calf$ by \cite[Proposition 1.25(c)]{AOV1}, and is normal 
since $C_S(S_0)\le S_0$ (the extension condition holds). For each 
$\alpha\in\Aut(\calf)$, $\alpha(Q)=Q$, so $c_\alpha(\calf_0)=\calf_0$, and 
hence $\calf_0$ is characteristic in $\calf$.

If $P\in\calf_0^{cr}$, then $P\ge Q$ since $Q\nsg\calf_0$ (see 
\cite[Proposition I.4.5(a$\Rightarrow$b)]{AKO}). So for each $P^*\in 
P^\calf$, $P^*\ge Q$, and $C_S(P^*)\le C_S(Q)\le S_0$. Thus 
$C_S(P^*)=C_{S_0}(P^*)\le P^*$ since $P^*\in\calf_0^c$, so $P\in\calf^c$. 
Thus $\calf_0^{cr}\subseteq\calf^c$. 

By Proposition \ref{p:tame=>tame}(b) and since $\calf$ is not tamely 
realized by any $p'$-reduced $\KC$-group, the subsystem $\calf_0$ is not 
tamely realized by any $p'$-reduced $\KC$-group. Also, $\cale\nsg 
N_\calf^{\Inn(Q)}(Q)=\calf_0$ by Lemma \ref{l:C*nsgCF(Q)} (with $\cale$ in 
the role of $\calc^*$). Thus $(\calf_0,\cale,H)\in\scrs$, and by the 
minimality assumption, we have $\calf=\calf_0$. So $\autf(Q)=\Inn(Q)$. 

Let $1=Z_0(Q)\le Z_1(Q)\le Z_2(Q)\le \cdots\le Q$ be the upper 
central series of $Q=O_p(\calf)$. Thus for each $i$, 
$Z_{i+1}(Q)/Z_i(Q)=Z(Q/Z_i(Q))=Z(\calf/Z_i(Q))$ since 
$\autf(Q)=\Inn(Q)$. By Proposition 
\ref{p:tame=>tame}(a), if $\calf/Z_{i+1}(Q)$ is tamely realized by a finite 
$p'$-reduced $\KC$-group $G_{i+1}$, then $\calf/Z_i(Q)$ is tamely 
realized by a finite $p'$-reduced $\KC$-group $G_i$. Since $\calf$ is 
not tamely realized by any $p'$-reduced $\KC$-group by assumption, we 
conclude that $\calf/Q$ is not tamely realized by any $p'$-reduced 
$\KC$-group either.

For each $P,R\le C_S(Q)$ and each $\varphi\in\homf(P,R)$, the morphism 
$\varphi$ extends to some $\4\varphi\in\homf(PQ,RQ)$ since $Q\nsg\calf$, 
and $\varphi|_Q=c_g|_Q$ for some $g\in Q$ since $\autf(Q)=\Inn(Q)$. Hence 
$c_g^{-1}\4\varphi|_P=\varphi$ since $c_g|_{C_S(Q)}=\Id$. Thus each 
morphism in $\calf$ between subgroups of $C_S(Q)$ lies in $C_\calf(Q)$, and 
so $\calf/Q\cong C_\calf(Q)/Z(Q)$ by Lemma \ref{l:F/Q=E/Q0}, applied with 
$C_\calf(Q)$ in the role of $\cale$.

Set $Z=Z(Q)$. We have now shown that $C_\calf(Q)/Z$ is not realized by 
any $p'$-reduced $\KC$-group. Also, $\cale\nsg C_\calf(Q)$ by Lemma 
\ref{l:C*nsgCF(Q)} (applied with $\cale$ in the role of $\calc^*$), so 
$Z\cale\nsg C_\calf(Q)$ by Lemma \ref{l:E/Z<|F/Z}, and $Z\cale/Z\nsg 
C_\calf(Q)/Z$ by Lemma \ref{l:F/Q}. By Lemma \ref{l:F/Q=E/Q0}, 
$Z\cale/Z\cong\cale/(Z\cap T)$, where $\cale/(Z\cap T)$ is realized by 
$H/(Z\cap T)$ (see \cite[Theorem 5.20]{Craven}). So $Z\cale/Z$ is realized 
by a $p'$-reduced $\KC$-group $H_0\cong H/(Z\cap T)$. Also, 
	\[ \Comp(Z\cale/Z) = \{Z\calc_1/Z,\dots,Z\calc_k/Z\} 
	= \Comp(C_\calf(Q)/Z)\]
by Lemma \ref{l:ZC/Z}. So $(C_\calf(Q)/Z,Z\cale/Z,H_0)\in\scrs$, and by 
the minimality assumption on $(\calf,\cale,H)$, we have 
$\cale\cong Z\cale/Z$ and $\calf\cong C_\calf(Q)/Z(Q)$ 
and thus $O_p(\calf)=Q=1$. 

To summarize, we have reduced to the case where 
$(\calf,\cale,H)\in\scrs$ satisfies:
	\beqq \parbox{135mm}{$O_p(\calf)=O_p(\cale)=1$, 
	$\cale=\calf_T(H)$ where $H=\xxx{L}$, and each $L_i$ is a 
	known finite simple group. Also, 
	$\Comp(\calf)=\Comp(\cale)=\{\calc_1,\dots,\calc_k\}$ where 
	$T=\xxx{U}$, $U_i\in\sylp{L_i}$, and 
	$\calc_i=O^{p'}(\calf_{U_i}(L_i))$.}
	\label{e:Step1-red} \eeqq

\smallskip

\noindent\textbf{Step 2: } Let $\caln\nsg\calf$ be the characteristic 
subsystem constructed in Proposition \ref{p:NF(EJ)}: a subsystem over 
$N=\bigcap_{i=1}^kN_S(U_i)$ normal in $\calf$ and containing each component 
$\calc_i$ as a normal subsystem. In particular, for each 
$\alpha\in\Aut_\caln(T)$, $\alpha(U_i)=U_i$ for all $1\le i\le k$.

For each $P\in\caln^c$ and each $Q\in P^\calf$, $Q\in\caln^c$ since 
$\caln\nsg\calf$, and so $Q\ge Z(N)$. For each $1\le i\le k$, we have 
$U_i\nsg N$ by definition of $N$, and hence $Q\cap U_i\ne1$. Each $x\in 
C_S(Q)$ centralizes each $Q\cap U_i$ and hence normalizes each subgroup 
$U_i$ (recall that each element of $S$ permutes the $U_i$). So 
$C_S(Q)=C_{N}(Q)\le Q$, and $P\in\calf^c$. Thus $\caln^c\subseteq\calf^c$. 
By Proposition \ref{p:tame=>tame}(b) and since $\calf$ is not tamely 
realized by a $p'$-reduced $\KC$-group, $\caln$ is not tamely 
realized by a $p'$-reduced $\KC$-group either.

Now, $\cale$ is weakly normal in $\caln$ by \cite[Proposition 8.17]{Craven} 
and since $\cale\nsg\calf$ and $\caln\le\calf$. By the definition of 
$\caln$ in Proposition \ref{p:NF(EJ)}, if $T\le P\le N$, and 
$\alpha\in\autf(P)$ is such that $\alpha|_T\in\Aut_\caln(T)$, then 
$\alpha\in\Aut_\caln(P)$. So the extension condition for $\cale\le\caln$ 
follows from that for $\cale\nsg\calf$, and hence $\cale\nsg\caln$. 

Thus $(\caln,\cale,H)\in\scrs$, and $\calf=\caln$ by the minimality 
assumption. In other words, $(\calf,\cale,H)\in\scrs$ satisfies:
	\beqq \parbox{135mm}{\eqref{e:Step1-red} holds, and 
	$\calc_i\nsg\calf$ for each $1\le i\le k$.}
	\label{e:Step2-red} \eeqq

\smallskip

\noindent\textbf{Step 3: } Set $\calc=E(\calf)=\xxx\calc$. By 
Proposition \ref{p:Comp(F)-3} (applied with $H$ and $\cale$ 
in the roles of $G$ and $\calf$), there is a unique 
minimal normal fusion subsystem $\calc^*=\xxx{\calc^*}\nsg\cale$ containing 
$\calc$ that is realized by a product of known finite simple groups. 
Furthermore (by the same proposition), $\calc=O^{p'}(\calc^*)$, $\calc^*$ 
is centric and characteristic in $\cale$, and for each $i$, 
\begin{itemize} 
\item $\calc_i\le\calc_i^*\le\calf_{U_i}(L_i)$, 
\item $\calc_i=O^{p'}(\calc_i^*)$, and 
\item $\calc_i^*=\calf_{U_i}(H_i^*)$ where $H_i^*$ is a known finite simple 
group. 
\end{itemize}
Then $\calc^*\nsg\calf_T(H)=\cale\nsg\calf$, and so $\calc^*\nsg\calf$ by 
Lemma \ref{l:D<E<F} and since $\calc^*$ is characteristic in $\cale$. 
Set $H^*=\xxx{H^*}$, so that $\calc^*=\calf_T(H^*)$. Thus 
$(\calf,\calc^*,H^*)\in\scrs$, and $\cale=\calc^*$ by the 
minimality assumption.

Let $\Aut^0(\calc^*)\le\Aut(\calc^*)$ be the subgroup of all automorphisms 
that send each $U_i$ to itself. Then $\Aut_\calf(T)\le\Aut^0(\calc^*)$ 
by \eqref{e:Step2-red} and since $\calc^*\nsg\calf$. Each factor 
$\calc_i^*$ is a full subcategory of $\calc^*$ (contains all morphisms in 
$\calc^*$ between subgroups of $U_i$), and hence each 
$\alpha\in\Aut^0(\calc^*)$ sends each $\calc_i^*$ to itself. So 
	\[ \Aut_\calf(T)/\Aut_{\calc^*}(T)\le\Aut^0(\calc^*)/\Aut_{\calc^*}(T)
	\cong\prod_{i=1}^k\Out(\calc_i^*). \] 
By assumption, each $\calc^*_i$ is realized by a known finite simple group, 
hence is tamely realized by a known finite simple group by Proposition 
\ref{p:simple=>tame}. Since $\Out(K)$ is solvable for each known finite 
simple group $K$ (see \cite[Theorem 7.1.1(a)]{GLS3}), the groups 
$\Out(\calc^*_i)$ are also solvable. So $\Aut_\calf(T)/\Aut_{\calc^*}(T)$ is 
solvable. 

The hypotheses of \cite[Theorem 5(b)]{O-red-corr} thus hold for the pair 
$\calc^*\nsg\calf$. By that theorem, there is a sequence 
$\calc^*=\calf_0\nsg\calf_1\nsg\cdots\nsg\calf_m=\calf$ of saturated fusion 
subsystems, for some $m\ge0$, such that 
\begin{enumi} 
\item for each $0\le j<m$, $\calf_j$ is normal of $p$-power index or index 
prime to $p$ in $\calf_{j+1}$ and $\calc^*\nsg\calf_j\nsg\calf$; and 
\item for each $1\le j\le m$ and each $\alpha\in\Aut(\calf_{j})$ with 
$\8\alpha(\calc^*)=\calc^*$, we have $\8\alpha(\calf_{j'})=\calf_{j'}$ for all 
$0\le j'<j$.
\end{enumi}

Recall that $\Comp(\calf)=\{\calc_1,\dots,\calc_k\}$. 
For each $0\le j\le m-1$, 
$\Comp(\calf_{j})\subseteq\Comp(\calf)$ since $\calf_{j}\snsg\calf$, 
and the opposite inclusion holds since 
$\calc_i\nsg\calc\nsg\calc^*\nsg\calf_{j}$ for each $i$. Hence $\calc$ 
is characteristic in $\calf_{j}$. For each $\alpha\in\Aut(\calf_j)$, 
$\alpha|_T\in\Aut(\calc)$ since $\calc$ is characteristic, and 
$\Aut(\calc)=\Aut(\calc^*)$ by Proposition \ref{p:Comp(F)-2}(a). So 
$\8\alpha(\calc^*)=\calc^*$, and hence 
$\8\alpha(\calf_{j'})=\calf_{j'}$ for all $0<j'<j$ by condition (ii) above. 

In particular, this shows that $\calf_j$ is characteristic in $\calf_{j+1}$ 
for each $j$. Also, $Z(\calf_j)\le O_p(\calf_j)=1$ for each $j$ by Lemma 
\ref{l:E<|F}(b) and since $O_p(\calf)=1$ and $\calf_j\snsg\calf$. So by 
Proposition \ref{p:tame=>tame}(c,d), and since $\calf_j$ has index prime to 
$p$ or $p$-power index in $\calf_{j+1}$ and $Z(\calf_j)=1$, if $\calf_j$ is 
tamely realized by a finite $p'$-reduced $\KC$-group $G_j$, then 
$\calf_{j+1}$ is tamely realized by a finite $p'$-reduced $\KC$-group 
$G_{j+1}\ge G_j$. 

For each $1\le i\le k$, $\calc_i^*$ is tamely realized by some known finite 
simple group by Proposition \ref{p:simple=>tame}, and so $\calc^*=\calf_0$ 
is tamely realized by a product of known finite simple groups by 
Proposition \ref{p:prod.tame}(c). Hence $\calf_i$ is tamely realized 
by a $p'$-reduced $\KC$-group for each $1\le i\le m$. This contradicts our 
assumption on $\calf=\calf_m$, and we conclude that $\scrs=\emptyset$. 
\end{proof}

Note that Theorem \ref{ThA} is just Theorem \ref{t:E.real} without 
mentioning tameness.

We now list some special cases of Theorem \ref{t:E.real}.

\begin{Thm} \label{t:comp.real}
Let $\calf$ be a saturated fusion system over a finite $p$-group $S$. If 
all components of $\calf$ are realized by known finite quasisimple groups, 
then $\calf$ is tamely realized by a finite $p'$-reduced group 
all of whose components are known quasisimple groups. 
\end{Thm}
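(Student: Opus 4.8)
The plan is to obtain Theorem~\ref{t:comp.real} as a direct consequence of Theorem~\ref{t:E.real}, applied with $\cale$ equal to the layer $E(\calf)$ of $\calf$ (equivalently, one could take the generalized Fitting subsystem $F^*(\calf)$). Write $\Comp(\calf)=\{\calc_1,\dots,\calc_k\}$, with $\calc_i$ a fusion system over $U_i$, and set $\cale=E(\calf)$, a fusion system over $U=U_1\cdots U_k$. By Lemma~\ref{l:A2-Th6}(a), $\cale$ is the central product of the $\calc_i$ and is characteristic --- hence normal --- in $\calf$, and by Lemma~\ref{l:same.comps}(a), $\Comp(\cale)=\Comp(\calf)$ since every $\calc_i$ is contained in $\cale$. (If $k=0$, then $\cale$ is the trivial fusion system, realized by the trivial group, and there is nothing more to prove.) So by Theorem~\ref{t:E.real} it suffices to show that $\cale$ is realized by a finite group all of whose components are known quasisimple groups.

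For each $i$, choose a known quasisimple group $L_i$ with $\calc_i\cong\calf_{U_i}(L_i)$; replacing $L_i$ by $L_i/O_{p'}(L_i)$, which is again a known quasisimple group realizing $\calc_i$ (as $O_{p'}(L_i)\le Z(L_i)$ and has no effect on the $p$-fusion), we may assume $O_{p'}(L_i)=1$. Then $Z(L_i)=O_p(L_i)$ is a $p$-subgroup of $U_i$, and --- this is the one point requiring a short separate argument --- one has $Z(\calc_i)=Z(L_i)$. Now write $\cale\cong(\calc_1\times\cdots\times\calc_k)/Z_0$, where $Z_0\le\prod_iZ(\calc_i)$ is the amalgamating central subgroup. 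Using $Z(\calc_i)=Z(L_i)$, we may regard $Z_0$ as a subgroup of $Z(L)=\prod_iZ(L_i)$, where $L:=L_1\times\cdots\times L_k$, so that $Z_0\nsg L$ and $Z_0\le\prod_iU_i\in\sylp{L}$. Since $\calf_{\prod_iU_i}(L)=\calc_1\times\cdots\times\calc_k$ and $\calf_{(\prod_iU_i)/Z_0}(L/Z_0)=\calf_{\prod_iU_i}(L)/Z_0$ (the remark after Definition~\ref{d:F/Q}), the group $M:=L/Z_0$ satisfies $\calf_{(\prod_iU_i)/Z_0}(M)\cong\cale$. The components of $M$ are the images of the $L_i$, which are central quotients of known quasisimple groups and hence are themselves known quasisimple. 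So $\cale$ is realized by $M$, a finite group of the required kind, and Theorem~\ref{t:E.real} completes the proof. (If one works with $F^*(\calf)$ in place of $E(\calf)$, one simply includes an extra direct factor $O_p(\calf)$ in $L$.)

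The routine parts of this argument --- that $E(\calf)\nsg\calf$ with the same components, and the reduction to Theorem~\ref{t:E.real} --- are immediate from the lemmas cited. The genuinely substantive point, which I would isolate as a small lemma, is the realizability of the central product $\cale$ by a group of the right type, and this rests on the equality $Z(\calc_i)=Z(L_i)$ for a quasisimple component $\calc_i$ realized by a quasisimple group $L_i$ with $O_{p'}(L_i)=1$. That equality should be proved by observing that $Z(\calc_i)=O_p(\calc_i)$ for a quasisimple fusion system (using $O^p(\calc_i)=\calc_i$ and the simplicity of $\calc_i/Z(\calc_i)$), that $O_p(\calc_i)$ is then a strongly closed abelian subgroup of $\calf_{U_i}(L_i)$, and that such a subgroup of the fusion system of the quasisimple group $L_i$ is normal in $L_i$ and hence central --- giving $Z(\calc_i)\le Z(L_i)$, the reverse inclusion being Lemma~\ref{l:NG(Q)}(b). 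Everything else is bookkeeping with quotients of fusion systems and the compatibility of products with those quotients.
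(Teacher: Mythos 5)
Your reduction to Theorem~\ref{t:E.real} is the same as the paper's (the paper takes $\cale=F^*(\calf)$ rather than $E(\calf)$, but as you note either works, since $\Comp(E(\calf))=\Comp(\calf)$ by Lemma~\ref{l:same.comps}(a) and $E(\calf)$ is characteristic by Lemma~\ref{l:A2-Th6}(a)). The paper disposes of the remaining point --- that $\cale$ is realized by a finite group with known quasisimple components --- in a single sentence, and your proof is essentially an expansion of that sentence via the construction $M=L/Z_0$. You correctly isolate that the whole construction hinges on the equality $Z(\calc_i)=Z(L_i)$: without it you cannot view the amalgamating subgroup $Z_0\le\prod_iZ(\calc_i)$ as a subgroup of $Z(L)=\prod_iZ(L_i)$. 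So the skeleton of the argument, and the recognition of where the real content lies, are both right.

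The sketch you give for $Z(\calc_i)=Z(L_i)$, however, does not hold up. The assertion that ``a strongly closed abelian subgroup of the fusion system of the quasisimple group $L_i$ is normal in $L_i$'' is false as a general statement: the Goldschmidt groups (central extensions of $\PSL_2(2^n)$, of $\PSL_2(q)$ with $q\equiv\pm3\pmod 8$, of $\lie2G2(3^{2n+1})$, of $J_1$, at $p=2$ and analogously at odd $p$) have a strongly closed abelian Sylow $p$-subgroup that is not normal, and ``normal in $\calf_{U_i}(L_i)$'' does not imply ``normal in $L_i$'' either (the fusion system of any finite group with self-normalizing Sylow $p$-subgroup gives a counterexample). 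What you actually need is the inclusion $Z(\calc_i)\le Z(L_i)$, i.e.\ a $Z^*$-type statement, and the cleaner route inside this paper is the following. First, for a quasisimple fusion system one has $O^{p'}(\calc_i)=\calc_i$ (otherwise $O^{p'}(\calc_i)/Z(\calc_i)$ would be a proper nontrivial normal subsystem of the simple system $\calc_i/Z(\calc_i)$). Writing $Z=Z(L_i)\le Z(\calc_i)$, Lemma~\ref{l:Op'(F/Z)} gives $O^{p'}(\calc_i/Z)=O^{p'}(\calc_i)/Z=\calc_i/Z=\calf_{U_i/Z}(L_i/Z)$, where $L_i/Z$ is a known simple group. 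Since $\calc_i$ is quasisimple, $U_i/Z$ is not normal in $\calc_i/Z$, so by Theorem~\ref{OR-ThA} applied to $L_i/Z$ (case (b) is excluded by $O^{p'}(\calc_i/Z)=\calc_i/Z$), the subsystem $O^{p'}(\calc_i/Z)=\calc_i/Z$ is simple, hence $Z(\calc_i/Z)=1$ and therefore $Z(\calc_i)=Z$. With this lemma substituted, the rest of your argument (the identification of the central product, the quotient compatibility $\calf_{(\prod_iU_i)/Z_0}(L/Z_0)=\calf_{\prod_iU_i}(L)/Z_0$, and the identification of $\Comp(L/Z_0)$ using $Z_0\cap L_i\le Z_0\cap Z(\calc_i)=1$) goes through.
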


\begin{proof} This is the special case of Theorem \ref{t:E.real} where 
$\cale$ is the generalized Fitting subsystem of $\calf$. Note that 
$\cale$ is realizable since it is the central product of its components 
(which are realizable by assumption) and a $p$-group. 
\end{proof}

Our third theorem is the special case of Theorem \ref{t:E.real} where 
$\cale=\calf$.

\begin{Thm} \label{t:all.tame}
Let $p$ be a prime, and let $\calf$ be a fusion system over a finite 
$p$-group that is realized by a finite $p'$-reduced group all of whose 
components are known quasisimple groups. Then $\calf$ is tamely realized by 
a finite $p'$-reduced group all of whose components are known quasisimple 
groups. 
\end{Thm}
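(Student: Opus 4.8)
The plan is to obtain Theorem \ref{t:all.tame} as the special case $\cale=\calf$ of Theorem \ref{t:E.real}. Concretely, given $\calf$ over a finite $p$-group $S$ that is realized by a finite group all of whose components are known quasisimple groups, I would set $\cale=\calf$ and $T=S$. Then $\cale\nsg\calf$ is a normal pair of saturated fusion systems over $T\nsg S$ in the trivial way, and $\Comp(\cale)=\Comp(\calf)$ holds tautologically, so the hypothesis $\Comp(\cale)=\Comp(\calf)$ of Theorem \ref{t:E.real} is met. The only remaining hypothesis of Theorem \ref{t:E.real}, namely that $\cale$ is realized by a finite group all of whose components are known quasisimple groups, is exactly what we are assuming about $\calf$. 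Theorem \ref{t:E.real} then asserts that $\calf$ is tamely realized by a finite group all of whose components are known quasisimple groups, which is precisely the conclusion of Theorem \ref{t:all.tame}.

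Because the statement is thus an immediate corollary, there is no new obstacle to overcome inside this proof; all of the substance lives in Theorem \ref{t:E.real}, i.e.\ in the minimal-counterexample argument of its proof and the reductions carried out in its Steps 1--3, which in turn invoke Propositions \ref{p:prod.tame}, \ref{l:Comp(F)}, \ref{p:NF(EJ)}, \ref{p:tame=>tame}, and \ref{p:simple=>tame}, together with Theorem \ref{OR-ThA}. The one point I would verify while writing is that nothing in the statement or proof of Theorem \ref{t:E.real} tacitly requires $\cale$ to be a \emph{proper} subsystem of $\calf$: the normal-pair hypothesis $\cale\nsg\calf$ and the condition $\Comp(\cale)=\Comp(\calf)$ are both satisfied by the degenerate choice $\cale=\calf$, and the inductions there go through verbatim in that case.

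Finally, it is worth recording the relationship to the classification-dependent form stated in the introduction: invoking the classification of finite simple groups, every quasisimple group is ``known,'' so the hypothesis on the components becomes vacuous, and Theorem \ref{t:all.tame} specializes to Theorem \ref{ThC}, that every realizable fusion system over a finite $p$-group is tame.
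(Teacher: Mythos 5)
Your proposal matches the paper exactly: immediately before Theorem \ref{t:all.tame} the paper notes that it is the special case of Theorem \ref{t:E.real} with $\cale=\calf$, and your checks that $\cale\nsg\calf$ and $\Comp(\cale)=\Comp(\calf)$ hold trivially in that case are precisely the implicit content. Nothing further is needed.
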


\bigskip

\end{document}